\DeclareMathAlphabet{\mathpzc}{OT1}{pzc}{m}{it}
\newtheorem{theorem}{Theorem}[section]
\newtheorem{lemma}[theorem]{Lemma}
\newtheorem{corollary}[theorem]{Corollary}
\newtheorem{proposition}[theorem]{Proposition}
\theoremstyle{definition}
\newtheorem{definition}[theorem]{Definition}
\newtheorem{example}[theorem]{Example}
\theoremstyle{remark}
\newtheorem{remark}[theorem]{Remark}
\newtheorem{question}[theorem]{Question}
\newcommand{\op}{\operatorname}
\newcommand{\ra}{\rightarrow}
\newcommand{\la}{\leftarrow}
\newcommand{\hra}{\hookrightarrow}
\newcommand{\xra}[1]{\xrightarrow{#1}}
\newcommand{\xla}[1]{\xleftarrow{#1}}
\newcommand{\sira}{\xra{\sim}}
\newcommand{\xsira}[1]{\xrightarrow[\sim]{#1}}
\newcommand{\sila}{\overset{\sim}{\leftarrow}}
\newcommand{\mar}{\ar@{|->}}
\newcommand{\sar}{\ar@{->>}}
\newcommand{\iar}{\ar@{^{(}->}}
\newcommand{\gar}{\ar@{=}}
\newcommand{\gleichar}{\ar@{}|{=}}
\newcommand{\congar}{\ar@{}|{\cong}}
\newcommand{\Bl}[1]{{\mathbb{#1}}}
\newcommand{\DZ}{\Bl{Z}}
\newcommand{\DN}{\Bl{N}}
\newcommand{\Hom}{\op{Hom}}
\newcommand{\Tr}{{\op{Tr}}}
\newcommand{\Mod}{\op{Mod}}
\newcommand{\modfin}{\op{mod}}
\newcommand{\End}{\op{End}}
\newcommand{\id}{\op{id}}
\newcommand{\ol}[1]{{\overline{#1}}}
\newcommand{\ul}[1]{{\underline{#1}}}
\newcommand{\Kern}{\op{ker}}
\newcommand{\inv}{^{-1}}
\newcommand{\can}{\op{can}}
\newcommand{\supp}{\op{supp}}
\newcommand{\tildew}[1]{\widetilde{#1}}
\newcommand{\comp}{\circ}
\newcommand{\gr}{\op{gr}}
\newcommand{\mathovalbox}[1]{{\text{\ovalbox{${#1}$}}}}
\newcommand{\define}[1]{{\textbf{#1}}}
\numberwithin{equation}{section}
\newcommand{\add}{\op{add}}
\newcommand{\proj}{\op{proj}}
\newcommand{\zvek}[2]{{\big[\begin{smallmatrix} {#1} & {#2} \end{smallmatrix}\big]}}
\newcommand{\svek}[2]{{\big[\begin{smallmatrix} {#1} \\ {#2} \end{smallmatrix}\big]}}
\newcommand{\tzmat}[4]{{\big[\begin{smallmatrix} {#1} & {#2} \\ {#3} & {#4} \end{smallmatrix}\big]}}
\renewcommand{\phi}{\varphi}
\newcommand{\Cone}{\op{Cone}}
\theoremstyle{remark}
\renewcommand{\epsilon}{{\varepsilon}}
\newcommand{\heart}{{\heartsuit}}
\newcommand{\WCweak}{{WC}}
\newcommand{\candidateWCweak}{{WC_{\op{c}}}}
\newcommand{\WCfun}{{\widetilde{WC}}}
\newcommand{\weak}{{\op{weak}}}
\newcommand{\ic}{{\op{ic}}}
\newcommand{\anticomm}{\circleddash}
\newcommand{\range}{\op{range}}
\newcommand{\anti}{{\op{anti}}}
\begin{document}
\selectlanguage{english}

\title[Homotopy categories and weight complex functors]{Homotopy
  categories and idempotent completeness, weight structures and weight
  complex functors}

\author{Olaf M.~Schn{\"u}rer}
\address{Mathematisches Institut\\ 
  Universit{\"a}t Bonn\\
  Endenicher Allee 60\\
  53115 Bonn\\
  Germany
}
\email{olaf.schnuerer@math.uni-bonn.de}

\keywords{Homotopy category, idempotent completeness, weight
  structure, w-structure, weight complex functor, t-structure,
  filtered triangulated category, f-category} 

\begin{abstract}
This article provides some basic results on weight structures, weight
complex functors and homotopy categories.
We prove that the full subcategories 
$K(\mathcal{A})^{w \leq n}$, $K(\mathcal{A})^{w \geq n}$,
$K(\mathcal{A})^-$ and $K(\mathcal{A})^+$ (of objects isomorphic to
suitably bounded complexes) of the 
homotopy category $K(\mathcal{A})$
of an additive category $\mathcal{A}$ are idempotent complete, which
confirms that 
$(K(\mathcal{A})^{w \leq 0}, K(\mathcal{A})^{w \geq 0})$ is a weight
structure on $K(\mathcal{A})$. 

We discuss weight complex functors and provide full details of an
argument sketched by M.~Bondarko, which shows that if $w$ is a bounded
weight structure on a triangulated category $\mathcal{T}$ that has a filtered
triangulated enhancement $\tildew{\mathcal{T}}$ then there exists a
strong weight complex functor $\mathcal{T} \ra
K(\heart(w))^\anti$. Surprisingly, in
order to carry out the proof, we 
need to impose an additional axiom on the filtered triangulated
category $\tildew{\mathcal{T}}$ which seems to be new. 
\end{abstract}

\maketitle

\tableofcontents

\section{Introduction}
\label{sec:introduction-weight-structures}

The aim of this article is to provide and review some foundational
results on homotopy categories, weight structures and weight complex
functors.

Weight structures were independently introduced by M.~Bondarko
in \cite{bondarko-weight-str-vs-t-str}, and D.~Pauksztello in
\cite{pauk-co-t} where they are called co-t-struc\-tures. Their definition is
formally very similar to that of a t-structure. In both cases there is
an axiom demanding that each objects fits into a triangle of a certain
form. In the case of a t-structure these ``truncation
triangles" are functorial whereas in the case of a weight structure
these ``weight decomposition triangles" are not functorial. 
This is a technical issue but the theory remains amazingly rich.
Weight structures have applications in various branches of
mathematics, for example in algebraic geometry (theory of motives),
algebraic topology (stable homotopy category) and representation
theory, see e.\,g.\ the work of M.~Bondarko (e.\,g.\
\cite{bondarko-weight-str-vs-t-str}), 
J.~Wildeshaus (e.\,g.\ \cite{wildeshaus-chow-arXiv}) and
D.~Pauksztello (\cite{pauk-co-t, pauk-note-co-t-arXiv}) and references 
therein; other references are 
\cite{achar-treumann-arXiv},
\cite{iyama-yoshino-mutation},
\cite{geordie-ben-HOMFLYPT},
\cite{aihara-iyama-silting-arXiv},
\cite{keller-nicolas-simple-arXiv},
\cite{MSSS-AB-context-co-t-str-arXiv},
\cite{achar-kitchen-koszul-mixed-arXiv}.

A crucial observation due to M.~Bondarko is that, in the presence of a
weight structure 
$w=(\mathcal{T}^{w \leq 0},\mathcal{T}^{w \geq 0})$
on a triangulated category $\mathcal{T}$, there is a weak weight
complex functor
$\WCweak: \mathcal{T} \ra K_\weak(\heart)$ where 
$\heart= \mathcal{T}^{w \leq 0} \cap \mathcal{T}^{w \geq 0}$ 
is the
heart of $w$ and the weak homotopy
category $K_\weak(\heart)$ is a certain quotient of the homotopy category
$K(\heart)$. 
M.~Bondarko explains
that in various natural settings this functor lifts to a 
``strong weight complex functor" $\mathcal{T} \ra K(\heart)^\anti$
(the upper index does not appear in
\cite{bondarko-weight-str-vs-t-str}; it will be explained below). 
We expect that this strong weight complex functor
will be an important tool.

The basic example of a weight structure is the ``standard weight
structure" 
$(K(\mathcal{A})^{w \leq 0}, K(\mathcal{A})^{w \geq 0})$
on the homotopy category $K(\mathcal{A})$ of an additive
category $\mathcal{A}$;
here 
$K(\mathcal{A})^{w \leq n}$ (resp.\ 
$K(\mathcal{A})^{w \geq n}$) is the full subcategory of 
$K(\mathcal{A})$ consisting of complexes $X=(X^i, d^i:X^i \ra
X^{i+1})$ that are isomorphic to a complex concentrated in
degrees $\leq n$ (resp.\ $\geq n$) (for fixed $n \in \DZ$).
The subtle point to confirm this example 
(which appears in \cite{bondarko-weight-str-vs-t-str} and
\cite{pauk-co-t})
is to check that $K(\mathcal{A})^{w \leq 0}$ 
and $K(\mathcal{A})^{w \geq 0}$ are both 
closed under retracts in $K(\mathcal{A})$.

This basic example was our motivation for the first part of this article
where we discuss the idempotent completeness of (subcategories of)
homotopy categories. Let $\mathcal{A}$ be an additive category as
above.
Then a natural question is whether 
$K(\mathcal{A})$ is idempotent complete.
Since $K(\mathcal{A})$ is a triangulated category this question can be
rephrased as follows:
\begin{question}
  \label{q:KA-ic}
  Given any idempotent endomorphism $e:X \ra X$ in
  $K(\mathcal{A})$, is there an isomorphism $X\cong E \oplus F$ such
  that $e$ corresponds to 
  $\tzmat 1000: E \oplus F \ra E \oplus F$?
\end{question}
We do not know the answer to this question in general.
We can show that certain full subcategories of $K(\mathcal{A})$ are
idempotent complete:
\begin{theorem}
  [{see Thm.~\ref{t:one-side-bounded-hot-idempotent-complete}}] 
  \label{t:one-side-bounded-hot-idempotent-complete-einleitung}
  The full subcategories 
  $K(\mathcal{A})^{w \leq n}$ and 
  $K(\mathcal{A})^{w \geq n}$ of
  $K(\mathcal{A})$ are idempotent complete.
  In particular
  $K(\mathcal{A})^-:=\bigcup_{n \in \DZ}K(\mathcal{A})^{w \leq n}$,
  $K(\mathcal{A})^+:=\bigcup_{n \in \DZ}K(\mathcal{A})^{w \geq n}$ and
  $K(\mathcal{A})^{bw}:= K(\mathcal{A})^- \cap K(\mathcal{A})^+$ 
  are idempotent complete.
\end{theorem}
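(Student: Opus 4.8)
The plan is to reduce the whole theorem to the single assertion that every idempotent in $K(\mathcal A)^{w \le 0}$ splits, and to prove that by an explicit mapping-telescope construction.

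\emph{Reductions.} Since the shift $[1]$ is an autoequivalence of $K(\mathcal A)$ taking $K(\mathcal A)^{w \le n}$ to $K(\mathcal A)^{w \le n-1}$, and autoequivalences preserve idempotent completeness, it suffices to treat $n = 0$. The subcategory $K(\mathcal A)^{w \le 0}$ is, by its very definition, equivalent to the full subcategory $K^{\le 0}(\mathcal A) \subseteq K(\mathcal A)$ of complexes concentrated in degrees $\le 0$, so I work with the latter. The case $K(\mathcal A)^{w \ge n}$ follows by duality: the canonical isomorphism of categories $K(\mathcal A)^{\opp} \cong K(\mathcal A^{\opp})$ given by $X^i \mapsto X^{-i}$ identifies $(K(\mathcal A)^{w \ge n})^{\opp}$ with $K(\mathcal A^{\opp})^{w \le -n}$; as $\mathcal A^{\opp}$ is again additive and idempotent completeness is self-dual, the $w \le 0$ case applied to $\mathcal A^{\opp}$ yields the $w \ge n$ case for $\mathcal A$. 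Granting both, the three ``in particular'' claims are formal: an idempotent on an object of $K(\mathcal A)^-$ already lives in some $K(\mathcal A)^{w \le n}$ and splits there with summands again in $K(\mathcal A)^{w \le n} \subseteq K(\mathcal A)^-$; the case of $K(\mathcal A)^+$ is dual; and for $K(\mathcal A)^{bw}$ one splits the same idempotent in both $K(\mathcal A)^{w \le n}$ and $K(\mathcal A)^{w \ge m}$ and invokes uniqueness of idempotent splittings to conclude that the common summands lie in $K(\mathcal A)^- \cap K(\mathcal A)^+$.

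\emph{A strict idempotent on one object.} The governing idea is already visible when $e = e^2 : A \to A$ is an honest idempotent in $\mathcal A$, regarded as an endomorphism of $A$ placed in degree $0$. I would set $E = (\cdots \xra{e} A \xra{1-e} A \xra{e} A \xra{1-e} A)$, concentrated in degrees $\le 0$ with the last term in degree $0$; this is a genuine complex precisely because $e(1-e) = (1-e)e = 0$, and it lies in $K^{\le 0}(\mathcal A)$. Let $\iota : E \to A$ be $e$ in degree $0$ (and $0$ elsewhere) and $\pi : A \to E$ be $\id_A$ in degree $0$. Then $\iota$ and $\pi$ are chain maps, $\iota\pi = e$ on the nose, while $\pi\iota$ is the chain map equal to $e$ in degree $0$; the degreewise-constant map $h^{-k} = \id_A$ is a homotopy showing $\pi\iota \simeq \id_E$, because in each negative degree the two incident differentials of $E$ are $e$ and $1-e$ in some order and sum to $\id_A$. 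Hence $e = \iota\pi$ with $\pi\iota \simeq \id_E$, so $e$ splits with image $E$, the complementary summand being the analogous complex built from $1 - e$.

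\emph{The general case and the main obstacle.} For a general $X \in K^{\le 0}(\mathcal A)$ and a general idempotent $e \in \End_{K(\mathcal A)}(X)$, choose a representing chain map $\epsilon$ with $\epsilon^2 - \epsilon = d_X s + s d_X$ for some degree $-1$ map $s$. I would run the same construction with $X$ in place of $A$: form the total complex $E$ of the double complex whose columns, at horizontal positions $0, -1, -2, \dots$, are copies of $X$ and whose horizontal maps alternate $\epsilon$ and $\id - \epsilon$. The decisive and convenient point is that, since $X$ vanishes in positive degrees, each total degree of $E$ receives contributions from only finitely many columns; thus $E$ is again concentrated in degrees $\le 0$, has finite direct sums in each degree, and requires no infinite coproducts in $\mathcal A$ (which need not exist). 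The main obstacle is that $\epsilon$ is idempotent only up to the homotopy $s$, so the naive alternating differential fails to square to zero: consecutive horizontal maps compose to $\epsilon(\id-\epsilon) = -(d_X s + s d_X)$. I would repair this in the spirit of the homological perturbation lemma, adding correction terms built from $s$ that shift the column index further and further; by the same finiteness this series terminates in each total degree, yielding an honest differential $D$ with $D^2 = 0$. With this $E$ (and the partner built from $\id - \epsilon$) I would then produce chain maps $\iota : E \to X$ and $\pi : X \to E$ with $\iota\pi = \epsilon$ and, via homotopies again assembled from $s$, $\pi\iota \simeq \id_E$, exactly as in the strict case, giving the splitting $X \cong E \oplus F$ in $K^{\le 0}(\mathcal A)$. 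Establishing $D^2 = 0$ for the perturbed differential and manufacturing the homotopy $\pi\iota \simeq \id_E$ is the genuinely technical heart of the argument; the reductions and the strict-idempotent model are formal.
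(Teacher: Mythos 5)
Your reductions are all correct: the shift to $n=0$, the duality $K(\mathcal{A})^\opp \cong K(\mathcal{A}^\opp)$ disposing of the $w \geq n$ case (which is in fact tidier than the paper, which reruns its swindle keeping track of where cones land), the formal deduction of the statements about $K(\mathcal{A})^-$, $K(\mathcal{A})^+$ and $K(\mathcal{A})^{bw}$, and the strict-idempotent telescope with its constant homotopy. The gap is exactly where you flag it, and it is not a routine technicality. Write the sought differential on $\bigoplus_{k\geq 0}[k]X$ as $D = D_0 + \delta_1 + \delta_2 + \cdots$, where $\delta_r$ shifts the column index by $r$. Local finiteness (boundedness above of $X$) makes such sums well defined, but you must still solve $D_0\delta_r + \delta_r D_0 = -\sum_{i+j=r}\delta_i\delta_j$ for every $r \geq 2$. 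With $\delta_1$ alternating $\epsilon$ and $\id - \epsilon$, the case $r=2$ is solved by $\delta_2 = \pm s$; but at $r=3$ the right-hand side has components $\pm(\epsilon s - s\epsilon)$, which is a cycle, i.e.\ a chain map $X \ra [-1]X$, and solvability for $\delta_3$ means precisely that this map is null-homotopic. Nothing in your data --- a single homotopy $s$ with $\epsilon^2 - \epsilon = ds+sd$ --- implies that the class of $\epsilon s - s\epsilon$ in $\Hom_{K(\mathcal{A})}(X,[-1]X)$ vanishes, even after re-choosing $s$ and $\delta_2$ by cycles; and this is only the first of an infinite sequence of obstructions. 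Solving them all amounts to promoting the homotopy idempotent $(\epsilon,s)$ to a coherent one, which is exactly the well-known subtlety behind Question~\ref{q:KA-ic} and behind the bounded counterexample of Remark~\ref{rem:hot-bd-cplx-add-cat-not-idempotent-complete}. The homological perturbation lemma is also not the right tool here: it transfers an already-square-zero differential along a deformation retract, whereas you are solving a Maurer--Cartan equation whose term-by-term solvability is obstructed. The same unresolved coherence issues infect the construction of the corrected $\iota$, $\pi$ and the homotopy $\pi\iota \simeq \id_E$.

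This is precisely why the paper's proof of Theorem~\ref{t:one-side-bounded-hot-idempotent-complete} has a different shape: it never strictifies $e$ at the chain level. It passes to the idempotent completion $\mathcal{T}^\ic$ of $\mathcal{T} = K^-(\mathcal{A})$, where $M \cong E \oplus F$ splits for free, and uses the Eilenberg-swindle functor $S = \bigoplus_{n\geq 0}[2n](-)$ --- well defined on bounded above complexes by the same local finiteness you exploit --- together with the isomorphism $S \cong \id \oplus [2]S$ and elementary direct-sum triangles to exhibit an explicit $R \in \mathcal{T}^{w\leq n}$ such that both $R$ and $E \oplus R$ are isomorphic to objects of $\mathcal{T}^{w\leq n}$; since $\mathcal{T}^{w\leq n}$ is closed under cones, $E$ itself is then isomorphic to an object of $\mathcal{T}^{w\leq n}$. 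As it stands, your proposal proves the theorem only for idempotents admitting a strictly idempotent chain representative; to rescue the telescope you would need a genuine argument producing the higher coherence data, or you should abandon strictification and argue purely with the triangulated structure as the paper does.
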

Possibly this result is known to the
experts but we could not find a proof in the literature. 
Our proof of this result is constructive and based on work of
R.~W.~Thomason \cite{thomason-class} 
and ideas of P.~Balmer and M.~Schlichting
\cite{balmer-schlichting}.
Theorem~\ref{t:one-side-bounded-hot-idempotent-complete-einleitung}
implies that
$(K(\mathcal{A})^{w \leq 0}, K(\mathcal{A})^{w \geq 0})$
is a weight structure on $K(\mathcal{A})$ (see
Prop.~\ref{p:ws-hot-additive}).

Another approach to Question~\ref{q:KA-ic} is to impose further
assumptions on $\mathcal{A}$. 
We can show (see Thm.~\ref{t:hot-idempotent-complete}):
$K(\mathcal{A})$ is idempotent complete if
$\mathcal{A}$ has countable coproducts (this follows directly
from results of 
M.~B{\"o}kstedt and A.~Neeman \cite{neeman-homotopy-limits}
or from a variation of our proof of 
Theorem~\ref{t:one-side-bounded-hot-idempotent-complete-einleitung})
or if $\mathcal{A}$ is abelian (this is an application of results from
\cite{beligiannis-reiten-torsion-theories} and
\cite{Karoubianness}).
If $\mathcal{A}$ itself is idempotent complete then 
projectivization (see \cite{Krause-krull-schmidt-categories}) shows
that the full 
subcategory $K(\mathcal{A})^b$ of $K(\mathcal{A})$ of complexes that
are isomorphic to a bounded complex is idempotent complete.

It may seem natural to assume that
$\mathcal{A}$ is idempotent complete and additive in 
Question~\ref{q:KA-ic}. However, if $\mathcal{A}^\ic$ is the idempotent
completion of $\mathcal{A}$, then $K(\mathcal{A})$ is idempotent
complete if and only if $K(\mathcal{A}^{\ic})$ is idempotent complete
(see Rem.~\ref{rem:KA-idempotent-complete-wlog-ic}).

Results of R.~W.~Thomason \cite{thomason-class} indicate that it might
be useful to consider the Grothendieck group $K_0(K(\mathcal{A}))$ of
the triangulated category $K(\mathcal{A})$ for additive (essentially)
small $\mathcal{A}$. We show that the Grothendieck groups 
$K_0(K(\mathcal{A}))$,
$K_0(K(\mathcal{A})^-)$ and
$K_0(K(\mathcal{A})^+)$ all vanish for such $\mathcal{A}$ 
(see Prop.~\ref{p:grothendieck-homotopy-category}).

The second part of this article concerns weight complex functors.
In the example of the standard weight structure 
$(K(\mathcal{A})^{w \leq 0}, K(\mathcal{A})^{w \geq 0})$ on
$K(\mathcal{A})$, the heart $\heart$ is the idempotent completion of
$\mathcal{A}$ 
(see Cor.~\ref{c:heart-standard-wstr})
and the weak weight complex functor $\WCweak:
\mathcal{T} \ra K_\weak(\heart)$ 
naturally 
and easily
lifts 
to a
triangulated functor $\WCfun:K(\mathcal{A}) \ra K(\heart)^\anti$
(see Prop.~\ref{p:strong-WC-for-standard-wstr}); here the triangulated
categories $K(\heart)^\anti$ and $K(\heart)$ coincide as additive
categories with translation but a candidate triangle 
$X \xra{u} Y \xra{v} Z \xra{w} [1]X$
is a triangle in $K(\heart)^\anti$ if and only if 
$X \xra{-u} Y \xra{-v} Z \xra{-w} [1]X$
is a triangle in $K(\heart)$.
This functor $\WCfun$ is an example of a ``strong weight complex
functor".

Let us return to the general setup of a weight structure $w$ on a
triangulated category $\mathcal{T}$ with heart $\heart$.
Assume that  
$\tildew{\mathcal{T}}$ is a filtered triangulated category over
$\mathcal{T}$ in the sense of \cite[App.]{Beilinson}.
The main result of the second part of this article is a complete proof
of the following Theorem.

\begin{theorem}
  [{see Thm.~\ref{t:strong-weight-cplx-functor} and cf.\
    \cite[8.4]{bondarko-weight-str-vs-t-str}}] 
  \label{t:strong-weight-cplx-functor-einleitung}
  Assume that $w$ is bounded and that $\tildew{\mathcal{T}}$
  satisfies axiom~\ref{enum:filt-tria-cat-3x3-diag} stated 
  in Section~\ref{sec:additional-axiom}.
  Then there is a strong weight complex functor
  \begin{equation*}
    \WCfun: \mathcal{T} \ra K^b(\heart)^\anti.
  \end{equation*}
  This means that $\WCfun$ is a triangulated functor whose composition
  with $K^b(\heart)^\anti \ra K_\weak(\heart)$ 
  is isomorphic to the weak weight complex functor
  (as a functor of additive categories with translation).
\end{theorem}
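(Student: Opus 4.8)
The plan is to use the filtered enhancement $\tildew{\mathcal{T}}$ to replace the \emph{non-functorial} weight decompositions underlying the weak functor $\WCweak$ by \emph{functorial} truncations, and then to read off the weight complex as the associated graded of a functorially constructed weight Postnikov tower. First I would recall the structure available on $\tildew{\mathcal{T}}$: the forgetful functor $\omega:\tildew{\mathcal{T}}\ra\mathcal{T}$ together with its section embedding $\mathcal{T}$ as the trivially filtered objects, the filtration-shift autoequivalence $s$, and --- crucially --- the truncation functors $\sigma_{\leq n},\sigma_{\geq n}$ with their functorial truncation triangles and the graded functors $\gr^n:\tildew{\mathcal{T}}\ra\mathcal{T}$. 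I would then lift the weight structure $w$ to $\tildew{\mathcal{T}}$ in a way compatible with $\omega$ and $\gr^n$, so that the relevant weight-exactness statements are available.

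The central step is to construct a functor $G:\mathcal{T}\ra\tildew{\mathcal{T}}$ with $\omega\comp G\cong\id_{\mathcal{T}}$ such that for each $X$ the object $GX$ is \emph{filtered by weights}: its $n$-th graded piece $\gr^n(GX)$ lies in $\heart$ (placed in filtration degree $n$). The point of passing to $\tildew{\mathcal{T}}$ is that here the separation of weights into distinct filtration degrees can be performed by the \emph{functorial} filtration-truncations, so that, unlike on $\mathcal{T}$ itself, the resulting tower depends functorially on $X$; boundedness of $w$ guarantees that only finitely many graded pieces are nonzero, so $GX$ lives in a bounded range and the construction terminates. Applying $\gr^\bullet$ to $GX$ and using the connecting morphisms of the successive truncation triangles as differentials then yields a bounded complex $\WCfun(X)=(\gr^\bullet(GX),d)$ in $\heart$, functorially in $X$; the sign twist in $K^b(\heart)^\anti$ is exactly what is needed to match the connecting-map signs coming from the triangulated structure.

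The hard part --- and the reason for the new axiom~\ref{enum:filt-tria-cat-3x3-diag} --- is to verify that this assignment is an honest \emph{triangulated} functor rather than a functor defined only objectwise or only up to the weak equivalence. Two things must be controlled simultaneously. First, that $d\comp d=0$ on the nose and that the whole tower is coherent: for three consecutive truncations the relevant octahedra must be compatible, and it is precisely this compatibility, packaged as a functorial $3\times3$ diagram in $\tildew{\mathcal{T}}$, that the standard axioms of a filtered triangulated category do not seem to supply and that must therefore be imposed. Second, that $\WCfun$ carries distinguished triangles of $\mathcal{T}$ to triangles of $K^b(\heart)^\anti$; here one lifts a triangle to $\tildew{\mathcal{T}}$, refines it to a short exact sequence of filtered objects, and applies $\gr^\bullet$ to obtain a termwise split short exact sequence of complexes, whose associated triangle in $K^b(\heart)^\anti$ is the image triangle. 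I expect the bookkeeping showing that axiom~\ref{enum:filt-tria-cat-3x3-diag} indeed forces $d\comp d=0$ and the coherence of the octahedra to be the genuine obstacle.

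Finally, I would check compatibility with the weak weight complex functor. Composing $\WCfun$ with the projection $K^b(\heart)^\anti\ra K_\weak(\heart)$ forgets exactly the coherence data that distinguishes a strong from a weak functor, so it suffices to compare the underlying graded objects and connecting maps; by construction these form a weight Postnikov tower of $X$, which is what computes $\WCweak(X)$. This identifies the composite with $\WCweak$ as a functor of additive categories with translation and completes the argument.
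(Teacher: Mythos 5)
Your proposal founders on its central step: the existence of a functor $G:\mathcal{T}\ra\tildew{\mathcal{T}}$ with $\omega\comp G\cong\id_{\mathcal{T}}$ whose values have all graded pieces $\gr^n(GX)$ in (the appropriate shift of) $\heart$. You give no construction of $G$, and there is no reason such a functor should exist. The functorial truncations $\sigma_{\leq n},\sigma_{\geq n}$ on $\tildew{\mathcal{T}}$ only read off a filtration that an object of $\tildew{\mathcal{T}}$ already carries; they cannot manufacture, functorially in $X\in\mathcal{T}$, a filtered lift whose filtration realizes the weight filtration. Producing such a lift for an object means choosing weight decompositions, and producing it for a morphism means choosing compatible extensions to those decompositions --- exactly the non-functorial choices that force $\WCweak$ to take values only in $K_\weak(\heart)$. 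So the proposal assumes away the actual difficulty rather than resolving it; if a genuine section $G$ existed, the theorem would follow with almost no work and the new axiom would be essentially superfluous.

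What the paper does instead is introduce the full subcategory $\tildew{\mathcal{T}}^s\subset\tildew{\mathcal{T}}$ of objects all of whose graded pieces lie in the heart, and prove that $\omega|_{\tildew{\mathcal{T}}^s}:\tildew{\mathcal{T}}^s\ra\mathcal{T}$ is full and essentially surjective (Proposition~\ref{p:omega-on-subtildeT}) --- but it is \emph{not} faithful, so it admits no section; one passes to the quotient $\mathcal{Q}=\tildew{\mathcal{T}}^s/\Kern(\omega|_{\tildew{\mathcal{T}}^s})$, which is equivalent to $\mathcal{T}$, and checks that $h=\can\comp c$ annihilates $\Kern(\omega|_{\tildew{\mathcal{T}}^s})$ (Corollary~\ref{c:e-prime-zero-on-kernel}), hence descends to $\mathcal{Q}\simeq\mathcal{T}$. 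Note also that you misplace the role of axiom~\ref{enum:filt-tria-cat-3x3-diag}: it is not needed for $d\comp d=0$ or for the coherence of the truncation octahedra --- those are automatic from the standard f-category axioms, since the $\sigma$-truncations are functorial (being adjoints to inclusions) and consecutive morphisms in a triangle compose to zero, which is how the paper obtains the functor $c:\tildew{\mathcal{T}}\ra C^b(\mathcal{T})$ with no extra hypotheses. The axiom enters at two different points: in Lemma~\ref{l:alpha-hom-subtildeT}, which underlies the fullness of $\omega|_{\tildew{\mathcal{T}}^s}$, and in Proposition~\ref{p:cone-alpha-homotopic-to-zero}, which shows that $c$ takes a cone of $\alpha_N\comp m$ to a mapping-cone sequence --- this is what simultaneously proves that $h$ kills $\Kern(\omega|_{\tildew{\mathcal{T}}^s})$ and that the descended functor $\ol{h}$ is triangulated.
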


Our proof of this theorem relies on the ideas of
A.~Beilinson and M.~Bondarko sketched in
\cite[8.4]{bondarko-weight-str-vs-t-str}. We explain the idea of the proof
in Section~\ref{sec:idea-construction-strong-WC-functor}.
The additional axiom~\ref{enum:filt-tria-cat-3x3-diag} 
imposed on $\tildew{\mathcal{T}}$ in
Theorem~\ref{t:strong-weight-cplx-functor-einleitung} 
seems to be new. It states that any morphism gives rise to a certain
$3\times 3$-diagram; see Section~\ref{sec:additional-axiom} for the
precise formulation. It is used in the proof of
Theorem~\ref{t:strong-weight-cplx-functor-einleitung} at 
two important points; we do not know if this axiom can be removed.

We hope that this axiom is satisfied for reasonable filtered triangulated
categories; it is true in the basic example of a filtered
triangulated category: If $\mathcal{A}$ is an abelian category
its filtered derived category $DF(\mathcal{A})$
is a filtered triangulated category (see
Prop.~\ref{p:basic-ex-f-cat})
that satisfies axiom~\ref{enum:filt-tria-cat-3x3-diag} 
(see Lemma~\ref{l:additional-axiom-true-DFA}).

In the short third part of this article we prove a result which
naturally fits into the context of weight structures and filtered
triangulated categories:
Given a filtered triangulated category $\tildew{\mathcal{T}}$ over a
triangulated category $\mathcal{T}$
with a weight structure $w$, there is a unique weight structure on
$\tildew{\mathcal{T}}$ that is compatible with $w$ (see
Prop.~\ref{p:compatible-w-str}).

\subsection*{Plan of the article}
We fix our notation and gather together some results on additive
categories, idempotent completeness, triangulated categories and
homotopy categories in Section~\ref{sec:preliminaries}; we suggest
skimming through this section and coming back as required.
Sections~\ref{sec:hot-cat-idem-complete}
and~\ref{sec:weight-structures} constitute the first part of this
article - they contain the results on idempotent
completeness of homotopy categories and some basic results on weight
structures.
The next two sections lay the groundwork for the proof of
Theorem~\ref{t:strong-weight-cplx-functor-einleitung}.
In Section~\ref{sec:weak-wc-fun} we construct the weak weight complex
functor in detail. We recall the notion of a filtered triangulated category in 
Section~\ref{sec:filt-tria-cat} and prove some important results
stated in \cite[App.]{Beilinson} as no proofs are available in the literature. 
We prove Theorem~\ref{t:strong-weight-cplx-functor-einleitung} in
Section~\ref{sec:strong-WCfun}.
Section~\ref{sec:weight-structures-and-filtered-triang} contains the
results on compatible weight structures.

\subsection*{Acknowledgements}

This work was motivated by an ongoing joint project with Geordie Williamson.
We thank him for the collaboration, for comments and in particular for
the encouragement to turn our private notes into this article. 
We thank Mikhail Bondarko for useful correspondence, discussions and
encouragement.
We thank Catharina Stroppel for her interest and
helpful comments on a preliminary version of this article.
We thank Peter J\o{}rgensen,
Henning Krause, Maurizio Martino, David Pauksztello, Jorge Vit{\'o}ria and Dong Yang
for their interest, discussions and useful references.
Thanks are also due to Apostolos Beligiannis and Idun Reiten for
explanations,
and to
Paul Balmer and Amnon Neeman for useful correspondence.

Our work was partially supported by the priority program SPP 1388 of
the German Science foundation, and by the 
Collaborative Research Center SFB Transregio 45 of the German Science foundation.

\section{Preliminaries}
\label{sec:preliminaries}

For the definition of an additive category (with translation
(= shift)), a functor of 
additive categories (with translation), and of a triangulated category
see \cite{KS-cat-sh}.

\subsection{(Additive) categories}
\label{sec:additive-categories}

Let $\mathcal{A}$ be a category and $X$ an object of $\mathcal{A}$. 
An object $Y \in \mathcal{A}$ is a 
\textbf{retract of $X$} if there are morphisms $p:X \ra Y$ and $i:Y
\ra X$ such that $pi=\id_Y$. Then $ip:X \ra X$ is an idempotent
endomorphism.
A subcategory $\mathcal{B} \subset \mathcal{A}$ is 
\textbf{closed under retracts} (= Karoubi-closed) if it contains all
retracts in $\mathcal{A}$ of any object of $\mathcal{B}$.
In this case $\mathcal{B}$ is a
\textbf{strict} subcategory of $\mathcal{A}$, i.\,e.\ it is closed under
isomorphisms.

Let $\mathcal{B}$ be a subcategory of an additive
category $\mathcal{A}$. We say that $\mathcal{B}$ is 
\textbf{dense in $\mathcal{A}$} if each object of $\mathcal{A}$ is a
summand of an object of $\mathcal{B}$.
We
define full subcategories $\leftidx{^\perp}{\mathcal{B}},
\mathcal{B}^\perp \subset \mathcal{A}$ by
\begin{align*}
  \leftidx{^\perp}{\mathcal{B}} & =
  \{Z \in \mathcal{A} \mid \mathcal{A}(Z, \mathcal{B})=0\},\\
  \mathcal{B}^\perp & =
  \{Z \in \mathcal{A} \mid \mathcal{A}(\mathcal{B},Z)=0\}.
\end{align*}

\subsection{Idempotent completeness}
\label{sec:idemp-completeness}

Let $\mathcal{A}$ be a category and $X$ an object of $\mathcal{A}$. 
An idempotent
endomorphism $e \in \End(X)$ \textbf{splits} if there is a \textbf{splitting
of $e$}, i.\,e.\ there are an object $Y \in \mathcal{A}$ and morphisms
$p:X \ra Y$ 
and $i: Y \ra X$ such that $ip=e$ and $pi=\id_Y$. 
A splitting of $e$ is unique up to
unique isomorphism. 
If every idempotent endomorphism splits we say that $\mathcal{A}$ is
\textbf{idempotent complete} (= Karoubian). 
If $\mathcal{A}$ is additive, an idempotent $e:X \ra X$ has a splitting
$(Y,p,i)$ and $1-e$ has a 
splitting $(Z, q, j)$, then obviously $(i,j): Y\oplus Z \ra X$ is an
isomorphism with inverse $\svek p q$.
In particular in an idempotent complete additive category 
any idempotent endomorphism of an object $X$ induces a 
direct sum decomposition of $X$.

Any additive category $\mathcal{A}$ has an \textbf{idempotent
  completion} (= Ka\-rou\-bi completion)
$(\mathcal{A}^\ic,i)$, i.\,e.\ there is an idempotent complete additive
category $\mathcal{A}^\ic$ together with an additive functor
$i:\mathcal{A} \ra \mathcal{A}^\ic$ such
that any additive functor $F:\mathcal{A} \ra \mathcal{C}$ to an
idempotent complete additive category $\mathcal{C}$ factors as $F=i
\comp F^\ic$ for an additive functor $F^\ic:\mathcal{A}^\ic \ra
\mathcal{C}$ which is unique up to unique isomorphism; see 
e.\,g.\ \cite{balmer-schlichting} for an explicit construction. Then
$i$ is fully faithful and we usually view $\mathcal{A}$ as a full
subcategory of $\mathcal{A}^\ic$; it is a dense subcategory.
Conversely if
$\mathcal{A}$ is a full dense
additive subcategory of an idempotent complete additive category
$\mathcal{B}$, 
then $\mathcal{B}$ together with 
the inclusion $\mathcal{A} \hra \mathcal{B}$ is an idempotent
completion of $\mathcal{A}$.

\subsection{Triangulated categories}
\label{sec:triang-categ}

Let $\mathcal{T}$ (more precisely $(\mathcal{T}, [1])$ together with a
certain class of candidate triangles) be a
triangulated category (see \cite[Ch.~10]{KS-cat-sh},
\cite{neeman-tricat}, \cite{BBD}). We follow the terminology of
\cite{neeman-tricat} and call candidate triangle (resp.\ triangle)
what is called triangle (resp.\ distinguished triangle) in
\cite{KS-cat-sh}. We say that a subcategory $\mathcal{S} \subset
\mathcal{T}$ is \define{closed under extensions} if for any triangle $X
\ra Y \ra Z \ra [1]X$ in $\mathcal{T}$ with $X$ and $Z$ in
$\mathcal{S}$ we have $Y \in \mathcal{S}$.

\subsubsection{Basic statements about triangles}
\label{sec:basic-triangles}

\begin{lemma}
  \label{l:zero-triang-cat}
  Let $X \xra{f} Y \xra{g} Z \xra{h} [1]X$ be a triangle in
  $\mathcal{T}$. Then 
  \begin{enumerate}
  \item 
    \label{enum:zero-triang-cat-equiv}
    $h=0$ if and only if there is a morphism $\epsilon: Y \ra X$
    such that $\epsilon f =\id_X$.
  \end{enumerate}
  Let $\epsilon: Y \ra X$ be given satisfying $\epsilon f =\id_X$. Then:
  \begin{enumerate}[resume]
  \item 
    \label{enum:zero-triang-unique-delta}
    There
    is a unique morphism $\delta: Z \ra Y$ such that $\epsilon \delta
    = 0$ and $g \delta =\id_Z$.
  \item 
    \label{enum:zero-triang-isom}
    The morphism
    \begin{equation}
      \label{eq:split-triangle}
      \xymatrix{
        X \ar[r]^-f \ar[d]^-{\id_X} 
        & Y \ar[r]^-g \ar[d]^-{\svek \epsilon g} 
        & Z \ar[r]^-{h=0} \ar[d]^-{\id_Z} 
        & {[1]X} \ar[d]^-{\id_{[1]X}} \\
        {X} \ar[r]^-{\svek 10} 
        & {X \oplus Z} \ar[r]^-{\zvek 01}
        & Z \ar[r]^-0 
        & {[1]X}
      }
    \end{equation}
    is an isomorphism of triangles and $\svek \epsilon g$ is invertible with
    inverse $ \zvek f \delta$.
    Under the isomorphism $\svek \epsilon g:Y \sira X \oplus Z$ the
    morphism $\epsilon$ corresponds to $\zvek 10: X \oplus Z \ra X$ and the
    morphism $\delta$ to $\svek 01:Z \ra X \oplus Z$.
  \end{enumerate}
\end{lemma}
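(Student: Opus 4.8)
The plan is to deduce everything from the long exact sequences obtained by applying the cohomological functors $\mathcal{T}(-,X)$ and $\mathcal{T}(Z,-)$ to the given triangle, and then to recognize the diagram \eqref{eq:split-triangle} as a morphism of triangles whose outer vertical maps are invertible. For part (1) I would apply $\mathcal{T}(-,X)$ to the triangle; rotating backwards to $[-1]Z\xra{-[-1]h}X\xra{f}Y$ and using exactness, the relevant segment reads
\[
\mathcal{T}(Z,X)\xra{g^*}\mathcal{T}(Y,X)\xra{f^*}\mathcal{T}(X,X)\xra{(-[-1]h)^*}\mathcal{T}([-1]Z,X),
\]
so that $\Bild(f^*)=\kernel((-[-1]h)^*)$. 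Evaluating at $\id_X$ proves both implications simultaneously: $\id_X\in\Bild(f^*)$ means precisely that some $\epsilon$ satisfies $\epsilon f=\id_X$, whereas $(-[-1]h)^*(\id_X)=-[-1]h$ vanishes exactly when $h=0$ (since $[-1]$ is an equivalence). Hence the two conditions are equivalent.

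For part (2), note that part (1) gives $h=0$, so applying $\mathcal{T}(Z,-)$ yields the exact sequence
\[
\mathcal{T}(Z,X)\xra{f_*}\mathcal{T}(Z,Y)\xra{g_*}\mathcal{T}(Z,Z)\xra{h_*}\mathcal{T}(Z,[1]X),
\]
in which $h_*=0$, so $g_*$ is surjective and produces some $\delta_0\colon Z\ra Y$ with $g\delta_0=\id_Z$. To arrange in addition $\epsilon\delta=0$ I would set $\delta:=\delta_0-f\epsilon\delta_0$; then $g\delta=\id_Z$ because $gf=0$, and $\epsilon\delta=\epsilon\delta_0-(\epsilon f)\epsilon\delta_0=0$ because $\epsilon f=\id_X$. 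For uniqueness, if $\delta,\delta'$ both satisfy the two conditions then $\eta:=\delta-\delta'$ has $g\eta=0$, so by exactness ($\kernel(g_*)=\Bild(f_*)$) we may write $\eta=f\mu$ for some $\mu\colon Z\ra X$; then $0=\epsilon\eta=(\epsilon f)\mu=\mu$, forcing $\eta=0$.

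For part (3) I would first verify that \eqref{eq:split-triangle} is a morphism of triangles: the bottom row is the direct sum of the split triangles $X\xra{\id}X\ra 0\ra[1]X$ and $0\ra Z\xra{\id}Z\ra 0$, hence a triangle, and the three squares commute using $\epsilon f=\id_X$, $gf=0$ and $h=0$. Since the outer vertical maps $\id_X$, $\id_Z$, $\id_{[1]X}$ are isomorphisms, the standard fact that a morphism of triangles invertible at two of the three vertices is invertible at the third shows that $\svek\epsilon g$ is an isomorphism. A direct matrix computation then gives $\svek\epsilon g\cdot\zvek f\delta=\tzmat{\epsilon f}{\epsilon\delta}{gf}{g\delta}=\tzmat 1001$, so $\zvek f\delta$ is a right inverse of the isomorphism $\svek\epsilon g$ and hence equals its two-sided inverse. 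Finally, transporting $\epsilon$ and $\delta$ across this isomorphism gives $\epsilon\circ\zvek f\delta=\zvek{\epsilon f}{\epsilon\delta}=\zvek 10$ and $\svek\epsilon g\circ\delta=\svek{\epsilon\delta}{g\delta}=\svek 01$, as asserted.

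The only genuinely non-routine point is the existence statement in part (2): obtaining a section $\delta_0$ of $g$ is immediate from $h=0$, but the extra requirement $\epsilon\delta=0$ is what makes $\delta$ unique and forces the inverse of $\svek\epsilon g$ to be the clean matrix $\zvek f\delta$, and the correction term $-f\epsilon\delta_0$ is the key device for achieving it. Everything else is bookkeeping with the long exact sequences and with $2\times2$ matrices of morphisms, once one is careful about the sign and shift in the connecting map used in part (1).
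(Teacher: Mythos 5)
Your proof is correct, and its skeleton for parts (1) and (3) — cohomological functors for (1), the split bottom row plus the two-out-of-three criterion for (3) — coincides with the paper's. The genuine difference is in part (2): you establish existence of $\delta$ directly, using the long exact sequence from $\Hom(Z,?)$ to produce a section $\delta_0$ of $g$ and then correcting it to $\delta = \delta_0 - f\epsilon\delta_0$ so that $\epsilon\delta=0$, with uniqueness coming from exactness at $\mathcal{T}(Z,Y)$ together with $\epsilon f = \id_X$. The paper instead proves the isomorphism statement of (3) first (exactly as you do: the bottom row of \eqref{eq:split-triangle} is a coproduct of triangles, hence $\svek \epsilon g$ is invertible) and then reads (2) off from it: writing the inverse of $\svek \epsilon g$ as $\zvek ab$, the identities $\id_Y = a\epsilon + bg$ and $\svek \epsilon g \zvek ab = \tzmat 1001$ force $a=f$, $\epsilon b = 0$ and $gb=\id_Z$, so $b$ is the desired $\delta$; uniqueness holds because any admissible $\delta$ makes $\zvek f\delta$ a two-sided inverse of $\svek \epsilon g$. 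Your route buys a self-contained construction of $\delta$ that does not depend on (3) — the correction term $-f\epsilon\delta_0$ is the same device used to split idempotents in additive categories — while the paper's route is slightly more economical: once invertibility of $\svek \epsilon g$ is known, existence and uniqueness of $\delta$ both come for free from the matrix entries of the inverse, with no section to adjust by hand. A minor further difference: for the forward implication of (1) the paper uses the one-line computation $h = [1](\epsilon f)\comp h = [1]\epsilon \comp ([1]f \comp h) = 0$, whereas you obtain both implications at once by evaluating the long exact sequence at $\id_X$; both are valid.
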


\begin{proof}
  If $\epsilon f=\id_X$ then $h =\id_{[1]X} \comp h = [1]\epsilon \comp
  [1]f \comp h =0$. If $h=0$ then use the cohomological functor
  $\Hom(?,X)$ to find $\epsilon$.

  Let $\epsilon: Y \ra X$ be given satisfying $\epsilon f=\id_X$. 
  The morphism \eqref{eq:split-triangle} is a morphism of candidate
  triangles and even of triangles since coproducts of triangles are
  triangles (e.\,g.\ \cite[Prop.~1.2.1]{neeman-tricat}). Hence $\svek
  \epsilon g$ is an isomorphism. 

  For any $\delta: Z \ra Y$ satisfying $\epsilon \delta
  = 0$ and $g \delta =\id_Z$ we have 
  $\svek \epsilon g \zvek f\delta =\tzmat 1001$. Hence $\delta$ is
  unique if it exists. 

  Let $\zvek ab$ be the inverse of $\svek \epsilon g$. Then $\id_Y =
  a\epsilon +bg$ and hence $f= \id_Y f= a\epsilon f+bgf =a$; on
  the other hand $\tzmat 1001 =
  \svek \epsilon g \zvek ab =\tzmat {\epsilon f}{\epsilon
    b}{gf}{gb}=\tzmat 1 {\epsilon b} 0 {g b}$. Hence $b$ satisfies the
  conditions imposed on $\delta$.
\end{proof}

We say that a triangle 
$X \xra{f} Y \xra{g} Z \xra{h} [1]X$ splits if it is isomorphic (by an
arbitrary isomorphism of triangles) to the
triangle $X \xra{\svek 10} X \oplus Z \xra{\zvek 01} Z \xra{0} [1]X$.
This is the case if and only if $h=0$ as we see from
Lemma~\ref{l:zero-triang-cat}.

\begin{corollary}
  [{cf.~\cite[Lemma~2.2]{Karoubianness}}]
  \label{c:retract-is-summand-in-tricat}
  Let $e: X \ra X$ be an idempotent endomorphism in $\mathcal{T}$. Then $e$
  splits if and only if $1-e$ splits. 
  In particular, an object $Y$ is a retract of an object $X$ if and
  only if $Y$ is a summand of $X$.
\end{corollary}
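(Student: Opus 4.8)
The plan is to reduce the statement to the splitting criterion for triangles established in Lemma~\ref{l:zero-triang-cat}. For the main equivalence, by symmetry it suffices to show that if $e$ splits then $1-e$ splits, since $1-e$ is again idempotent and replacing $e$ by $1-e$ then yields the converse. So suppose $e = ip$ with $pi = \id_Y$ for some object $Y$ and morphisms $p: X \ra Y$, $i: Y \ra X$. I would complete $i$ to a triangle $Y \xra{i} X \xra{g} Z \xra{h} [1]Y$ and apply Lemma~\ref{l:zero-triang-cat} with $f = i$ and $\epsilon = p$: the relation $pi = \id_Y$ forces $h = 0$, and part~\eqref{enum:zero-triang-isom} yields an isomorphism $\svek p g: X \sira Y \oplus Z$ with inverse $\zvek i \delta$, where $\delta: Z \ra X$ satisfies $p\delta = 0$ and $g\delta = \id_Z$.

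Next I would transport $e = ip$ along this isomorphism. Using $pi = \id_Y$, $p\delta = 0$, and $gi = 0$ (consecutive maps in a triangle compose to zero), a short matrix computation shows that $\svek p g \comp e \comp \zvek i \delta = \tzmat 1000$. Hence $1-e$ corresponds to $\tzmat 0001: Y \oplus Z \ra Y \oplus Z$, which splits through $Z$; transporting this splitting back along $\svek p g$ exhibits $1-e$ as split (concretely via the triple $(Z, g, \delta)$, for which $g\delta = \id_Z$ and $\delta g = 1-e$). This proves the main equivalence.

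Finally, the ``in particular'' clause follows formally: if $Y$ is a summand of $X$ it is clearly a retract. Conversely, if $Y$ is a retract of $X$ via $pi = \id_Y$, then $e := ip$ is idempotent and splits by hypothesis, so by the main equivalence $1-e$ also splits; the remark on induced direct sum decompositions in Section~\ref{sec:idemp-completeness} then gives $X \cong Y \oplus Z$, so $Y$ is a summand. I expect no serious obstacle here: the one genuinely nontrivial step is realizing that completing $i$ to a triangle converts the purely additive identity $pi = \id_Y$ into the splitting supplied by Lemma~\ref{l:zero-triang-cat}. This is exactly where the triangulated structure is indispensable, since in a general additive category the splitting of $e$ does not force the splitting of $1-e$; everything else is routine bookkeeping.
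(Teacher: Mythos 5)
Your proof is correct and follows essentially the same route as the paper: complete $i:Y\ra X$ to a triangle, apply Lemma~\ref{l:zero-triang-cat} with $\epsilon=p$ to obtain $\delta$ (the paper's $j$), and conclude that $(Z,g,\delta)$ splits $1-e$. The only differences are cosmetic — you spell out the symmetry reduction, the matrix computation, and the ``in particular'' clause, which the paper leaves implicit.
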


This corollary shows that the question of idempotent completeness of a
triangulated category is equivalent to the analog of
Question~\ref{q:KA-ic}.

\begin{proof}
  Let $(Y,p,i)$ be a splitting of $e$. Complete $i:Y \ra X$ into a
  triangle $Y \xra{i} X \xra{q} Z \ra [1]Y$. 
  Lemma~\ref{l:zero-triang-cat} \eqref{enum:zero-triang-unique-delta}
  applied
  to this triangle and $p:X \ra Y$ yields a morphism $j:Z \ra X$ and then
  Lemma~\ref{l:zero-triang-cat} \eqref{enum:zero-triang-isom} shows
  that $(Z, q, j)$ is a splitting of $1-e$.
\end{proof}

\begin{proposition}
  [{\cite[Prop.~1.1.9]{BBD}}]
  \label{p:BBD-1-1-9-copied-for-w-str}
  Let $(X,Y,Z)$ and $(X',Y',Z')$ be triangles and let
  $g:Y \ra Y'$ be a morphism in $\mathcal{T}$:
  \begin{equation}
    \label{eq:BBD-1-1-9-copied-for-w-str}
    \xymatrix{
      X \ar[r]^u \ar@{..>}[d]_f \ar@{}[dr]|{(1)} & Y \ar[r]^v \ar[d]_g
      \ar@{}[dr]|{(2)} & Z \ar[r]^d \ar@{..>}[d]_h & {[1]X} \ar@{..>}[d]_{[1]f}\\
      X' \ar[r]^{u'} & Y' \ar[r]^{v'} & Z' \ar[r]^{d'} & {[1]X' }
    }
  \end{equation}
  The following conditions are equivalent:
  \begin{enumerate}
  \item 
    $v'gu=0$;
  \item
    \label{enum:fkomm}
    there is a morphism $f$ such that (1) commutes;
  \item 
    \label{enum:hkomm}
    there is a morphism $h$ such that (2) commutes;
  \item 
    there is a morphism $(f,g,h)$ of triangles as indicated in diagram
    \eqref{eq:BBD-1-1-9-copied-for-w-str}.
  \end{enumerate}
  If these conditions are satisfied and $\Hom([1]X,Z')=0$ then
  $f$ in \eqref{enum:fkomm} and $h$ in \eqref{enum:hkomm}
  are unique.
\end{proposition}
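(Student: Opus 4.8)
The plan is to prove the four-term equivalence by establishing $(4)\Rightarrow(2)\Rightarrow(1)$, $(4)\Rightarrow(3)\Rightarrow(1)$, and then closing the loop with $(1)\Rightarrow(2)$, $(1)\Rightarrow(3)$ and $(2)\Rightarrow(4)$; the uniqueness claim is handled separately at the end. First I would dispose of the easy directions. Condition $(4)$ trivially restricts to both $(2)$ and $(3)$. For $(2)\Rightarrow(1)$, from $u'f=gu$ I get $v'gu=v'u'f=0$, using that consecutive morphisms of a triangle compose to zero, i.e.\ $v'u'=0$. Symmetrically, for $(3)\Rightarrow(1)$, from $hv=v'g$ I get $v'gu=hvu=0$ since $vu=0$.

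The homological heart of the statement consists of the two implications out of $(1)$. For $(1)\Rightarrow(2)$ I would apply the cohomological functor $\Hom(X,-)$ to the second triangle $X'\xra{u'}Y'\xra{v'}Z'$, giving the exact sequence $\Hom(X,X')\xra{u'\comp-}\Hom(X,Y')\xra{v'\comp-}\Hom(X,Z')$. Hypothesis $(1)$ says exactly that $gu\in\Hom(X,Y')$ is annihilated by $v'\comp-$, so by exactness it lies in the image of $u'\comp-$; any preimage is the desired $f$ with $u'f=gu$. Dually, for $(1)\Rightarrow(3)$ I would apply the contravariant cohomological functor $\Hom(-,Z')$ to the first triangle $X\xra{u}Y\xra{v}Z$, producing the exact sequence $\Hom(Z,Z')\xra{-\comp v}\Hom(Y,Z')\xra{-\comp u}\Hom(X,Z')$; now $v'g\in\Hom(Y,Z')$ is annihilated by $-\comp u$, hence is of the form $h\comp v$ for some $h$, which is precisely $(3)$.

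The remaining step $(2)\Rightarrow(4)$ is the single genuinely triangulated input: given the commuting square $u'f=gu$ relating the first morphisms of the two triangles, the completion axiom for triangulated categories produces a third morphism $h\colon Z\ra Z'$ making $(f,g,h)$ a morphism of triangles. Everything else above is formal manipulation with the long exact sequences attached to cohomological functors, so I expect this invocation of the completion axiom to be the conceptual crux, even though it is immediate once recalled. Combining all the arrows gives the equivalence of $(1)$--$(4)$.

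Finally, for uniqueness under the hypothesis $\Hom([1]X,Z')=0$: if $f_1,f_2$ both satisfy $(2)$ then $u'(f_1-f_2)=0$, and rotating the second triangle backward and applying $\Hom(X,-)$ yields exactness of $\Hom(X,[-1]Z')\ra\Hom(X,X')\xra{u'\comp-}\Hom(X,Y')$. Since $\Hom(X,[-1]Z')\cong\Hom([1]X,Z')=0$, the map $u'\comp-$ is injective, forcing $f_1=f_2$. Likewise, if $h_1,h_2$ both satisfy $(3)$ then $(h_1-h_2)v=0$, and rotating the first triangle forward and applying $\Hom(-,Z')$ yields exactness of $\Hom([1]X,Z')\ra\Hom(Z,Z')\xra{-\comp v}\Hom(Y,Z')$; as $\Hom([1]X,Z')=0$, the map $-\comp v$ is injective, forcing $h_1=h_2$. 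This completes the proposed proof.
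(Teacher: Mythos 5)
Your proof is correct and complete. The paper itself gives no argument here---it simply defers to \cite[Prop.~1.1.9]{BBD}---and your proof is precisely the standard one found there: the easy implications follow from $v'u'=0$ and $vu=0$, the implications out of $(1)$ follow from exactness of the sequences obtained by applying the cohomological functors $\Hom(X,-)$ and $\Hom(-,Z')$ to the triangles, the step $(2)\Rightarrow(4)$ is the completion axiom, and the uniqueness statements follow by the same exactness arguments after rotating, using $\Hom(X,[-1]Z')\cong\Hom([1]X,Z')=0$. Nothing is missing.
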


\begin{proof}
  See {\cite[Prop.~1.1.9]{BBD}}.
\end{proof}

\begin{proposition}
  [{\cite[1.1.11]{BBD}}]
  \label{p:3x3-diagram-copied-for-w-str}
  Every commutative square
  \begin{equation*}
    \xymatrix{
      {X} \ar[r] & Y\\
      {X'} \ar[u] \ar[r] & {Y'} \ar[u]
    }
  \end{equation*}
  can be completed to a so called \textbf{$3\times 3$-diagram}
  \begin{equation*}
    \xymatrix{
      {[1]X'} \ar@{..>}[r] & 
      {[1]Y'} \ar@{..>}[r] &
      {[1]Z'} \ar@{..>}[r] \ar@{}[rd]|{\anticomm}& 
      {[2]X'} \\
      {X''} \ar[u] \ar[r] & 
      {Y''} \ar[u] \ar[r] &
      {Z''} \ar[u] \ar[r] & 
      {[1]X''} \ar@{..>}[u] \\
      {X} \ar[u] \ar[r] & 
      {Y} \ar[u] \ar[r] &
      {Z} \ar[u] \ar[r] & 
      {[1]X} \ar@{..>}[u] \\
      {X'} \ar[u] \ar[r] & 
      {Y'} \ar[u] \ar[r] &
      {Z'} \ar[u] \ar[r] & 
      {[1]X',} \ar@{..>}[u] 
    }
  \end{equation*}
  i.\,e.\ a diagram as above having the following properties: The
  dotted arrows are obtained 
  by translation $[1]$, all small squares are commutative except the
  upper right square marked with $\anticomm$ which is
  anti-commutative, and all three rows and columns with solid arrows
  are triangles. (The column/row with the dotted arrows becomes a
  triangle if an odd number of its morphisms is multiplied by
  $-1$.) 

  Variation: Any of the eight small commutative squares in the above
  diagram can be completed to a $3\times 3$-diagram as above.
  It is also possible to complete the anti-commutative square to a
  $3\times 3$ diagram.
\end{proposition}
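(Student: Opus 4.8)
The plan is to construct the whole diagram out of three applications of the octahedral axiom, pivoting on the diagonal of the given square. Write the square as $f\colon X'\to X$, $a'\colon X'\to Y'$, $a\colon X\to Y$, $g\colon Y'\to Y$, so that the diagonal $d:=a\comp f=g\comp a'\colon X'\to Y$ is well defined. First I would complete $f,g,a',a$ into triangles whose cones are the objects $X''$ (left column), $Y''$ (middle column), $Z'$ (bottom row) and $Z$ (middle row), writing the middle row as $X\xra{a}Y\xra{b}Z\xra{}[1]X$ and the bottom row as $X'\xra{a'}Y'\xra{b'}Z'\xra{}[1]X'$; and I would fix one cone $C$ of the diagonal, sitting in a triangle $X'\xra{d}Y\xra{m}C\xra{}[1]X'$.

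Next I would apply the octahedral axiom to the factorization $X'\xra{f}X\xra{a}Y$ of $d$. This yields a triangle $X''\xra{\phi}C\xra{\psi}Z\xra{}[1]X''$ together with its octahedral commutativities, in particular $\psi\comp m=b$. Then I would apply the octahedral axiom to the other factorization $X'\xra{a'}Y'\xra{g}Y$ of $d$, reusing the \emph{same} triangle over $d$; this produces a triangle $Z'\xra{\phi'}C\xra{\psi'}Y''\xra{}[1]Z'$ with, in particular, $\phi'\comp b'=m\comp g$. Setting $h:=\psi\comp\phi'\colon Z'\to Z$ I then obtain $h\comp b'=\psi\comp(m\comp g)=(\psi\comp m)\comp g=b\comp g$, so that $(f,g,h)$ is a morphism from the bottom row to the middle row and $h$ is the desired map of the right column. (Alternatively one could produce such an $h$ from Proposition~\ref{p:BBD-1-1-9-copied-for-w-str}, but obtaining it explicitly avoids the non-uniqueness issue below.)

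The decisive third step is to apply the octahedral axiom to the factorization $Z'\xra{\phi'}C\xra{\psi}Z$ of $h$. On one hand this completes $h$ into the triangle $Z'\xra{h}Z\xra{}Z''\xra{}[1]Z'$, which is the right column and defines $Z''$; on the other hand its octahedral triangle reads $Y''\xra{}Z''\xra{}[1]X''\xra{}[1]Y''$, whose backward rotation $X''\to Y''\to Z''\to[1]X''$ is precisely the top row. At this point all three rows and all three columns are triangles, every object and the relevant maps are in place, and the various octahedral commutativities feed directly into the commutativity of the small squares.

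The hard part will be the sign bookkeeping: one must check that all eight small squares commute and pin down the single anti-commutative square marked $\anticomm$. The anti-commutativity is forced by the backward rotation used to extract the top row (rotating a triangle inserts a sign $-1$ on one morphism), and tracking how this sign propagates through the three octahedra to the upper right square is the genuinely delicate point; it is also where uniqueness is unavailable, since $\Hom([1]X,Z')$ need not vanish, so the maps cannot be pinned down by a vanishing criterion and the construction must be kept explicit. Finally, the stated variations would follow by symmetry: the construction is symmetric in rows and columns, so any of the eight small squares can be brought into the position of the bottom left square by transposing rows with columns and translating objects by $[\pm 1]$, while the anti-commutative square is handled by absorbing the resulting sign into one of the morphisms.
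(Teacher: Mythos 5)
Your construction is precisely the classical proof of \cite[1.1.11]{BBD}, which is all the paper does for the main statement (it cites BBD and only argues the variations): three octahedra, two on the factorizations $X'\to X\to Y$ and $X'\to Y'\to Y$ of the diagonal through a fixed cone $C$, and a third on $Z'\xra{\phi'}C\xra{\psi}Z$, after which the top row is the de-rotation of the third octahedron's output triangle (the map $X''\to Y''$ comes out as $\psi'\comp\phi$, the commutativity of all eight squares follows from the standard octahedral compatibilities, and the sign carried by the rotated cone-of-$\psi$ triangle lands exactly in the upper-right square marked $\anticomm$). Your symmetry argument for the variations (transpose rows and columns, translate, absorb signs) is also the same rotation-plus-diagonal-symmetry trick the paper sketches, so the proposal is correct and essentially identical in approach.
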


\begin{proof}
  See \cite[1.1.11]{BBD}. To obtain the variations remove the column on
  the right and add the $[-1]$-shift of the ``$Z$-column" on the
  left. Modify the signs suitably. Iterate this and use the diagonal
  symmetry.
\end{proof}

\subsubsection{Anti-triangles}
\label{sec:anti-triangles}

Following \cite[10.1.10]{KS-cat-sh} we call a candidate triangle $X
\xra{f} Y \xra{g} Z \xra{h} [1]X$ an 
\textbf{anti-triangle} if $X \xra{f} Y \xra{g} Z \xra{-h} [1]X$ is a
triangle. Then $(\mathcal{T}, [1])$ with the class of all
anti-triangles is again a triangulated category that we denote by
$(\mathcal{T}^\anti, [1])$ or $\mathcal{T}^\anti$.
The triangulated categories $\mathcal{T}$ and $\mathcal{T}^\anti$ are
equivalent as triangulated categories
(cf.\ \cite[Exercise~10.10]{KS-cat-sh}): Let
$F=\id_\mathcal{T}:\mathcal{T} \ra \mathcal{T}$ be the identity
functor (of additive 
categories) and $\tau: F[1] \sira [1]F$ the isomorphism given by
$\tau_X=-\id_{[1]X}: [1]X=F[1]X \sira [1]X=[1]FX$. Then
\begin{equation}
  \label{eq:T-antiT-triequi}
  (\id_\mathcal{T}, \tau):\mathcal{T} \sira \mathcal{T}^\anti  
\end{equation}
is a triangulated equivalence.

\subsubsection{Adjoints are triangulated}
\label{sec:adjoints-triang}

Assume that $(G,\gamma): \mathcal{T} \ra \mathcal{S}$ is a
triangulated functor and that $F: \mathcal{S} \ra \mathcal{T}$ is left
adjoint to $G$. 
Let $X \in \mathcal{S}$. The morphism 
${[1]X} \ra {[1]GFX} \xsira{\gamma_{FX}\inv} {G[1]FX}$ 
(where the first morphism is the translation of the unit of the adjunction)
corresponds under the adjunction to a morphism
$\phi_X: F[1]X \ra [1]FX$.
This construction is natural in $X$ and defines a morphism $\phi: F[1]
\ra [1]F$ (which in fact is an isomorphism).
We omit the tedious proof of the following Proposition.
\begin{proposition}
  [{\cite[Prop.~1.6]{keller-vossieck} (without proof); cf.\
    \cite[Exercise~10.3]{KS-cat-sh}}] 
  \label{p:linksad-triang-for-wstr-art}
  Let $F$ be left adjoint to a triangulated functor $(G,\gamma)$.
  Then $(F, \phi)$ as defined above is a triangulated functor.

  Similarly the right adjoint to a triangulated functor is
  triangulated.
\end{proposition}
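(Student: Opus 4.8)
The plan is to verify the three defining properties of a triangulated functor for $(F,\phi)$: that $F$ is additive, that $\phi\colon F[1]\to[1]F$ is a natural isomorphism, and that $F$ carries triangles to triangles. Additivity is clear, since a left adjoint between additive categories preserves biproducts. That $\phi$ is a natural isomorphism I would deduce formally from the calculus of mates: writing $\eta,\varepsilon$ for the unit and counit of the adjunction, both $F[1]$ and $[1]F$ are left adjoint to one and the same functor, namely to $[-1]\comp G$, where we identify $[-1]G\cong G[-1]$ by means of $\gamma$. By uniqueness of adjoints the two left adjoints are canonically isomorphic, and one checks that this canonical isomorphism is exactly $\phi$ (equivalently, $\phi_X$ is the mate of $\gamma_{FX}\inv$). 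Hence $\phi$ is natural in $X$ and invertible, and it remains to treat triangles.

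The decisive point is that one cannot simply complete $Fu$ to a triangle and invoke the five lemma for triangulated categories, because the candidate triangle $FX\xra{Fu}FY\xra{Fv}FZ\xra{\phi_X\comp Fw}[1]FX$ attached to a triangle $X\xra{u}Y\xra{v}Z\xra{w}[1]X$ is not yet known to be a triangle. I would instead first show that this candidate triangle becomes a long exact sequence under every representable functor $\mathcal{T}(-,T)$. To see this, apply the cohomological functor $\mathcal{S}(-,GT)$ to the given triangle in $\mathcal{S}$; since the adjunction isomorphism $\mathcal{T}(FW,T)\sira\mathcal{S}(W,GT)$ is natural in $W$, it transforms this long exact sequence into a long exact sequence with terms $\mathcal{T}(F[n]X,T)$, $\mathcal{T}(F[n]Y,T)$, $\mathcal{T}(F[n]Z,T)$ and with maps induced by the various $Fu,Fv,Fw$ and their shifts. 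Finally the isomorphisms $\phi$ (and their shifts) let me replace each $\mathcal{T}(F[n]X,T)$ by $\mathcal{T}([n]FX,T)$ and each connecting map $(Fw)^*$ by $(\phi_X\comp Fw)^*$; the outcome is precisely the sequence obtained by applying $\mathcal{T}(-,T)$ to the candidate triangle above, which is therefore exact.

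With this in hand I would complete $Fu\colon FX\to FY$ to a genuine triangle $FX\xra{Fu}FY\xra{a}C\xra{b}[1]FX$ in $\mathcal{T}$. Naturality of $\eta$ yields a commutative square from $u$ to $GFu$, so by Proposition~\ref{p:BBD-1-1-9-copied-for-w-str}, applied to the image under $G$ of this triangle (a triangle, as $G$ is triangulated), there is a morphism $\psi\colon Z\to GC$ completing a morphism of triangles; let $c\colon FZ\to C$ be its adjunct. A short adjunction computation, using naturality of $\eta$, the middle and right commuting squares, and the defining property of $\phi$, shows that $(\id_{FX},\id_{FY},c)$ is a morphism from the candidate triangle to the triangle $FX\xra{Fu}FY\xra{a}C\xra{b}[1]FX$. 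Applying $\mathcal{T}(-,T)$ then produces a morphism of long exact sequences which is the identity at every term save the one carrying $c^*\colon\mathcal{T}(C,T)\to\mathcal{T}(FZ,T)$; by the ordinary five lemma $c^*$ is bijective for all $T$, whence $c$ is an isomorphism by Yoneda. Thus the candidate triangle is isomorphic to a triangle and so is itself a triangle, proving that $(F,\phi)$ is triangulated. The statement for a right adjoint follows by the dual argument (or by passing to opposite categories), with the roles of unit and counit interchanged. The main obstacle, as indicated, is exactly the unavailability of the triangulated five lemma for the not-yet-distinguished candidate triangle, which forces the detour through representable functors, the adjunction, and $\phi$ to establish exactness before the ordinary five lemma can finish the argument.
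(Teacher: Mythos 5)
The paper offers no proof to compare yours against: it explicitly writes that the ``tedious proof'' is omitted, and the cited source \cite{keller-vossieck} also states the result without proof. Your argument is correct and supplies exactly the missing standard proof. Its structure is sound: you identify $\phi$ as the canonical comparison between two left adjoints of $[-1]\comp G\cong G[-1]$ (so it is automatically a natural isomorphism), you show the candidate triangle $FX\xra{Fu}FY\xra{Fv}FZ\xra{\phi_X\comp Fw}[1]FX$ is $\Hom(-,T)$-exact by transporting the long exact sequence of $(X,Y,Z)$ through the adjunction and through iterates of $\phi$, and you then compare it with a genuine triangle on $Fu$ by means of the map $c=\varepsilon_C\comp F\psi$, where $\psi\colon Z\ra GC$ is a filler for the morphism of triangles in $\mathcal{S}$ provided by Proposition~\ref{p:BBD-1-1-9-copied-for-w-str}, finishing with the ordinary five lemma and Yoneda. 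The two places that genuinely require care are handled correctly in your sketch: first, the check that $(\id_{FX},\id_{FY},c)$ is a morphism of candidate triangles is precisely where the defining property of $\phi$ enters, via the identity $\phi_X=\varepsilon_{[1]FX}\comp F(\gamma_{FX}\inv\comp[1]\eta_X)$, which makes $b\comp c=\phi_X\comp Fw$ a one-line computation; second, the compatibility of the iterated identifications $F[n]\cong[n]F$ with the connecting maps in the transported long exact sequence follows from naturality of $\phi$ and the inductive definition of its iterates, and any residual sign discrepancies coming from rotations are harmless because exactness of a sequence is unchanged when a map is replaced by its negative. The dual argument for right adjoints is, as you say, immediate by passing to opposite categories.
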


\subsubsection{Torsion pairs and t-structures}
\label{sec:t-str-and-torsion-theories}

The notion of a t-structure \cite[1.3.1]{BBD}
and that of a torsion pair 
\cite{beligiannis-reiten-torsion-theories}
on a triangulated category essentially coincide,
see \cite[Prop.~I.2.13]{beligiannis-reiten-torsion-theories}:
A pair $(\mathcal{X}, \mathcal{Y})$ is a torsion pair if and only if
$(\mathcal{X}, [1]\mathcal{Y})$ is a t-structure.
We will use both terms. 

\subsection{Homotopy categories and variants}
\label{sec:homotopy-categories}

Let $\mathcal{A}$ be an additive category and $C(\mathcal{A})$ the
category of (cochain) complexes in $\mathcal{A}$ with cochain maps as
morphisms: 
A morphism $f:X \ra Y$ in $C(\mathcal{A})$ is a sequence 
$(f^n)_{n \in \DZ}$ of morphisms $f^n: X^n \ra Y^n$ such that 
$d^n_Y f^n =f^{n+1} d^n_X$ for all $n \in \DZ$
(or in shorthand notation $df=fd$).

Let $f, g: X \ra Y$ be morphisms in $C(\mathcal{A})$. Then $f$ and
$g$ are \define{homotopic} if there is a sequence $h=(h^n)_{n \in
  \DZ}$ of morphisms $h^n: X^n \ra Y^{n-1}$ in $\mathcal{A}$ such that
$f^n-g^n= d^{n-1}_Y h^n + h^{n+1} d^n_X$ for all $n \in \DZ$ (or in
shorthand 
notation $f-g=dh+hd$).
The \define{homotopy category} $K(\mathcal{A})$ has the same objects 
as $C(\mathcal{A})$, but 
\begin{equation*}
  \Hom_{K(\mathcal{A})}(X,Y):= 
  \frac{\Hom_{C(\mathcal{A})}(X,Y)}{\{\text{morphisms homotopic to zero}\}}.
\end{equation*}

Let $f, g: X \ra Y$ be morphisms in $C(\mathcal{A})$. Then $f$ and
$g$ are \define{weakly homotopic} 
(see \cite[3.1]{bondarko-weight-str-vs-t-str})
if there is a pair $(s, t)$ of
sequences $s=(s^n)_{n \in \DZ}$ and 
$t=(t^n)_{n \in \DZ}$ of morphisms $s^n, t^n: X^n \ra Y^{n-1}$
such that 
\begin{equation*}
  f^n-g^n= d^{n-1}_Y s^n + t^{n+1} d^n_X    
\end{equation*}
for all $n \in \DZ$
(or in shorthand notation $f-g=ds+td$).
The \define{weak homotopy category} $K_{\weak}(\mathcal{A})$ has the
same objects  
as $C(\mathcal{A})$, but 
\begin{equation*}
  \Hom_{K(\mathcal{A})}(X,Y):= 
  \frac{\Hom_{C(\mathcal{A})}(X,Y)}{\{\text{morphisms weakly
      homotopic to zero}\}}. 
\end{equation*}

\begin{remark}
  Let $h$ and $(s,t)$ be as in the above definition (without asking
  for $f-g=dh+hd$ or $f-g=ds+td$). 
  Then $dh+hd:X \ra Y$ is homotopic to zero. 
  But
  $ds+td:X \ra Y$ is
  not necessarily weakly homotopic to zero: It need not be a morphism
  in $C(\mathcal{A})$.
  It is weakly homotopic to zero if and only if $dsd=dtd$ (which is of
  course the case if $f-g=ds+td$).
  
  Note that 
  weakly homotopic maps induce the same map on cohomology.
\end{remark}

All categories $C(\mathcal{A})$, $K(\mathcal{A})$ and
$K_{\weak}(\mathcal{A})$ are additive categories.
Let $[1]: C(\mathcal{A}) \ra C(\mathcal{A})$ be the functor that
maps an object $X$ to
$[1]X$ where $([1]X)^n=X^{n+1}$ and
$d_{[1]X}^n=-d_X^{n+1}$ and a morphism $f: X \ra Y$ to $[1]f$ where
$([1]f)^n=f^{n+1}$. This is an automorphism of $C(\mathcal{A})$ and
induces automorphisms $[1]$ of $K(\mathcal{A})$ and
$K_{\weak}(\mathcal{A})$. The categories $C(\mathcal{A})$,
$K(\mathcal{A})$ and $K_{\weak}(\mathcal{A})$ become additive categories with
translation. Sometimes we write $\Sigma$ instead of $[1]$.
Obviously there are canonical functors
\begin{equation}
  \label{eq:can-CKKweak}
  C(\mathcal{A}) \ra K(\mathcal{A}) \ra K_\weak(\mathcal{A})
\end{equation}
of additive categories with translation.

The category $K(\mathcal{A})$ has a natural structure of triangulated
category:
Given a morphism 
$m: M \ra N$ in $C(\mathcal{A})$ we define its mapping cone $\Cone(m)$
of $m$
to be the complex $\Cone(m)=N \oplus [1]M$ with differential
$\tzmat{d_N}{m}{0}{d_{[1] M}}=\tzmat{d_N}{m}{0}{-d_M}$. 
It fits into the following mapping cone sequence 
\begin{equation}
  \label{eq:mapping-cone-triangle}
  \xymatrix{
    & {M} \ar[r]^-{m}
    & {N} \ar[r]^-{\svek {1} 0}
    & {\Cone(m)} \ar[r]^-{\zvek 0{1}}
    & {[1]M}
  }
\end{equation}
in $C(\mathcal{A})$. 
The triangles of $K(\mathcal{A})$ are precisely the candidate
triangles that are 
isomorphic to the
image of a mapping cone sequence 
\eqref{eq:mapping-cone-triangle} in $K(\mathcal{A})$;
this image is called the mapping cone triangle of $m$.
We will later use: If we rotate the mapping cone triangle 
for $-m$ twice we obtain
the triangle
\begin{equation}
  \label{eq:mapcone-minus-m-rot2}
  \xymatrix{
    {[-1]N} \ar[r]^-{\svek {-1} 0}
    & {[-1]\Cone(-m)} \ar[r]^-{\zvek 0{-1}}
    & {M} \ar[r]^-{-m}
    & {N.}
  }
\end{equation}

In this setting
there
is apart from \eqref{eq:T-antiT-triequi}
another triangulated equivalence
between $K(\mathcal{A})$ and $K(\mathcal{A})^\anti$: 
The functor $S:C(\mathcal{A})\sira C(\mathcal{A})$ which sends a complex
$(X^n, d_X^n)$ to $(X^n, -d_X^n)$ and a morphism $f$ to $f$ descends to
a functor $S:K(\mathcal{A}) \sira K(\mathcal{A})$. Then 
\begin{equation}
  \label{eq:KA-antiKA-triequi}
  (S,\id):K(\mathcal{A}) \sira K(\mathcal{A})^\anti  
\end{equation}
(where $\id: S[1]\sira
[1]S$ is the obvious identification) is a triangulated equivalence:
Observe that $S$ maps
\eqref{eq:mapping-cone-triangle} to
\begin{equation*}
  \xymatrix{
    & {S(M)} \ar[r]^-{S(m)=m}
    & {S(N)} \ar[r]^-{\svek {1} 0}
    & {\Cone(-S(m))} \ar[r]^-{\zvek 0{1}}
    & {[1]S(M)}
  }
\end{equation*}
which becomes an anti-triangle in $K(\mathcal{A})$.

We introduce some notation:
Any functor $F: \mathcal{A} \ra \mathcal{B}$ of additive
categories obviously induces
a functor $F_{C}:C(\mathcal{A}) \ra C(\mathcal{B})$ of additive
categories with translation and a functor
$F_{K}:K(\mathcal{A}) \ra K(\mathcal{B})$ of triangulated categories.
If $F: \mathcal{A} \ra \mathcal{A}$ is an endofunctor we denote these
functors often by $F_{C(\mathcal{A})}$ and $F_{K(\mathcal{A})}$.

\section{Homotopy categories and idempotent completeness}
\label{sec:hot-cat-idem-complete}

Let $K(\mathcal{A})$ be the homotopy category of an additive category
$\mathcal{A}$. 
We define full subcategories of $K(\mathcal{A})$ as follows.
Let $K^{w \leq n}(\mathcal{A})$ consist of all
objects that are zero in all degrees $>n$
(where $w$ means ``weights"; the terminology will become clear
from Proposition~\ref{p:ws-hot-additive} below). The union of all $K^{w \leq 
n}(\mathcal{A})$ for $n \in \DZ$ is $K^-(\mathcal{A})$, the category
of all bounded above complexes.

Similarly we define 
$K^{w \geq n}(\mathcal{A})$ and $K^+(\mathcal{A})$.
Let $K^b(\mathcal{A})=K^-(\mathcal{A}) \cap K^+(\mathcal{A})$ be the
full subcategory of all bounded complexes.

For $* \in \{+, -, b, w \leq n, w \geq n\}$ let
$K(\mathcal{A})^*$ be the closure under isomorphisms of 
$K^*(\mathcal{A})$ in $K(\mathcal{A})$. Then all inclusions $K^*(\mathcal{A}) \subset
K(\mathcal{A})^*$ are equivalences of categories. 

We define $K(\mathcal{A})^{bw}:= K(\mathcal{A})^- \cap
K(\mathcal{A})^+$ (where $bw$ means ``bounded weights"). In general
the inclusion  $K(\mathcal{A})^b \subset 
K(\mathcal{A})^{bw}$ is not an equivalence (see
Rem.~\ref{rem:hot-bd-cplx-add-cat-not-idempotent-complete}).

\begin{theorem}
  \label{t:one-side-bounded-hot-idempotent-complete}
  Let $\mathcal{A}$ be an additive category and $n \in \DZ$. 
  The following categories are idempotent complete: 
  \begin{enumerate}
  \item
    \label{enum:one-side-bounded-hot-idempotent-complete-neg}
    $K^{w \leq n}(\mathcal{A})$, 
    $K(\mathcal{A})^{w \leq n}$, 
    $K^-(\mathcal{A})$,
    $K(\mathcal{A})^{-}$;
  \item 
    \label{enum:one-side-bounded-hot-idempotent-complete-pos}
    $K^{w \geq n}(\mathcal{A})$, 
    $K(\mathcal{A})^{w \geq n}$,
    $K^+(\mathcal{A})$,
    $K(\mathcal{A})^{+}$; 
  \item 
    \label{enum:one-side-bounded-hot-idempotent-complete-bounded}
    $K(\mathcal{A})^{bw}$.
  \end{enumerate}
\end{theorem}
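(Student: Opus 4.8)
The plan is to reduce the whole statement to a single core case and then construct the splitting there explicitly, using the data of the homotopy idempotent.

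\emph{Reductions.} Since each inclusion $K^*(\mathcal{A}) \subset K(\mathcal{A})^*$ is an equivalence and idempotent completeness is invariant under equivalence, it suffices to treat the categories $K^*(\mathcal{A})$. The shift $[n]$ is an automorphism of $K(\mathcal{A})$ carrying $K^{w \le n}(\mathcal{A})$ to $K^{w \le 0}(\mathcal{A})$, so in part~\eqref{enum:one-side-bounded-hot-idempotent-complete-neg} I may take $n=0$. Part~\eqref{enum:one-side-bounded-hot-idempotent-complete-pos} then follows from part~\eqref{enum:one-side-bounded-hot-idempotent-complete-neg} by duality: the standard equivalence $K(\mathcal{A})^\opp \cong K(\mathcal{A}^\opp)$ (reverse each complex, $j \mapsto -j$) identifies $K^{w \ge n}(\mathcal{A})^\opp$ with $K^{w \le -n}(\mathcal{A}^\opp)$, and a category is idempotent complete if and only if its opposite is. For the unions, any idempotent $e \colon X \ra X$ in $K^-(\mathcal{A})$ already has $X$ in some $K^{w \le n}(\mathcal{A})$, and the fixed-$n$ case produces a splitting inside $K^{w \le n}(\mathcal{A}) \subset K^-(\mathcal{A})$; dually for $K^+(\mathcal{A})$. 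Finally, for $K(\mathcal{A})^{bw} = K(\mathcal{A})^- \cap K(\mathcal{A})^+$ I split a given idempotent $e$ both in $K(\mathcal{A})^-$ and in $K(\mathcal{A})^+$; both are splittings of $e$ in $K(\mathcal{A})$, so by uniqueness of splittings their objects are isomorphic, whence the summand from part~\eqref{enum:one-side-bounded-hot-idempotent-complete-neg} lies in $K(\mathcal{A})^- \cap K(\mathcal{A})^+ = K(\mathcal{A})^{bw}$. Everything thus reduces to showing that $K^{w \le 0}(\mathcal{A})$ is idempotent complete.

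\emph{Core case.} Here a homotopy idempotent is a cochain map $e \colon X \ra X$ on a complex $X$ concentrated in degrees $\le 0$, together with a homotopy $h$ satisfying $e^2 = e + dh + hd$, and I must produce a complex $E$ in degrees $\le 0$ with cochain maps $p \colon X \ra E$ and $i \colon E \ra X$ such that $pi \simeq \id_E$ and $ip \simeq e$. Conceptually $E$ is the image of $e$, i.e.\ the homotopy colimit of the telescope $X \xra{e} X \xra{e} \cdots$, which in a triangulated category with countable coproducts is the cone of $1-e$ on $\bigoplus_{\DN} X$ and realises the splitting at once; this is exactly the route (via B{\"o}kstedt--Neeman) available when $\mathcal{A}$ has countable coproducts.

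\emph{Main obstacle.} For a general additive $\mathcal{A}$ this homotopy colimit does not exist: the entries of $\bigoplus_{\DN} X$ are infinite coproducts in every nonpositive degree, and boundedness above does \emph{not} make them finite. The constructive content, following R.~W.~Thomason and the ideas of Balmer--Schlichting, is therefore to replace the telescope by an explicit complex having only \emph{finitely many} summands in each degree yet still representing the image: one threads the defect homotopy $h$ into the differential of $E$ and into the definitions of $p$ and $i$, exploiting the vanishing of $X$ in positive degrees to force degreewise finiteness. I expect the verification that this explicit $E$ genuinely lies in $K^{w \le 0}(\mathcal{A})$ and that the homotopy identities $pi \simeq \id_E$ and $ip \simeq e$ hold -- an intricate bookkeeping in which $h$ and the relation $e^2 - e = dh + hd$ are invoked repeatedly -- to be the principal technical hurdle.

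\emph{Alternative organisation.} One may instead first pass to the idempotent completion $\mathcal{A}^\ic$, where a rigidified strict idempotent admits a degreewise image that furnishes $E$ immediately, and then argue that the resulting bounded-above summand descends to an object of $K(\mathcal{A})$. The delicate point in that approach is precisely the descent from $\mathcal{A}^\ic$-entries back to $\mathcal{A}$-entries, which is once more where boundedness above has to be used.
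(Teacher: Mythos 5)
Your reductions are all sound: passing from $K(\mathcal{A})^*$ to $K^*(\mathcal{A})$, shifting to $n=0$, deducing part~\eqref{enum:one-side-bounded-hot-idempotent-complete-pos} from part~\eqref{enum:one-side-bounded-hot-idempotent-complete-neg} via $K(\mathcal{A})^\opp \cong K(\mathcal{A}^\opp)$ (this is in fact cleaner than the paper, which reruns the whole argument for the second part and must compensate for the fact that cones of maps in $K^{w\geq n}(\mathcal{A})$ only land in $K^{w\geq n-1}(\mathcal{A})$), and the treatment of the unions and of $K(\mathcal{A})^{bw}$ via uniqueness of splittings. But the core case -- idempotent completeness of $K^{w\leq 0}(\mathcal{A})$ -- is where the entire content of the theorem lies, and you have not proved it: you describe what you ``expect'' the construction to be and explicitly defer the verification (``the principal technical hurdle''). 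That is a gap, not a proof.

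Moreover, the idea you sketch for that case does not work as stated. You propose to replace the telescope $\bigoplus_{\DN} X$ by a degreewise finite complex, ``exploiting the vanishing of $X$ in positive degrees to force degreewise finiteness''. Boundedness above does nothing of the sort for unshifted copies: if $X$ is concentrated in degrees $\leq 0$, then $\bigoplus_{\DN} X$ has infinitely many nonzero summands in every degree where $X$ is nonzero, i.e.\ in arbitrarily negative degrees. Degreewise finiteness appears only if the copies are progressively shifted, and this is precisely the paper's key device: the functor $S(X) = \bigoplus_{n\in\DN} [2n]X$, which satisfies $S \cong \id \oplus [2]S$. Once the copies are shifted, however, the object is no longer a homotopy colimit of the telescope of $e$, and indeed the paper never constructs chain-level maps $p$, $i$ or any homotopies involving $h$. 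Instead it works entirely in the idempotent completion $\mathcal{T}^\ic$, where the splitting $M \cong E \oplus F$ exists for free, extends $S$ there, shows $SM \cong E \oplus [2]SE \oplus SF$, uses closure of $K^{w\leq n}(\mathcal{A})$ under cones to get $SE \oplus [1]SE$ and $[1]SF \oplus [2]SF$ in $K^{w\leq n}(\mathcal{A})$, concludes $E \oplus R \in K^{w\leq n}(\mathcal{A})$ for $R := SM \oplus [1]SM \oplus [2]SM$, and finally identifies $E$ with $\Cone(R \ra R \oplus E)$, which lies in $K^{w\leq n}(\mathcal{A})$. So the swindle \emph{replaces}, rather than implements, the image-of-a-homotopy-idempotent construction you were aiming for; to complete your proof you would have to either reproduce an argument of this type or genuinely carry out a shifted-telescope construction with all the homotopy bookkeeping, neither of which your proposal contains.
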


\begin{remark}
  \label{rem:hot-bd-cplx-add-cat-not-idempotent-complete}
  In general the (equivalent) categories $K^b(\mathcal{A})$ and $K(\mathcal{A})^b$
  are not idempotent complete (and hence the 
  inclusion into the idempotent complete category $K(\mathcal{A})^{bw}$
  is not an equivalence): 
  Let $\modfin(k)$ be the category of finite dimensional vector spaces
  over a field $k$ and let 
  $\mathcal{E} \subset \modfin(k)$ be the full subcategory of even
  dimensional vector spaces. Note that $K(\mathcal{E}) \subset
  K(\modfin(k))$ is a full triangulated subcategory. 
  Then $X \mapsto \sum_{i \in \DZ} (-1)^i \dim H^i(X)$ is well defined
  on the objects of $K(\modfin(k))^b$. It takes even values on
  all objects of $K(\mathcal{E})^b$; hence $\tzmat 1000:k^2
  \ra k^2$ cannot split in $K(\mathcal{E})^b$. 
\end{remark}

\begin{remark}
  \label{rem:thomason-and-balmer-schlichting}
  Let us indicate the ideas behind the rather explicit proof of 
  Theorem~\ref{t:one-side-bounded-hot-idempotent-complete} which might
  perhaps be seen as a variation of the Eilenberg swindle.

  Let $\mathcal{T}'$ be an essentially small triangulated category.
  R.~W.~Thomason shows that taking the Grothendieck group establishes
  a bijection between 
  dense strict full triangulated subcategories of $\mathcal{T}'$
  and subgroups of its Grothendieck
  group $K_0(\mathcal{T}')$, see
  \cite[Section~3]{thomason-class}. 
  
  Now let $\mathcal{T}^\ic$ be the idempotent completion
  of an essentially small triangulated category $\mathcal{T}$,
  cf.\ Section~\ref{sec:idemp-completeness}; it carries a natural
  triangulated structure \cite[Thm.~1.5]{balmer-schlichting}.
  The previous result applied to $\mathcal{T}'=\mathcal{T}^\ic$ shows
  that the vanishing of $K_0(\mathcal{T}^\ic)$ implies that
  $\mathcal{T}$ is idempotent complete; this was observed by
  P.~Balmer and M.~Schlichting \cite[2.2-2.5]{balmer-schlichting}
  where they also provide a method that sometimes shows this vanishing
  condition.

  These results show directly that 
  $K^-(\mathcal{A})$, $K(\mathcal{A})^{-}$, $K^+(\mathcal{A})$,
  $K(\mathcal{A})^{+}$ and $K(\mathcal{A})^{bw}$ are idempotent
  complete if $\mathcal{A}$ is an essentially small additive category.

  A careful analysis of the proofs of these results essentially gives
  our proof of Theorem~\ref{t:one-side-bounded-hot-idempotent-complete} below.
\end{remark}

\begin{proof}
  We prove \eqref{enum:one-side-bounded-hot-idempotent-complete-neg} first.
  It is obviously sufficient to prove that 
  $\mathcal{T}^{w \leq n} := K^{w \leq n}(\mathcal{A})$ is 
  idempotent complete.
  Let $\mathcal{T}:= K^-(\mathcal{A})$ and consider the endofunctor 
  \begin{align*}
    S: \mathcal{T} & \ra \mathcal{T},\\
    X & \mapsto \bigoplus_{n \in \DN} [2n]X = X \oplus [2] X \oplus [4] X \oplus \dots
  \end{align*}
  (it is even triangulated by \cite[Prop.~1.2.1]{neeman-tricat}).
  Note that $S$ is well defined: Since $X$ is bounded above, the
  countable direct sum has only finitely many nonzero summands in
  every degree. There is an obvious 
  isomorphism of functors $S \sira \id \oplus [2] S$.
  The functor $S$ extends to a (triangulated) endofunctor
  of the idempotent completion $\mathcal{T}^\ic$ of $\mathcal{T}$,
  denoted by the same symbol, and we still have an   
  isomorphism $S \sira \id \oplus [2] S$.

  Now let $M$ be an object of $\mathcal{T}^{w \leq n}$ with an
  idempotent endomorphism $e: M \ra M$. In $\mathcal{T}^\ic$
  we obtain a direct sum 
  decomposition $M \cong E \oplus F$ such that $e$ corresponds to
  $\tzmat 1000: E \oplus F \ra E \oplus F$. We have to show that $E$ is
  isomorphic to an object of $\mathcal{T}^{w \leq n}$.
  
  Since $S$ preserves $\mathcal{T}^{w \leq n}$, we obtain that
  \begin{equation}
    \label{eq:smint}
    SM \cong
    SE \oplus SF \sira
    (E \oplus [2] SE) \oplus SF 
    \in \mathcal{T}^{w \leq n},
  \end{equation}
  where we use the convention just to write $X \in
  \mathcal{T}^{w \leq n}$ if an object $X \in \mathcal{T}^\ic$ is
  isomorphic to an object of $\mathcal{T}^{w \leq n}$.
  The direct sum of the triangles 
  \begin{align*}
    0 \ra & SE \xra{1} SE \ra 0,\\
    SE \ra & 0 \ra [1] SE \xra{1} [1] SE,\\
    SF \xra{1} & SF \ra 0 \ra [1] SF
  \end{align*}
  in $\mathcal{T}^\ic$ is the triangle
  \begin{equation*}
    SE \oplus SF \xra{\tzmat 0001}  
    SE \oplus SF \xra{\tzmat 1000}
    SE \oplus [1] SE \xra{\tzmat 0100}
    [1] (SE \oplus SF).
  \end{equation*}
  The first two vertices are isomorphic to 
  $SM \in \mathcal{T}^{w \leq n}$. The mapping cone of a map between
  objects of $\mathcal{T}^{w \leq n}$ is again in $\mathcal{T}^{w \leq
    n}$. We obtain that
  \begin{equation}
    \label{eq:sesigseint}
    SE \oplus [1] SE \in \mathcal{T}^{w \leq n}.
  \end{equation}
  Applying $[1]$ (which preserves $\mathcal{T}^{w \leq n}$) to the
  same statement for $F$ yields
  \begin{equation}
    \label{eq:sigsfsigsigsfint}
    [1] SF \oplus [2] SF \in\mathcal{T}^{w \leq n}.
  \end{equation}
  Taking the direct sum of the objects in \eqref{eq:smint},
  \eqref{eq:sesigseint} and \eqref{eq:sigsfsigsigsfint} shows that
  \begin{align*}
    E \oplus [2] SE \oplus SF
    \oplus &
    SE \oplus [1] SE 
    \oplus 
    [1] SF \oplus [2] SF\\
    \cong & E \oplus \big[SM \oplus [1] SM \oplus [2]
    SM\big] \in \mathcal{T}^{w \leq n}.
  \end{align*}
  Define 
  $R:= SM \oplus [1] SM \oplus [2] SM$ which is obviously in
  $\mathcal{T}^{w \leq n}$. Then
  the ``direct sum" triangle
  \begin{equation*}
    R \ra R\oplus E \ra E \xra{0} [1] R
  \end{equation*}
  shows that $E$ is isomorphic to an object 
  of $\mathcal{T}^{w \leq n}$.

  Now we prove
  \eqref{enum:one-side-bounded-hot-idempotent-complete-pos}.
  (The proof is essentially the same, but one has to pay attention to the fact
  that the mapping cone of a map between objects of
  $K^{w\geq n}(\mathcal{A})$ is only in
  $K^{w\geq n-1}(\mathcal{A})$.)
  Again it is
  sufficient to show that 
  $\mathcal{T}^{w \geq n} := K^{w\geq n}(\mathcal{A})$ is idempotent complete.
  Let 
  $\mathcal{T}:= K^+(\mathcal{A})$ and
  consider the (triangulated)
  functor 
  $S: \mathcal{T} \ra \mathcal{T}$, 
  $X \mapsto \bigoplus_{n \in \DN} [-2n]X = X \oplus [-2] X \oplus \dots$.
  It is well defined, extends to the idempotent completion
  $\mathcal{T}^\ic$ of $\mathcal{T}$, and we have
  an isomorphism $S \sira \id \oplus [-2] S$ of functors.
  Let $M$ in $\mathcal{T}^{w \geq n}$ with an
  idempotent endomorphism $e: M \ra M$. In $\mathcal{T}^\ic$
  we have a direct sum 
  decomposition $M \cong E \oplus F$ such that $e$ corresponds to
  $\tzmat 1000: E \oplus F \ra E \oplus F$. We have to show that $E$ is
  isomorphic to an object of $\mathcal{T}^{w \geq n}$.
  
  Since $S$ preserves $\mathcal{T}^{w \geq n}$, we obtain (with the
  analog of the
  convention introduced above) that
  \begin{equation}
    \label{eq:smintplus}
    SM \cong
    SE \oplus SF \sira
    (E \oplus [-2] SE) \oplus SF 
    \in \mathcal{T}^{w \geq n}.
  \end{equation}
  As above we have a triangle
  \begin{equation*}
    SE \oplus SF \xra{\tzmat 0001}  
    SE \oplus SF \xra{\tzmat 1000}
    SE \oplus [1] SE \xra{\tzmat 0100}
    [1] (SE \oplus SF).
  \end{equation*}
  The first two vertices are isomorphic to 
  $SM \in \mathcal{T}^{w \geq n}$; hence we get
  $SE \oplus [1] SE \in \mathcal{T}^{w \geq n-1}$ or equivalently
  \begin{equation}
    \label{eq:sesigseintplus}
    [-1] SE \oplus SE \in \mathcal{T}^{w \geq n}.
  \end{equation}
  Applying $[-1]$ (which preserves $\mathcal{T}^{w \geq n}$) to the
  same statement for $F$ yields
  \begin{equation}
    \label{eq:sigsfsigsigsfintplus}
    [-2] SF \oplus [{-1}] SF \in\mathcal{T}^{w \geq n}.
  \end{equation}
  Taking the direct sum of the objects in \eqref{eq:smintplus},
  \eqref{eq:sesigseintplus} and \eqref{eq:sigsfsigsigsfintplus} shows that
  \begin{align*}
    E \oplus [{-2}] SE \oplus SF 
    \oplus &
    [-1] SE \oplus SE
    \oplus
    [{-2}] SF \oplus [{-1}] SF\\
    \cong & E \oplus \big[SM \oplus [-1] SM \oplus [{-2}]
    SM\big] \in \mathcal{T}^{w \geq n}.
  \end{align*}
  Define 
  $R:= SM \oplus [-1] SM \oplus [{-2}] SM$ which is obviously in
  $\mathcal{T}^{w \geq n}$. Then
  the ``direct sum" triangle
  \begin{equation*} 
    E \ra E\oplus R \ra R \xra{0} [1] E
  \end{equation*}
  shows that $[1] E$ is isomorphic to an object 
  of $\mathcal{T}^{w \geq n-1}$. Hence $E$ is isomorphic to an
  object of $\mathcal{T}^{w \geq n}$.

  The statement 
  \eqref{enum:one-side-bounded-hot-idempotent-complete-bounded} 
  that $K(\mathcal{A})^{bw}$
  is idempotent complete is a consequence
  of
  \eqref{enum:one-side-bounded-hot-idempotent-complete-neg}
  and 
  \eqref{enum:one-side-bounded-hot-idempotent-complete-pos}.
\end{proof} 

\begin{theorem}
  \label{t:hot-idempotent-complete}
  Let $\mathcal{A}$ be an additive category. 
  \begin{enumerate}
  \item 
    \label{enum:hot-idempotent-complete-abelian}
    If $\mathcal{A}$ is abelian 
    then $K(\mathcal{A})$ is idempotent complete.
  \item 
    \label{enum:hot-idempotent-complete-count-coprod}
    If $\mathcal{A}$ has countable coproducts
    then $K(\mathcal{A})$ is idempotent complete.
  \item 
    \label{enum:hot-idempotent-complete-if-idempotent-complete}
    If $\mathcal{A}$ is idempotent complete then
    $K^b(\mathcal{A})$ and $K(\mathcal{A})^b$ are idempotent complete.
  \end{enumerate}
\end{theorem}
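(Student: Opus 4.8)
The three parts carry three different hypotheses, and I would prove them by three different mechanisms, using \eqref{enum:hot-idempotent-complete-count-coprod} as the model, refining it for \eqref{enum:hot-idempotent-complete-abelian}, and reducing \eqref{enum:hot-idempotent-complete-if-idempotent-complete} to the earlier results. In each case, by the definition of idempotent completeness (and Corollary~\ref{c:retract-is-summand-in-tricat}) the task is to split an arbitrary idempotent endomorphism $e\colon X \ra X$ in the relevant triangulated category.

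For \eqref{enum:hot-idempotent-complete-count-coprod} the plan is the Eilenberg--swindle argument of M.~B\"okstedt and A.~Neeman. Coproducts of complexes are formed degreewise and descend to $K(\mathcal{A})$, so if $\mathcal{A}$ has countable coproducts then so does $K(\mathcal{A})$. Given an idempotent $e$ I would form the mapping telescope, i.e.\ the homotopy colimit of $X \xra{e} X \xra{e} \cdots$, realised as the cone of $1-\mathrm{shift}$ on $\bigoplus_{n\in\DN} X$; its canonical summand splits off the ``image'' of $e$ and yields the desired decomposition. This is precisely \cite{neeman-homotopy-limits}, and it can alternatively be extracted from a variant of the functor $S=\bigoplus_{n}[2n](-)$ appearing in the proof of Theorem~\ref{t:one-side-bounded-hot-idempotent-complete}.

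I expect \eqref{enum:hot-idempotent-complete-abelian} to be the main obstacle. The difficulty is that an abelian category need not have countable coproducts (e.g.\ finitely generated modules), so the swindle of the previous paragraph is unavailable; moreover $K(\mathcal{A})$ carries no cohomological $t$-structure, since the canonical truncation sequence of a complex is \emph{not} degreewise split and hence is not a triangle in $K(\mathcal{A})$. Embedding $\mathcal{A}$ into a cocomplete abelian category is circular, as realising the splitting object by a complex with terms in $\mathcal{A}$ is itself the question at hand. Instead I would exploit that the category $C(\mathcal{A})$ of complexes is abelian, so that a homotopy idempotent lifts to a chain map $\tilde e$ with $\tilde e^2-\tilde e = ds+sd$ that is idempotent only up to homotopy, and then invoke the idempotent-completeness criterion of \cite{Karoubianness} together with the torsion pairs on $K(\mathcal{A})$ produced from the abelian structure in \cite{beligiannis-reiten-torsion-theories}. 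The crux is to exhibit such a torsion pair whose two halves are idempotent complete and to verify the hypotheses of the criterion; this is exactly the point at which being abelian, rather than merely idempotent complete, is essential.

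For \eqref{enum:hot-idempotent-complete-if-idempotent-complete} I would argue by projectivization \cite{Krause-krull-schmidt-categories}. Let $\Mod(\mathcal{A})$ be the abelian category of additive functors $\mathcal{A}^{\opp}\ra\Ab$. The Yoneda functor embeds $\mathcal{A}$ as finitely generated projective objects, and since finitely generated projectives are always closed under summands, their subcategory is the idempotent completion of $\mathcal{A}$; because $\mathcal{A}$ is already idempotent complete it \emph{equals} $\mathcal{A}$. Hence $K^b(\mathcal{A})\simeq K^b(\proj)$ is the category of perfect complexes over $\Mod(\mathcal{A})$, i.e.\ the compact objects of $D(\Mod(\mathcal{A}))$; these form a thick, and in particular retract-closed, subcategory. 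As $\Mod(\mathcal{A})$ is cocomplete, $D(\Mod(\mathcal{A}))$ has arbitrary coproducts and is therefore idempotent complete by \cite{neeman-homotopy-limits}, and a retract-closed subcategory of an idempotent complete category is idempotent complete. Thus $K^b(\mathcal{A})$, and with it the equivalent $K(\mathcal{A})^b$, is idempotent complete; the only delicate point is the identification of $K^b(\proj)$ with the thick subcategory of perfect complexes, which is precisely where the idempotent completeness of $\mathcal{A}$ is used (and whose failure for non-idempotent-complete $\mathcal{A}$ is visible in Remark~\ref{rem:hot-bd-cplx-add-cat-not-idempotent-complete}).
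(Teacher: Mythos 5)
Your parts (1) and (2) are exactly the paper's proof. For (2) the paper cites the B{\"o}kstedt--Neeman splitting result (\cite[Prop.~1.6.8]{neeman-tricat}) and notes, just as you do, the alternative of running the swindle functor $X \mapsto \bigoplus_{n \in \DN}[2n]X$ from the proof of Theorem~\ref{t:one-side-bounded-hot-idempotent-complete} on all of $K(\mathcal{A})$. For (1) the paper carries out precisely what you call the crux: it constructs the torsion pairs $(K(\mathcal{A})^{w \leq 0}, \mathcal{K}^{h \geq 1})$ and $(\mathcal{C}^{h \leq -1}, K(\mathcal{A})^{w \geq 0})$ in Lemma~\ref{l:torsion-pair-homotopy-abelian} (the categories $\mathcal{K}^{h \geq 1}$ and $\mathcal{C}^{h \leq -1}$ are cut out by kernel/cokernel conditions -- this is where abelianness is used, and the paper notes they agree with the torsion pairs of \cite{beligiannis-reiten-torsion-theories}), observes that $\mathcal{K}^{h \geq n} \subset K(\mathcal{A})^{w \geq n-2}$ and $\mathcal{C}^{h \leq n} \subset K(\mathcal{A})^{w \leq n+2}$ so that both truncated idempotents split by Theorem~\ref{t:one-side-bounded-hot-idempotent-complete}, and then concludes by \cite[Prop.~2.3]{Karoubianness}. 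So your strategy, ingredients and references for (1) coincide with the paper's; you have simply deferred the construction of the torsion pairs to the literature.

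Part (3) is where you genuinely diverge, and there are two points you must patch. The paper argues locally: given a bounded complex $C$, it sets $X := \bigoplus_i C^i$, uses projectivization $\add X \sira \proj(R)$ with $R = \End(X)$ (\cite[Prop.~1.3.1]{Krause-krull-schmidt-categories}, valid because $\mathcal{A}$ is idempotent complete), so that $C$ lies in $K^b(\add X) \simeq K^b(\proj(R))$ for a single ring $R$, and then quotes the classical idempotent completeness of $K^b(\proj(R))$ (\cite[Exercise~I.30]{KS}, \cite[Prop.~3.4]{neeman-homotopy-limits}). Your global route through the functor category needs, first, that $\mathcal{A}$ be essentially small for $\Mod(\mathcal{A})$ to exist without enlarging the universe -- the theorem carries no such hypothesis, so you would have to first cut down to a small idempotent-complete full subcategory containing the terms of the given complex, which is in effect the paper's $\add X$ step. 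Second, your ``delicate point'' hides a real circularity: Neeman's theorem identifies the compacts of $D(\Mod(\mathcal{A}))$ with the \emph{thick closure} of the representables, and that thick closure is by definition the idempotent completion of the image of $K^b(\proj)$ -- so declaring it equal to $K^b(\proj)$ itself is exactly the statement to be proved. The non-circular input is the B{\"o}kstedt--Neeman argument that every compact object is already isomorphic to a bounded complex of finitely generated projectives, which you would need in its several-objects version. The paper's local reduction buys a verbatim citation of the ring case and no set-theoretic worries; your version, once these two points are supplied, is a correct and somewhat more conceptual alternative.
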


\begin{remark}
  \label{rem:KA-idempotent-complete-question}
  We do not know whether $K(\mathcal{A})$
  is idempotent complete for additive $\mathcal{A}$ (cf.\
  Question~\ref{q:KA-ic}).
\end{remark}

\begin{remark}
  \label{rem:KA-idempotent-complete-wlog-ic}
  If $\mathcal{A}^\ic$ is the idempotent completion of an additive
  category $\mathcal{A}$ then $K(\mathcal{A})$ is idempotent complete
  if and only if $K(\mathcal{A}^\ic)$ is idempotent complete:
  This follows from R.~W.~Thomason's results cited in
  Remark~\ref{rem:thomason-and-balmer-schlichting} (note that
  $K(\mathcal{A}) \subset K(\mathcal{A}^\ic)$ is dense) and
  Proposition~\ref{p:grothendieck-homotopy-category} below.
  Hence in Remark~\ref{rem:KA-idempotent-complete-question} one can
  assume without loss of generality that $\mathcal{A}$ is idempotent
  complete.
\end{remark}

\begin{proof}
  We prove \eqref{enum:hot-idempotent-complete-abelian} in
  Corollary~\ref{c:hot-idempotent-complete-abelian} below. 
  
  Let us show \eqref{enum:hot-idempotent-complete-count-coprod}.
  Assume that $\mathcal{A}$ has countable
  coproducts. Then $K(\mathcal{A})$ has countable coproducts; hence any
  idempotent endomorphism of an object of $K(\mathcal{A})$ splits by
  \cite[Prop.~1.6.8]{neeman-tricat}. 
  Another way to see this is to use the strategy explained in
  Remark~\ref{rem:thomason-and-balmer-schlichting}. More concretely,
  adapt the proof of 
  Theorem~\ref{t:one-side-bounded-hot-idempotent-complete}~\eqref{enum:one-side-bounded-hot-idempotent-complete-neg} in the obvious way: 
  Note that the functor $X \mapsto \bigoplus_{n \in \DN} [2n]X$ is
  well-defined on $K(\mathcal{A})$.

  For the proof of
  \eqref{enum:hot-idempotent-complete-if-idempotent-complete} assume
  now that $\mathcal{A}$ is idempotent complete. 
  Since $K^b(\mathcal{A}) \subset K(\mathcal{A})^b$ is an
  equivalence it is sufficient to show that $K^b(\mathcal{A})$ is
  idempotent complete. 

  Let $C$ be an object of
  $K^b(\mathcal{A})$.
  Let $X:=\bigoplus_{i \in \DZ} C^i$ be the finite direct sum over
  all nonzero components of $C$.
  Let $\add X \subset \mathcal{A}$ be the full subcategory of
  $\mathcal{A}$ that contains  
  $X$ and is closed under finite direct sums and summands.
  Since $\mathcal{A}$ is idempotent complete, 
  $\Hom(X,?): \mathcal{A} \ra \Mod(\End(X))$ induces an equivalence
  \begin{equation}
    \label{eq:project-equiv}
    \Hom(X,?): \add X \sira \proj(R)
  \end{equation}
  where $R=\End(X)$ and $\proj(R)$ is the category of all finitely
  generated projective right $R$-modules (see
  \cite[Prop.~1.3.1]{Krause-krull-schmidt-categories}).

  Note that $K^b(\add X)$ is a full triangulated subcategory of
  $K^b(\mathcal{A})$ containing $C$. Equivalence~\eqref{eq:project-equiv}
  yields an 
  equivalence
  $K^b(\add X) \sira K^b(\proj(R))$.
  The category $K^b(\proj(R))$ of perfect complexes is well known to
  be idempotent complete
  (e.\,g.\ \cite[Exercise~I.30]{KS} or
  \cite[Prop.~3.4]{neeman-homotopy-limits}).
  This implies that any idempotent endomorphism of $C$ splits.
\end{proof}

In the rest of this section we assume that $\mathcal{A}$ is abelian.
We use torsion pairs/t-structures in
order to prove that $K(\mathcal{A})$ is idempotent complete.

Let $\mathcal{K}^{h \geq 0} \subset K(\mathcal{A})$
be the full subcategory of
objects isomorphic to complexes $X$ of the form
\begin{equation*}
  \xymatrix{
    {\dots} \ar[r] &
    {0} \ar[r] &
    {X^{-2}} \ar[r]^{d^{-2}} &
    {X^{-1}} \ar[r]^{d^{-1}} &
    {X^0} \ar[r]^{d^0} &
    {X^1} \ar[r] &
    {\dots}
  }
\end{equation*}
with $X^0$ in degree zero and $d^{-2}$ the kernel of $d^{-1}$. 
(Here $\mathcal{K}$ stands for ``kernel" and ``$h \geq 0$" indicates that the
cohomology is concentrated in degrees $\geq 0$.)

Let $\mathcal{C}^{h \leq 0} \subset K(\mathcal{A})$ 
be the full subcategory of
objects isomorphic to complexes $X$ of the form
\begin{equation*}
  \xymatrix{
    {\dots} \ar[r] &
    {X^{-1}} \ar[r]^{d^{-1}} &
    {X^0} \ar[r]^{d^0} &
    {X^{1}} \ar[r]^{d^{1}} &
    {X^2} \ar[r] &
    {0} \ar[r] &
    {\dots}
  }
\end{equation*}
with $X^0$ in degree zero and $d^{1}$ the cokernel of $d^{0}$. 
(Here $\mathcal{C}$ stands for ``cokernel" and ``$h \leq 0$" indicates that the
cohomology is concentrated in degrees $\leq 0$.)

Define 
$\mathcal{C}^{h\leq n}:= [-n]\mathcal{C}^{h\leq 0}$ 
and
$\mathcal{K}^{h \geq n}:= [-n]\mathcal{K}^{h \geq 0}$. 

\begin{lemma}
  [{cf.~Example after
    \cite[Prop.~I.2.15]{beligiannis-reiten-torsion-theories}}]
  \label{l:torsion-pair-homotopy-abelian}
  Let $\mathcal{A}$ be abelian.
  Then 
  \begin{equation*}
    k:=(K(\mathcal{A})^{w \leq 0}, \mathcal{K}^{h \geq 1})
    \quad\text{and}\quad
    c:=(\mathcal{C}^{h \leq -1}, K(\mathcal{A})^{w \geq 0})
  \end{equation*}
  are torsion pairs on $K(\mathcal{A})$.
\end{lemma}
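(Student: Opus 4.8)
The plan is to verify directly that $k$ is a torsion pair and to deduce the statement for $c$ by duality at the end. By the equivalence between torsion pairs and t-structures recalled before the lemma, it suffices to check three things for $k=(\mathcal{X},\mathcal{Y})$ with $\mathcal{X}=K(\mathcal{A})^{w\leq 0}$ and $\mathcal{Y}=\mathcal{K}^{h\geq 1}$: orthogonality $\Hom_{K(\mathcal{A})}(\mathcal{X},\mathcal{Y})=0$; the shift conditions $[1]\mathcal{X}\subseteq\mathcal{X}$ and $[-1]\mathcal{Y}\subseteq\mathcal{Y}$; and the existence, for every $X$, of a triangle $A\to X\to B\to[1]A$ with $A\in\mathcal{X}$ and $B\in\mathcal{Y}$. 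The shift conditions are immediate: $[1]$ only lowers degrees, so it preserves $\mathcal{X}$; and $[-1]\mathcal{Y}=\mathcal{K}^{h\geq 2}\subseteq\mathcal{K}^{h\geq 1}$ because in a normal-form representative of an object of $\mathcal{K}^{h\geq 2}$ the degree $(-1)$ term already vanishes and $d^0$ is then a monomorphism, so such a complex is automatically of the shape defining $\mathcal{K}^{h\geq 1}$ (with degree $(-1)$ term $0=\ker d^0$).

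For orthogonality I would represent $A\in\mathcal{X}$ by a complex concentrated in degrees $\leq 0$ and $B\in\mathcal{Y}$ by its normal form $0\to B^{-1}\to B^0\to B^1\to\cdots$ with $B^{-1}=\ker(d^0_B)$ and $d^{-1}_B$ the kernel inclusion. A chain map $f\colon A\to B$ can then be nonzero only in degrees $-1$ and $0$. Commutativity in degree $0$ forces $d^0_B f^0=0$, so $f^0$ factors as $f^0=d^{-1}_B s$ through $\ker d^0_B=B^{-1}$; setting $h^0=s$ and $h^n=0$ otherwise gives a contracting homotopy, the only nontrivial point being the identity $f^{-1}=s\,d^{-1}_A$ needed in degree $-1$, which follows by cancelling the monomorphism $d^{-1}_B$ in the degree $(-1)$ commutativity relation. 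Hence every such $f$ is null-homotopic and $\Hom_{K(\mathcal{A})}(\mathcal{X},\mathcal{Y})=0$.

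The heart of the argument is the decomposition triangle. Given an arbitrary complex $X$, I would let $B$ agree with $X$ in degrees $\geq 0$, equal $\ker(d^0_X)$ in degree $-1$ with $d^{-1}_B$ the inclusion, and vanish below, so that $B\in\mathcal{K}^{h\geq 1}$; and I would take $g\colon X\to B$ to be the identity in degrees $\geq 0$, the canonical factorization $X^{-1}\to\ker d^0_X$ of $d^{-1}_X$ in degree $-1$, and zero below. That $g$ is a chain map uses only $d^0_X d^{-1}_X=0$. The desired triangle is then the rotation $[-1]\Cone(g)\to X\xra{g}B\to\Cone(g)$ of the mapping cone triangle of $g$, once I identify $[-1]\Cone(g)$ with an object of $\mathcal{X}$; concretely I expect $[-1]\Cone(g)\cong\tau_{\leq 0}X$, the canonical truncation $\tau_{\leq 0}X=(\cdots\to X^{-1}\to\ker d^0_X\to 0)$, which is concentrated in degrees $\leq 0$ and hence lies in $\mathcal{X}$.

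I expect the identification $\Cone(g)\cong[1]\tau_{\leq 0}X$ in $K(\mathcal{A})$ to be the main obstacle. Since $g$ is the identity in all degrees $\geq 0$, the differential of $\Cone(g)=B\oplus[1]X$ contains identity entries $g^m=\id\colon X^m\to B^m$ for every $m\geq 0$; these span a contractible direct summand, and Gaussian elimination — removing the paired summands $X^m$ and $B^m$ for all $m\geq 0$ — leaves exactly $[1]\tau_{\leq 0}X$, concentrated in degrees $\leq -1$. Because the complexes may be unbounded I would make this precise by writing down mutually inverse homotopy equivalences between $\Cone(g)$ and $[1]\tau_{\leq 0}X$ rather than iterating single eliminations. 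This completes the proof that $k$ is a torsion pair. Finally, $c$ follows from $k$ applied to the abelian category $\mathcal{A}^\opp$: the degree-reversing equivalence $K(\mathcal{A}^\opp)\cong K(\mathcal{A})^\opp$ interchanges $K(\mathcal{A}^\opp)^{w\leq 0}$ with $K(\mathcal{A})^{w\geq 0}$ and, turning kernels into cokernels, interchanges $\mathcal{K}^{h\geq 1}(\mathcal{A}^\opp)$ with $\mathcal{C}^{h\leq -1}(\mathcal{A})$; since the opposite of a torsion pair is a torsion pair with its two classes swapped, the torsion pair $k$ for $\mathcal{A}^\opp$ becomes the torsion pair $c$ for $\mathcal{A}$.
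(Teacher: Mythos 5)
Your proposal is correct, and its core coincides with the paper's proof: the same orthogonality computation (a map from a complex concentrated in degrees $\leq 0$ to a kernel-normal-form complex factors through $\ker d^0$ via the monomorphism $d^{-1}$, giving a null-homotopy), and the same decomposition of an arbitrary $X$ into the canonical truncation $\tau_{\leq 0}X$ (with $\ker d^0$ in degree $0$) and the complex $0 \to \ker d^0 \to X^0 \to X^1 \to \cdots$. The differences are in packaging. First, you build the decomposition triangle as a rotated mapping cone of $g\colon X \to B$ and then identify $[-1]\Cone(g)\cong\tau_{\leq 0}X$ by an explicit homotopy equivalence (your ``Gaussian elimination''); the paper instead writes down the whole candidate triangle $\tau_{\leq 0}X \to X \to B \to [1]\tau_{\leq 0}X$ with explicit components and asserts it is a triangle. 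These are equivalent verifications of comparable length, and your cancellation does go through for unbounded complexes since all maps and homotopies are defined degreewise with finitely many terms in each degree. Second, you obtain $c$ from $k$ via the degree-reversing equivalence $K(\mathcal{A}^\opp)\cong K(\mathcal{A})^\opp$, whereas the paper proves both pairs simultaneously, running the argument for $k$ and for the shifted pair $[1]c=(\mathcal{C}^{h\leq -2}, K(\mathcal{A})^{w\geq -1})$ in parallel, with the cokernel of $d^{-2}$ playing the role dual to the kernel of $d^0$. Your duality argument is sound and avoids repeating the construction, at the modest cost of checking the dictionary (regrading sends $\mathcal{K}^{h\geq 1}(\mathcal{A}^\opp)$ to $\mathcal{C}^{h\leq -1}(\mathcal{A})$ and $K(\mathcal{A}^\opp)^{w\leq 0}$ to $K(\mathcal{A})^{w\geq 0}$, and the opposite of a torsion pair is a torsion pair with the classes swapped), which you carry out correctly; the paper's parallel treatment avoids this bookkeeping but essentially writes the dual argument by hand.
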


\begin{remark}
  It is easy to see that the torsion pair $k$ of 
  Lemma~\ref{l:torsion-pair-homotopy-abelian}
  coincides with the torsion pair defined 
  in the Example after
  \cite[Prop.~I.2.15]{beligiannis-reiten-torsion-theories}:
  An object $X \in K(\mathcal{A})$ is in 
  $K(\mathcal{A})^{w \leq n}$
  (resp.\ $\mathcal{K}^{h \geq n}$) if and only if the complex
  $\Hom(A,X)$ of abelian groups is exact in all degrees $>n$ (resp.\
  $<n$) for all $A \in \mathcal{A}$.
  The categories 
  $\mathcal{C}^{h \leq n}$ and $K(\mathcal{A})^{w \geq n}$ can be
  characterized similarly.
\end{remark}

\begin{proof}
  Let $(\mathcal{X}, \mathcal{Y})$ be the pair $k$ or 
  the pair $[1]c:=(\mathcal{C}^{h \leq -2}, K(\mathcal{A})^{w \geq -1})$.

  Let $f$ represent a morphism $X \ra Y$ with $X \in \mathcal{X}$ and $Y \in
  \mathcal{Y}$. Then the diagram
  \begin{equation*}
    \xymatrix{
      {X:} \ar[d]^f &
      {\dots} \ar[r] \ar[d] &
      {X^{-2}} \ar[r]^{d^{-2}} \ar[d] &
      {X^{-1}} \ar[r]^{d^{-1}} \ar[d]^{f^{-1}} &
      {X^0} \ar[r] \ar[d]^{f^0} \ar@{..>}[dl]|-{\exists !} &
      {0} \ar[r] \ar[d] &
      {0} \ar[r] \ar[d] &
      {\dots}\\
      {Y:} &
      {\dots} \ar[r] &
      {0} \ar[r] &
      {Y^{-1}} \ar[r]^{d^{-1}} &
      {Y^0} \ar[r]^{d^0} &
      {Y^1} \ar[r]^{d^1} &
      {Y^2} \ar[r] &
      {\dots}
    }
  \end{equation*}
  shows that $f$ is homotopic to zero; hence
  $\Hom_{K(\mathcal{A})}(\mathcal{X}, \mathcal{Y})=0$. 
  It is obvious that $\mathcal{X}$ is stable under $[1]$ and
  $\mathcal{Y}$ is stable under $[-1]$.

  We need to show that any object $A=(A^j, d^j)$ of $K(\mathcal{A})$ fits into a
  triangle $X \ra A \ra Y \ra [1]X$ with $X \in \mathcal{X}$ and $Y
  \in \mathcal{Y}$.
  
  \begin{itemize}
  \item 
    Case $(\mathcal{X}, \mathcal{Y})=k$:
    Let $f: M \ra A^0$ be the kernel of $d^0:A^0 \ra A^1$; then
    there is a unique morphism $g:A^{-1} \ra M$ such that
    $d^{-1}=f g$.
  \item 
    Case $(\mathcal{X}, \mathcal{Y})=[1]c$:
    Let 
    $g: A^{-1} \ra M$ be the cokernel of $d^{-2}:A^{-2} \ra A^{-1}$; then
    there is a unique morphism $f:M \ra A^{0}$ such that
    $d^{-1}=f g$.
  \end{itemize}
  
  The commutative diagram 
  \begin{equation*}
    \xymatrix{
      {X:} \ar[d] &
      {\dots} \ar[r] &
      {A^{-2}} \ar[r]^{d^{-2}} \ar[d]^1 &
      {A^{-1}} \ar[r]^{g} \ar[d]^1 &
      {M} \ar[r] \ar[d]^{f} &
      {0} \ar[r] \ar[d] &
      {0} \ar[r] \ar[d] &
      {\dots}\\
      {A:} \ar[d] &
      {\dots} \ar[r] &
      {A^{-2}} \ar[r]^{d^{-2}} \ar[d] &
      {A^{-1}} \ar[r]^{d^{-1}} \ar[d]^{g} &
      {A^0} \ar[r]^{d^0} \ar[d]^1 &
      {A^1} \ar[r]^{d^1} \ar[d]^1 &
      {A^2} \ar[r] \ar[d]^1 &
      {\dots}\\
      {Y:} \ar[d] &
      {\dots} \ar[r] &
      {0} \ar[r] \ar[d] &
      {M} \ar[r]^{f} \ar[d]^1 &
      {A^0} \ar[r]^{d^0} \ar[d] &
      {A^1} \ar[r]^{d^1} \ar[d] &
      {A^2} \ar[r] \ar[d] &
      {\dots} \\
      {[1]X:} &
      {\dots} \ar[r] &
      {A^{-1}} \ar[r]^{-g} &
      {M} \ar[r] &
      {0} \ar[r] &
      {0} \ar[r] &
      {0} \ar[r] &
      {\dots}\\
    }
  \end{equation*}
  defines a candidate
  triangle $X \ra A \ra Y \ra [1]X$ in $K(\mathcal{A})$.
  It is easy to check that it is in fact a triangle.
\end{proof}

\begin{corollary}
  \label{c:hot-idempotent-complete-abelian}
  Let $\mathcal{A}$ be an abelian category
  and $n \in \DZ$. 
  Then $K(\mathcal{A})$ is
  idempotent complete, and the same is true for 
  $\mathcal{K}^{h\geq n}$ and $\mathcal{C}^{h \leq n}$.
\end{corollary}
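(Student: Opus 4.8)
The plan is to exploit the two torsion pairs produced in Lemma~\ref{l:torsion-pair-homotopy-abelian}, together with Theorem~\ref{t:one-side-bounded-hot-idempotent-complete} and the criterion of \cite{Karoubianness}: a triangulated category carrying a torsion pair $(\mathcal{X},\mathcal{Y})$ is idempotent complete as soon as both $\mathcal{X}$ and $\mathcal{Y}$ are. Only the nontrivial direction of this criterion (both halves idempotent complete $\Rightarrow$ the ambient category idempotent complete) is needed, and it is the main external input; I would cite it rather than reprove it, since reconstructing a splitting from the functorial truncation triangle is precisely the content of \cite{Karoubianness} (compare the mechanism behind Corollary~\ref{c:retract-is-summand-in-tricat}). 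The point of genuine care there is that an endomorphism of the middle term of a triangle is \emph{not} determined by its restrictions to the two ends, so producing an idempotent-compatible splitting is not formal; this is exactly the step I would leave to loc.\ cit.

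First I would record the elementary observation that both classes of a torsion pair on $K(\mathcal{A})$ are closed under retracts. By Section~\ref{sec:t-str-and-torsion-theories} a torsion pair corresponds to a t-structure and one has $\mathcal{X}=\leftidx{^\perp}{\mathcal{Y}}$ and $\mathcal{Y}=\mathcal{X}^\perp$; such orthogonal classes are Karoubi-closed, since if $Z'$ is a retract of $Z\in\leftidx{^\perp}{\mathcal{Y}}$ via $i,p$ with $pi=\id$ and $f\colon Z'\ra Y$ with $Y\in\mathcal{Y}$, then $fp\colon Z\ra Y$ vanishes, whence $f=fpi=0$. Consequently a retract-closed full subcategory of an idempotent complete category is again idempotent complete: a splitting in the ambient category exhibits the summands as retracts, which then lie in the subcategory and provide a splitting there.

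Next I would bootstrap idempotent completeness of one half of the torsion pair $c=(\mathcal{C}^{h\leq -1}, K(\mathcal{A})^{w\geq 0})$ from boundedness. By definition the complexes in $\mathcal{C}^{h\leq -1}=[1]\mathcal{C}^{h\leq 0}$ are concentrated in degrees $\leq 1$, so $\mathcal{C}^{h\leq -1}\subseteq K(\mathcal{A})^-$. The latter is idempotent complete by Theorem~\ref{t:one-side-bounded-hot-idempotent-complete}, and $\mathcal{C}^{h\leq -1}$ is retract-closed by the previous step, so $\mathcal{C}^{h\leq -1}$ is idempotent complete. Since the second half $K(\mathcal{A})^{w\geq 0}$ is idempotent complete by the same theorem, the criterion \cite{Karoubianness} applied to $c$ shows that $K(\mathcal{A})$ is idempotent complete. (Symmetrically one could use $k=(K(\mathcal{A})^{w\leq 0},\mathcal{K}^{h\geq 1})$, noting $\mathcal{K}^{h\geq 1}\subseteq K(\mathcal{A})^+$.)

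Finally, once $K(\mathcal{A})$ is idempotent complete the remaining assertions are immediate: $\mathcal{K}^{h\geq n}=[-(n-1)]\mathcal{K}^{h\geq 1}$ and $\mathcal{C}^{h\leq n}=[-(n+1)]\mathcal{C}^{h\leq -1}$ are shifts of a torsion-free, respectively torsion, class, hence retract-closed full subcategories of the idempotent complete $K(\mathcal{A})$, and are therefore idempotent complete. I expect no obstacle in these last steps; the entire weight of the argument rests on the boundedness bootstrap feeding Theorem~\ref{t:one-side-bounded-hot-idempotent-complete} into the torsion-pair criterion, and on the reconstruction inside \cite{Karoubianness} that I would take as given.
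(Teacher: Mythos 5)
Your proof is correct and follows essentially the same route as the paper: the torsion pairs of Lemma~\ref{l:torsion-pair-homotopy-abelian}, the idempotent completeness of the one-side-bounded subcategories from Theorem~\ref{t:one-side-bounded-hot-idempotent-complete}, and the splitting criterion of \cite{Karoubianness} (the paper cites Prop.~2.3 there, applied to the morphism of truncation triangles induced by an idempotent). The only difference is the order of operations: the paper splits the truncated idempotents directly in $K(\mathcal{A})$ (using that $\mathcal{K}^{h\geq n}$ and $\mathcal{C}^{h\leq n}$ sit inside $w$-bounded subcategories) and glues, deducing idempotent completeness of the torsion classes afterwards from orthogonality, whereas you establish that the two classes are idempotent complete first and then invoke the criterion — a cosmetic reorganization of the same ingredients.
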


\begin{proof}
  Let $(\mathcal{X}, \mathcal{Y})$ be one of the torsion pairs of 
  Lemma~\ref{l:torsion-pair-homotopy-abelian}.

  Let $e: A \ra A$ be an idempotent endomorphism in $K(\mathcal{A})$.
  The truncation functors $\tau_{\mathcal{X}}: K(\mathcal{A}) \ra
  \mathcal{X}$ and $\tau_{\mathcal{Y}}: K(\mathcal{A}) \ra \mathcal{Y}$
  yield a morphism of triangles
  \begin{equation*}
    \xymatrix{
      {\tau_{\mathcal{X}}(A)} \ar[r]^-u \ar[d]^{\tau_{\mathcal{X}}(e)} &
      {A} \ar[r]^-v \ar[d]^{e} &
      {\tau_{\mathcal{Y}}(A)} \ar[r]^-w \ar[d]^{\tau_{\mathcal{Y}}(e)} &
      {[1]\tau_{\mathcal{X}}(A)} \ar[d]^{[1]\tau_{\mathcal{X}}(e)} \\
      {\tau_{\mathcal{X}}(A)} \ar[r]^-u &
      {A} \ar[r]^-v &
      {\tau_{\mathcal{Y}}(A)} \ar[r]^-w &
      {[1]\tau_{\mathcal{X}}(A).}
    }
  \end{equation*}
  All morphisms ${\tau_{\mathcal{X}}(e)}$, $e$, ${\tau_{\mathcal{Y}}(e)}$ are
  idempotent 
  and 
  ${\tau_{\mathcal{X}}(e)}$ and ${\tau_{\mathcal{Y}}(e)}$ split by
  Theorem~\ref{t:one-side-bounded-hot-idempotent-complete}
  (since $\mathcal{K}^{h \geq n} \subset K(\mathcal{A})^{w\geq n-2}$
  and
  $\mathcal{C}^{h \leq n} \subset K(\mathcal{A})^{w\leq n+2}$).
  Hence $e$ splits by 
  \cite[Prop.~2.3]{Karoubianness}.
  This shows that $K(\mathcal{A})$ is idempotent complete.

  Since $\mathcal{X}=\leftidx{^\perp}{\mathcal{Y}}$
  and $\mathcal{Y}=\mathcal{X}^\perp$ for any torsion pair
  this implies that $\mathcal{X}$ and $\mathcal{Y}$ are idempotent
  complete.
\end{proof}

\section{Weight structures} 
\label{sec:weight-structures}

The following definition of a weight structure is independently due to
M.~Bondarko \cite{bondarko-weight-str-vs-t-str} 
and D.~Pauksztello \cite{pauk-co-t} who calls it a co-t-structure.

\begin{definition}
  \label{d:ws}
  Let $\mathcal{T}$ be a triangulated category. 
  A \define{weight structure}
  (or \define{w-structure})
  on $\mathcal{T}$ is a 
  pair $w=(\mathcal{T}^{w \leq 0},\mathcal{T}^{w \geq 0})$
  of two full subcategories such that:
  (We define
  $\mathcal{T}^{w \leq n} :=[-n]\mathcal{T}^{w \leq 0}$ and
  $\mathcal{T}^{w \geq n} :=[-n]\mathcal{T}^{w \geq 0}$.)
  \begin{enumerate}[label=(ws{\arabic*})]
  \item 
    \label{enum:ws-i} 
    $\mathcal{T}^{w \leq 0}$ and $\mathcal{T}^{w \geq 0}$ are additive
    categories and
    closed under retracts
    in $\mathcal{T}$;
  \item 
    \label{enum:ws-ii} 
    $\mathcal{T}^{w \leq 0} \subset
    \mathcal{T}^{w \leq 1}$ and $\mathcal{T}^{w \geq 1} \subset
    \mathcal{T}^{w \geq 0}$;
  \item 
    \label{enum:ws-iii} 
    $\Hom_\mathcal{T}(\mathcal{T}^{w \geq 1}, \mathcal{T}^{w \leq 0})=0$;
  \item 
    \label{enum:ws-iv} 
    For every $X$ in $\mathcal{T}$ there is a triangle
    \begin{equation*}
      A \ra X \ra B \ra [1]A
    \end{equation*}
    in $\mathcal{T}$ with $A$ in $\mathcal{T}^{w \geq 1}$ and $B$ in
    $\mathcal{T}^{w \leq 0}$. 
  \end{enumerate}

  A weight structure $w=(\mathcal{T}^{w \leq 0},\mathcal{T}^{w \geq
    0})$ is \textbf{bounded above} if $\mathcal{T}=\bigcup_{n \in \DZ}
  \mathcal{T}^{w \leq n}$ and \textbf{bounded below} 
  if $\mathcal{T}=\bigcup_{n \in \DZ} \mathcal{T}^{w \geq n}$.
  It is \textbf{bounded} if it is bounded above and bounded below.

  The \define{heart} of a 
  weight structure $w=(\mathcal{T}^{w \leq 0},\mathcal{T}^{w \geq 0})$
  is
  \begin{equation*}
    \heart(w):= \mathcal{T}^{w=0}:= \mathcal{T}^{w \leq 0}\cap
    \mathcal{T}^{w \geq 0}.   
  \end{equation*}
  
  A \define{weight category} (or \define{w-category}) is a pair
  $(\mathcal{T}, w)$ where 
  $\mathcal{T}$ is a triangulated category and $w$ is a weight
  structure on $\mathcal{T}$. Its \define{heart} is the heart of $w$.
\end{definition}

A triangle 
$A \ra X \ra B \ra [1]A$ with $A$ in $\mathcal{T}^{w \geq n+1}$ and $B$ in
$\mathcal{T}^{w \leq n}$ 
(cf.~\ref{enum:ws-iv})
is called a 
\define{weight decomposition} of $X$, or more precisely a 
\define{$(w\geq n+1, w \leq n)$-weight decomposition} 
or a \define{weight decomposition of type $(w\geq n+1, w \leq n)$} of
$X$. 
It is convenient to write such a weight decomposition as $w_{\geq
n+1}X \ra X \ra w_{\leq n}X \ra [1]w_{\geq n+1}X$ where $w_{\geq
n+1}X$ and $w_{\leq n}X$ are just names for the objects $A$ and
$B$ from above. If we say
that $w_{\geq n+1}X \ra X \ra w_{\leq n}X \ra [1]w_{\geq n+1}X$ is a
weight decomposition without specifying its type explicitly, this type is usually
obvious from the notation.

The heart $\heart(w)$ is a full subcategory of $\mathcal{T}$ and
closed under retracts in $\mathcal{T}$ by
\ref{enum:ws-i}. 

We will use the following notation (for $a, b \in \DZ$):
$\mathcal{T}^{w \in [a,b]} := \mathcal{T}^{w \leq b} \cap
\mathcal{T}^{w \geq a}$, $\mathcal{T}^{w=a}:= \mathcal{T}^{w \in
  [a,a]}$.
Note that $\mathcal{T}^{w \in [a,b]}=0$ if $b<a$ by 
\ref{enum:ws-iii}: For $X \in \mathcal{T}^{w \in [a,b]}$ we have $\id_X=0$.

\begin{definition}
  \label{def:w-exact}
  Let $\mathcal{T}$ and $\mathcal{S}$ be weight categories. A triangulated
  functor $F: \mathcal{T} \ra \mathcal{S}$ is called 
  \define{w-exact} (or \define{weight-exact})  if 
  $F(\mathcal{T}^{w \leq 0}) \subset \mathcal{S}^{w \leq 0}$ and
  $F(\mathcal{T}^{w \geq 0}) \subset \mathcal{S}^{w \geq 0}$.
\end{definition}

\subsection{First properties of weight structures}
\label{sec:first-properties-ws}

Let $\mathcal{T}$ be a triangulated category with a weight structure
$w=(\mathcal{T}^{w \leq 0},\mathcal{T}^{w \geq 0})$.

\begin{lemma}
  [{cf.\ \cite[Prop.~1.3.3]{bondarko-weight-str-vs-t-str} for
    some statements}]
  \label{l:weight-str-basic-properties}
  \rule{0cm}{1mm}
  \begin{enumerate}
  \item 
    \label{enum:triangle-heart-splits}
    Let $X \xra{f} Y \xra{g} Z \xra{h} [1]X$ be a triangle in
    $\mathcal{T}$.
    If $Z \in \mathcal{T}^{w \geq n}$ and $X \in \mathcal{T}^{w \leq n}$
    then this triangle splits.
    
    In particular any triangle $X \xra{f} Y \xra{g} Z \xra{h} [1]X$
    with all objects $X, Y, Z$ in the heart $\heart(w)$ splits.
  \end{enumerate}
  Let
  \begin{equation}
    \label{eq:weight-decomp-X}
    w_{\geq n+1}X \xra{f} X \xra{g} w_{\leq n}X \xra{h} [1]w_{\geq n+1}X  
  \end{equation}
  be
  a
  $(w\geq n+1, w \leq n)$-weight decomposition of $X$, for some $n \in \DZ$.
  \begin{enumerate}[resume]
  \item 
    \label{enum:weight-decomp-with-knowlegde-dir-summand-i}
    If $X$ is in $\mathcal{T}^{w \leq n}$, then $w_{\leq n}X \cong X \oplus
    [1]w_{\geq n+1}X$
  \item 
    \label{enum:weight-decomp-with-knowlegde-dir-summand-ii}
    If $X$ is in $\mathcal{T}^{w \geq n+1}$, then $w_{\geq n+1}X\cong X\oplus
    [-1]w_{\leq n}X$.
  \item 
    \label{enum:weight-perp-prop}
    For every $n \in \DZ$ we have
    \begin{align}
      \label{eq:ws-leq-right-orth-of-geq}
      (\mathcal{T}^{w\geq n+1})^\perp & =\mathcal{T}^{w \leq n}, \\
      \label{eq:ws-geq-left-orth-of-leq}
      \leftidx{^{\perp}}{(\mathcal{T}^{w\leq n})}{} & =\mathcal{T}^{w \geq n+1}.
    \end{align}
    In particular 
    $\mathcal{T}^{w \leq n}$ and $\mathcal{T}^{w \geq n+1}$ are closed
    under extensions.
  \item 
    \label{enum:weights-bounded}
    Assume that $a \leq n < b$ (for $a,b \in \DZ$) and that $X \in
    \mathcal{T}^{w \in [a,b]}$. Then
    $w_{\leq n}X \in \mathcal{T}^{w \in [a,n]}$ and
    $w_{\geq n+1}X \in \mathcal{T}^{w \in [n+1,b]}$.

    More precisely: If $a \leq n$ then $X \in \mathcal{T}^{w \geq a}$
    implies $w_{\leq n}X \in 
    \mathcal{T}^{w \in [a,n]}$ (and obviously $w_{\geq n+1}X \in
    \mathcal{T}^{w \geq n+1} \subset \mathcal{T}^{w \geq a}$). 
    If $n < b$ then 
    $X \in \mathcal{T}^{w \leq b}$
    implies (obviously $w_{\leq n}X \in \mathcal{T}^{w \leq n} \subset
    \mathcal{T}^{w \leq b}$ and) 
    $w_{\geq n+1}X \in \mathcal{T}^{w \in [n+1, b]}$.
  \item 
    \label{enum:bounded-w-decomp}
    Let $a,b,n \in \DZ$.
    For $X \in \mathcal{T}^{w \geq a}$ 
    (resp.\ $X \in \mathcal{T}^{w \leq b}$
    or $X \in \mathcal{T}^{w \in [a,b]}$) 
    there is a 
    $(w\geq n+1, w \leq n)$-weight decomposition 
    \eqref{eq:weight-decomp-X} of $X$ such that 
    both $w_{\geq n+1}X$ and $w_{\leq n}X$ are in 
    $\mathcal{T}^{w \geq a}$ 
    (resp.\ in $\mathcal{T}^{w \leq b}$
    or $\mathcal{T}^{w \in [a,b]}$).
  \end{enumerate}
\end{lemma}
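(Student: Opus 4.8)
The plan is to reduce every case to part~\eqref{enum:weights-bounded} (already proved) by a case analysis on the position of $n$ relative to the bounds, treating the three assertions in parallel. The guiding observation is this: whenever $n$ lies in the \emph{interior} of the relevant range, an arbitrary weight decomposition of type $(w\geq n+1, w\leq n)$ -- which exists by \ref{enum:ws-iv} applied to $[n]X$ and shifted back by $[-n]$ -- already has both pieces in the desired subcategory by part~\eqref{enum:weights-bounded}; and whenever $n$ lies \emph{outside} that range, the object $X$ is already concentrated on one side, so a trivial split weight decomposition does the job.

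First I would treat the lower-bound assertion, so suppose $X \in \mathcal{T}^{w\geq a}$. If $n \geq a$, I take any $(w\geq n+1,w\leq n)$-weight decomposition; then part~\eqref{enum:weights-bounded} gives $w_{\leq n}X \in \mathcal{T}^{w\in[a,n]}\subset\mathcal{T}^{w\geq a}$, while $w_{\geq n+1}X \in \mathcal{T}^{w\geq n+1}\subset\mathcal{T}^{w\geq a}$ by \ref{enum:ws-ii}. If instead $n < a$, i.e.\ $n+1 \leq a$, then \ref{enum:ws-ii} yields $\mathcal{T}^{w\geq a}\subset\mathcal{T}^{w\geq n+1}$, so $X \in \mathcal{T}^{w\geq n+1}$, and the split triangle $X \xra{\id} X \ra 0 \ra [1]X$ is a weight decomposition of the required type with $w_{\geq n+1}X = X$ and $w_{\leq n}X = 0$, both visibly in $\mathcal{T}^{w\geq a}$.

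The upper-bound case is dual: for $X \in \mathcal{T}^{w\leq b}$ an arbitrary decomposition works when $n < b$ (again by part~\eqref{enum:weights-bounded} together with \ref{enum:ws-ii}), and when $n \geq b$ one has $X \in \mathcal{T}^{w\leq b}\subset\mathcal{T}^{w\leq n}$ by \ref{enum:ws-ii}, so the split triangle $0 \ra X \xra{\id} X \ra 0$ furnishes $w_{\geq n+1}X = 0$ and $w_{\leq n}X = X$. For $X \in \mathcal{T}^{w\in[a,b]}$ I combine the two: if $a \leq n < b$ an arbitrary decomposition has both pieces in $\mathcal{T}^{w\in[a,b]}$ directly by part~\eqref{enum:weights-bounded}; if $n < a$ the first trivial decomposition works, and if $n \geq b$ the second one does. (When $b < a$ the category $\mathcal{T}^{w\in[a,b]}$ is zero by \ref{enum:ws-iii}, so $X=0$ and any decomposition is trivial.) I do not anticipate a genuine obstacle: the only point requiring care is bookkeeping the boundary inequalities so that the interior cases fall under part~\eqref{enum:weights-bounded} while the exterior cases leave $X$ on the correct side; the triangles $X \xra{\id} X \ra 0 \ra [1]X$ and $0 \ra X \xra{\id} X \ra 0$ are legitimate weight decompositions precisely because $0$ lies in every $\mathcal{T}^{w\leq n}$ and every $\mathcal{T}^{w\geq n+1}$ by \ref{enum:ws-i}.
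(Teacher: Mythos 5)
Your case analysis for part~\eqref{enum:bounded-w-decomp} is correct and is in fact the same argument the paper gives for that part: in the interior cases ($a \leq n$, resp.\ $n < b$, resp.\ $a \leq n < b$) an arbitrary weight decomposition works by \eqref{enum:weights-bounded}, and in the exterior cases the trivial triangles $X \xra{\id} X \ra 0 \ra [1]X$ and $0 \ra X \xra{\id} X \ra 0$ serve as weight decompositions of the required type (the paper also handles the degenerate case $b<a$ by noting $X=0$, as you do). So as a proof of part~\eqref{enum:bounded-w-decomp} \emph{alone}, your proposal is fine.

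The gap is that this is only one of the six assertions of the lemma, and the single ingredient you rely on, part~\eqref{enum:weights-bounded}, is itself part of the statement being proved, not something ``already proved''. Nothing in your proposal establishes parts \eqref{enum:triangle-heart-splits}--\eqref{enum:weights-bounded}, and these require genuinely different arguments. For \eqref{enum:triangle-heart-splits}, \eqref{enum:weight-decomp-with-knowlegde-dir-summand-i} and \eqref{enum:weight-decomp-with-knowlegde-dir-summand-ii} one uses axiom \ref{enum:ws-iii} to force the relevant morphism ($h$, $f$, $g$ respectively) to vanish and then invokes the splitting criterion of Lemma~\ref{l:zero-triang-cat}. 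For \eqref{enum:weight-perp-prop} the inclusions $\supset$ follow from \ref{enum:ws-iii}, while for $\subset$ one takes a weight decomposition of $X$, shows one of its maps vanishes, deduces that $X$ is a retract of $w_{\leq n}X$ (resp.\ of $w_{\geq n+1}X$), and invokes closure under retracts \ref{enum:ws-i}. Part \eqref{enum:weights-bounded} then follows from the closure-under-extensions consequence of \eqref{enum:weight-perp-prop}, applied to the weight decomposition triangle and its rotation. Without these, the chain of dependencies ending in your part-(6) argument has no base.
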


\begin{proof}
  By \ref{enum:ws-iii} we have 
  $h=0$ in \eqref{enum:triangle-heart-splits},
  $f=0$ in 
  \eqref{enum:weight-decomp-with-knowlegde-dir-summand-i} 
  and $g=0$ in
  \eqref{enum:weight-decomp-with-knowlegde-dir-summand-ii};
  use Lemma~\ref{l:zero-triang-cat}.

  We prove \eqref{enum:weight-perp-prop}.
  Axiom \ref{enum:ws-iii} shows that the inclusions $\supset$ in
  \eqref{eq:ws-leq-right-orth-of-geq} and
  \eqref{eq:ws-geq-left-orth-of-leq} are true.   
  Let $X \in \mathcal{T}$ and take a weight decomposition
  \eqref{eq:weight-decomp-X} of $X$.
  If $X \in (\mathcal{T}^{w\geq n+1})^\perp$ then $f=0$ by
  \ref{enum:ws-iii}; hence $X$
  is a summand of $w_{\leq n}X \in \mathcal{T}^{w\leq n}$ and hence in 
  $\mathcal{T}^{w\leq n}$ by \ref{enum:ws-i}.
  Similarly $X \in \leftidx{^\perp}{(\mathcal{T}^{w\leq n})}$ implies
  $g=0$ so $X$ is a summand of $w_{\geq n+1}X \in \mathcal{T}^{w\geq n+1}$
  and hence in $\mathcal{T}^{w\geq n+1}$.

  Let us prove \eqref{enum:weights-bounded}:
  Since $X \in \mathcal{T}^{w \geq a}$ and $[1]w_{\geq n+1}X \in
  \mathcal{T}^{w\geq n} \subset \mathcal{T}^{w \geq a}$ and
  $\mathcal{T}^{w \geq a}$ is closed under extensions 
  by \eqref{enum:weight-perp-prop}
  we have $w_{\leq n}X \in \mathcal{T}^{w \in [a,n]}$.
  Turning the triangle we see that $w_{\geq n+1}X$ is an extension of
  $[-1]w_{\leq n}X \in \mathcal{T}^{w \leq n+1} 
  \subset \mathcal{T}^{w \leq b}$ 
  and $X \in \mathcal{T}^{w \leq b}$, hence $w_{\geq n+1}X \in \mathcal{T}^{w \in
    [n+1,b]}$.

  We prove \eqref{enum:bounded-w-decomp}:
  Assume $X \in \mathcal{T}^{w\geq a}$. If $a \leq n$ any such weight
  decomposition does the job by \eqref{enum:weights-bounded};
  if $a > n$ take $X \xra{\id} X \ra 0 \ra [1]X$.

  Assume $X \in \mathcal{T}^{w\leq b}$. If $n < b$ use
  \eqref{enum:weights-bounded}; 
  if $b \leq n$ take $0 \ra X \xra{\id} X \ra 0$.

  Assume $X \in \mathcal{T}^{w\in [a,b]}$. 
  The case $a > b$ is trivial since then $X=0$.
  So assume $a\leq b$.
  If $a \leq n < b$ use
  \eqref{enum:weights-bounded}; 
  if $a > n$ take $X \xra{\id} X \ra 0 \ra [1]X$.
  if $b \leq n$ take $0 \ra X \xra{\id} X \ra 0$.
\end{proof}

\begin{corollary}
  \label{c:inclusion-w-str}
  Let
  $(\mathcal{D}^{w \leq 0}, \mathcal{D}^{w \geq 0})$ and
  $(\mathcal{T}^{w \leq 0}, \mathcal{T}^{w \geq 0})$ be two
  weight structures on a triangulated category.
  If
  \begin{equation*}
    \mathcal{D}^{w \leq 0} \subset \mathcal{T}^{w \leq 0} 
    \text{ and }
    \mathcal{D}^{w \geq 0} \subset \mathcal{T}^{w \geq 0}, 
  \end{equation*}
  then these two weight structures coincide.
\end{corollary}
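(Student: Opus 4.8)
The plan is to derive the two reverse inclusions $\mathcal{T}^{w\leq 0}\subset\mathcal{D}^{w\leq 0}$ and $\mathcal{T}^{w\geq 0}\subset\mathcal{D}^{w\geq 0}$ directly from the orthogonality characterization of the aisles recorded in Lemma~\ref{l:weight-str-basic-properties}\eqref{enum:weight-perp-prop}. That lemma holds for \emph{any} weight structure, so in particular its identities \eqref{eq:ws-leq-right-orth-of-geq} and \eqref{eq:ws-geq-left-orth-of-leq} are available simultaneously for both of the given structures. The only elementary fact I need beyond this is that passing to a right orthogonal $(-)^\perp$, or to a left orthogonal $\leftidx{^\perp}{(-)}{}$, reverses inclusions of subcategories: if a $Z$ kills a larger class it kills a smaller one.

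First I would promote the hypotheses to all shifts. Since $\mathcal{D}^{w\geq n}=[-n]\mathcal{D}^{w\geq 0}$ and likewise for $\mathcal{T}$, applying the autoequivalence $[-1]$ to the assumed inclusion $\mathcal{D}^{w\geq 0}\subset\mathcal{T}^{w\geq 0}$ gives $\mathcal{D}^{w\geq 1}\subset\mathcal{T}^{w\geq 1}$. Taking right orthogonals, which is order-reversing, yields $(\mathcal{T}^{w\geq 1})^\perp\subset(\mathcal{D}^{w\geq 1})^\perp$. Now \eqref{eq:ws-leq-right-orth-of-geq} with $n=0$, applied to each structure separately, identifies the left-hand side with $\mathcal{T}^{w\leq 0}$ and the right-hand side with $\mathcal{D}^{w\leq 0}$, so I obtain $\mathcal{T}^{w\leq 0}\subset\mathcal{D}^{w\leq 0}$. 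Together with the assumed inclusion $\mathcal{D}^{w\leq 0}\subset\mathcal{T}^{w\leq 0}$ this gives the first equality $\mathcal{T}^{w\leq 0}=\mathcal{D}^{w\leq 0}$.

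The second equality is obtained symmetrically, using left orthogonals in place of right ones. Starting from the hypothesis $\mathcal{D}^{w\leq 0}\subset\mathcal{T}^{w\leq 0}$ and applying $\leftidx{^\perp}{(-)}{}$ (again order-reversing) gives $\leftidx{^\perp}{(\mathcal{T}^{w\leq 0})}{}\subset\leftidx{^\perp}{(\mathcal{D}^{w\leq 0})}{}$; by \eqref{eq:ws-geq-left-orth-of-leq} with $n=0$ these complements are $\mathcal{T}^{w\geq 1}$ and $\mathcal{D}^{w\geq 1}$, so $\mathcal{T}^{w\geq 1}\subset\mathcal{D}^{w\geq 1}$, and applying $[1]$ yields $\mathcal{T}^{w\geq 0}\subset\mathcal{D}^{w\geq 0}$. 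Combined with the remaining hypothesis this gives $\mathcal{T}^{w\geq 0}=\mathcal{D}^{w\geq 0}$, completing the argument.

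I do not anticipate a genuine obstacle: once Lemma~\ref{l:weight-str-basic-properties}\eqref{enum:weight-perp-prop} is in hand the whole proof is a two-line consequence. The only point requiring a little care is bookkeeping, namely choosing the correct orthogonal (left versus right) and the correct shift so that the two orthogonality identities line up, and observing that each of the two hypotheses feeds exactly one of the two equalities.
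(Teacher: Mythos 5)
Your proof is correct and is essentially identical to the paper's: both derive $\mathcal{T}^{w\leq 0}\subset\mathcal{D}^{w\leq 0}$ from the identity $(\mathcal{T}^{w\geq 1})^\perp=\mathcal{T}^{w\leq 0}$ of Lemma~\ref{l:weight-str-basic-properties}\eqref{enum:weight-perp-prop} together with the order-reversing property of orthogonals, and handle the other inclusion symmetrically with the left orthogonal. Your write-up merely makes explicit the shift bookkeeping that the paper compresses into the word ``similarly.''
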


\begin{proof}
  Our assumptions and \eqref{eq:ws-leq-right-orth-of-geq} give
  \begin{equation*}
    \mathcal{T}^{w \leq 0} = (\mathcal{T}^{w\geq 1})^\perp 
    \subset
    (\mathcal{D}^{w\geq 1})^\perp =\mathcal{D}^{w \leq 0}.    
  \end{equation*}
  Similarly we obtain 
  $\mathcal{T}^{w \geq 0} \subset \mathcal{D}^{w \geq 0}$.
\end{proof}

The following Lemma is the analog of \cite[1.3.19]{BBD}.

\begin{lemma}
  \label{l:induced-w-str}
  Let $\mathcal{T}'$ be a full triangulated subcategory of a
  triangulated category $\mathcal{T}$.  
  Assume that 
  $w=(\mathcal{T}^{w \leq 0},\mathcal{T}^{w \geq 0})$ is a weight
  structure on $\mathcal{T}$. 
  Let 
  $w'=(\mathcal{T}' \cap \mathcal{T}^{w \leq 0}, \mathcal{T}' \cap
  \mathcal{T}^{w \geq 0})$. 
  Then 
  $w'$ is a weight structure on $\mathcal{T}'$ if and only if for
  any object $X \in \mathcal{T}'$ there is a triangle
  \begin{equation}
    \label{eq:induced-w-str-w-decomp}
    \xymatrix{
      {w_{\geq 1}X} \ar[r] 
      & {X} \ar[r]
      & {w_{\leq 0}X} \ar[r]
      & {[1]w_{\geq 1}X}
    }
  \end{equation}
  in $\mathcal{T}'$ that is a weight
  decomposition 
  of type $(w \geq 1, w \leq 0)$ in $(\mathcal{T},w)$.

  If $w'$ is a weight structure on $\mathcal{T}'$ it is called the
  \define{induced weight structure}. 
\end{lemma}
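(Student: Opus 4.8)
The plan is to follow the pattern of \cite[1.3.19]{BBD}. The forward implication is immediate, whereas for the converse all but one of the axioms for $w'$ are inherited from $w$, the remaining one being exactly the hypothesis. Throughout I would abbreviate $\mathcal{D}^{\leq n} := \mathcal{T}' \cap \mathcal{T}^{w \leq n}$ and $\mathcal{D}^{\geq n} := \mathcal{T}' \cap \mathcal{T}^{w \geq n}$, so that the two subcategories of $w'$ are $\mathcal{D}^{\leq 0}$ and $\mathcal{D}^{\geq 0}$. Before verifying the axioms I would record the bookkeeping identity that the shifts agree with the intersections, namely $[-n]\mathcal{D}^{\leq 0} = \mathcal{D}^{\leq n}$ and $[-n]\mathcal{D}^{\geq 0} = \mathcal{D}^{\geq n}$ for all $n \in \DZ$. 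Indeed $\mathcal{T}'$ is a full triangulated subcategory, hence stable under $[\pm 1]$, so $[-n]\mathcal{T}' = \mathcal{T}'$ and therefore $[-n]\mathcal{D}^{\leq 0} = [-n]\mathcal{T}' \cap [-n]\mathcal{T}^{w \leq 0} = \mathcal{T}' \cap \mathcal{T}^{w \leq n} = \mathcal{D}^{\leq n}$, and likewise on the other side. Fullness of $\mathcal{T}'$ moreover gives $\Hom_{\mathcal{T}'} = \Hom_{\mathcal{T}}$ on the objects in question.

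For the direction ``$w'$ is a weight structure $\Rightarrow$ the triangles exist'' I would take, for $X \in \mathcal{T}'$, the weight decomposition triangle provided by \ref{enum:ws-iv} for $w'$; it lives in $\mathcal{T}'$ and, by the bookkeeping identity, its outer vertices lie in $\mathcal{T}^{w \geq 1}$ and $\mathcal{T}^{w \leq 0}$, so it is a weight decomposition of type $(w \geq 1, w \leq 0)$ in $(\mathcal{T}, w)$.

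For the converse I would check \ref{enum:ws-i}--\ref{enum:ws-iv} in turn. Axiom \ref{enum:ws-ii} is immediate, as $\mathcal{D}^{\leq 0} \subset \mathcal{D}^{\leq 1}$ and $\mathcal{D}^{\geq 1} \subset \mathcal{D}^{\geq 0}$ follow from the corresponding inclusions in $\mathcal{T}$; and \ref{enum:ws-iii} holds because $\mathcal{D}^{\geq 1} \subset \mathcal{T}^{w \geq 1}$ and $\mathcal{D}^{\leq 0} \subset \mathcal{T}^{w \leq 0}$, so fullness reduces the vanishing to $\Hom_{\mathcal{T}}(\mathcal{T}^{w \geq 1}, \mathcal{T}^{w \leq 0}) = 0$. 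For \ref{enum:ws-i}, both $\mathcal{D}^{\leq 0}$ and $\mathcal{D}^{\geq 0}$ are additive, being intersections of the additive subcategory $\mathcal{T}'$ (a full triangulated subcategory is additive) with the additive subcategories $\mathcal{T}^{w \leq 0}$, resp.\ $\mathcal{T}^{w \geq 0}$. Closure under retracts rests on the observation that a retract in $\mathcal{T}'$ is in particular a retract in $\mathcal{T}$ (the same $p, i$ witness it): if $Y$ is a retract in $\mathcal{T}'$ of some $X \in \mathcal{D}^{\leq 0}$, then $Y \in \mathcal{T}'$, while $Y \in \mathcal{T}^{w \leq 0}$ by \ref{enum:ws-i} for $w$, so $Y \in \mathcal{D}^{\leq 0}$; the argument for $\mathcal{D}^{\geq 0}$ is identical.

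Finally, \ref{enum:ws-iv} for $w'$ is precisely the hypothesis: for $X \in \mathcal{T}'$ the assumed triangle \eqref{eq:induced-w-str-w-decomp} is a triangle in $\mathcal{T}'$ whose vertices $w_{\geq 1}X$ and $w_{\leq 0}X$ lie in $\mathcal{T}^{w \geq 1}$ and $\mathcal{T}^{w \leq 0}$ (since it is a weight decomposition of that type in $\mathcal{T}$), hence in $\mathcal{D}^{\geq 1}$ and $\mathcal{D}^{\leq 0}$. I do not expect a genuine obstacle; the only points requiring care are the translation-stability bookkeeping and the remark that retracts taken in $\mathcal{T}'$ remain retracts in $\mathcal{T}$, both consequences of $\mathcal{T}'$ being a \emph{full} triangulated subcategory.
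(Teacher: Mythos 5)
Your proof is correct and follows essentially the same route as the paper's: the hypothesis supplies axiom \ref{enum:ws-iv}, closure under retracts is transferred from $\mathcal{T}$ to $\mathcal{T}'$ by noting that a retract in $\mathcal{T}'$ is a retract in $\mathcal{T}$, and the remaining axioms are routine (the paper simply declares them obvious where you spell out the shift-stability bookkeeping and the fullness argument).
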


\begin{proof}
  If $w'$ is a weight structure on $\mathcal{T}'$ weight
  decompositions in $(\mathcal{T}', w')$ are triangles and yield
  weight decompositions in 
  $(\mathcal{T}, w)$.

  Conversely let us show that under the given condition $w'$ is a
  weight structure on $\mathcal{T}'$. 
  This condition obviously says that $w'$ satisfies \ref{enum:ws-iv}. 
  If $Y$ is a retract in $\mathcal{T}'$ of 
  $X \in \mathcal{T}'\cap \mathcal{T}^{w \leq 0}$, it is a retract of
  $X$ in $\mathcal{T}$ and hence $Y \in \mathcal{T}^{w \leq 0}$.
  This proves that $\mathcal{T}'\cap \mathcal{T}^{w \leq 0}$ is
  closed under retracts in $\mathcal{T}'$, cf.\ \ref{enum:ws-i}. 
  The remaining conditions for $w'$ being are weight structure are obvious.
\end{proof}

\subsection{Basic example}
\label{sec:basic-ex}

Let $\mathcal{A}$ be an additive category and $K(\mathcal{A})$ its homotopy category.
We use the notation introduced in Section~\ref{sec:hot-cat-idem-complete}.

\begin{proposition}
  [{cf.\ \cite{bondarko-weight-str-vs-t-str}, \cite{pauk-co-t}}]
  \label{p:ws-hot-additive}
  The pair
  \begin{equation*}
    (K(\mathcal{A})^{w \leq 0}, K(\mathcal{A})^{w \geq 0})  
  \end{equation*}
  is a weight
  structure on $K(\mathcal{A})$.

  It induces (see Lemma~\ref{l:induced-w-str}) weight structures on
  $K^*(\mathcal{A})$  
  for $* \in \{+, -, b\}$
  and on $K(\mathcal{A})^*$ for $* \in \{+, -, b, bw\}$.

  All these weight structures are called the 
  \define{standard weight structure} on the respective category.
\end{proposition}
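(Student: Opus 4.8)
The plan is to verify the four axioms \ref{enum:ws-i}--\ref{enum:ws-iv} of Definition~\ref{d:ws} for the pair $(K(\mathcal{A})^{w\leq 0}, K(\mathcal{A})^{w\geq 0})$ on $\mathcal{T}:=K(\mathcal{A})$, and then to deduce the induced weight structures via Lemma~\ref{l:induced-w-str}. Two of the axioms are immediate from the description of the subcategories: a complex concentrated in degrees $\leq 0$ is in particular concentrated in degrees $\leq 1$, and dually, which gives \ref{enum:ws-ii}; and for \ref{enum:ws-iii}, if $X$ is concentrated in degrees $\geq 1$ and $Y$ in degrees $\leq 0$, then every cochain map $X \ra Y$ vanishes in each degree (in degree $n$ one of $X^n$, $Y^n$ is zero), so $\Hom_{K(\mathcal{A})}(X,Y)=0$. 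Since Hom-sets and these degree conditions are invariant under isomorphism, this suffices. Additivity of both subcategories (part of \ref{enum:ws-i}) is clear, as finite direct sums preserve the degrees in which a complex is concentrated.

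The essential point is closure under retracts in \ref{enum:ws-i}, and here I would simply invoke Theorem~\ref{t:one-side-bounded-hot-idempotent-complete}: each of $K(\mathcal{A})^{w\leq 0}$ and $K(\mathcal{A})^{w\geq 0}$ is idempotent complete and, being a closure under isomorphisms, strict. Given a retract $Y$ of some $X \in K(\mathcal{A})^{w\leq 0}$, with $p\colon X \ra Y$, $i\colon Y \ra X$ and $pi=\id_Y$, the idempotent $e:=ip$ of $X$ is an endomorphism in the full subcategory $K(\mathcal{A})^{w\leq 0}$, where it splits; by uniqueness of splittings in $K(\mathcal{A})$ the resulting summand is isomorphic to $Y$, so $Y \in K(\mathcal{A})^{w\leq 0}$. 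The same argument applies to $K(\mathcal{A})^{w\geq 0}$. This is exactly the ``subtle point'' flagged in the introduction.

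For \ref{enum:ws-iv} I would use the stupid (naive) truncation. For a complex $X$, the termwise inclusion $\sigma^{\geq 1}X \ra X$ and projection $X \ra \sigma^{\leq 0}X$ form a short exact sequence of complexes that is split in every degree (in each degree one of the outer terms vanishes), hence yields a triangle
\[
\sigma^{\geq 1}X \ra X \ra \sigma^{\leq 0}X \ra [1]\sigma^{\geq 1}X
\]
in $K(\mathcal{A})$ (it is isomorphic to a mapping cone triangle). Since $\sigma^{\geq 1}X \in K(\mathcal{A})^{w\geq 1}$ and $\sigma^{\leq 0}X \in K(\mathcal{A})^{w\leq 0}$, this is the desired weight decomposition, so $w$ is a weight structure on $K(\mathcal{A})$.

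Finally, for the induced weight structures I would apply Lemma~\ref{l:induced-w-str}, which reduces matters to producing, for each $X$ in the relevant full triangulated subcategory, a $(w\geq 1, w\leq 0)$-weight decomposition lying inside it. For $K^{-}(\mathcal{A})$, $K^{+}(\mathcal{A})$ and $K^{b}(\mathcal{A})$ the truncations $\sigma^{\geq 1}X$ and $\sigma^{\leq 0}X$ inherit the boundedness of $X$ (note $\sigma^{\leq 0}X$ is automatically bounded above, and bounded below whenever $X$ is), so the triangle above already lies in the subcategory; transporting along an isomorphism then handles the isomorphism-closures $K(\mathcal{A})^{*}$ for $*\in\{+,-,b\}$. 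For $K(\mathcal{A})^{bw}$ the naive truncation need not stay bounded on both sides, so instead I would use that $K(\mathcal{A})^{bw}=\bigcup_{a\leq b}K(\mathcal{A})^{w\in[a,b]}$ and invoke Lemma~\ref{l:weight-str-basic-properties}\,\eqref{enum:bounded-w-decomp}: for $X \in K(\mathcal{A})^{w\in[a,b]}$ there is a weight decomposition with both terms in $K(\mathcal{A})^{w\in[a,b]} \subset K(\mathcal{A})^{bw}$. The only genuine obstacle is closure under retracts, which is precisely why this proposition comes after Theorem~\ref{t:one-side-bounded-hot-idempotent-complete}; granting that input, the remaining verifications are routine.
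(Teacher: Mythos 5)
Your proposal is correct and follows essentially the same route as the paper: closure under retracts via Theorem~\ref{t:one-side-bounded-hot-idempotent-complete} together with strictness, axiom \ref{enum:ws-iv} via the stupid truncation triangle (the paper writes this out explicitly as a rotated mapping cone sequence, which is exactly your degreewise split short exact sequence), and the induced structures via Lemma~\ref{l:induced-w-str}, using Lemma~\ref{l:weight-str-basic-properties}\,\eqref{enum:bounded-w-decomp} for $K(\mathcal{A})^{bw}$. No gaps.
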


\begin{remark}
  \label{rem:ws-hot-additive-anti}
  The triangulated equivalence \eqref{eq:KA-antiKA-triequi}
  between $K(\mathcal{A})$ and $K(\mathcal{A})^\anti$
  allows us to transfer the weight structure from
  Proposition~\ref{p:ws-hot-additive} to 
  $K(\mathcal{A})^\anti$. This defines the 
  \define{standard weight structure} 
  \begin{equation*}
    (K(\mathcal{A})^{\anti, w \leq 0}, K(\mathcal{A})^{\anti, w \geq 0})  
  \end{equation*}
  on $K(\mathcal{A})^\anti$.
  We have 
  $K(\mathcal{A})^{\anti, w \leq 0}=K(\mathcal{A})^{w \leq 0}$
  and $K(\mathcal{A})^{\anti, w \geq 0}= K(\mathcal{A})^{w \geq 0}$.
  Similarly one can transfer the induced weight structures.
\end{remark}

\begin{proof}
  Condition \ref{enum:ws-i}: It is obvious that both
  $K(\mathcal{A})^{w \leq 0}$ and 
  $K(\mathcal{A})^{w \geq 0}$ are additive. 
  Since they are strict subcategories of $K(\mathcal{A})$ and 
  idempotent complete by
  Theorem~\ref{t:one-side-bounded-hot-idempotent-complete},
  they are in particular closed under retracts in $K(\mathcal{A})$.
  Conditions \ref{enum:ws-ii} and \ref{enum:ws-iii} are obvious.

  We verify condition \ref{enum:ws-iv} explicitly. 
  Let $X=(X^i, d^i:X^i
  \ra X^{i+1})$ be a complex.
  We give $(w \geq n+1, w \leq n)$-weight decompositions of $X$ for
  any $n \in \DZ$.
  The following diagram
  defines
  complexes $\ul{w}_{\leq n}(X)$, $\ul{w}_{\geq n+1}(X)$ and a mapping
  cone sequence 
  $[-1]\ul{w}_{\leq n}(X) \ra {\ul{w}_{\geq n+1}(X)} \ra X \ra
    \ul{w}_{\leq n}(X)$
  in $C(\mathcal{A})$:
  \begin{equation*}
    \hspace{-0.4cm}
    \xymatrix
    {
      {[-1]{\ul{w}_{\leq n}(X)}:} \ar[d] & 
      {\dots} \ar[r] &
      {X^{{n-2}}} \ar[r]^-{-d^{{n-2}}} \ar[d] &
      {X^{{n-1}}} \ar[r]^-{-d^{{n-1}}} \ar[d] &
      {X^n} \ar[r] \ar[d]^-{d^n} &
      {0} \ar[r] \ar[d] &
      {\dots} \\
      {{\ul{w}_{\geq n+1}(X)}:} \ar[d] &  
      {\dots} \ar[r] &
      {0} \ar[r] \ar[d] &
      {0} \ar[r] \ar[d] &
      {X^{n+1}} \ar[r]^-{d^{n+1}} \ar[d]^-{1} &
      {X^{n+2}} \ar[r] \ar[d]^-1 &
      {\dots} \\
      {X:} \ar[d] & 
      {\dots} \ar[r] &
      {X^{{n-1}}} \ar[r]^-{d^{{n-1}}} \ar[d]^-1 &
      {X^n} \ar[r]^-{d^n} \ar[d]^-1 &
      {X^{n+1}} \ar[r]^-{d^{n+1}} \ar[d] &
      {X^{n+2}} \ar[r] \ar[d] &
      {\dots} \\
      {{\ul{w}_{\leq n}(X)}:} & 
      {\dots} \ar[r] &
      {X^{{n-1}}} \ar[r]^-{d^{{n-1}}} &
      {X^n} \ar[r] &
      {0} \ar[r] &
      {0} \ar[r] &
      {\dots.} 
    }
  \end{equation*}
  Passing to $K(\mathcal{A})$ and rotation of the triangle
  yields the weight decomposition we need:
  \begin{equation}
    \label{eq:hot-wdecomp}
    \hspace{-0.4cm}
    \xymatrix
    {
      {{\ul{w}_{\geq n+1}(X)}:} \ar[d] &  
      {\dots} \ar[r] &
      {0} \ar[r] \ar[d] &
      {0} \ar[r] \ar[d] &
      {X^{n+1}} \ar[r]^-{d^{n+1}} \ar[d]^-{1} &
      {X^{n+2}} \ar[r] \ar[d]^-1 &
      {\dots} \\
      {X:} \ar[d] & 
      {\dots} \ar[r] &
      {X^{{n-1}}} \ar[r]^-{d^{{n-1}}} \ar[d]^-1 &
      {X^n} \ar[r]^-{d^n} \ar[d]^-1 &
      {X^{n+1}} \ar[r]^-{d^{n+1}} \ar[d] &
      {X^{n+2}} \ar[r] \ar[d] &
      {\dots} \\
      {{\ul{w}_{\leq n}(X)}:} \ar[d] & 
      {\dots} \ar[r] &
      {X^{{n-1}}} \ar[r]^-{d^{{n-1}}} \ar[d] &
      {X^n} \ar[r] \ar[d]^-{-d^n} &
      {0} \ar[r] \ar[d] &
      {0} \ar[r] \ar[d] &
      {\dots} \\
      {[1]{\ul{w}_{\geq n+1}(X)}:} &  
      {\dots} \ar[r] &
      {0} \ar[r] &
      {X^{n+1}} \ar[r]^-{-d^{n+1}} &
      {X^{n+2}} \ar[r]^-{-d^{n+2}} &
      {X^{n+3}} \ar[r] &
      {\dots} \\
    }
  \end{equation}

  The statements about the induced weight structures 
  on $K^*(\mathcal{A})$ and $K(\mathcal{A})^*$
  for $* \in \{+,-,b\}$ are obvious
  from \eqref{eq:hot-wdecomp}
  and Lemma~\ref{l:induced-w-str}.
  For $K(\mathcal{A})^{bw}$ use additionally
  Lemma~\ref{l:weight-str-basic-properties}, \eqref{enum:bounded-w-decomp}.
\end{proof}

We continue to use the maps $\ul{w}_{\leq n}$ and $\ul{w}_{\geq n+1}$
on complexes
introduced in the above proof.
Define
$\ul{w}_{>n}:=\ul{w}_{\geq n+1}$, 
$\ul{w}_{<n}:=\ul{w}_{\leq n-1}$,
$\ul{w}_{[a,b]}=\ul{w}_{\geq a}\ul{w}_{\leq b}=\ul{w}_{\leq
  b}\ul{w}_{\geq a}$, and $\ul{w}_a=\ul{w}_{[a,a]}$.
The triangle \eqref{eq:hot-wdecomp} 
will be called the
\define{$\ul{w}$-weight decomposition of $X$}.

\begin{remark}
  Note that $\ul{w}_{\geq n}$ and $\ul{w}_{\leq n}$ are functorial
  on $C(\mathcal{A})$ but not at all on $K(\mathcal{A})$
  (if $\mathcal{A} \not= 0$):
  Take an object $M \in \mathcal{A}$ and consider its mapping cone
  $\Cone(\id_M)$.
  All objects $\Cone(\id_M)$ (for $M \in \mathcal{A}$) are isomorphic
  to the zero 
  object in $K(\mathcal{A})$. If $\ul{w}_{\leq 0}$ were functorial, all
  $\ul{w}_{\leq 0}(\Cone(\id_M))=M$ were isomorphic, hence $\mathcal{A}=0$. 
\end{remark}

\begin{remark}
  \label{rem:t-structures-adjacent-to-standard-w-structure}
  Assume that $\mathcal{A}$ is an abelian category and consider the
  following four subcategories of $K(\mathcal{A})$:

  \begin{equation*}
    (\mathcal{C}^{h \leq 0}, K(\mathcal{A})^{w \geq 1},
    K(\mathcal{A})^{w \leq 0}, \mathcal{K}^{h \geq 1}).
  \end{equation*}
  The two outer pairs are torsion pairs on $K(\mathcal{A})$
  by Lemma~\ref{l:torsion-pair-homotopy-abelian}; the pair in the
  middle is (up two a swap of the two members and a translation) the
  standard w-structure on $K(\mathcal{A})$ 
  from Proposition~\ref{p:ws-hot-additive}.
  In any pair of direct neighbors there are no morphisms from left
  to right; more precisely the left member is the left
  orthogonal of the right member and vice versa.
  
  In the terminology of \cite[Def~4.4.1]{bondarko-weight-str-vs-t-str},
  the t-structure 
  $(K(\mathcal{A})^{w \leq 0}, \mathcal{K}^{h \geq 0})$ 
  and the (standard) w-structure
  $(K(\mathcal{A})^{w \leq 0}, K(\mathcal{A})^{w \geq 0})$
  on $K(\mathcal{A})$ are left adjacent (i.\,e.\ their left aisles coincide)
  to each other.
  Similarly, the t-structure 
  $(\mathcal{C}^{h \leq 0}, K(\mathcal{A})^{w \geq 0})$
  and the (standard) w-structure
  $(K(\mathcal{A})^{w \leq 0}, K(\mathcal{A})^{w \geq 0})$
  on $K(\mathcal{A})$ are right adjacent to each other.
\end{remark}

\begin{lemma}
  \label{l:bd-complex-dir-summand}
  Let $X \in K(\mathcal{A})$ and $a,b \in \DZ$.
  If $X \in K(\mathcal{A})^{w \in [a,b]}$ then 
  $X$ is a summand of $\ul{w}_{[a,b]}(X)$ in $K(\mathcal{A})$.
\end{lemma}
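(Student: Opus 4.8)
The plan is to strip the window $[a,b]$ in two stages, using the explicit $\ul{w}$-weight decomposition \eqref{eq:hot-wdecomp} together with the two splitting statements of Lemma~\ref{l:weight-str-basic-properties}. Writing $\mathcal{T}:=K(\mathcal{A})$, the hypothesis means $X\in\mathcal{T}^{w\leq b}\cap\mathcal{T}^{w\geq a}$, and by definition $\ul{w}_{[a,b]}=\ul{w}_{\geq a}\,\ul{w}_{\leq b}$. First I would dispose of the degenerate case $b<a$: then $\mathcal{T}^{w\in[a,b]}=0$ (as observed after Definition~\ref{d:ws}), so $X\cong 0$ and nothing is to prove. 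Assume from now on that $a\leq b$.

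The first stage cuts off the top. Applying the $\ul{w}$-weight decomposition at level $b$, namely the triangle $\ul{w}_{\geq b+1}(X)\to X\to\ul{w}_{\leq b}(X)\to[1]\ul{w}_{\geq b+1}(X)$, and using $X\in\mathcal{T}^{w\leq b}$, Lemma~\ref{l:weight-str-basic-properties}\,\eqref{enum:weight-decomp-with-knowlegde-dir-summand-i} gives $\ul{w}_{\leq b}(X)\cong X\oplus[1]\ul{w}_{\geq b+1}(X)$. In particular $X$ is a summand of $Y:=\ul{w}_{\leq b}(X)$.

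The step I expect to require the most attention is checking that the intermediate complex $Y$ still lies in $\mathcal{T}^{w\geq a}$, which is exactly what lets the second splitting apply. The complementary summand $[1]\ul{w}_{\geq b+1}(X)$ is concentrated in degrees $\geq b$, hence lies in $\mathcal{T}^{w\geq b}\subset\mathcal{T}^{w\geq a}$ (here I use $a\leq b$); since also $X\in\mathcal{T}^{w\geq a}$ and $\mathcal{T}^{w\geq a}$ is additive by \ref{enum:ws-i}, I conclude $Y\in\mathcal{T}^{w\geq a}$.

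The second stage cuts off the bottom. Applying the $\ul{w}$-weight decomposition of $Y$ at level $a-1$ and using $Y\in\mathcal{T}^{w\geq a}$, Lemma~\ref{l:weight-str-basic-properties}\,\eqref{enum:weight-decomp-with-knowlegde-dir-summand-ii} yields $\ul{w}_{\geq a}(Y)\cong Y\oplus[-1]\ul{w}_{\leq a-1}(Y)$, so $Y$ is a summand of $\ul{w}_{\geq a}(Y)$. By definition $\ul{w}_{\geq a}(Y)=\ul{w}_{\geq a}\ul{w}_{\leq b}(X)=\ul{w}_{[a,b]}(X)$. Finally, since being a summand is transitive (a retract of a retract is a retract, cf.\ Corollary~\ref{c:retract-is-summand-in-tricat}), $X$ --- a summand of $Y$, which in turn is a summand of $\ul{w}_{[a,b]}(X)$ --- is a summand of $\ul{w}_{[a,b]}(X)$, as required.
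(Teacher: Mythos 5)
Your proof is correct and follows essentially the same route as the paper's: handle the degenerate case $a>b$, use Lemma~\ref{l:weight-str-basic-properties}\,\eqref{enum:weight-decomp-with-knowlegde-dir-summand-i} to split $X$ off from $\ul{w}_{\leq b}(X)$, observe that $\ul{w}_{\leq b}(X)\cong X\oplus[1]\ul{w}_{>b}(X)$ lies in $K(\mathcal{A})^{w\geq a}$, and then use \eqref{enum:weight-decomp-with-knowlegde-dir-summand-ii} to split $\ul{w}_{\leq b}(X)$ off from $\ul{w}_{[a,b]}(X)$. The only cosmetic difference is that you spell out the additivity of $K(\mathcal{A})^{w\geq a}$ and the transitivity of being a summand, which the paper leaves implicit.
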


\begin{proof}
  Let $X \in K(\mathcal{A})^{w \in [a,b]}$. If $a>b$ then
  $X\cong 0=\ul{w}_{[a,b]}(X)$. Assume $a \leq b$.
  Lemma~\ref{l:weight-str-basic-properties}~\eqref{enum:weight-decomp-with-knowlegde-dir-summand-i} 
  gives $\ul{w}_{\leq b}(X) \cong X \oplus [1]\ul{w}_{> b}(X)$. Observe that 
  $[1]\ul{w}_{>b}(X) \in K(\mathcal{A})^{w \geq b} \subset
  K(\mathcal{A})^{w \geq a}$. 
  Hence $\ul{w}_{\leq b}(X) \in K(\mathcal{A})^{w \geq a}$.  
  Now 
  Lemma~\ref{l:weight-str-basic-properties}~\eqref{enum:weight-decomp-with-knowlegde-dir-summand-ii}
  shows that $\ul{w}_{[a,b]}(X)\cong \ul{w}_{\leq b}(X) \oplus [-1]
  \ul{w}_{< a}(X)$.
\end{proof}

We view $\mathcal{A}$ as a full subcategory of $K(\mathcal{A})$,
namely given by all complexes concentrated in degree zero.

\begin{corollary}
  \label{c:heart-standard-wstr}
  The heart $\heart$ of the standard weight structure on $K(\mathcal{A})$
  is the idempotent completion of $\mathcal{A}$.

  The same statement is true for the standard weight structure on
  $K(\mathcal{A})^*$ for $* \in \{+,-,bw\}$
  (and for that on 
  $K^*(\mathcal{A})$ for $* \in \{+,-\}$).

  The heart of the standard weight structure on $K(\mathcal{A})^b$
  (resp.\ $K^b(\mathcal{A})$) is 
  the closure under retracts of $\mathcal{A}$ in $K(\mathcal{A})^b$
  (resp.\ $K^b(\mathcal{A})$). 
\end{corollary}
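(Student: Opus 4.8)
The plan is to verify that $\mathcal{A}$, viewed as the full subcategory of complexes concentrated in degree zero, is a full, dense, additive subcategory of $\heart$ and that $\heart$ is idempotent complete; by the criterion recalled in Section~\ref{sec:idemp-completeness} this identifies $\heart$ with the idempotent completion of $\mathcal{A}$. Fullness and additivity of the inclusion $\mathcal{A} \hookrightarrow \heart$ are immediate: a complex concentrated in degree zero lies in both $K(\mathcal{A})^{w\leq 0}$ and $K(\mathcal{A})^{w\geq 0}$, hence in $\heart$, and for such complexes no nonzero homotopies exist, so $\Hom_{K(\mathcal{A})}$ restricts to $\Hom_{\mathcal{A}}$ on these objects.

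For idempotent completeness of $\heart$ I would avoid appealing to $K(\mathcal{A})$ itself, whose idempotent completeness is open (cf.\ Question~\ref{q:KA-ic}), and argue as follows. Given an idempotent $e\colon X\to X$ with $X\in\heart\subset K(\mathcal{A})^{w\leq 0}$, Theorem~\ref{t:one-side-bounded-hot-idempotent-complete} allows $e$ to split inside $K(\mathcal{A})^{w\leq 0}$, say with image $E$. Then $E$ is a retract of $X$ in $K(\mathcal{A})$, and since the heart is closed under retracts by \ref{enum:ws-i}, we get $E\in\heart$; thus $e$ already splits in $\heart$.

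The heart of the matter is density. For $X\in\heart=K(\mathcal{A})^{w\in[0,0]}$, Lemma~\ref{l:bd-complex-dir-summand} shows that $X$ is a summand of $\ul{w}_{[0,0]}(X)=\ul{w}_0(X)$. Here a direct computation with the explicit truncations $\ul{w}_{\leq 0}$ and $\ul{w}_{\geq 0}$ from the proof of Proposition~\ref{p:ws-hot-additive} collapses $\ul{w}_0(X)=\ul{w}_{\geq 0}\ul{w}_{\leq 0}(X)$ to the complex $X^0$ concentrated in degree zero, i.e.\ an object of $\mathcal{A}$. Hence every object of $\heart$ is a summand of an object of $\mathcal{A}$, which is exactly density, and the first assertion follows.

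For the variants I would first record that $\heart\subset K(\mathcal{A})^{bw}$: each object of $\heart$ is a summand of the bounded complex $X^0$, hence lies in $K(\mathcal{A})^-$ and in $K(\mathcal{A})^+$ by Theorem~\ref{t:one-side-bounded-hot-idempotent-complete}, so in their intersection $K(\mathcal{A})^{bw}$. Consequently, for $*\in\{+,-,bw\}$ the induced heart is $K(\mathcal{A})^*\cap\heart=\heart$, giving the idempotent completion again; the cases $K^*(\mathcal{A})$ with $*\in\{+,-\}$ follow because the inclusions $K^*(\mathcal{A})\subset K(\mathcal{A})^*$ are weight-exact equivalences and so identify the hearts. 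The main point to get right is the bounded case: since $K^b(\mathcal{A})$ need not be idempotent complete (Remark~\ref{rem:hot-bd-cplx-add-cat-not-idempotent-complete}), its heart is only the closure under retracts of $\mathcal{A}$, not the full completion. I would prove the two inclusions directly: retracts of objects of $\mathcal{A}$ lie in $\heart$ (closed under retracts) and in $K(\mathcal{A})^b$ by hypothesis, while conversely any $X\in K(\mathcal{A})^b\cap\heart$ is, by the density computation above, a summand of $X^0\in\mathcal{A}\subset K(\mathcal{A})^b$ and hence already a retract of an object of $\mathcal{A}$ inside $K(\mathcal{A})^b$. This yields that the heart equals the closure under retracts of $\mathcal{A}$ in $K(\mathcal{A})^b$ (resp.\ in $K^b(\mathcal{A})$).
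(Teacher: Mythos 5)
Your proof is correct and follows essentially the same route as the paper: idempotent completeness of $\heart$ via Theorem~\ref{t:one-side-bounded-hot-idempotent-complete} together with closure of the heart under retracts (you split idempotents in $K(\mathcal{A})^{w \leq 0}$ where the paper splits them in $K(\mathcal{A})^{bw}$, an immaterial difference), density via Lemma~\ref{l:bd-complex-dir-summand} and the identification $\ul{w}_{0}(X)=X^0$, and then the idempotent-completion criterion from Section~\ref{sec:idemp-completeness}. The variants and the bounded case are handled exactly as in the paper's proof.
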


\begin{proof}
  Since $K(\mathcal{A})^{bw}$ is
  idempotent complete
  (Thm.~\ref{t:one-side-bounded-hot-idempotent-complete})
  and the heart $\heart$ is contained in $K(\mathcal{A})^{bw}$
  and closed under retracts in $K(\mathcal{A})$ it follows that
  $\heart$ is idempotent complete.
  Furthermore any object $X \in \heart$ is a summand of
  $X^0=\ul{w}_{0}(X) \in \mathcal{A}$ by Lemma~\ref{l:bd-complex-dir-summand}.
  Since $\mathcal{A}$ is a full subcategory of $\heart$ the claim
  follows from the results at the end of 
  Section~\ref{sec:idemp-completeness}. 
  The proof of the second statement is similar.

  For the proof of the last statement let $\heart$ be the heart of the
  standard weight structure on $\mathcal{T}=K(\mathcal{A})^b$
  (resp.\ $\mathcal{T}=K^b(\mathcal{A})$).
  Since $\mathcal{A} \subset \heart$ and the heart $\heart$ is
  closed under retracts in $\mathcal{T}$,
  any retract of an object of $\mathcal{A}$ in $\mathcal{T}$
  is in $\heart$. Conversely, any object of $\heart$ is
  by Lemma~\ref{l:bd-complex-dir-summand} a retract of an object of
  $\mathcal{A}$.
\end{proof}

\begin{proposition}
  \label{p:grothendieck-homotopy-category}
  Let $\mathcal{A}$ be a small additive category. Then
  the Grothendieck group of its homotopy category $K(\mathcal{A})$ is
  trivial: 
  \begin{equation*}
    K_0(K(\mathcal{A}))=0.
  \end{equation*}
  The Grothendieck groups of $K^-(\mathcal{A})$, $K(\mathcal{A})^{-}$,
  $K^+(\mathcal{A})$ and $K(\mathcal{A})^{+}$ vanish as well. 
\end{proposition}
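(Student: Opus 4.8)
The plan is to exploit the Eilenberg-swindle functors already introduced in the proof of Theorem~\ref{t:one-side-bounded-hot-idempotent-complete}, together with the elementary fact that an even shift does not change classes in a Grothendieck group. Recall first that in any triangulated category the (rotated) triangle $Y \ra 0 \ra [1]Y \ra [1]Y$ gives $[Y]+[[1]Y]=0$ in $K_0$, so that $[[1]Y]=-[Y]$ and hence $[[2]Y]=[Y]$ for every object $Y$. Smallness of $\mathcal{A}$ ensures that the isomorphism classes of complexes form a set, so all the Grothendieck groups under consideration are genuinely well defined.

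First I would dispose of the one-sided bounded categories. On $K^-(\mathcal{A})$ consider the triangulated endofunctor $S=\bigoplus_{n \in \DN}[2n](-)$ from the proof of Theorem~\ref{t:one-side-bounded-hot-idempotent-complete}; it is well defined because a bounded-above complex contributes only finitely many summands in each degree, and it satisfies $S\cong\id\oplus[2]S$. For any $X \in K^-(\mathcal{A})$ this isomorphism yields, in $K_0(K^-(\mathcal{A}))$, the relation $[SX]=[X]+[[2]SX]=[X]+[SX]$, using $[[2]SX]=[SX]$; hence $[X]=0$ and $K_0(K^-(\mathcal{A}))=0$. The dual functor $\bigoplus_{n \in \DN}[-2n](-)$ on $K^+(\mathcal{A})$ gives $K_0(K^+(\mathcal{A}))=0$ in exactly the same way. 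Since the inclusions $K^{\pm}(\mathcal{A})\subset K(\mathcal{A})^{\pm}$ are equivalences of categories, the groups $K_0(K(\mathcal{A})^-)$ and $K_0(K(\mathcal{A})^+)$ vanish as well.

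For the full homotopy category the naive swindle is unavailable, since on a complex unbounded in both directions neither infinite shift-sum is defined in $\mathcal{A}$; overcoming this is the main obstacle, and the idea is to reduce to the one-sided case by the standard weight decomposition. For arbitrary $X \in K(\mathcal{A})$ and any $n \in \DZ$, the weight decomposition triangle $\ul{w}_{\geq n+1}(X) \ra X \ra \ul{w}_{\leq n}(X) \ra [1]\ul{w}_{\geq n+1}(X)$ of Proposition~\ref{p:ws-hot-additive} has first term concentrated in degrees $\geq n+1$, hence in $K^+(\mathcal{A})$, and third term concentrated in degrees $\leq n$, hence in $K^-(\mathcal{A})$. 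Thus $[X]=[\ul{w}_{\geq n+1}(X)]+[\ul{w}_{\leq n}(X)]$ in $K_0(K(\mathcal{A}))$. The fully faithful inclusions $K^+(\mathcal{A})\hookrightarrow K(\mathcal{A})$ and $K^-(\mathcal{A})\hookrightarrow K(\mathcal{A})$ are triangulated, so they induce homomorphisms $K_0(K^{\pm}(\mathcal{A}))\ra K_0(K(\mathcal{A}))$; since both source groups vanish by the previous step, the classes $[\ul{w}_{\geq n+1}(X)]$ and $[\ul{w}_{\leq n}(X)]$ are zero in $K_0(K(\mathcal{A}))$. Therefore $[X]=0$ for every $X$, which proves $K_0(K(\mathcal{A}))=0$.
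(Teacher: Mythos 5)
Your proof is correct and follows essentially the same route as the paper: both split $[X]$ via the stupid truncation (weight decomposition) triangle into a bounded-below and a bounded-above class, and kill each by the Eilenberg swindle $T(B)\cong B\oplus[2]T(B)$ together with $[[2]Y]=[Y]$. The only difference is organizational — you establish $K_0(K^{\pm}(\mathcal{A}))=0$ first and push the classes forward along the triangulated inclusions, whereas the paper runs the swindle directly inside $K_0(K(\mathcal{A}))$ and then notes the one-sided cases are immediate.
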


This result is presumably well-known to the experts 
(cf.~\cite{schlichting-neg-K-theory-dercat} or
\cite{miyachi-grothendieck}).

\begin{proof}
  We prove that $K_0(K(\mathcal{A}))=0$.
  We write $[{X}]$ for the class of an object $X$ in the Grothendieck group.
  Let $X \in K(\mathcal{A})$. Let $A=\ul{w}_{>0}(X)$ and
  $B=\ul{w}_{\leq 0}(X)$. The $\ul{w}$-weight decomposition 
  $A \ra X \ra B \ra [1]A$
  gives $[X]=[A]+[B]$ in the
  Grothendieck group. Since $B$ is a bounded above complex
  \begin{equation*}
    T(B):= B \oplus [2] B \oplus [4]B \oplus \dots = \bigoplus_{n \in \DN}[2n]B
  \end{equation*}
  is a well-defined object of $K(\mathcal{A})$.
  There is an obvious isomorphism $T(B) =B \oplus
  [2]T(B)$ which gives
  \begin{equation*}
    [{T(B)}]=[{B}]+[{[2]T(B)}]=[{B}]+[{T(B)}]
  \end{equation*}
  implying $[{B}]=0$. Considering
  $\bigoplus_{n \in \DN}[-2n]A$
  we similarly find $[A]=0$. Hence $[X]=[A]+[B]=0$.

  The proof that the other Grothendieck groups mentioned in the
  proposition vanish is now obvious.
\end{proof}

\begin{remark}
  Assume that we are in the setting of 
  Proposition~\ref{p:grothendieck-homotopy-category}.
  If we can show that the Grothendieck group of the idempotent
  completion of $K(\mathcal{A})$ vanishes then $K(\mathcal{A})$ is
  idempotent complete by the results cited in
  Remark~\ref{rem:thomason-and-balmer-schlichting}.
\end{remark}

\begin{example}
  Let $\{0\} \subsetneq \Lambda \subset \DN$ be a subset 
  that is
  closed under addition, e.\,g. $\Lambda= 17 \DN+9\DN$.
  Let $\modfin(k)$ be the category of finite dimensional vector spaces
  over a field $k$ and let 
  $\modfin_{\Lambda}(k) \subset \modfin(k)$ be the full subcategory of vector
  spaces whose dimension is in $\Lambda$. 
  We claim that $K(\modfin_{\Lambda}(k))$ is idempotent complete.

  Obviously $K(\modfin_{\Lambda}(k)) \subset
  K(\modfin(k))$ is a full triangulated subcategory. 
  It is easy to see that any object of $K(\modfin(k))$ is isomorphic to a
  stalk complex, i.\,e.\ a complex in $\modfin(k)$ with all differentials
  $d=0$. Hence $K(\modfin(k))$ is idempotent complete 
  (alternatively, this follows from
  Theorem~\ref{t:hot-idempotent-complete}~\eqref{enum:hot-idempotent-complete-abelian})   
  and $K(\modfin_{\Lambda}(k))$ is dense in $K(\modfin(k))$.
  In particular $K(\modfin(k))$ is the idempotent completion of
  $K(\modfin_{\Lambda}(k))$.

  The Grothendieck groups of $K(\modfin_{\Lambda}(k))$ and $K(\modfin(k))$ vanish
  by Proposition~\ref{p:grothendieck-homotopy-category}.
  Hence results of R.\ W.\ Thomason \cite[Section~3]{thomason-class} (as
  explained in Remark~\ref{rem:thomason-and-balmer-schlichting}) show
  that the closure of $K(\modfin_{\Lambda}(k))$ under isomorphisms in
  $K(\modfin(k))$ equals $K(\modfin(k))$. In particular
  $K(\modfin_{\Lambda}(k))$ is 
  idempotent complete.
\end{example}

\section{Weak weight complex functor}
\label{sec:weak-wc-fun}

In this section we construct the weak weight complex
functor essentially following
\cite[Ch.~3]{bondarko-weight-str-vs-t-str} where it is just called
weight complex functor.
We repeat the construction in detail since we need this
for the proof of Theorem~\ref{t:strong-weight-cplx-functor}.

Let $(\mathcal{T}, w=(\mathcal{T}^{w \leq 0}, \mathcal{T}^{w \geq
  0}))$ be a weight category. 
We fix for every object $X$ in
$\mathcal{T}$ and every $n \in \DZ$ a weight decomposition
\begin{equation}
  \label{eq:choice-weight-decomp}
  \xymatrix{
    {T^n_X:} &
    {w_{\geq n+1}X} \ar[r]^-{g^{n+1}_X} & 
    {X} \ar[r]^-{k_X^n} &
    {w_{\leq n}X} \ar[r]^-{v_X^n} &
    {[1]w_{\geq n+1}X;}
  }
\end{equation}
as suggested by the notation we assume
$w_{\geq n+1}X \in \mathcal{T}^{w \geq n+1}$ and $w_{\leq n} \in
\mathcal{T}^{w \leq n}$.  
For any $n$ there is a unique morphism of triangles 
\begin{equation}
  \label{eq:unique-triang-morph-Tnx}
  \xymatrix{
    {T^n_X:} \ar[d]|{(h^n_X,\id_X,l^n_X)} &
    {w_{\geq n+1}X} \ar[r]^-{g^{n+1}_X} \ar[d]^-{h^n_X}
    \ar@{}[rd]|{\triangle} &  
    {X} \ar[r]^-{k^n_X} \ar[d]^-{\id_X} 
    \ar@{}[rd]|{\nabla} &
    {w_{\leq n}X} \ar[r]^-{v^n_X} \ar[d]^-{l^n_X} &
    {[1]w_{\geq n+1}X} \ar[d]^-{[1]h^n_X} \\
    {T^{n-1}_X:} &
    {w_{\geq n}X} \ar[r]^-{g^n_X} & 
    {X} \ar[r]^-{k^{n-1}_X} &
    {w_{\leq n-1}X} \ar[r]^-{v^{n-1}_X} &
    {[1]w_{\geq n}X}
  }  
\end{equation}
extending $\id_X$
(use Prop.~\ref{p:BBD-1-1-9-copied-for-w-str} and
\ref{enum:ws-iii}).
More precisely $h^n_X$ (resp.\ $l^n_X$) is the unique morphism making the square
$\Delta$ (resp.\ $\nabla$) commutative.
We use the square marked with $\triangle$ as the germ cell for the
octahedral axiom and obtain the following diagram:
\begin{equation}
  \label{eq:wc-weak-octaeder}
  O_X^n:=
  \xymatrix@dr{
    && {[1]{w_{\geq n+1}X}} \ar[r]^-{[1]h^n_X} & {[1]{w_{\geq n}X}}
    \ar[r]^-{[1]e^n_X} & {[1]{w_n X}} 
    \\ 
    {T^{\leq n}_X:} &{w_n X} \ar@(ur,ul)[ru]^-{c^n_X}
    \ar@{..>}[r]^-{a^n_X} & 
    {w_{\leq n} X} \ar@{..>}[r]^-{l^n_X} \ar[u]^-{v^n_X} & 
    {w_{\leq n-1} X} 
    \ar@(dr,dl)[ru]^-{b^n_X} \ar[u]_{v^{n-1}_X}\\ 
    {T^{n-1}_X:} &{w_{\geq n}X} \ar[u]^-{e^n_X} \ar[r]^-{g^n_X} 
    & {X} \ar@(dr,dl)[ru]^-{k^{n-1}_X} \ar[u]_{k^n_X} \\  
    {T^n_X:} &{w_{\geq n+1}X} \ar[u]^-{h^n_X} \ar@(dr,dl)[ru]^-{g^{n+1}_X}
    \ar@{}[ru]|{\triangle} \\
    & {T^{\geq n}_X:}
  }
\end{equation}
The octahedral axiom says that after fitting $h^n_X$ into
the triangle $T^{\geq n}_X$ the dotted arrows exist such
that $T^{\leq n}_X$ is a triangle and everything commutes. The lower
dotted arrow is in fact $l^n_X$ by the uniqueness statement given
above.
We fix such octahedral diagrams $O^n_X$ for all objects $X$ and all $n
\in \DZ$. 

The triangles
$T^{\leq n}_X$ and $T^{\geq n}_X$ 
and the fact that
$\mathcal{T}^{w \leq n}$
and $\mathcal{T}^{w \geq n}$ are closed under extensions 
show that $w_n X
\in \mathcal{T}^{w=n}$ and $[n]w_n X \in \heart:=\heart(w)$.

We define the (candidate) weight complex $\candidateWCweak(X) \in C(\heart)$ of
$X$
as follows (the index $c$ stands for ``candidate"): Its $n$-th term is 
\begin{equation*}
  \candidateWCweak(X)^n:=[n]w_n X  
\end{equation*}
and the
differential $d^n_{\candidateWCweak(X)}:[n]w_n X \ra [n+1]w_{n+1}X$ is defined by
\begin{multline}
  \label{eq:differential-weak-WC-complex}
  d^n_{\candidateWCweak(X)}
  := [n](b^{n+1}_X \comp a^n_X)
  =  [n](([1]e^{n+1}_X) \comp v^n_X \comp a^n_X)\\
  =  [n](([1]e^{n+1}_X) \comp c^n_X).
\end{multline}
Note that $d^n_{\candidateWCweak(X)} \comp d^{n-1}_{\candidateWCweak(X)}=0$ 
since 
the composition of two consecutive
maps in a triangle is zero (apply this to \eqref{eq:wc-weak-octaeder}).
Hence $\candidateWCweak(X)$ is in fact a complex in $\heart$.

Now let $f: X \ra Y$ be a morphism in $\mathcal{T}$.
We can extend $f$ to a morphism of triangles 
(use Prop.~\ref{p:BBD-1-1-9-copied-for-w-str} and
\ref{enum:ws-iii}) 
\begin{equation}
  \label{eq:f-w-trunc}
  \xymatrix{
    {T^n_X:} \ar[d]_-{(f_{w \geq n+1}, f, f_{w \leq n})} &
    {w_{\geq n+1}X} \ar[r]^-{g^{n+1}_X} \ar[d]^-{f_{w \geq n+1}} & 
    {X} \ar[r]^-{k_X^n}  \ar[d]^-{f} &
    {w_{\leq n}X} \ar[r]^-{v_X^n} \ar[d]^-{f_{w \leq n}}&
    {[1]w_{\geq n+1}X,} \ar[d]^-{[1]f_{w \geq n+1}} \\
    {T^n_Y:} &
    {w_{\geq n+1}Y} \ar[r]^-{g^{n+1}_Y} & 
    {Y} \ar[r]^-{k_Y^n} &
    {w_{\leq n}Y} \ar[r]^-{v_Y^n} &
    {[1]w_{\geq n+1}Y.}
  }
\end{equation}
This extension is not unique in general; this will be discussed later
on. Nevertheless we fix now such an extension $(f_{w \geq n+1}, f,
f_{w \leq n})$ for any $n \in \DZ$.  

Consider the following diagram in the category of triangles
(objects: triangles; morphisms: morphisms of triangles):
\begin{equation}
  \label{eq:any-trunc-f-comm-hil}
  \xymatrix@C=2.5cm@R=1.5cm{
    {T^n_X} 
    \ar[d]_-{(h^n_X,\id_X,l^n_X)}
    \ar[r]^-{(f_{w \geq n+1}, f, f_{w \leq n})} & 
    {T^n_Y} \ar[d]^-{(h^n_Y,\id_Y,l^n_Y)} \\
    {T^{n-1}_X} 
    \ar[r]^-{(f_{w \geq n}, f, f_{w \leq n-1})} &
    {T^{n-1}_Y} \\
  }
\end{equation}
This square is commutative since a morphism of triangles $T^n_X \ra
T^{n-1}_Y$ extending $f$ is unique 
(use Prop.~\ref{p:BBD-1-1-9-copied-for-w-str} and \ref{enum:ws-iii}).

In particular we have $f_{w \leq n-1} l^n_X =l^n_Y f_{w \leq n}$, so
we can extend the partial morphism $(f_{w \leq n}, f_{w \leq n-1})$ 
by a morphism $f^n:w_nX \ra w_nY$ 
to a morphism of triangles
\begin{equation}
  \label{eq:f-w-trunc-fn}
  \xymatrix{
    {T^{\leq n}_X:} \ar[d]_-{(f^n, f_{w \leq n}, f_{w \leq n-1})} & 
    {w_n X} \ar[r]^-{a^n_X} \ar@{..>}[d]^-{f^n} &
    {w_{\leq n} X} \ar[r]^-{l^n_X} \ar[d]^-{f_{w\leq n}} & 
    {w_{\leq n-1} X} \ar[r]^-{b^n_X} \ar[d]^-{f_{w\leq
        n-1}} &  
    {[1]w_n X} \ar@{..>}[d]^-{[1]f^n}
    \\
    {T^{\leq n}_{Y}:} & 
    {w_n Y} \ar[r]^-{a^n_Y} &
    {w_{\leq n} Y} \ar[r]^-{l^n_Y} & 
    {w_{\leq n-1} Y} \ar[r]^-{b^n_Y} & 
    {[1]w_n Y}
  }
\end{equation}
as indicated.
Again there might be a choice, but we fix for each $n \in \DZ$ such an
$f^n$.  
The commutativity of the squares on the left and right in 
\eqref{eq:f-w-trunc-fn} shows that 
the sequence
$\candidateWCweak(f):=([n]f^n)_{n \in \DZ}$
defines a morphism of
complexes
\begin{equation}
  \label{eq:nearly-weakWC}
  \xymatrix{
    {\candidateWCweak(X):} \ar[d]_{\candidateWCweak(f)} 
    & {\dots} \ar[r] 
    & {[n]w_n X} \ar[r]^-{d^n_{\candidateWCweak(X)}} \ar[d]^{[n]f^n}
    & {[n+1]w_{n+1}X} \ar[r] \ar[d]^{[n+1]f^{n+1}}
    & {\dots}\\
    {\candidateWCweak(Y):}
    & {\dots} \ar[r] 
    & {[n]w_n Y} \ar[r]^-{d^n_{\candidateWCweak(Y)}}
    & {[n+1]w_{n+1}Y} \ar[r]
    & {\dots.}
  }
\end{equation}
Since some morphisms existed but were not unique we cannot expect that
$\candidateWCweak$ defines a functor $\mathcal{T} \ra C(\heart)$,
cf.\ Example~\ref{ex:weak-WC-weak-homotopy} below.

Let $\WCweak$ be the composition of $\candidateWCweak$ with the canonical
functor $C(\heart) \ra K_\weak(\heart)$
(cf.\ \eqref{eq:can-CKKweak}), i.\,e.\ the
assignment mapping an object $X$ of $\mathcal{T}$ to $\WCweak(X):=
\candidateWCweak(X)$ and a morphism $f$ of $\mathcal{T}$ to the class
of $\candidateWCweak(f)$ in $K_\weak(\heart)$. The complex 
$\WCweak(X)$ is called a \define{weight complex of $X$}.

Recall that the assignment $X \mapsto \WCweak(X)$ depends on the
choices made in \eqref{eq:choice-weight-decomp} and
\eqref{eq:wc-weak-octaeder}. For morphisms we have:

\begin{proposition}
  \label{p:dependence-on-choices-weakWCfun}
  Mapping a morphism $f$ in $\mathcal{T}$ to $\WCweak(f)$ does
  not depend on the choices made in \eqref{eq:f-w-trunc} and
  \eqref{eq:f-w-trunc-fn}.   
\end{proposition}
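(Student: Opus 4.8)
The plan is to fix two collections of choices in \eqref{eq:f-w-trunc} and \eqref{eq:f-w-trunc-fn}, giving rise to two sequences $(f^n)$ and $(\tilde{f}^n)$, hence to two chain maps $\candidateWCweak(f) = ([n]f^n)$ and $([n]\tilde{f}^n)$ as in \eqref{eq:nearly-weakWC}. Since both are morphisms in $C(\heart)$, their difference $([n]g^n)$ with $g^n := f^n - \tilde{f}^n$ is again a chain map, so it suffices to exhibit a weak homotopy to zero, i.e.\ sequences $s^n, t^n \colon [n]w_nX \ra [n-1]w_{n-1}Y$ with $[n]g^n = d^{n-1}_{\candidateWCweak(Y)} s^n + t^{n+1} d^n_{\candidateWCweak(X)}$. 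I would construct everything in unshifted form (maps $w_nX \ra [-1]w_{n-1}Y$) and apply $[n]$ at the end. Handling both choices at once is no harder than handling \eqref{eq:f-w-trunc-fn} alone, so I would do the general case directly.

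First I would extract two consequences of the defining squares. From the right-hand square of \eqref{eq:f-w-trunc} for both choices, $\delta_{w\leq n} := f_{w\leq n} - \tilde{f}_{w\leq n}$ satisfies $\delta_{w\leq n} k^n_X = 0$, so applying $\Hom(-, w_{\leq n}Y)$ to $T^n_X$ yields a factorization $\delta_{w\leq n} = \rho^n v^n_X$ with $\rho^n \colon [1]w_{\geq n+1}X \ra w_{\leq n}Y$. From the left-hand square of \eqref{eq:f-w-trunc-fn} I then get $a^n_Y g^n = \delta_{w\leq n} a^n_X = \rho^n c^n_X$, using $c^n_X = v^n_X a^n_X$ from \eqref{eq:wc-weak-octaeder}. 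Thus the failure of $g^n$ to be killed by $a^n_Y$ is entirely carried by $\rho^n$.

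The heart of the argument is to split $g^n = g'^n + \mu^n$ into a $t$-part and an $s$-part, every factorization being forced by axiom~\ref{enum:ws-iii} in the shifted form $\Hom(\mathcal{T}^{w\geq a}, \mathcal{T}^{w\leq a-1}) = 0$. For the $t$-part: the composite $l^n_Y \rho^n$ maps from $\mathcal{T}^{w\geq n}$ to $\mathcal{T}^{w\leq n-1}$, hence vanishes, so (via $T^{\leq n}_Y$) $\rho^n = a^n_Y \nu^n$; likewise $\nu^n \comp [1]h^{n+1}_X$ maps from $\mathcal{T}^{w\geq n+1}$ to $\mathcal{T}^{w\leq n}$ and vanishes, so (via the shift of $T^{\geq n+1}_X$) $\nu^n = \nu'^n \comp [1]e^{n+1}_X$. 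Setting $\mu^n := \nu^n c^n_X$ and recalling $b^{n+1}_X a^n_X = ([1]e^{n+1}_X)c^n_X$ from \eqref{eq:differential-weak-WC-complex}, I obtain $\mu^n = ([1]\underline{t}^{n+1}) \comp (b^{n+1}_X a^n_X)$ with $\underline{t}^{n+1} := [-1]\nu'^n$, which after applying $[n]$ is precisely $t^{n+1} d^n_{\candidateWCweak(X)}$. For the $s$-part: by construction $a^n_Y \mu^n = \rho^n c^n_X = a^n_Y g^n$, so $g'^n := g^n - \mu^n$ is killed by $a^n_Y$; factoring through $[-1]b^n_Y$ (via $T^{\leq n}_Y$) and then, since the resulting map composed with $[-1]l^{n-1}_Y$ again goes from $\mathcal{T}^{w\geq n}$ to $\mathcal{T}^{w\leq n-1}$ and so vanishes, through $[-1]a^{n-1}_Y$, I get $g'^n = ([-1]b^n_Y)([-1]a^{n-1}_Y)\underline{s}^n$, which after applying $[n]$ is $d^{n-1}_{\candidateWCweak(Y)} s^n$. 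Adding the two halves gives $[n]g^n = d^{n-1}_{\candidateWCweak(Y)} s^n + t^{n+1} d^n_{\candidateWCweak(X)}$, the desired weak homotopy.

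I expect the main obstacle to be conceptual rather than computational: one must anticipate that the two homotopy operators $s$ and $t$ genuinely differ, the $s$ coming from the target truncations $T^{\leq n}_Y$, $T^{\leq n-1}_Y$ and the $t$ coming from the source data through $c^n_X$ and $e^{n+1}_X$. An honest null-homotopy would require $s = t$, and there is no reason for the two one-sided factorizations above to coincide; this is exactly why the construction only descends to a functor into $K_\weak(\heart)$ rather than into $K(\heart)$. The remaining difficulty is bookkeeping with the shift functor, so that $s^n$ and $t^{n+1}$ really land in degree $n-1$ and the two contributions assemble correctly.
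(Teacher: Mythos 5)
Your proof is correct and takes essentially the same route as the paper's: you reduce to the difference of the two choices (the paper's reduction to $f=0$), factor $\delta_{w\leq n}$ through $v^n_X$, extract the correction term $\nu'^n\comp b^{n+1}_X a^n_X$ (the paper's $\tau^{n+1} b^{n+1}_X a^n_X$) via a two-step application of \ref{enum:ws-iii}, and factor the remainder through $([-1]b^n_Y)([-1]a^{n-1}_Y)$, yielding exactly the weak-homotopy identity \eqref{eq:well-defined-in-weak-homotopy-cat}. The differences are purely notational: your $\rho^n,\nu'^n,\psi^n,\chi^n$ are the paper's $s^n,\tau^{n+1},\nu^n,\sigma^n$, and you spell out the two uses of \ref{enum:ws-iii} that the paper compresses into a single diagram.
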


\begin{proof}
  By considering appropriate differences it is easy to see that it is
  sufficient to consider the case that $f=0$. 
  Consider \eqref{eq:f-w-trunc} now for $f=0$ 
  (but we write $f_{w \leq n}$ instead of 
  $0_{w \leq n}$).
  \begin{equation}
    \label{eq:f-w-trunc-null-morph}
    \xymatrix{
      {T^n_X:} \ar[d]_-{(f_{w \geq n+1}, 0, f_{w \leq n})} &
      {w_{\geq n+1}X} \ar[r]^-{g^{n+1}_X} \ar[d]^-{f_{w \geq n+1}} & 
      {X} \ar[r]^-{k_X^n}  \ar[d]^-{f=0} &
      {w_{\leq n}X} \ar[r]^-{v_X^n} \ar[d]^-{f_{w \leq n}}&
      {[1]w_{\geq n+1}X,} \ar[d]^-{[1]f_{w \geq n+1}} \\
      {T^n_Y:} &
      {w_{\geq n+1}Y} \ar[r]^-{g^{n+1}_Y} & 
      {Y} \ar[r]^-{k_Y^n} &
      {w_{\leq n}Y} \ar[r]^-{v_Y^n} &
      {[1]w_{\geq n+1}Y,}\\
    }
  \end{equation}
  Since $f_{w \leq n} \comp k^n_X=0$ there exists
  $s^n: [1]w_{\geq n+1}X \ra
  w_{\leq n}Y$ such that $f_{w \leq n}=s^n v^n_X$.
  Then in the situation
  \begin{equation*}
    \xymatrix@C=1.5cm{
      {[1]w_{\geq n+2}X} \ar[r]^-{[1] h^{n+1}_X}
      & {[1]w_{\geq {n+1}}X} \ar[r]^-{[1] e^{n+1}_X} \ar[d]_(.3){s^n}
      & {[1]w_{n+1}X} 
      \ar@{..>}[lld]^(.3){\tau^{n+1}}
      \\
      {w_n Y} \ar[r]_-{a^n_Y} 
      & {w_{\leq n}Y} \ar[r]_-{l^n_Y} 
      & {w_{\leq n-1}Y} 
    }
  \end{equation*}
  (where both rows are part of triangles in 
  \eqref{eq:wc-weak-octaeder}, the upper row comes up to signs
  from a rotation of $T^{\geq n+1}_{X}$, the lower row is from
  ${T^{\leq n}_{Y}}$)
  the indicated factorization 
  \begin{equation}
    \label{eq:sn-factorization}
    s^n=a^n_Y \tau^{n+1} ([1]e^{n+1}_X)
  \end{equation}
  exists by \ref{enum:ws-iii}. 
  Now \eqref{eq:f-w-trunc-fn} takes the form (the dotted diagonal arrow
  will be explained)
  \begin{equation*}
    \hspace{-1.4cm}
    \xymatrix@=1.8cm{
      {T^{\leq n}_X:} \ar[d]_-{(f^n, f_{w \leq n}, f_{w \leq n-1})} & 
      {w_n X} \ar[r]^-{a^n_X} \ar[d]^-{f^n} \ar@{}[rd]|(0.7){\nabla}&
      {w_{\leq n} X} \ar[r]^-{l^n_X} 
      \ar[d]^-{
        s^n v^n_X} 
      \ar@{..>}[dl]|-{\tau^{n+1} b^{n+1}_X} & 
      {w_{\leq n-1} X} \ar[r]^-{b^n_X} 
      \ar[d]^-{
        s^{n-1} v^{n-1}_X} &  
      {[1]w_n X} \ar[d]^-{[1]f^n}
      \\
      {T^{\leq n}_{Y}:} & 
      {w_n Y} \ar[r]^-{a^n_Y} &
      {w_{\leq n} Y} \ar[r]^-{l^n_Y} & 
      {w_{\leq n-1} Y} \ar[r]^-{b^n_Y} & 
      {[1]w_n Y}
    }
  \end{equation*}
  Equation \eqref{eq:sn-factorization} and $([1]e^{n+1}_X) v^n_X =
  b^{n+1}_X$ (which follows from the octahedron $O^{n+1}_X$, cf.\
  \eqref{eq:wc-weak-octaeder})
  yield
  \begin{equation*}
    s^nv^n_X=a^n_Y \tau^{n+1} ([1]e^{n+1}_X)v^n_X  =a^n_Y \tau^{n+1} b^{n+1}_X.
  \end{equation*}
  This shows that the honest triangle marked $\nabla$ 
  commutes. Consider $f^n-\tau^{n+1} b^{n+1}_X a^n_X$: Since 
  \begin{equation*}
    a^n_Y(f^n-\tau^{n+1} b^{n+1}_X a^n_X)
    =a^n_Y f^n - s^nv^n_X a^n_X=0
  \end{equation*}
  there is a morphism $\nu^n: w_n X \ra [-1]w_{\leq n-1}Y$ such that
  \begin{equation}
    \label{eq:factors-over-nu}
    -([-1]b^n_Y) \nu^n=f^n -\tau^{n+1} b^{n+1}_X a^n_X.  
  \end{equation}
  Now consider the following diagram
  \begin{equation*}
    \hspace{-0.4cm}
    \xymatrix{
      & {w_n X} \ar[d]^{\nu^n} \ar@{..>}[dl]_{\sigma^n}\\
      {[-1]w_{n-1}Y} \ar[r]_-{-[-1]a^{n-1}_Y} 
      & {[-1]w_{\leq {n-1}}Y} \ar[r]_-{-[-1]l^{n-1}_Y} 
      & {[-1]w_{\leq {n-2}}Y} \ar[r]_-{-[-1]b^{n-1}_Y}
      & {w_{n-1}Y}
    }
  \end{equation*}
  where the lower row is the triangle
  obtained from $T^{\leq n-1}_{Y}$ by three rotations.
  The composition $([-1]l^{n-1}_Y) \nu^n$ vanishes by 
  \ref{enum:ws-iii}; hence there is a morphism $\sigma^n: w_n X \ra
  [-1]w_{n-1}Y$ as 
  indicated such that
  $-([-1]a^{n-1}_Y) \sigma^n = \nu^n$. If we plug this into
  \eqref{eq:factors-over-nu} we get
  \begin{equation*}
    f^n- \tau^{n+1} b^{n+1}_X a^n_X=-([-1]b^n_Y) \nu^n =([-1]b^n_Y)
    ([-1]a^{n-1}_Y) \sigma^n. 
  \end{equation*}
  Applying $[n]$ to this equation yields
  (using \eqref{eq:differential-weak-WC-complex})
  \begin{equation}
    \label{eq:well-defined-in-weak-homotopy-cat}
    \candidateWCweak(f)^n=[n]f^n= ([n]\tau^{n+1})
    d^n_{\candidateWCweak(X)} + d^{n-1}_{\candidateWCweak(Y)}
    ([n]\sigma^n).  
  \end{equation}
  This shows that $\candidateWCweak(f)$ is weakly
  homotopic to zero proving the claim (since we assumed that $f=0$). 
\end{proof}

\begin{theorem}
  [{cf.\ \cite[Ch.~3]{bondarko-weight-str-vs-t-str}}]
  \label{t:weakWCfun}
  The assignment $X \mapsto \WCweak(X)$, $f \mapsto \WCweak(f)$,
  depends only on the choices made in
  \eqref{eq:choice-weight-decomp} and 
  \eqref{eq:wc-weak-octaeder}
  and defines an additive functor
  \begin{equation*}
    \WCweak: \mathcal{T} \ra K_\weak(\heart).
  \end{equation*}
  This functor is in a canonical way a functor of additive categories
  with translation:
  There is a canonical isomorphism of functors $\phi: \Sigma \comp \WCweak
  \sira \WCweak \comp [1]$ such that $(\WCweak, \phi)$ is a functor of
  additive categories with translation (the translation on
  $K_\weak(\heart)$ is denoted $\Sigma$ here for clarity).
\end{theorem}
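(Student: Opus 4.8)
The plan is to fix the object-level choices \eqref{eq:choice-weight-decomp} and \eqref{eq:wc-weak-octaeder} once and for all, establish that $\WCweak$ is then an additive functor, and finally construct the translation isomorphism $\phi$. The tool used throughout is Proposition~\ref{p:dependence-on-choices-weakWCfun}: since $\WCweak(f)$ is independent of the extensions chosen in \eqref{eq:f-w-trunc} and \eqref{eq:f-w-trunc-fn}, it suffices to exhibit, for each identity to be checked, \emph{one} system of extensions realising it already at the level of $\candidateWCweak$.

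For the functor axioms I would argue as follows. Taking all extensions in \eqref{eq:f-w-trunc} and \eqref{eq:f-w-trunc-fn} to be identities shows $\candidateWCweak(\id_X)=\id$, hence $\WCweak(\id_X)=\id$. Given $f\colon X\ra Y$ and $g\colon Y\ra Z$, the composite of a chosen extension of $f$ with a chosen extension of $g$ is an extension of $g\comp f$ (the data $(T^n_Y)_n$ cancels), and the induced maps on the weight pieces satisfy $(g\comp f)^n=g^n\comp f^n$; thus $\candidateWCweak(g\comp f)=\candidateWCweak(g)\comp\candidateWCweak(f)$ for this choice, and Proposition~\ref{p:dependence-on-choices-weakWCfun} upgrades this to $\WCweak(g\comp f)=\WCweak(g)\comp\WCweak(f)$. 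Additivity is identical: the sum of extensions of $f$ and $f'$ is an extension of $f+f'$ with $(f+f')^n=f^n+(f')^n$, so $\candidateWCweak(f+f')=\candidateWCweak(f)+\candidateWCweak(f')$ and the claim follows.

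For the translation structure I would exploit that $[1]$ shifts weights: applying the automorphism $[1]$ to the data $T^{n+1}_X$ and $O^{n+1}_X$ produces valid weight decompositions and octahedra for $[1]X$, with $n$-th weight piece $[1]w_{n+1}X$ and octahedral maps $[1]a^{n+1}_X$ and $[1]b^{n+2}_X$. The resulting candidate complex $C'$ has $C'^n=[n+1]w_{n+1}X=(\Sigma\candidateWCweak(X))^n$ and, by the sign convention $d^n_{\Sigma(-)}=-d^{n+1}_{(-)}$, differential $d^n_{C'}=[n+1](b^{n+2}_X\comp a^{n+1}_X)=-d^n_{\Sigma\candidateWCweak(X)}$; hence $((-1)^n\id)_{n\in\DZ}$ is an isomorphism of complexes $\Sigma\candidateWCweak(X)\sira C'$. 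It remains to compare $C'$ (shifted data for $[1]X$) with $\candidateWCweak([1]X)$ (the intrinsic data for $[1]X$): extending $\id_{[1]X}$ along the morphism of triangles from the shifted to the intrinsic data, which exists and is unique by Proposition~\ref{p:BBD-1-1-9-copied-for-w-str} together with \ref{enum:ws-iii}, yields a chain map $C'\ra\candidateWCweak([1]X)$ that is an isomorphism in $K_\weak(\heart)$, its inverse coming from the reverse comparison. Composing gives $\phi_X\colon\Sigma\WCweak(X)\sira\WCweak([1]X)$.

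Finally I would check that $\phi$ is natural. The sign-twist part is natural on the nose, since the shifted-data chain map induced by $[1]f$ has $n$-th component $[n+1]f^{n+1}=(\Sigma\candidateWCweak(f))^n$, which commutes with the scalars $(-1)^n\id$. The hard part will be the naturality of the second, object-choice-change isomorphism $C'\sira\candidateWCweak([1]X)$: this amounts to showing that changing the chosen weight-decomposition data of an object induces a natural isomorphism between the two resulting weight-complex functors. I expect to prove this by re-running the argument of Proposition~\ref{p:dependence-on-choices-weakWCfun} with source and target allowed to carry different object data, the point again being that any two extensions of a fixed morphism differ by maps factoring (via \ref{enum:ws-iii}) so as to produce a weak homotopy. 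Granting this, both squares defining $\phi$ commute in $K_\weak(\heart)$, and $(\WCweak,\phi)$ is a functor of additive categories with translation.
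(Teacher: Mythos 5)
Your overall strategy is the same as the paper's: the functor axioms are obtained exactly as you describe (identity, composite and sum extensions, then Proposition~\ref{p:dependence-on-choices-weakWCfun}), and the translation structure is obtained in the paper, too, by building a second weight complex functor $\WCweak'$ from shifted choices for $[1]X$ and comparing it with $\WCweak$ via a change-of-choices isomorphism; your last paragraph is precisely Remark~\ref{rem:choices-weakWC}.

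There is, however, a genuine error at the start of your translation argument: applying the automorphism $[1]$ to $T^{n+1}_X$ and $O^{n+1}_X$ does \emph{not} produce valid weight decompositions and octahedra for $[1]X$. If $(u,v,w)$ is a triangle, then rotating three times shows that $(-[1]u,-[1]v,-[1]w)$ is a triangle, whereas the naive shift $([1]u,[1]v,[1]w)$ differs from it by an odd number of signs and is therefore an \emph{anti}-triangle. Consequently your complex $C'$ is built from data to which the machinery does not apply: the extension of $\id_{[1]X}$ from the shifted ``triangle'' to the intrinsic one cannot be produced by Proposition~\ref{p:BBD-1-1-9-copied-for-w-str} (both rows there must be triangles), and the re-run of Proposition~\ref{p:dependence-on-choices-weakWCfun} that you invoke for naturality also presupposes genuine triangles and octahedra (its proof repeatedly uses that the chosen rows are triangles, e.g.\ to get factorizations via \ref{enum:ws-iii} and exactness of cohomological functors). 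The paper avoids all of this by taking as weight decomposition of $[1]X$ the triple rotation $U^n_{[1]X}$ of $T^{n+1}_X$, in which all three maps carry a minus sign (see \eqref{eq:choice-weight-decomp-translation}), together with the compatible octahedron \eqref{eq:wc-weak-octaeder-translation}; with these signs one computes
\begin{equation*}
  d^n_{\WCweak'([1]X)} \;=\; [n]\bigl((-[1]b^{n+2}_X) \comp [1]a^{n+1}_X\bigr)
  \;=\; -d^{n+1}_{\WCweak(X)} \;=\; d^n_{\Sigma(\WCweak(X))},
\end{equation*}
so that $\Sigma \comp \WCweak = \WCweak' \comp [1]$ holds on the nose and $\phi := \psi \comp [1]$, with $\psi:\WCweak' \sira \WCweak$ the change-of-choices isomorphism, is automatically an isomorphism of functors. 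Your $((-1)^n\id)$ correction is a symptom of the sign problem rather than a feature: once the shifted data is replaced by the correctly signed data, it disappears and the remainder of your argument reproduces the paper's proof.
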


\begin{proof}
  It is obvious that 
  $\WCweak(\id_X)=\id_{\WCweak(X)}$
  and
  $\WCweak(f \comp g)=\WCweak(f) \comp \WCweak(g)$.
  Hence $\WCweak$ is a well-defined functor which is obviously
  additive.
  
  We continue the proof 
  after the following Remark~\ref{rem:choices-weakWC}
\end{proof}

\begin{remark}
  \label{rem:choices-weakWC}
  Let $\WCweak_1:=\WCweak$ be the additive functor from 
  Theorem~\ref{t:weakWCfun} (we do not know yet how it is compatible
  with the respective translations) and let $\WCweak_2$ be another
  such 
  functor constructed from possibly different choices 
  in 
  \eqref{eq:choice-weight-decomp} and 
  \eqref{eq:wc-weak-octaeder}.
  For each $X \in \mathcal{T}$ the identity $\id_X$ gives rise to a
  well-defined morphism $\psi_{21,X}: \WCweak_1(X) \ra \WCweak_2(X)$ in
  $K_\weak(\heart)$ which is constructed in the same manner as $\WCweak(f)$ was
  constructed from $f:X \ra Y$ above. The collection of these $\psi_{21,X}$
  in fact defines an isomorphism $\psi_{21}: \WCweak_1 \sira \WCweak_2$.
  If there is a third functor $\WCweak_3$ of the same type all these
  isomorphisms are compatible in the sense that
  $\psi_{32}\psi_{21}=\psi_{31}$ and $\psi_{ii}=\id_{\WCweak_i}$ for
  all $i \in \{1,2,3\}$.
\end{remark}

\begin{proof}[Proof of Thm.~\ref{t:weakWCfun} continued:]
  Our aim is to define $\phi$. 
  Let $U^{n}_{[1]X}$ be the triangle obtained by three rotations from
  the triangle $T^{n+1}_X$ (see \eqref{eq:choice-weight-decomp}):
  \begin{equation}
    \label{eq:choice-weight-decomp-translation}
    \xymatrix{
      {U^{n}_{[1]X}:} &
      {[1]w_{\geq n+2}X} \ar[r]^-{-[1]g^{n+2}_X} & 
      {[1]X} \ar[r]^-{-[1]k_X^{n+1}} &
      {[1]w_{\leq n+1}X} \ar[r]^-{-[1]v_X^{n+1}} &
      {[2]w_{\geq n+2}X;}
    }
  \end{equation}
  Note that it is a 
  $(w \geq n+1, w \leq n)$-weight decomposition of $[1]X$.
  Using \eqref{eq:wc-weak-octaeder} it is easy to check that 
  \begin{equation}
    \label{eq:wc-weak-octaeder-translation}
    \hspace{-1.0cm}
    \xymatrix@=1.2cm@dr{
      && {[2]{w_{\geq n+2}X}} \ar[r]^-{[2]h^{n+1}_X} & {[2]{w_{\geq {n+1}}X}}
      \ar[r]^-{[2]e^{n+1}_X} & {[2]{w_{n+1} X}} 
      \\ 
      {U^{\leq n}_{[1]X}:} &{[1]w_{n+1} X} \ar@(ur,ul)[ru]^-{-[1]c^{n+1}_X}
      \ar[r]_-{[1]a^{n+1}_X} & 
      {[1]w_{\leq {n+1}} X} \ar[r]^-{[1]l^{n+1}_X} \ar[u]^-{-[1]v^{n+1}_X} & 
      {[1]w_{\leq n} X} 
      \ar@(dr,dl)[ru]^-{-[1]b^{n+1}_X} \ar[u]_{-[1]v^{n}_X}\\ 
      {U^{n-1}_{[1]X}:} & [1]{w_{\geq {n+1}}X} \ar[u]^-{[1]e^{n+1}_X}
      \ar[r]^-{-[1]g^{n+1}_X}  
      & {[1]X} \ar@(dr,dl)[ru]^-{-[1]k^{n}_X} \ar[u]_{-[1]k^{n+1}_X} \\  
      {U^{n}_{[1]X}:} &{[1]w_{\geq n+2}X}
      \ar[u]^-{[1]h^{n+1}_X} \ar@(dr,dl)[ru]^-{-[1]g^{n+2}_X} 
      \\
      & {U^{\geq n}_{[1]X}:}
    }
  \end{equation}
  is an octahedron.
  In the same manner in which the choices
  \eqref{eq:choice-weight-decomp} and 
  \eqref{eq:wc-weak-octaeder} gave rise to the functor $\WCweak:
  \mathcal{T} \ra K_\weak(\heart)$, the choices 
  \eqref{eq:choice-weight-decomp-translation} and
  \eqref{eq:wc-weak-octaeder-translation} 
  give rise to an additive functor 
  $\WCweak': \mathcal{T} \ra K_\weak(\heart)$.
  As seen in Remark~\ref{rem:choices-weakWC} there is a canonical
  isomorphism $\psi:\WCweak' \sira \WCweak$.
  
  We have
  \begin{equation*}
    \WCweak'([1]X)^n=[n][1]w_{n+1}X=\WCweak(X)^{n+1}=\Sigma(\WCweak(X))^{n}  
  \end{equation*}
  and
  \begin{multline*}
    d^{n}_{\WCweak'([1]X)}=[n]((-[1]b^{n+2}_X) \comp [1]a^{n+1}_X)
    =-[n+1](b^{n+2}_X \comp a^{n+1}_X) \\
    = -d^{n+1}_{\WCweak(X)}=
    d^n_{\Sigma(\WCweak(X))}.
  \end{multline*}
  This implies that $\Sigma(\WCweak(X))= \WCweak'([1]X)$
  and it is easy to see that even $\Sigma \comp \WCweak =\WCweak' \comp [1]$.
  Now define $\phi$ as the
  composition 
  \begin{equation*}
    \Sigma \comp \WCweak =\WCweak' \comp [1] \xsira{\psi \comp [1]} \WCweak
    \comp [1].     
  \end{equation*}
\end{proof}

\begin{definition}
  The functor $\WCweak$ (together with $\phi$) of additive categories
  with translation from
  Theorem~\ref{t:weakWCfun}
  is called a \define{weak weight complex functor}.
\end{definition}

A weak weight complex functor depends on the choices
made in \eqref{eq:choice-weight-decomp} and
\eqref{eq:wc-weak-octaeder}.
However it follows from 
the proof of the Theorem~\ref{t:weakWCfun}
(and Remark~\ref{rem:choices-weakWC}) that any two weak weight complex
functors are canonically isomorphic. Hence we allow ourselves to speak
about \emph{the} weak weight complex functor.

\begin{example}
  [{cf.\ \cite[Rem.~1.5.2.3]{bondarko-weight-str-vs-t-str}}]
  \label{ex:weak-WC-weak-homotopy}
  Let $\Mod(R)$ be the category of
  $R$-modules for $R=\DZ/4\DZ$ and consider the standard weight structure on
  $K(\Mod(R))$
  (see Prop.~\ref{p:ws-hot-additive}). Let $X$ be the complex
  $\dots \ra 0 \ra R 
  \xra{\cdot 2} R \ra 0 \ra 
  \dots$ with $R$ in degrees $0$ and $1$.
  If we use the $\ul{w}$-weight decompositions from the proof of
  Proposition~\ref{p:ws-hot-additive}, the only interesting weight
  decomposition is $T^0_X$ of type $(w \geq 1, w \leq 0)$ and has the
  following form (where we draw the complexes vertically and give only
  their components in degrees 0 and 1, and similar for the morphisms):
  \begin{equation*}
    \xymatrix{
      {\ul{w}_{\geq 1}X} \ar[r]^{\svek 10} &
      {X} \ar[r]^{\svek 01} &
      {\ul{w}_{\leq 0}X} \ar[r]^{\svek 0{\cdot(-2)}} &
      {[1]\ul{w}_{\geq 1}X}\\
      {R} \ar[r]^1 &
      {R} \ar[r]^0 &
      {0} \ar[r]^0 &
      {0}\\
      {0} \ar[r]^0 \ar[u] &
      {R} \ar[r]^1 \ar[u]^{\cdot 2}&
      {R} \ar[r]^{\cdot(-2)=\cdot 2} \ar[u] &
      {R.} \ar[u]
    }
  \end{equation*}
  We can choose $w_1X= w_{\geq 1}X$ and $w_0X=w_{\leq 0}X$ and then
  one checks that 
  $\WCweak(X)$ is given by the connecting morphism of this triangle,
  more precisely
  \begin{equation*}
    \WCweak(X) = (\dots \ra 0 \ra R \xra{\cdot 2} R \ra 0 \ra \dots)
  \end{equation*}
  concentrated in degrees 0 and 1.
  Consider the morphism $0=0_X=\svek 00: X \ra X$ and extend it to a
  morphism of triangles
  (cf.\ \eqref{eq:f-w-trunc} or \eqref{eq:f-w-trunc-null-morph})
  \begin{equation*}
    \xymatrix{
      {w_{\geq 1}X} \ar[r] \ar@{..>}[d]^{\svek y0} &
      {X} \ar[r] \ar[d]^{\svek 00} &
      {w_{\leq 0}X} \ar[r] \ar@{..>}[d]^{\svek 0x} &
      {[1]w_{\geq 1}X} \ar@{..>}[d]^{[1]\svek y0 =\svek 0{y}} \\
      {w_{\geq 1}X} \ar[r] &
      {X} \ar[r] &
      {w_{\leq 0}X} \ar[r] &
      {[1]w_{\geq 1}X.}     
    }
  \end{equation*}
  It is an easy exercise to check that the dotted arrows complete $0_X$
  to a morphism of triangles
  for any $x,y \in \{0,2\}$. Now one checks that all four morphism $(0,0),
  (0,2), (2,0), (2,2): \WCweak(X) \ra \WCweak(X)$ are weakly
  homotopic whereas for example $(0,0)$ and $(2,0)$ are not
  homotopic. 

  In particular, this example confirms that mapping an object $X$ to
  $\candidateWCweak(X)$ and a morphism 
  $f$ to $\candidateWCweak(f)$ (or its class in $K(\heart)$) is not a
  well-defined functor: We have to pass to the weak homotopy category.

  (But one can easily find a preferred choice for the morphisms $f^n$
  in this example
  which defines a functor to $K(\heart)$, see Section
  \ref{sec:strong-WC-standard-wstr} below.)
\end{example}

\begin{lemma}
  [{cf.\ \cite[Thm.~3.3.1.IV]{bondarko-weight-str-vs-t-str}}]
  \label{l:WCweak-w-exact}
  Let $a,b \in \DZ$. If 
  $X \in \mathcal{T}^{w \geq a}$ 
  (resp.\ $X \in \mathcal{T}^{w \leq b}$
  or $X \in \mathcal{T}^{w \in [a,b]}$) 
  then
  $\WCweak(X) \in K(\heart)^{w \geq a}$ 
  (resp.\ $\WCweak(X) \in K(\heart)^{w \leq b}$
  or $\WCweak(X) \in K(\heart)^{w \in [a,b]}$).

  In particular, if the weight structure is bounded, then $\WCweak(X) \in
  K(\heart)^b$ for all $X \in \mathcal{T}$.
\end{lemma}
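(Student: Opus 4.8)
The plan is to prove the three variants by reducing to the first. The statement for $X \in \mathcal{T}^{w \le b}$ is proved by the symmetric argument (building the contracting telescope in high rather than low degrees); the statement for $X \in \mathcal{T}^{w \in [a,b]}$ then follows by combining the two, since $K(\heart)^{w \in [a,b]} = K(\heart)^{w \le b} \cap K(\heart)^{w \ge a}$; and the ``in particular'' is immediate because a bounded weight structure places every object into some $\mathcal{T}^{w \in [a,b]}$. So I focus on showing: if $X \in \mathcal{T}^{w \ge a}$ then the complex $\candidateWCweak(X)$ is isomorphic \emph{in $K(\heart)$} to a complex concentrated in degrees $\ge a$. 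I would argue directly with the fixed choices \eqref{eq:choice-weight-decomp} and \eqref{eq:wc-weak-octaeder} defining the functor, \emph{not} by passing to convenient choices via Remark~\ref{rem:choices-weakWC}: that remark only yields isomorphisms in $K_\weak(\heart)$, and membership in $K(\heart)^{w\ge a}$ need not be invariant under weak homotopy.

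First I would establish the crucial vanishing. For every $n \le a$ the object $w_{\le n}X$ lies in $\mathcal{T}^{w \ge n}$: for $n = a$ this is Lemma~\ref{l:weight-str-basic-properties}~\eqref{enum:weights-bounded}, while for $n < a$ one has $X \in \mathcal{T}^{w \ge n+1}$ by \ref{enum:ws-ii}, so Lemma~\ref{l:weight-str-basic-properties}~\eqref{enum:weight-decomp-with-knowlegde-dir-summand-ii} gives $w_{\ge n+1}X \cong X \oplus [-1]w_{\le n}X$, whence $[-1]w_{\le n}X \in \mathcal{T}^{w \ge n+1}$ by \ref{enum:ws-i} and thus $w_{\le n}X \in \mathcal{T}^{w\ge n}$. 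Consequently, for $n \le a$ the map $l^n_X \colon w_{\le n}X \to w_{\le n-1}X$ has source in $\mathcal{T}^{w\ge n}$ and target in $\mathcal{T}^{w\le n-1}$, so $l^n_X = 0$ by \ref{enum:ws-iii}. Hence each triangle $T^{\le n}_X$ with $n \le a$ splits (Lemma~\ref{l:zero-triang-cat}, after rotating so that the vanishing map becomes the connecting map), yielding $w_nX \cong w_{\le n}X \oplus [-1]w_{\le n-1}X$ with $a^n_X$ the projection and $b^n_X$ (up to sign) the inclusion of the second summand.

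Writing $Q_n := [n]w_{\le n}X \in \heart$ for $n \le a$, the splittings give $\candidateWCweak(X)^n = [n]w_nX \cong Q_n \oplus Q_{n-1}$ for all $n \le a$, with $Q_n$ the ``new'' and $Q_{n-1}$ the ``old'' summand. Next I would feed these splittings into the differential formula $d^n_{\candidateWCweak(X)} = [n](b^{n+1}_X \comp a^n_X)$ from \eqref{eq:differential-weak-WC-complex}: for $n \le a-1$, $a^n_X$ is the projection onto $w_{\le n}X$ and $b^{n+1}_X$ the inclusion of $w_{\le n}X$ into $[1]w_{n+1}X$, so $d^n$ carries the new summand $Q_n$ of $C^n$ by the identity onto the old summand $Q_n$ of $C^{n+1}$ and annihilates $Q_{n-1}$. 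This is exactly the telescoping shape of a ``mapping cone of an identity''. I would then set $E^n := C^n$ for $n \le a-1$, let $E^a$ be the old summand $Q_{a-1}$ of $C^a$, and $E^n := 0$ for $n > a$; the computation shows $E \subset \candidateWCweak(X)$ is a subcomplex, is a degreewise direct summand, and is contractible (the evident shift-back $s$ satisfies $ds+sd=\id_E$). The quotient $C/E$ is concentrated in degrees $\ge a$ with entries $Q_a$ and $[n]w_nX$ ($n>a$), all in $\heart$. Since $0 \to E \to \candidateWCweak(X) \to C/E \to 0$ is degreewise split, it induces a triangle in $K(\heart)$, and $E \cong 0$ there gives $\candidateWCweak(X) \cong C/E \in K(\heart)^{w \ge a}$.

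The main obstacle I anticipate is the third paragraph: confirming that, under the chosen splittings, $a^n_X$ and $b^{n+1}_X$ really are the projection and the inclusion, so that $d^n$ acquires the precise telescoping form (identity on the shared summand, zero otherwise). This requires tracking the split monomorphisms and epimorphisms through the octahedra $O^n_X$ and $O^{n+1}_X$, using the relation $([1]e^{n+1}_X)\comp v^n_X = b^{n+1}_X$ recorded in \eqref{eq:wc-weak-octaeder}, and keeping careful account of the signs introduced by the shifts $[1]$. None of these signs affect contractibility, but they must be organized so that $E$ is genuinely a contractible, degreewise-split subcomplex.
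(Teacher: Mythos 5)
Your argument for the case $X \in \mathcal{T}^{w \geq a}$ is correct, and it takes a genuinely different route from the paper's. The paper never works with the fixed choices: using Lemma~\ref{l:weight-str-basic-properties}~\eqref{enum:bounded-w-decomp} it passes to auxiliary weight decompositions whose terms all lie in $\mathcal{T}^{w \geq a}$, deduces that then every $w_nX$ lies in $\mathcal{T}^{w\geq a}$, so that the resulting complex $\WCweak'(X)$ is \emph{literally} concentrated in degrees $\geq a$, and finally transfers the conclusion back along the canonical isomorphism $\WCweak'(X)\cong\WCweak(X)$ of Remark~\ref{rem:choices-weakWC} --- an isomorphism in $K_\weak(\heart)$, exactly the step you distrust. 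You instead keep the fixed choices, split the triangles $T^{\leq n}_X$ for $n \leq a$, and produce an honest isomorphism in $K(\heart)$ between $\candidateWCweak(X)$ and a complex concentrated in degrees $\geq a$; all the individual steps (the memberships $w_{\leq n}X \in \mathcal{T}^{w\geq n}$ for $n \leq a$, the vanishing $l^n_X=0$, the identification of $a^n_X$ and $b^{n+1}_X$ up to sign via Lemma~\ref{l:zero-triang-cat}, the contractibility of the telescope) check out. One correction to your stated motivation: membership in $K(\heart)^{w \geq a}$ \emph{is} invariant under isomorphism in $K_\weak(\heart)$. Indeed, if $D$ is weakly isomorphic to a complex concentrated in degrees $\geq a$, then the brutal truncation map $k\colon D \to \ul{w}_{\leq a-1}D$ is weakly null-homotopic, say $k=ds+td$; composing this relation with $d$ on either side gives $dsd=d$ and $dtd=d$ in the relevant degrees, whence $h:=s+t-sdt$ is a genuine null-homotopy of $k$, so $D$ is a retract of $\ul{w}_{\geq a}D$ and lies in $K(\heart)^{w\geq a}$ by Theorem~\ref{t:one-side-bounded-hot-idempotent-complete}. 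So the paper's route can be made rigorous; the merit of yours is that it needs no such invariance lemma and proves the statement for the honest fixed-choice complex.

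Two further points. First, a warning on ``the symmetric argument'': for $X \in \mathcal{T}^{w\leq b}$ you must split $T^{\geq n}_X$, i.e.\ show $h^n_X=0$, for \emph{all} $n \geq b$, including $n=b$. The dual of your retract argument (now via Lemma~\ref{l:weight-str-basic-properties}~\eqref{enum:weight-decomp-with-knowlegde-dir-summand-i}) gives $w_{\geq n}X \in \mathcal{T}^{w\leq n}$ only for $n > b$; the boundary case needs the other clause of Lemma~\ref{l:weight-str-basic-properties}~\eqref{enum:weights-bounded}, which gives $w_{\geq b}X \in \mathcal{T}^{w=b}$. If one splits only for $n \geq b+1$, the telescope quotient retains the unwanted summand $[b+1]w_{\geq b+1}X$ in degree $b+1$ and the argument fails.

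Second, the genuine gap: the ``in particular'' is not immediate from the rest. Combining the two one-sided cases gives only $\WCweak(X) \in K(\heart)^{w\leq b}\cap K(\heart)^{w\geq a} = K(\heart)^{w\in[a,b]} \subset K(\heart)^{bw}$, whereas the lemma asserts $\WCweak(X) \in K(\heart)^b$; and this paper is precisely a setting where the distinction matters, since $K(\heart)^b \subset K(\heart)^{bw}$ is strict in general (Remark~\ref{rem:hot-bd-cplx-add-cat-not-idempotent-complete}) because $\heart$ need not be idempotent complete (e.g.\ by Corollary~\ref{c:heart-standard-wstr} the heart of the standard weight structure on $K^b(\mathcal{E})$, with $\mathcal{E}$ the even-dimensional vector spaces, is equivalent to $\mathcal{E}$). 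Knowing that a complex is isomorphic to one concentrated in degrees $\geq a$ and also isomorphic to one concentrated in degrees $\leq b$ does not produce a single bounded model. The repair stays inside your framework: for $X \in \mathcal{T}^{w\in[a,b]}$ with $a<b$ (enlarge the interval if $a=b$) both families of splittings are available, and the combined subcomplex $E$ with $E^n=C^n$ for $n\leq a-1$ and for $n\geq b+1$, $E^a=Q_{a-1}$, $E^b=$ the summand $[b+1]w_{\geq b+1}X$ of $C^b$, and $E^n=0$ for $a<n<b$, is again a contractible degreewise-split subcomplex, now with quotient literally concentrated in degrees $[a,b]$. This yields $\WCfun$-free, choice-honest boundedness: $\candidateWCweak(X) \cong C/E$ in $K(\heart)$ with $C/E$ bounded, hence $\WCweak(X)\in K(\heart)^b$ --- in fact a cleaner conclusion than the paper's transfer argument provides, since closure of $K(\heart)^b$ under weak-homotopy isomorphism (which that argument would need here) does not follow from the retract trick above.
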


\begin{proof}
  Let $X \in \mathcal{S}$ where $\mathcal{S}$ is one of the categories 
  $\mathcal{T}^{w \geq a}$, $\mathcal{T}^{w \leq b}$, 
  $\mathcal{T}^{w \in [a,b]}$.
  Lemma~\ref{l:weight-str-basic-properties}
  \eqref{enum:bounded-w-decomp} shows that we can assume that in all
  weight decompositions $T^n_X$ 
  (see \eqref{eq:choice-weight-decomp})
  of $X$ the objects $w_{\geq n+1}X$ and $w_{\leq n}X$ are in
  $\mathcal{S}$.
  Choose octahedra \eqref{eq:wc-weak-octaeder} and let
  $\WCweak'(X)$ be constructed using these choices.

  Consider the octahedron
  \eqref{eq:wc-weak-octaeder} again.
  We claim that $w_nX \in \mathcal{S}$.
  We already know that $w_nX \in \mathcal{T}^{w=n}$. In particular the
  triangle $T_X^{\geq n}$ is a $(w\geq n+1, w \leq n)$-weight
  decomposition of $w_{\geq n}X$ and the triangle $T_X^{\leq n}$ is a
  $(w \geq n, w \leq n-1)$-weight decomposition of $w_{\leq n}X$.
  We obtain
  \begin{itemize}
  \item Case $\mathcal{S}= \mathcal{T}^{w \geq a}$:
    If $a \leq n$ then the weight decomposition $T_X^{\geq n}$ and
    Lemma~\ref{l:weight-str-basic-properties} \eqref{enum:weights-bounded}
    show $w_nX \in \mathcal{T}^{w \geq a}=\mathcal{S}$.
    If $a > n$ the triangle $T_X^{\leq n}$ shows that $w_nX$ is an
    extension of $w_{\leq n}X \in \mathcal{T}^{w \geq a}$ and
    $[-1]w_{\leq n-1}X \in \mathcal{T}^{w \geq a+1} \subset
    \mathcal{T}^{w \geq a}$ and 
    hence in $\mathcal{T}^{w \geq a}=\mathcal{S}$.
  \item Case $\mathcal{S}= \mathcal{T}^{w \leq b}$:
    If $n-1 < b$ the weight decomposition $T_X^{\leq n}$ and
    Lemma~\ref{l:weight-str-basic-properties} \eqref{enum:weights-bounded}
    show $w_nX \in \mathcal{T}^{w \leq b}=\mathcal{S}$.
    If $n-1 \geq b$ the triangle $T_X^{\geq n}$ shows that $w_nX$ is an
    extension of $w_{\geq n}X \in \mathcal{T}^{w \leq b}$ and
    $[1]w_{\geq n+1}X \in \mathcal{T}^{w \leq b-1} \subset
    \mathcal{T}^{w \leq b}$ and 
    hence in $\mathcal{T}^{w \leq b}=\mathcal{S}$.
  \item Case $\mathcal{S}= \mathcal{T}^{w \in [a, b]}$: Follows from
    the above two cases.
  \end{itemize}
  This proves the claim $w_nX \in \mathcal{S}$.
  Let $I$ be the one among the intervals $[a,\infty)$, $(-\infty,b]$,
  $[a,b]$ that satisfies $\mathcal{S}=\mathcal{T}^{w \in I}$.
  If $n \not\in I$ then
  $w_nX \in \mathcal{T}^{w \in I} \cap \mathcal{T}^{w=n}=0$ and hence
  $\WCweak'(X)^n=[n]w_nX=0$. This shows 
  $\WCweak'(X) \in K^{w \in I}(\heart)$ and
  $\WCweak(X) \in  K(\heart)^{w \in I}$ since $\WCweak'(X) \cong
  \WCweak(X)$. (Here the categories $K^{w \in I}(\heart)$ and
  $K(\heart)^{w \in I}$ are defined in the obvious way, cf.\ beginning
  of Section~\ref{sec:hot-cat-idem-complete}).
\end{proof}

In the following definition the triangulated category
$K(\heart(w))^\anti$ appears (see Section
\ref{sec:triang-categ} for the definition of $\mathcal{T}^\anti$ for
a triangulated category $\mathcal{T}$). 
This happens naturally as can be seen from
Proposition~\ref{p:strong-WC-for-standard-wstr} below. 
Let us however remark that we could avoid its appearance by 
replacing the definition of a weak weight complex functor $\WCweak$
above with its composition with the functor induced by $(S, \id)$
(see \eqref{eq:KA-antiKA-triequi})
which just changes the signs of all differentials.

\begin{definition}
  [{cf.~\cite[Conj.~3.3.3]{bondarko-weight-str-vs-t-str}}]
  \label{d:strong-WCfun}
  A \define{strong weight complex functor} is a 
  \emph{triangulated} 
  functor $\WCfun:\mathcal{T} \ra K(\heart)^\anti$
  such that the obvious composition
   \begin{equation*}
     {\mathcal{T}} \xra{\WCfun} {K(\heart)^\anti} \ra {K_\weak(\heart)}
   \end{equation*}
   is isomorphic to the/a weak weight complex functor
   as a functor of additive categories with translation.
\end{definition}

Recall the standard weight structure on $K(\heart)^\anti$ from
Remark~\ref{rem:ws-hot-additive-anti}.

\begin{lemma}
  \label{l:WCstrong-w-exact}  
  Any strong weight complex functor $\WCfun:\mathcal{T} \ra
  K(\heart)^\anti$
  is weight-exact.
\end{lemma}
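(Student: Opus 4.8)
The plan is to reduce the statement to an object-level transfer through the canonical quotient functor $Q\colon K(\heart) \ra K_\weak(\heart)$ and to feed in the weight-exactness of the weak weight complex functor already recorded in Lemma~\ref{l:WCweak-w-exact}. First I would unwind the definitions. By Remark~\ref{rem:ws-hot-additive-anti} the standard weight structure on $K(\heart)^\anti$ satisfies $(K(\heart)^\anti)^{w\le0}=K(\heart)^{w\le0}$ and $(K(\heart)^\anti)^{w\ge0}=K(\heart)^{w\ge0}$, so it suffices to prove the two inclusions $\WCfun(\mathcal{T}^{w\le0})\subseteq K(\heart)^{w\le0}$ and $\WCfun(\mathcal{T}^{w\ge0})\subseteq K(\heart)^{w\ge0}$; these are dual and I would spell out only the first. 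Fix $X\in\mathcal{T}^{w\le0}$. By Lemma~\ref{l:WCweak-w-exact}, together with the special choices made in its proof, the weak weight complex of $X$ may be taken to be, as an object, a complex concentrated in degrees $\le0$, hence an object of $K(\heart)^{w\le0}$. By Definition~\ref{d:strong-WCfun} the image $Q\WCfun(X)$ is isomorphic in $K_\weak(\heart)$ to this weak weight complex, and $Q$ is the identity on objects, so $\WCfun(X)$ is weakly homotopy equivalent to a complex concentrated in degrees $\le0$.

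The heart of the argument is then the claim that the subcategories $K(\heart)^{w\le0}$ and $K(\heart)^{w\ge0}$ are invariant under isomorphism in $K_\weak(\heart)$, i.e. under weak homotopy equivalence. To see this I would exploit the numerical invariants $H^m(\Hom_\heart(A,C))$ for $A\in\heart$ and $m\in\DZ$. These are weak-homotopy invariants of $C$: applying $\Hom_\heart(A,-)$ to a weak homotopy produces a weak homotopy of complexes of abelian groups, and weakly homotopic maps agree on cohomology (cf. the remark following the definition of $K_\weak$). Moreover, for maps out of a stalk complex, weak homotopy and homotopy coincide, so that $H^{-n}(\Hom_\heart(A,C))=\Hom_{K(\heart)}([n]A,C)$. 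If $C$ is concentrated in degrees $\le0$ these groups vanish for $n<0$; by weak invariance the same then holds for $\WCfun(X)$, giving $\Hom_{K(\heart)}([n]A,\WCfun(X))=0$ for all $A\in\heart$ and all $n<0$.

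It remains to run the converse: from this vanishing against all stalks placed in positive degrees I would deduce $\WCfun(X)\in K(\heart)^{w\le0}=(K(\heart)^{w\ge1})^\perp$ (using \eqref{eq:ws-leq-right-orth-of-geq}). In the bounded case — which is exactly the setting of Theorem~\ref{t:strong-weight-cplx-functor-einleitung}, where $\WCfun$ takes values in $K^b(\heart)^\anti$ — this is a clean downward induction on the top nonzero degree $N$. Taking $A=C^N$ with $N\ge1$, the vanishing of $H^N(\Hom_\heart(C^N,C))=\Hom(C^N,C^N)/\im(d^{N-1}\comp-)$ forces $\id_{C^N}$ to factor as $d^{N-1}\comp r$, so $d^{N-1}$ is a split epimorphism; Gaussian elimination then splits off a contractible two-term summand $C^N\xra{\id}C^N$, lowering the top degree while preserving the invariants, and the induction terminates in $K(\heart)^{w\le0}$. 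The dual induction from the bottom handles $\mathcal{T}^{w\ge0}$, so in the bounded case $\WCfun$ is weight-exact.

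The hard part will be this converse in the \emph{unbounded} case. For complexes that are not bounded above, vanishing of the Hom-groups against all positive-degree stalks does not obviously upgrade to orthogonality against every object of $K(\heart)^{w\ge1}$: a Milnor-sequence dévissage would require the countable coproducts or homotopy colimits that $K(\heart)$ need not possess, and the boundary degree $0$ obstructs the naive $\lim^1$ argument. I would therefore either restrict to the bounded setting, which is all the application in Theorem~\ref{t:strong-weight-cplx-functor-einleitung} needs, or prove separately that $Q$ \emph{reflects} the two weight subcategories of $K(\heart)$. That reflection statement is the genuinely delicate point on which the full generality of the lemma rests.
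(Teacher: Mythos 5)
Your overall route is exactly the paper's: the paper's entire proof of Lemma~\ref{l:WCstrong-w-exact} is the single sentence ``This follows immediately from Lemma~\ref{l:WCweak-w-exact}'', i.e.\ the transfer through $K(\heart)^\anti \ra K_\weak(\heart)$ that you carry out in your first paragraph. You are also right to isolate the hidden step: this transfer needs $K(\heart)^{w\le 0}$ and $K(\heart)^{w\ge 0}$ to be closed under isomorphism in $K_\weak(\heart)$, a point the paper passes over in silence. However, your execution of that step has two genuine gaps, so the proposal does not prove the lemma as stated. First, the Gaussian elimination: splitting off the summand $C^N \xra{\id} C^N$ requires the idempotent $r \comp d^{N-1}$ to split in $\heart$, and hearts of weight structures need not be idempotent complete -- by Corollary~\ref{c:heart-standard-wstr} and Remark~\ref{rem:hot-bd-cplx-add-cat-not-idempotent-complete}, the heart of the standard weight structure on $K^b(\mathcal{E})$ is only the retract closure of $\mathcal{E}$ and is \emph{not} idempotent complete. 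Second, the lemma concerns arbitrary (possibly unbounded) weight structures and functors into $K(\heart)^\anti$, and the unbounded case you leave open is provably out of reach of your invariants: for $R=k[x]/(x^2)$ the two-sided complex $\cdots \xra{x} R \xra{x} R \xra{x} \cdots$ in $K(\proj(R))$ has vanishing $\Hom$ from every stalk in every degree (it is acyclic), yet it lies in no $K(\proj(R))^{w\le n}$, since a bounded above acyclic complex of projectives is contractible while this complex is not. So vanishing against stalks simply does not detect $K(\heart)^{w\le 0}$ for unbounded complexes.

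Both gaps close if one proves the reflection statement directly from the weak homotopy equivalence instead of from the induced vanishing of invariants; this is the missing argument (which the paper's one-liner also needs). Let $f:C \ra D$, $g:D \ra C$ be chain maps inducing inverse isomorphisms in $K_\weak(\heart)$, with $D$ concentrated in degrees $\le 0$, and let $\iota: \ul{w}_{\ge 1}C \ra C$ be the brutal truncation inclusion. For degree reasons $f \comp \iota =0$ in $C(\heart)$, so composing a weak homotopy $\id_C - gf = d\sigma + \tau d$ with $\iota$ gives maps $s^i,t^i: C^i \ra C^{i-1}$ with $\id_{C^i}=d^{i-1}s^i + t^{i+1}d^i$ for all $i \ge 1$. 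This \emph{weak} nullhomotopy of $\iota$ upgrades to a genuine one: put $h^1:=s^1$ and $h^i:=t^i \comp d^{i-1}\comp s^i$ for $i\ge 2$; then $d^{i-1}h^i=(d^{i-1}t^id^{i-1})s^i=d^{i-1}s^i$ (since $t^id^{i-1}=\id-d^{i-2}s^{i-1}$ and $d^{i-1}d^{i-2}=0$) and $h^{i+1}d^i=t^{i+1}(d^is^{i+1}d^i)=t^{i+1}d^i$ (since $d^is^{i+1}=\id-t^{i+2}d^{i+1}$ and $d^{i+1}d^i=0$), whence $d^{i-1}h^i+h^{i+1}d^i=\id_{C^i}$ for all $i\ge 1$. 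Thus $\iota$ is nullhomotopic; rotating the triangle $\ul{w}_{\ge 1}C \ra C \ra \ul{w}_{\le 0}C \ra [1]\ul{w}_{\ge 1}C$ and applying Lemma~\ref{l:zero-triang-cat} exhibits $C$ as a retract of $\ul{w}_{\le 0}C$, so $C \in K(\heart)^{w\le 0}$ by closure under retracts (Theorem~\ref{t:one-side-bounded-hot-idempotent-complete}, as used in Proposition~\ref{p:ws-hot-additive}). The dual argument with the brutal projection $C \ra \ul{w}_{\le -1}C$ handles $K(\heart)^{w \ge 0}$. With this in hand, your reduction -- equivalently the paper's terse deduction -- is complete, with no boundedness or idempotent-completeness hypotheses, and in particular your bounded-case induction and its Gaussian elimination can be discarded.
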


\begin{proof}
  This follows immediately from Lemma~\ref{l:WCweak-w-exact}.
\end{proof}

\subsection{Strong weight complex functor for the standard weight
  structure} 
\label{sec:strong-WC-standard-wstr}

Consider the standard weight structure $w$ on the homotopy category
$K(\mathcal{A})$ of an additive category $\mathcal{A}$
from Proposition~\ref{p:ws-hot-additive}. 
Given $X \in K(\mathcal{A})$ the $\ul{w}$-weight decomposition 
\eqref{eq:hot-wdecomp} is a preferred choice for the
weight decomposition $T_X^n$ in \eqref{eq:choice-weight-decomp}.
Then there is also an obvious preferred choice for the octahedron 
$O_X^n$ in \eqref{eq:wc-weak-octaeder} in which $w_n X$ is just
$[-n]X^n$, the
$n$-th term $X^n$ of the complex $X$ shifted into degree $n$. 
With this choices the complex
$\candidateWCweak(X)=\WCweak(X)$ is 
obtained from $X$ by multiplying all differentials by $-1$, i.\,e.\
$\candidateWCweak(X) = S(X)$ where $S$ is the functor defined in
Section~\ref{sec:homotopy-categories};  
here we view 
$\mathcal{A} \subset \heart(w)$ as a full
subcategory (see Cor.~\ref{c:heart-standard-wstr} for a more
precise statement).

Similarly there are preferred choices for morphisms: Let 
$f: X \ra Y$ be a morphism in $K(\mathcal{A})$. Let $\hat{f}: X \ra Y$
be a morphism in $C(\mathcal{A})$ representing $f$. 
The morphisms $\ol{w}_{\geq n+1}\hat{f}$,
$\ol{w}_{\leq n}\hat{f}$ and $\ol{w}_{n}\hat{f}$ 
gives rise to preferred choices for the morphisms $f_{w \geq n+1}$,
$f_{w\leq n}$ and $f^n$ in 
\eqref{eq:f-w-trunc} and \eqref{eq:f-w-trunc-fn}. 
If we identify $\mathcal{A} \subset \heart(w)$ as above the morphism
$\candidateWCweak(f)$ (see \eqref{eq:nearly-weakWC}) of complexes 
is just $\hat{f}=S(\hat{f}):\candidateWCweak(X)=S(X)
\ra \candidateWCweak(Y)=S(Y)$. Obviously its class in the homotopy
category 
is $f=S(f)$ and hence
does not depend on 
the choice of the representative for $f$. 

\begin{proposition}
  \label{p:strong-WC-for-standard-wstr}
  The composition 
  \begin{equation*}
    K(\mathcal{A}) \xsira{(S,\id)}
    K(\mathcal{A})^\anti \subset K(\heart(w))^\anti
  \end{equation*}
  of the triangulated equivalence $(S, \id)$
  (cf.~\eqref{eq:KA-antiKA-triequi}) and the obvious inclusion is a
  strong weight complex functor.
\end{proposition}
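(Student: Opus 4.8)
The plan is to check the two conditions of Definition~\ref{d:strong-WCfun} for the functor $\WCfun := \big(K(\mathcal{A})^\anti \subset K(\heart(w))^\anti\big)\comp(S,\id)$: namely that $\WCfun$ is triangulated, and that the further composition with the canonical functor $K(\heart(w))^\anti \ra K_\weak(\heart(w))$ is isomorphic, \emph{as a functor of additive categories with translation}, to the weak weight complex functor $\WCweak$.

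Triangulatedness is the easy half. The equivalence $(S,\id)$ is triangulated by \eqref{eq:KA-antiKA-triequi}. The inclusion $\mathcal{A} \hra \heart(w)=\mathcal{A}^\ic$ of Corollary~\ref{c:heart-standard-wstr} induces a triangulated functor $K(\mathcal{A}) \ra K(\heart(w))$, since it sends mapping cone triangles to mapping cone triangles; and any triangulated functor sends anti-triangles to anti-triangles, so the induced functor $K(\mathcal{A})^\anti \ra K(\heart(w))^\anti$ is triangulated as well. Thus $\WCfun$ is a composition of triangulated functors.

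Next I would identify the underlying additive functor of $\WCfun':=\big(K(\heart(w))^\anti \ra K_\weak(\heart(w))\big)\comp\WCfun$ with that of $\WCweak$. This is exactly what the discussion preceding the Proposition provides: taking the $\ul{w}$-weight decompositions \eqref{eq:hot-wdecomp} and the evident octahedra as the choices in the construction of $\WCweak$, one gets $\candidateWCweak(X)=S(X)$ on objects and $\candidateWCweak(f)=S(\hat f)$ on morphisms, so that $\WCweak$ and $\WCfun'$ coincide as additive functors $K(\mathcal{A}) \ra K_\weak(\heart(w))$.

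The hard part will be to promote this equality to an isomorphism of functors of additive categories with translation; equivalently, to match the translation isomorphism $\phi$ produced in Theorem~\ref{t:weakWCfun} with the translation isomorphism of $\WCfun'$, which is the composite of the identity isomorphisms of $(S,\id)$, of the inclusion, and of $K(\heart(w))^\anti \ra K_\weak(\heart(w))$, and hence is the identity. Because $\WCweak$ is unique up to canonical isomorphism of functors of additive categories with translation (Theorem~\ref{t:weakWCfun} together with Remark~\ref{rem:choices-weakWC}), it is enough to compute $\phi$ for the preferred choices, where $\phi_X=\psi_{[1]X}$ and $\psi$ compares the functor $\WCweak'$ built from the rotated choices \eqref{eq:choice-weight-decomp-translation} and \eqref{eq:wc-weak-octaeder-translation} with $\WCweak$ built from the preferred choices for $[1]X$. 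Concretely I would first extend $\id_{[1]X}$ to the morphism of weight decompositions $U^n_{[1]X} \ra T^n_{[1]X}$, then propagate it through the octahedra to read off $\psi_{[1]X}$, carefully tracking the signs introduced by the threefold rotation in \eqref{eq:choice-weight-decomp-translation}. The delicate point is that these accumulated signs must combine so that $\psi_{[1]X}$ agrees with the identity translation structure of $\WCfun'$; this is precisely where the detour through the anti-category $K(\heart(w))^\anti$, and the use of the \emph{specific} equivalence $(S,\id)$ (rather than \eqref{eq:T-antiT-triequi}), is forced upon us, and getting this sign bookkeeping to close is the technical heart of the argument. Once $\phi$ is matched with the identity, the identity natural transformation furnishes the required isomorphism $\WCfun'\sira\WCweak$, and $\WCfun$ is a strong weight complex functor.
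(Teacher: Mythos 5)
Your first two steps (triangulatedness of the composition, and the identification of the underlying additive functors via the preferred choices) are correct, and in outline you follow the paper: its entire proof is ``Clear from the above arguments'', i.e.\ it relies on exactly the discussion you cite. The problem is your third step. You correctly isolate the matching of the translation isomorphisms as the only remaining issue, but you do not carry it out; you only assert that ``these accumulated signs must combine so that $\psi_{[1]X}$ agrees with the identity translation structure.'' Since everything else is formal, this assertion is the entire nontrivial content of the proposition, so the proposal has a gap exactly at its declared technical heart.

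Moreover, this is not a verification one can wave through: if you do the computation you outline, with the paper's conventions it does \emph{not} close. The triple rotation \eqref{eq:choice-weight-decomp-translation} puts minus signs on the first two maps of $U^n_{[1]X}$, while the preferred decomposition $T^n_{[1]X}$ of $[1]X$ coming from \eqref{eq:hot-wdecomp} carries no such signs; hence $(-\id,\id,-\id)$ is a morphism of triangles $U^n_{[1]X}\ra T^n_{[1]X}$ extending $\id_{[1]X}$, and by Proposition~\ref{p:dependence-on-choices-weakWCfun} it computes $\psi_{[1]X}$. Every comparison map $f^n$ is then forced to be $-\id$, so $\phi_X=\psi_{[1]X}=-\id$, whereas $\can\comp(S,\id)$ has translation structure $+\id$. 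These two structures on the same additive functor $F$ are in general not intertwined by any natural isomorphism: such an $\eta$ would satisfy $\eta_{[1]X}=-\Sigma\eta_X$, naturality on stalk complexes forces $\eta$ to act there by one central unit $\epsilon$, and for $\mathcal{A}$ the finitely generated free abelian groups and $X=(\DZ\xra{3}\DZ)$ (degrees $0,1$), writing $\eta_X$ as the class of a chain map $(e^0,e^1)$, the chain map condition gives $e^0=e^1$ while naturality with respect to the projection $X\ra\DZ_0$ and the inclusion $[-1]\DZ_0\ra X$ gives $e^0\equiv\epsilon$ and $e^1\equiv-\epsilon \pmod 3$ — a contradiction. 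So closing your argument requires more than bookkeeping: one must adjust a sign convention (e.g.\ define $\phi$ in Theorem~\ref{t:weakWCfun} as the \emph{negative} of $\psi\comp[1]$, after which $\phi=\id$ in the standard case and your step becomes immediate), a subtlety which the paper's one-line proof also glosses over.
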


\begin{proof}
  Clear from the above arguments.
\end{proof}

\section{Filtered triangulated categories}
\label{sec:filt-tria-cat}

We very closely follow \cite[App.]{Beilinson}. Let us recall the
definition of a filtered triangulated category.  

\begin{definition}
  \label{d:filt-tria}
  \begin{enumerate}
  \item 
    \label{enum:filt-tria-cat}
    A \define{filtered triangulated category}, or \define{f-cat\-e\-go\-ry}
    for short, is a quintuple
    $(\tildew{\mathcal{T}}, \tildew{\mathcal{T}}(\leq 0),
    \tildew{\mathcal{T}}(\geq 0), s, \alpha)$ 
    where $\tildew{\mathcal{T}}$ is a triangulated category,
    $\tildew{\mathcal{T}}(\leq 0)$ and $\tildew{\mathcal{T}}(\geq 0)$
    are strict full \emph{triangulated} subcategories,
    $s: \tildew{\mathcal{T}} \sira \tildew{\mathcal{T}}$ is a
    triangulated automorphism (called ``shift of filtration") and
    $\alpha: 
    \id_{\tildew{\mathcal{T}}} \ra s$ is a morphism of
    \emph{triangulated}
    functors, such that the 
    following axioms hold 
    (where 
    $\tildew{\mathcal{T}}(\leq n):= s^n(\tildew{\mathcal{T}}(\leq 0))$
    and 
    $\tildew{\mathcal{T}}(\geq n):= s^n(\tildew{\mathcal{T}}(\geq 0))$):
    \begin{enumerate}[label=(fcat{\arabic*})]
    \item 
      \label{enum:filt-tria-cat-shift}
      $\tildew{\mathcal{T}}(\geq 1) \subset \tildew{\mathcal{T}}(\geq 0)$ and
      $\tildew{\mathcal{T}}(\leq 0) \subset \tildew{\mathcal{T}}(\leq 1)$
    \item 
      \label{enum:filt-tria-cat-exhaust}
      $\tildew{\mathcal{T}}=\bigcup_{n \in \DZ} \tildew{\mathcal{T}}(\leq n) =
        \bigcup_{n \in \DZ} \tildew{\mathcal{T}}(\geq n)$.
    \item 
      \label{enum:filt-tria-cat-no-homs}
      $\Hom(\tildew{\mathcal{T}}(\geq 1), \tildew{\mathcal{T}}(\leq 0))=0$.
    \item 
      \label{enum:filt-tria-cat-triang}
      For any $X$ in $\tildew{\mathcal{T}}$ there is a 
      triangle
      \begin{equation*}
        A \ra X \ra B \ra A[1]
      \end{equation*}
      with $A$ in $\tildew{\mathcal{T}}(\geq 1)$ and $B$ 
      in $\tildew{\mathcal{T}}(\leq 0)$.
    \item 
      \label{enum:filt-tria-cat-shift-alpha}
      For any $X \in \tildew{\mathcal{T}}$ one has
      $\alpha_{s(X)}=s(\alpha_X)$ as morphisms
      $s(X) \ra s^2(X)$.
    \item 
      \label{enum:filt-tria-cat-hom-bij}
      For all $X$ in
      $\tildew{\mathcal{T}}(\leq 0)$
      and $Y$ in $\tildew{\mathcal{T}}(\geq 1)$, the map
      \begin{align*}
        \Hom(X,s\inv(Y)) & \sira \Hom(X,Y)\\
        f & \mapsto \alpha_{s\inv(Y)} \comp f
      \end{align*}
      is bijective (equivalently one can require that
      \begin{align*}
        \Hom(s(X), Y) & \sira \Hom(X,Y)\\
        g & \mapsto g \comp \alpha_X
      \end{align*}
      is bijective).
      As diagrams these equivalent conditions look as follows:
      \begin{equation*}
        \xymatrix{
          & {Y}\\
          {X} \ar[ur]^-a \ar@{..>}[r]_-{\exists ! a'} & {s\inv(Y)}
          \ar[u]_-{\alpha_{s\inv(Y)}} 
        }
        \quad
        \text{ and }
        \quad
        \xymatrix{
          {s(X)}  \ar@{..>}[r]^-{\exists ! b'} & {Y}\\
          {X} \ar[u]^-{\alpha_X} \ar[ur]_-b &
        }
      \end{equation*}
    \end{enumerate}
    By abuse of notation we then say that $\tildew{\mathcal{T}}$ is an
    f-category. 
  \item 
    \label{enum:filt-tria-over}
    Let $\mathcal{T}$ be a triangulated category. A \define{filtered
      triangulated category over\footnote
      {
        We do not ask here for a functor
        $\tildew{\mathcal{T}} \ra \mathcal{T}$ as suggested by the word
        ``over"; Proposition~\ref{p:functor-omega} will yield such a
        functor.
      } $\mathcal{T}$}
    (or \define{f-category over $\mathcal{T}$})
    is an f-category $\tildew{\mathcal{T}}$ together with an
    equivalence 
    \begin{equation*}
      i: \mathcal{T} \ra \tildew{\mathcal{T}}(\leq 0)\cap
      \tildew{\mathcal{T}}(\geq 0)  
    \end{equation*}
    of triangulated categories.
  \end{enumerate}
\end{definition}

Let $\tildew{\mathcal{T}}$ be an f-category.
We will use the shorthand notation
$\tildew{\mathcal{T}}([a,b])=\tildew{\mathcal{T}}(\leq b) \cap
\tildew{\mathcal{T}}(\geq a)$ and abbreviate
$[a,a]$ by $[a]$. Similarly $\tildew{\mathcal{T}}(<a)$
etc.\ have the obvious meaning.
For $a<b$ we have $\tildew{\mathcal{T}}(\leq a) \cap \tildew{\mathcal{T}}(\geq b)=0$: If $X$ is in this intersection then axiom \ref{enum:filt-tria-cat-no-homs} implies that $\id_X=0$ hence $X=0$.
Note that $\tildew{\mathcal{T}}$ together with the identity functor
$\id: \tildew{\mathcal{T}}([0]) \ra \tildew{\mathcal{T}}([0])$ is an
f-category over $\tildew{\mathcal{T}}([0])$.

\begin{remark}
  \label{rem:f-cat-vs-t-cat}
  Let $\tildew{\mathcal{T}}$ be a filtered triangulated category. Define 
  $\mathcal{D}^{t \leq 0}:=\tildew{\mathcal{T}}(\geq 1)$ and 
  $\mathcal{D}^{t \geq 0}:=\tildew{\mathcal{T}}(\leq 0)$.
  Then 
  $(\mathcal{D}^{t \leq 0}, 
  \mathcal{D}^{t \geq 0})$ defines a t-structure 
  on
  $\tildew{\mathcal{T}}$.

  Note that in this example all $\mathcal{D}^{t \leq i}$
  coincide since $\tildew{\mathcal{T}}(\geq 1)$ is a triangulated
  subcategory; 
  similarly, all $\mathcal{D}^{t \geq i}$ are equal.
  The heart of this t-structure is zero.

  Of course we can apply the shift of filtration to this example and
  obtain t-structures 
  $(\tildew{\mathcal{T}}(\geq n+1), \tildew{\mathcal{T}}(\leq n))$ for
  all $n \in \DZ$.

  Similarly, define 
  $\mathcal{E}^{w \leq 0} :=\tildew{\mathcal{T}}(\leq 0)$ and
  $\mathcal{E}^{w \geq 0} :=\tildew{\mathcal{T}}(\geq 1)$. Then 
  $(\mathcal{E}^{w \leq 0}, \mathcal{E}^{w \geq 0})$ is a weight
  structure on $\tildew{\mathcal{T}}$.
  Note that \ref{enum:ws-i} is satisfied since 
  $(\tildew{\mathcal{T}}(\geq 1), \tildew{\mathcal{T}}(\leq 0))$ is a
  t-structure. Again all $\mathcal{E}^{w \leq i}$ 
  (resp.\ $\mathcal{E}^{w \leq i}$) coincide and the heart is zero.
\end{remark}

\subsection{Basic example}
\label{sec:basic-example-fcat}

We introduce the filtered derived category of an abelian category,
following \cite[V.1]{illusie-cotan-i}. The reader who is not interested
in this basic example of an f-category can skip this section and
continue with \ref{sec:firstprop-filt-cat}.

Let $\mathcal{A}$ be an abelian category and $CF(\mathcal{A})$ the
category whose objects are complexes in $\mathcal{A}$ with a finite
decreasing filtration and whose morphisms are morphisms of complexes
which respect the filtrations. 
If $L$ is an object of $CF(\mathcal{A})$ and $i,j  \in \DZ$ we denote the
component of $L$ in degree $i$ by $L^i$ and by $F^jL$ the $j$-the step
of the filtration, and by $F^jL^i$ the component of degree $i$ in
$F^jL$.
Pictorially an object $L$ looks as
\begin{equation*}
  \xymatrix{
    {L:} & 
    {\dots} \ar@{}[r]|-{\supset} &
    {F^{-1}L} \ar@{}[r]|-{\supset} &
    {F^{0}L} \ar@{}[r]|-{\supset} &
    {F^{1}L} \ar@{}[r]|-{\supset} &
    {F^{2}L} \ar@{}[r]|-{\supset} &
    {\cdots}
  }
\end{equation*}
or, if we also indicate the differentials, as
\begin{equation*}
  \xymatrix@R1pc{
    {\dots} & 
    {} &
    {\dots} &
    {\dots} &
    {\dots} &
    {} \\
    {L^{1}:} \ar[u] & 
    {\dots} \ar@{}[r]|-{\supset} &
    {F^{-1}L^{1}} \ar@{}[r]|-{\supset} \ar[u] &
    {F^{0}L^{1}} \ar@{}[r]|-{\supset} \ar[u] &
    {F^{1}L^{1}} \ar@{}[r]|-{\supset} \ar[u] &
    {\dots} \\
    {L^{0}:} \ar[u] & 
    {\dots} \ar@{}[r]|-{\supset} &
    {F^{-1}L^{0}} \ar@{}[r]|-{\supset} \ar[u] &
    {F^{0}L^{0}} \ar@{}[r]|-{\supset} \ar[u] &
    {F^{1}L^{0}} \ar@{}[r]|-{\supset} \ar[u] &
    {\dots} \\
    {L^{-1}:} \ar[u] & 
    {\dots} \ar@{}[r]|-{\supset} &
    {F^{-1}L^{-1}} \ar@{}[r]|-{\supset} \ar[u] &
    {F^{0}L^{-1}} \ar@{}[r]|-{\supset} \ar[u] &
    {F^{1}L^{-1}} \ar@{}[r]|-{\supset} \ar[u] &
    {\dots} \\
    {\dots} \ar[u] & 
    {}  &
    {\dots} \ar[u] &
    {\dots} \ar[u] &
    {\dots} \ar[u] &
    {} 
  }
\end{equation*}

This is an additive category having kernels and cokernels, but 
it is
not abelian (if $\mathcal{A}\not=0$).
There is an obvious translation functor $[1]$ on $CF(\mathcal{A})$.

A morphism $f: L \ra M$ between objects of $CF(\mathcal{A})$
is called a \define{quasi-isomorphism} if one of the following
equivalent conditions is satisfied:
\begin{enumerate}
\item $F^nf:F^nL \ra F^nM$ is a quasi-isomorphism for all $n \in \DZ$.
\item $\gr^n(f): \gr^n(L) \ra \gr^n(M)$ is a quasi-isomorphism for
  all $n \in \DZ$.
\end{enumerate}

We localize $CF(\mathcal{A})$ with respect to the class of all
quasi-isomorphisms and obtain the \define{filtered derived category}
$DF(\mathcal{A})$ of $\mathcal{A}$.
This category can equivalently be constructed as the localization of
the filtered homotopy category. The latter category is triangulated
with triangles isomorphic to mapping cone triangles; this
structure of a triangulated category is inherited to
$DF(\mathcal{A})$.

Morphisms in $DF(\mathcal{A})$ are equivalence classes of so-called roofs:
Any morphism $f: L \ra M$ in $DF(\mathcal{A})$ in $DF(\mathcal{A})$ 
can be represented as
\begin{equation*}
  g s\inv: L \xla{s} L' \xra{g} M
\end{equation*}
where $s$ and $g$ are morphisms in $CF(\mathcal{A})$ and $s$ is a
quasi-isomorphism. Similarly, one can also represent $f$ as
\begin{equation*}
  t\inv h: L \xra{h} M' \xla{t} M
\end{equation*}
where $t$ and $h$ are morphisms in $CF(\mathcal{A})$ and $t$ is a
quasi-isomorphism.

Let
$D(\mathcal{A})$ be the derived category of $\mathcal{A}$.
The functor $\gr^n:CF(\mathcal{A}) \ra C(\mathcal{A})$ passes to the
derived categories and yields a triangulated functor $\gr^n:
DF(\mathcal{A}) \ra D(\mathcal{A})$.

Define (strict) full subcategories
\begin{align*}
  DF(\mathcal{A})(\leq n)& :=\{L \in DF(\mathcal{A})\mid \text{$\gr^i (L)
    =0$ for all $i>n$}\},\\
  DF(\mathcal{A})(\geq n)& :=\{L \in DF(\mathcal{A})\mid \text{$\gr^i (L)
    =0$ for all $i<n$}\}.
\end{align*}

Let $s: CF(\mathcal{A}) \ra CF(\mathcal{A})$ the functor which shifts
the filtration: 
Given an object $L$ the object $s(L)$ has the same underlying complex
but filtration $F^n(s(L))=F^{n-1}L$. 
It induces a triangulated automorphism $s:
DF(\mathcal{A}) \ra DF(\mathcal{A})$.
Let $\alpha: \id_{DF(\mathcal{A})} \ra s$ be the obvious morphism of
triangulated functors: We include a picture of $\alpha_L$ where we indicate the $0$-th part of the filtration by a box:
\begin{equation*}
  \xymatrix@R1pc{
    {L:} \ar[d]^-{\alpha_L} & 
    {\dots} \ar@{}[r]|-{\supset} &
    {F^{-1}L} \ar@{}[r]|-{\supset} \ar[d] &
    {\mathovalbox{F^{0}L}} \ar@{}[r]|-{\supset} \ar[d] &
    {F^1L} \ar@{}[r]|-{\supset} \ar[d] &
    {\cdots} \\
    {s(L):} 
    & 
    {\dots} \ar@{}[r]|-{\supset} &
    {F^{-2}L} \ar@{}[r]|-{\supset} 
    &
    {\mathovalbox{F^{-1}L}} \ar@{}[r]|-{\supset} 
    &
    {F^{0}L} \ar@{}[r]|-{\supset} 
    &
    {\cdots} 
  }
\end{equation*}
Note that $s^n(DF(\mathcal{A})(\leq 0))= DF(\mathcal{A})(\leq n)$ and
$s^n(DF(\mathcal{A})(\geq 0))= DF(\mathcal{A})(\geq n)$.

We define a functor 
$i:D(\mathcal{A}) \ra DF(\mathcal{A})$  
by mapping an 
object $L$ of $D(\mathcal{A})$ to $i(L)=(L,\Tr)$, where $\Tr$ is the 
trivial filtration on $L$ defined by $\Tr^i L = L$ for $i \leq 0$ 
and $\Tr^i L =0$ for $i > 0$.
We often consider $i$ as a functor to $DF(\mathcal{A})([0])$.

\begin{proposition}
  [{cf.\ \cite[Example A 2]{Beilinson}}]
  \label{p:basic-ex-f-cat}
  The datum
  \begin{equation}
    \label{eq:filt-der}
    (DF(\mathcal{A}), DF(\mathcal{A})(\leq 0),
    DF(\mathcal{A})(\geq 0), s, \alpha)
  \end{equation}
  defines a filtered triangulated
  category $DF(\mathcal{A})$, and
  the functor $i:D(\mathcal{A}) \ra DF(\mathcal{A})([0])$ makes
  $DF(\mathcal{A})$ into a filtered 
  triangulated category over $D(\mathcal{A})$.
\end{proposition}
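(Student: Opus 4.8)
The plan is to verify the six axioms \ref{enum:filt-tria-cat-shift}--\ref{enum:filt-tria-cat-hom-bij} for the quintuple \eqref{eq:filt-der} and then to show that $i$ is an equivalence onto $DF(\mathcal{A})([0])$. Four of the axioms are essentially formal. Axiom \ref{enum:filt-tria-cat-shift} is immediate from the definitions, since $\gr^i(L)=0$ for all $i<1$ is weaker than $\gr^i(L)=0$ for all $i<0$, and dually. Axiom \ref{enum:filt-tria-cat-exhaust} holds because the filtration of every object of $CF(\mathcal{A})$ is finite, so only finitely many $\gr^i$ are nonzero. Axiom \ref{enum:filt-tria-cat-shift-alpha} I would check directly on $CF(\mathcal{A})$ from the explicit description of $s$ and $\alpha$. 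For axiom \ref{enum:filt-tria-cat-triang} I would use the filtration itself: the short exact sequence of filtered complexes $0\ra F^1L\ra L\ra L/F^1L\ra 0$ (with induced, resp.\ quotient, filtration) satisfies $F^1L\in DF(\mathcal{A})(\geq 1)$ and $L/F^1L\in DF(\mathcal{A})(\leq 0)$ and yields the required triangle in $DF(\mathcal{A})$.

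The first substantial point is axiom \ref{enum:filt-tria-cat-no-homs}. Given $X\in DF(\mathcal{A})(\geq 1)$ and $Y\in DF(\mathcal{A})(\leq 0)$, I would represent a morphism $X\ra Y$ by a roof $X\xra{h}Y'\xla{t}Y$ with $t$ a quasi-isomorphism; since a quasi-isomorphism induces isomorphisms on all $\gr^i$, the object $Y'$ again lies in $DF(\mathcal{A})(\leq 0)$. The key point is that $Y'\in(\leq 0)$ forces all graded pieces of $F^1Y'$ to be acyclic, so $F^1Y'\cong 0$ in $DF(\mathcal{A})$, while $X\in(\geq 1)$ makes the inclusion $F^1X\hra X$ a quasi-isomorphism, hence an isomorphism in $DF(\mathcal{A})$. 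As $h$ respects filtrations, we have $h(F^1X)\subseteq F^1Y'$; composing with the isomorphism $F^1X\hra X$ thus factors $h$ through $F^1Y'\cong 0$, so the morphism is zero, proving \ref{enum:filt-tria-cat-no-homs}.

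Axiom \ref{enum:filt-tria-cat-hom-bij} is the main obstacle. For $X\in DF(\mathcal{A})(\leq 0)$ and $Y\in DF(\mathcal{A})(\geq 1)$ one must show that postcomposition with $\alpha_{s\inv(Y)}$ defines a bijection $\Hom(X,s\inv Y)\sira\Hom(X,Y)$. The difficulty is that $\alpha$ induces the zero map on all associated graded objects, so the claim cannot be read off from $\gr$; one must genuinely control morphisms in the localized category. My plan is to complete $\alpha_{s\inv(Y)}$ to a triangle, so that the asserted bijection becomes equivalent to the vanishing of $\Hom(X,C)$ and $\Hom(X,[-1]C)$ for $C=\Cone(\alpha_{s\inv(Y)})$, and then to establish this orthogonality by induction on the length of the filtration, using the isomorphisms $F^1X\cong 0$ and $F^1Y\cong Y$ in $DF(\mathcal{A})$ to reduce to objects concentrated in a single filtration degree, where the statement becomes one about the underlying complexes in $D(\mathcal{A})$. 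Carrying out this reduction carefully is the technical heart of the proof.

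Finally, to see that $i$ makes $DF(\mathcal{A})$ an f-category over $D(\mathcal{A})$, I would show that $\gr^0$ restricts to a quasi-inverse $DF(\mathcal{A})([0])\ra D(\mathcal{A})$ of $i$. The composite $\gr^0\comp i$ is the identity by the definition of the trivial filtration $\Tr$. Conversely, any $N\in DF(\mathcal{A})([0])$ satisfies $F^1N\cong 0$ and has $F^0N\hra N$ a quasi-isomorphism (since $\gr^iN$ is acyclic for $i\neq 0$), so the natural zigzag $N\la F^0N\ra i(\gr^0N)$ of quasi-isomorphisms exhibits a natural isomorphism between $\id$ and $i\comp\gr^0$ on $DF(\mathcal{A})([0])$. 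This yields both essential surjectivity and full faithfulness, and $i$ is triangulated since it commutes with the shift and sends short exact sequences of trivially filtered complexes to triangles.
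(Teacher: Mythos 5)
Most of your proposal is sound: axioms \ref{enum:filt-tria-cat-shift}, \ref{enum:filt-tria-cat-exhaust}, \ref{enum:filt-tria-cat-shift-alpha} and \ref{enum:filt-tria-cat-triang} are handled exactly as in the paper; your proof of \ref{enum:filt-tria-cat-no-homs} is the paper's argument transported into $DF(\mathcal{A})$ (factoring through $F^1Y'\cong 0$) rather than performed at the chain level, which is fine; and your zigzag $N \la F^0N \ra i(\gr^0 N)$ of natural filtered quasi-isomorphisms is a clean route to $i\comp\gr^0\cong\id$ on $DF(\mathcal{A})([0])$, arguably slicker than the paper's roof-straightening proof of fullness. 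The genuine gap is axiom \ref{enum:filt-tria-cat-hom-bij}, which you reduce but do not prove. Your reduction (pass to the cone of $\alpha_{s\inv(Y)}$, then d\'evissage in both variables via the exact triangles $F^kL \ra L \ra L/F^kL$ and the five lemma) is legitimate, but it only transforms the axiom into the statement: for $A,B \in D(\mathcal{A})$ and $m \geq 0$, postcomposition with $\alpha$ gives a bijection $\Hom_{DF(\mathcal{A})}(i(A), s^m i(B)) \sira \Hom_{DF(\mathcal{A})}(i(A), s^{m+1} i(B))$. This is not ``a statement about the underlying complexes in $D(\mathcal{A})$'' that can be read off; it is the original difficulty in concentrated form, and asserting it is where your proof stops.

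To see why it does not follow formally: a morphism $i(A) \ra s^{m+1}i(B)$ in $DF(\mathcal{A})$ is a roof $i(A) \xla{s} Z \xra{g} s^{m+1}i(B)$, and to compare it with $D(\mathcal{A})$-data you would need to replace the apex $Z$ by a trivially filtered complex. The available quasi-isomorphisms go the wrong way relative to one of the two legs: precomposing with $F^0Z \hra Z$ is allowed and makes $s$ factor through the quotient $F^0Z/F^1Z$, but $g$ need not kill $F^1Z$ --- its image need only lie in $F^1(s^{m+1}i(B))=B$, which is nonzero for $m\geq 0$ --- so $g$ does not factor through $F^0Z/F^1Z$ at the chain level; the dual obstruction appears for co-roofs. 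This wrong-variance problem is exactly what the paper's proof addresses with an explicit construction: given a co-roof $Y \xra{h} Z \xla{t} X$, it enlarges the filtration of the apex to $\tildew{Z}$ (with $F^n\tildew{Z}=Z$ for $n\leq 1$), checks that $X \ra \tildew{Z}$ is still a filtered quasi-isomorphism, and uses the shape of the filtrations to factor $sh\colon Y \ra \tildew{Z}$ \emph{uniquely} through $\alpha_Y$, thereby constructing an inverse to $(\;\cdot\;)\comp\alpha_Y$. Without this construction, or an equivalent device for computing Hom-sets in the localized filtered category, your proof of \ref{enum:filt-tria-cat-hom-bij} --- and hence of the proposition --- is incomplete.
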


\begin{proof}
  We first check that \eqref{eq:filt-der} defines a filtered
  triangulated category.
  Since all $\gr^i: DF(\mathcal{A}) \ra D(\mathcal{A})$ are
  triangulated functors, $DF(\mathcal{A})(\leq 0)$ 
  and $DF(\mathcal{A})(\geq 0)$ are strict full triangulated
  subcategories of $DF(\mathcal{A})$. 
  The conditions \ref{enum:filt-tria-cat-shift}, 
  \ref{enum:filt-tria-cat-exhaust} (we use finite filtrations) and
  \ref{enum:filt-tria-cat-shift-alpha} are obviously satisfied.

  \textbf{Condition \ref{enum:filt-tria-cat-triang}}:
  Let $X$ be any object in $DF(\mathcal{A})$. We define objects
  $X(\geq 1)$ 
  and $X/(X(\geq 1))$ 
  and (obvious) morphisms
$    X(\geq 1) \xra{i} X \xra{p} X/(X(\geq 1))$
  in $CF(\mathcal{A})$
  by the following
  diagram:
  \begin{equation}
    \label{eq:basic-example-fcat-def-Xgeq-triangle}
    \hspace{-1.6cm}
    \xymatrix@R1pc{
      {X(\geq 1):} \ar[d]^-{i} & 
      {\dots} \ar@{}[r]|-{=} &
      {F^{1}X} \ar@{}[r]|-{=} \ar[d] &
      {\mathovalbox{F^{1}X}} \ar@{}[r]|-{=} \ar[d] &
      {F^{1}X} \ar@{}[r]|-{\supset} \ar[d] &
      {F^{2}X} \ar@{}[r]|-{\supset} \ar[d] &
      {\cdots} \\
      {X:} \ar[d] \ar[d]^-{p} & 
      {\dots} \ar@{}[r]|-{\supset} &
      {F^{-1}X} \ar@{}[r]|-{\supset}  \ar[d] &
      {\mathovalbox{F^{0}X}} \ar@{}[r]|-{\supset}  \ar[d] &
      {F^1X} \ar@{}[r]|-{\supset}  \ar[d] &
      {F^2X} \ar@{}[r]|-{\supset}  \ar[d] &
      {\cdots} \\
      {X/(X(\geq 1)):} & 
      {\dots} \ar@{}[r]|-{\supset} &
      {F^{-1}X/F^1X} \ar@{}[r]|-{\supset} &
      {\mathovalbox{F^{0}X/F^1X}} \ar@{}[r]|-{\supset} &
      {0} \ar@{}[r]|-{=} &
      {0} \ar@{}[r]|-{=} &
      {\cdots} \\
    }
  \end{equation}
  There is a morphism $X/(X(\geq 1)) \ra [1](X(\geq 1))$ in
  $DF(\mathcal{A})$ such that 
  \begin{equation}
    \label{eq:triangle-fcat-basic}
    \xymatrix{
      {X(\geq 1)} \ar[r]^-{i} &
      {X} \ar[r]^-{p} &
      {X/(X(\geq 1))} \ar[r] &
      {[1](X(\geq 1))}
    }
  \end{equation}
  is a triangle in $DF(\mathcal{A})$: Use the obvious quasi-isomorphism from
  the mapping cone of $i$ to $X/(X(\geq 1))$.
  Since $X(\geq 1) \in DF(\mathcal{A})(\geq 1)$ and
  $X/(X(\geq 1)) \in DF(\mathcal{A})(\leq 0)$ by definition this
  proves condition \ref{enum:filt-tria-cat-triang}.

  \textbf{Observation:}
  Application of the triangulated functors $\gr^i$ to the 
  triangle \eqref{eq:triangle-fcat-basic} shows:
  If $X$ is in $DF(\mathcal{A})(\geq 1)$ then 
  $X (\geq 1) \ra X$ is an isomorphism in $DF(\mathcal{A})$.
  Similarly, if $X$ is in $DF(\mathcal{A})(\leq 0)$, then $X \ra
  X/(X(\geq 1))$ is an isomorphism.
  We obtain:
  \begin{itemize}
  \item 
    Any object in $DF(\mathcal{A})(\geq a)$
    is isomorphic to an object $Y$ with $Y=F^{-\infty}Y = \dots =F^{a-1}Y
    =F^aY$.
  \item Any object in $DF(\mathcal{A})(\leq b)$ is
    isomorphic to an object $Y$ with $0=F^{b+1}Y=F^{b+2}Y = \dots$.
  \item Any object in $DF(\mathcal{A})([a,b])$ is
    isomorphic to an object $Y$ with 
    $Y= F^{-\infty}Y=\dots 
    =F^aY \supset \dots \supset 0=F^{b+1}Y = \dots$.
  \end{itemize}

  \textbf{Condition \ref{enum:filt-tria-cat-no-homs}}:
  Let $X$ in $DF(\mathcal{A})(\geq 1)$ and $Y$ in
  $DF(\mathcal{A})(\leq 0)$. 
  By the above observation we can assume that $0=F^1Y =F^2Y = \dots$.
  Let a morphism $f: X \ra Y$ be
  represented by a roof $X \xla{s} Z \xra{g} Y$ with $s$ a
  quasi-isomorphism. 
  Then the obvious morphism $\iota: Z(\geq 1) \ra Z$ is a
  quasi-isomorphism and the roof 
  $X \xla{s} Z \xra{g} Y$ is equivalent to the roof
  $X \xla{s\iota} Z(\geq 1) \xra{g \iota} Y$. Since 
  $F^1Y=0$ and $F^1(Z(\geq 1))=Z(\geq 1)$ we obtain $g \iota=0$. Hence
  $f=0$.

  \textbf{Condition \ref{enum:filt-tria-cat-hom-bij}}:
  Let $X$ in $DF(\mathcal{A})(\geq 1)$ and $Y$ in
  $DF(\mathcal{A})(\leq 0)$ as before. 
  As observed above we can assume that $X= \dots =F^0X=F^1X$
  and that $0=F^1Y = F^2Y =\dots$.
  
  We prove that
  \begin{align*}
    \Hom(sY, X) & \ra \Hom(Y,X)\\
    g & \mapsto g \comp \alpha_Y
  \end{align*}
  is bijective. Here is a picture of $\alpha_Y$:
  \begin{equation*}
    \xymatrix@R1pc{
      {Y:} \ar[d]^-{\alpha_Y} & 
      {\dots} \ar@{}[r]|-{\supset} &
      {F^{-1}Y} \ar@{}[r]|-{\supset} \ar[d] &
      {\mathovalbox{F^{0}Y}} \ar@{}[r]|-{\supset} \ar[d] &
      {0} \ar@{}[r]|-{=} \ar[d] &
      {0} \ar@{}[r]|-{=} \ar[d] &
      {\cdots} \\
      {s(Y):} & 
      {\dots} \ar@{}[r]|-{\supset} &
      {F^{-2}Y} \ar@{}[r]|-{\supset} &
      {\mathovalbox{F^{-1}Y}} \ar@{}[r]|-{\supset} &
      {F^{0}Y} \ar@{}[r]|-{\supset} &
      {0} \ar@{}[r]|-{=} &
      {\cdots} 
    }
  \end{equation*}
  We define a map $\Hom(Y, X)\ra \Hom(s(Y), X)$ which will be inverse
  to the above map. 
  Let $f: Y \ra X$ be a morphism, represented by a roof 
  $Y \xra{h} Z \xla{t} X$ where $h$ and $t$ are morphisms in
  $CF(\mathcal{A})$ and $t$ is a quasi-isomorphism. We 
  define an object $\tildew{Z}$ and a morphism $Z \ra \tildew{Z}$ by
  the following picture in which we include $Z \xla{t} X$:
  \begin{equation*}
    \xymatrix@R1pc{
      {\tildew{Z}:} & 
      {\dots} \ar@{}[r]|-{=} &
      {Z} \ar@{}[r]|-{=} &
      {\mathovalbox{Z}} \ar@{}[r]|-{=} &
      {Z} \ar@{}[r]|-{\supset} &
      {F^2Z} \ar@{}[r]|-{\supset} &
      {\cdots} \\
      {Z:} \ar[u]^-{s} & 
      {\dots} \ar@{}[r]|-{\supset} &
      {F^{-1}Z} \ar@{}[r]|-{\supset} \ar[u] &
      {\mathovalbox{F^{0}Z}} \ar@{}[r]|-{\supset} \ar[u] &
      {F^{1}Z} \ar@{}[r]|-{\supset} \ar[u] &
      {F^2Z} \ar@{}[r]|-{\supset} \ar[u] &
      {\cdots} \\
      {X:} \ar[u]^-{t} & 
      {\dots} \ar@{}[r]|-{=} &
      {X} \ar@{}[r]|-{=} \ar[u] &
      {\mathovalbox{X}} \ar@{}[r]|-{=} \ar[u] &
      {X} \ar@{}[r]|-{\supset} \ar[u] &
      {F^{2}X} \ar@{}[r]|-{\supset} \ar[u] &
      {\cdots} 
    }
  \end{equation*}
  Since $t$ is a quasi-isomorphism, all $F^it:F^iX \ra F^iZ$ are
  quasi-isomorphisms. For $i$ small enough we have $Z=F^iZ$; this
  implies that $X \ra Z$ is a quasi-isomorphism in $C(\mathcal{A})$; hence
  $st: X \ra \tildew{Z}$ is a quasi-isomorphism in $CF(\mathcal{A})$. 
  Hence we get the following diagram
  \begin{equation*}
    \xymatrix{
       & {\tildew{Z}}\\
       {Y} \ar[d]_{\alpha_Y} \ar[r]^-h \ar[ru]^-{sh} & {Z} \ar[u]^-s &
       {X} \ar[l]_t \ar[lu]_{st}\\
       {s(Y)}
    }
  \end{equation*}
  Because of the special form of the filtrations on $\tildew{Z}$ and
  on $Y$ it is obvious that $sh:Y \ra \tildew{Z}$ comes from a
  unique morphism $\lambda:s(Y) \ra \tildew{Z}$ in $CF(\mathcal{A})$
  such that $\lambda \alpha_Y =sh$. We map $f$ to the class of
  the roof $(st)\inv \lambda$; it is easy to check that this is
  well-defined and inverse to the map $g \mapsto g \comp \alpha_Y$.

  Now we check that $i$ makes $DF(\mathcal{A})$ into an f-category over
  $D(\mathcal{A})$.
  It is obvious that
  $i:D(\mathcal{A}) \ra DF(\mathcal{A})([0])$ 
  is triangulated. 
  Our observation shows that it is essentially surjective.
  Since $\gr^0 \comp i=\id_{D(\mathcal{A})}$, our functor $i$ is
  faithful. It remains to prove fullness:
  Let $X$ and $Y$ be in $D(\mathcal{A})$ and let $f: i(X) \ra i(Y)$ be
  a morphism in $DF(\mathcal{A})$, represented by a roof
  $i(X) \xla{s} Z \xra{g} i(Y)$ with $s$ a quasi-isomorphism. 
  Consider the morphism
  \begin{equation*}
    \xymatrix@R1pc{
      {i(F^0Z):} \ar[d]^-{t} & 
      {\dots} \ar@{}[r]|-{=} &
      {F^{0}Z} \ar@{}[r]|-{=} \ar[d] &
      {\mathovalbox{F^{0}Z}} \ar@{}[r]|-{\supset} \ar[d] &
      {0} \ar@{}[r]|-{=} \ar[d] &
      {\cdots} \\
      {Z:} & 
      {\dots} \ar@{}[r]|-{\supset} &
      {F^{-1}Z} \ar@{}[r]|-{\supset} &
      {\mathovalbox{F^{0}Z}} \ar@{}[r]|-{\supset} &
      {F^1Z} \ar@{}[r]|-{\supset} &
      {\cdots} 
    }
  \end{equation*}
  Since $F^0s:F^0Z \ra F^0i(X)=X$ is a
  quasi-isomorphism, $st$ is a quasi-isomorphisms (and so is $t$).
  But the roof $i(X) \xla{st} i(F^0Z) \xra{gt} i(Y)$ comes from a roof
  $X \la F^0Z \ra Y$; hence $f$ is in the image of $i$.
\end{proof}

\subsection{First properties of filtered triangulated categories}
\label{sec:firstprop-filt-cat}

We will make heavy use of some results of
\cite[App.]{Beilinson} in Section~\ref{sec:strong-WCfun}.

As no proofs have appeared we give proofs for the more difficult
results we need.

\begin{proposition}
  [{cf.\ \cite[Prop.~A 3]{Beilinson} (without proof)}]
  \label{p:firstprop-filt-cat}
  Let $\tildew{\mathcal{T}}$ be a filtered triangulated category and
  $n \in \DZ$.
  \begin{enumerate}
  \item 
    \label{enum:firstprop-filt-cat-right-and-left}
    The inclusion $\tildew{\mathcal{T}}(\geq n) \subset
    \tildew{\mathcal{T}}$ has a right adjoint $\sigma_{\geq n}:
    \tildew{\mathcal{T}} \ra \tildew{\mathcal{T}}(\geq n)$,
    and the inclusion $\tildew{\mathcal{T}}(\leq n) \subset
    \tildew{\mathcal{T}}$ has a left adjoint $\sigma_{\leq n}:
    \tildew{\mathcal{T}} \ra \tildew{\mathcal{T}}(\leq n)$.
  \end{enumerate}
  We fix all these adjunctions.
  \begin{enumerate}[resume]
  \item 
    \label{enum:firstprop-filt-cat-trunc-triangle}
    For any $X$ in $\tildew{\mathcal{T}}$ there is a unique morphism
    $v^n_X: \sigma_{\leq n}X \ra [1]\sigma_{\geq n+1}X$ in ${\tildew{\mathcal{T}}}$
    such that the candidate triangle 
    \begin{equation}
      \label{eq:sigma-trunc-triangle}
      \xymatrix{
        {\sigma_{\geq n+1}X} \ar[r]^-{g^{n+1}_X} 
        & {X} \ar[r]^-{k^n_X}
        & {\sigma_{\leq n}X} \ar[r]^-{v^n_X}
        & {[1]\sigma_{\geq n+1}X}
      }
    \end{equation}
    is a triangle where the first two
    morphisms are adjunction morphisms. From every triangle $A\ra X
    \ra B \ra[1]A$ with
    $A$ in $\tildew{\mathcal{T}}(\geq n)$ and $B$ in
    $\tildew{\mathcal{T}}(< n)$ there is a 
    unique isomorphism of triangles to the
    above triangle extending $\id_X$.
    We call 
    \eqref{eq:sigma-trunc-triangle}
    the
    \textbf{$\sigma$-truncation triangle (of type $(\geq n+1, \leq n)$)}.
  \item 
    \label{enum:firstprop-filtr-cat-trunc-perps}
    We have
    \begin{align*}
      \tildew{\mathcal{T}}(\leq n) & = (\tildew{\mathcal{T}}(>n))^\perp \quad \text{and}\\
      \tildew{\mathcal{T}}(\geq n) & =
      \leftidx{^\perp}{(\tildew{\mathcal{T}}(< n))}.
    \end{align*}
    In particular if in a triangle $X \ra Y \ra Z
    \ra [1]X$ two out of the three objects $X$, $Y$, $Z$ are in
    $\tildew{\mathcal{T}}(\leq n)$ (resp.\ $\tildew{\mathcal{T}}(\geq
    n)$) then so is the third. 
  \item 
    \label{enum:firstprop-filt-cat-trunc-preserve}
    Let $a, b \in \DZ$.
    All functors $\sigma_{\leq n}$ and $\sigma_{\geq n}$ are
    triangulated and preserve all subcategories
    $\tildew{\mathcal{T}}(\leq a)$ and
    $\tildew{\mathcal{T}}(\geq b)$.
    There is a unique morphism 
    \begin{equation}
      \label{eq:interval-isom-sigma-trunc-morph-functors}
      \sigma^{[b,a]}:\sigma_{\leq a} \sigma_{\geq b} \ra
      \sigma_{\geq b} \sigma_{\leq a}
    \end{equation}
    (which is in fact an isomorphism)
    such that the diagram 
    \begin{equation}
      \label{eq:interval-isom-sigma-trunc}
      \xymatrix{
        {\sigma_{\geq b}X} \ar[r]^-{g^{b}_X}
        \ar[d]_-{k^a_{\sigma_{\geq b}X}}
        & 
        {X} \ar[r]^-{k_X^a}
        &
        {\sigma_{\leq a}X}
        \\
        {\sigma_{\leq a}\sigma_{\geq b}X}
        \ar[rr]^-{\sigma^{[b,a]}_X}
        &&
        {\sigma_{\geq b}\sigma_{\leq a} X} \ar[u]_-{g^{b}_{\sigma_{\leq
              a}X}}
      }
    \end{equation}
    commutes for all $X$ in $\tildew{\mathcal{T}}$.
  \end{enumerate}
\end{proposition}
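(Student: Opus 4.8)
The plan is to exploit the fact that, by axioms \ref{enum:filt-tria-cat-no-homs} and \ref{enum:filt-tria-cat-triang} (applied after the automorphism $s^{n}$), each pair $(\tildew{\mathcal{T}}(\geq n+1),\tildew{\mathcal{T}}(\leq n))$ is a torsion pair, so that the whole statement is the filtered analogue of the standard truncation formalism of \cite[1.3.3]{BBD}. For part~\ref{enum:firstprop-filt-cat-right-and-left} I would take the triangle of \ref{enum:filt-tria-cat-triang} and declare its outer terms to be $\sigma_{\geq n+1}X$ and $\sigma_{\leq n}X$; a morphism $X\ra X'$ extends over these triangles, uniquely by Proposition~\ref{p:BBD-1-1-9-copied-for-w-str} together with the orthogonality \ref{enum:filt-tria-cat-no-homs} (which kills the relevant obstruction group), and this simultaneously yields functoriality and the adjunction isomorphisms $\Hom(A,\sigma_{\geq n+1}X)\cong\Hom(A,X)$ for $A\in\tildew{\mathcal{T}}(\geq n+1)$ (and dually for $\sigma_{\leq n}$). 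Part~\ref{enum:firstprop-filt-cat-trunc-triangle} is then the resulting triangle \eqref{eq:sigma-trunc-triangle}: given two connecting maps $v,v'$, Proposition~\ref{p:BBD-1-1-9-copied-for-w-str} produces $h\colon\sigma_{\leq n}X\ra\sigma_{\leq n}X$ with $h\comp k^n_X=k^n_X$ and $v'\comp h=v$, whence $(h-\id)\comp k^n_X=0$ forces $h-\id$ to factor through $v^n_X$ by a map in $\Hom(\tildew{\mathcal{T}}(\geq n+1),\tildew{\mathcal{T}}(\leq n))=0$, so $h=\id$ and $v=v'$; the comparison isomorphism from an arbitrary such triangle is unique for the same orthogonality reason and is an isomorphism by the five lemma.

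For part~\ref{enum:firstprop-filtr-cat-trunc-perps} the inclusions $\tildew{\mathcal{T}}(\leq n)\subseteq(\tildew{\mathcal{T}}(>n))^\perp$ and $\tildew{\mathcal{T}}(\geq n)\subseteq{}^\perp(\tildew{\mathcal{T}}(<n))$ are exactly \ref{enum:filt-tria-cat-no-homs}. Conversely, for $Z\in(\tildew{\mathcal{T}}(>n))^\perp$ I look at the $\sigma$-truncation triangle $\sigma_{\geq n+1}Z\xra{g}Z\ra\sigma_{\leq n}Z\ra[1]\sigma_{\geq n+1}Z$; orthogonality gives $g=0$, so the triangle splits by Lemma~\ref{l:zero-triang-cat} and $Z$ is a summand of $\sigma_{\leq n}Z\in\tildew{\mathcal{T}}(\leq n)$. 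Closure of $\tildew{\mathcal{T}}(\leq n)$ under summands then follows from part~\ref{enum:firstprop-filt-cat-right-and-left}: for $W\in\tildew{\mathcal{T}}(\leq n)$ one has $\sigma_{>n}W=0$ (since $\Hom(\tildew{\mathcal{T}}(>n),W)=0$ makes $\id_{\sigma_{>n}W}=0$), and additivity of $\sigma_{>n}$ kills $\sigma_{>n}$ of every summand. The two-out-of-three clause is immediate because the subcategories are triangulated.

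For part~\ref{enum:firstprop-filt-cat-trunc-preserve}, triangulatedness of $\sigma_{\leq n}$ and $\sigma_{\geq n}$ is Proposition~\ref{p:linksad-triang-for-wstr-art} (they are adjoints of the triangulated inclusions), and preservation of $\tildew{\mathcal{T}}(\leq a)$, $\tildew{\mathcal{T}}(\geq b)$ I would read off the truncation triangle using two-out-of-three, separating the cases where $X$ already lies in the subcategory (so one truncation is an isomorphism or vanishes, by the adjunction) from the nested-level case. The morphism $\sigma^{[b,a]}_X$ I construct by factoring $\sigma_{\geq b}(k^a_X)\colon\sigma_{\geq b}X\ra\sigma_{\geq b}\sigma_{\leq a}X$ through the unit $k^a_{\sigma_{\geq b}X}$ via the universal property of $\sigma_{\leq a}$ (permissible since $\sigma_{\geq b}\sigma_{\leq a}X\in\tildew{\mathcal{T}}(\leq a)$); commutativity of \eqref{eq:interval-isom-sigma-trunc} is then precisely the naturality square of the counit $g^b$ at the morphism $k^a_X$. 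Uniqueness holds because pre-composition with $k^a_{\sigma_{\geq b}X}$ and post-composition with $g^b_{\sigma_{\leq a}X}$ are both injective on the pertinent $\Hom$-groups: their fibre and cofibre are $\sigma_{\geq a+1}\sigma_{\geq b}X$ and $\sigma_{\leq b-1}\sigma_{\leq a}X$, and the obstruction groups $\Hom(\tildew{\mathcal{T}}(\geq a+1),\tildew{\mathcal{T}}(\leq a))$ and $\Hom(\tildew{\mathcal{T}}(\geq b),\tildew{\mathcal{T}}(\leq b-1))$ vanish by \ref{enum:filt-tria-cat-no-homs}; naturality of $\sigma^{[b,a]}$ in $X$ follows from this uniqueness.

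The step I expect to cost the most is showing $\sigma^{[b,a]}_X$ is an isomorphism, which is where the genuine content ``truncations commute'' sits. I plan to sidestep the octahedral bookkeeping as follows (assuming $b\leq a$; otherwise both sides are $0$). Apply the triangulated functor $\sigma_{\leq a}$ to the $b$-truncation triangle $\sigma_{\geq b}X\xra{g^b_X}X\ra\sigma_{\leq b-1}X\ra[1]\sigma_{\geq b}X$ and use that $\sigma_{\leq b-1}X\in\tildew{\mathcal{T}}(\leq b-1)\subseteq\tildew{\mathcal{T}}(\leq a)$, so its unit $\sigma_{\leq b-1}X\sira\sigma_{\leq a}\sigma_{\leq b-1}X$ is an isomorphism. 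This produces a triangle $\sigma_{\leq a}\sigma_{\geq b}X\ra\sigma_{\leq a}X\ra\sigma_{\leq b-1}X\ra[1]\sigma_{\leq a}\sigma_{\geq b}X$ whose first term lies in $\tildew{\mathcal{T}}(\geq b)$ (by preservation) and whose third lies in $\tildew{\mathcal{T}}(\leq b-1)$, i.e.\ it is a $(\geq b,\leq b-1)$-truncation triangle of $\sigma_{\leq a}X$. The uniqueness in part~\ref{enum:firstprop-filt-cat-trunc-triangle} then gives a canonical isomorphism $\sigma_{\leq a}\sigma_{\geq b}X\sira\sigma_{\geq b}\sigma_{\leq a}X$ compatible with $g^b_{\sigma_{\leq a}X}$, and since post-composition with $g^b_{\sigma_{\leq a}X}$ is injective (as just shown) this isomorphism must coincide with $\sigma^{[b,a]}_X$, which is therefore invertible.
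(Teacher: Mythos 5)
Your proof is correct, and for parts \eqref{enum:firstprop-filt-cat-right-and-left}--\eqref{enum:firstprop-filtr-cat-trunc-perps} it has the same content as the paper's, which simply observes that $(\tildew{\mathcal{T}}(\geq n+1),\tildew{\mathcal{T}}(\leq n))$ is a t-structure (Remark~\ref{rem:f-cat-vs-t-cat}) and cites \cite[1.3.3, 1.3.4]{BBD}; you spell those standard arguments out. For part \eqref{enum:firstprop-filt-cat-trunc-preserve} you take a genuinely leaner route: the paper erects the $3\times 3$-diagram \eqref{eq:sigma-truncation-two-parameters}, runs its preservation case analysis inside that diagram, obtains $\sigma^{[b+1,a]}_X$ from a square of $\Hom$-group bijections, and proves invertibility by matching the bottom row of the diagram against the $\sigma$-truncation triangle of $\sigma_{\geq b+1}X$ via the uniqueness of part \eqref{enum:firstprop-filt-cat-trunc-triangle}; you instead get preservation by two-out-of-three on single truncation triangles (with the degenerate cases killed by the adjunctions and axiom \ref{enum:filt-tria-cat-no-homs}), define $\sigma^{[b,a]}_X$ by the universal property of the unit $k^a$ applied to $\sigma_{\geq b}(k^a_X)$, get uniqueness from the two orthogonality-based injectivity statements, and prove invertibility by applying $\sigma_{\leq a}$ to the $(\geq b,\leq b-1)$-truncation triangle of $X$ and recognizing a truncation triangle of $\sigma_{\leq a}X$ --- the mirror image of the paper's comparison, with no $3\times 3$ scaffolding. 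This is legitimate; the paper itself notes before its proof that the $3\times 3$-diagram is dispensable if one only wants the proposition, and what that heavier construction buys is the stock of canonical identifications recorded in Remark~\ref{rem:consequences-from-three-times-three}, used tacitly later in the paper, which your argument does not produce. One small repair: at the very end, identifying your comparison isomorphism with $\sigma^{[b,a]}_X$ does not follow from injectivity of post-composition with $g^{b}_{\sigma_{\leq a}X}$ alone, since a priori you only know how $\sigma^{[b,a]}_X$ behaves after pre-composition with $k^a_{\sigma_{\geq b}X}$; you should first check, using naturality of $k^a$ and $g^b$, that both morphisms have the same image under $\phi \mapsto g^{b}_{\sigma_{\leq a}X}\comp\phi\comp k^a_{\sigma_{\geq b}X}$ and then invoke the combined injectivity you already established --- a two-line fix, not a gap.
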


Our proof of this theorem gives some more canonical isomorphisms, see
Remark~\ref{rem:consequences-from-three-times-three} below. 
If we were only interested in the statements of
Proposition~\ref{p:firstprop-filt-cat} 
we
could do without the
$3 \times 3$-diagram~\eqref{eq:sigma-truncation-two-parameters}.

\begin{proof}
  The statements 
  \eqref{enum:firstprop-filt-cat-right-and-left} and
  \eqref{enum:firstprop-filt-cat-trunc-triangle} 
  follow 
  from the fact that 
  $(\tildew{\mathcal{T}}(\geq n+1), \tildew{\mathcal{T}}(\leq n))$ is
  a t-structure for all $n \in \DZ$
  (see Rem.~\ref{rem:f-cat-vs-t-cat}) and \cite[1.3.3]{BBD}.
  For \eqref{enum:firstprop-filtr-cat-trunc-perps} use
  \cite[1.3.4]{BBD} and the fact that $\tildew{\mathcal{T}}(\geq n)$
  and $\tildew{\mathcal{T}}(\leq n)$ are stable under $[1]$.

  We prove \eqref{enum:firstprop-filt-cat-trunc-preserve}:
  The functors $\sigma_{\leq n}$ and $\sigma_{\geq n}$ are
  triangulated by Proposition~\ref{p:linksad-triang-for-wstr-art}.
  Let $X \in \tildew{\mathcal{T}}$ and $a, b \in \DZ$. Consider the
  following $3 \times 3$-diagram
  \begin{equation}
    \label{eq:sigma-truncation-two-parameters}
    \xymatrix@=1.2cm{
      {[1]\sigma_{\geq b+1}\sigma_{\geq a+1} X} \ar@{..>}[r]^-{[1]\sigma_{\geq b+1}(g^{a+1}_X)} & 
      {[1]\sigma_{\geq b+1} X} \ar@{..>}[r]^-{[1]\sigma_{\geq b+1}({k^a_X})} & 
      {[1]\sigma_{\geq b+1}\sigma_{\leq a}X} \ar@{..>}[r]^-{[1]\sigma_{\geq b+1}(v^a_X)} \ar@{}[rd]|{\anticomm}& 
      {[2]\sigma_{\geq b+1}\sigma_{\geq a+1} X} \\
      {\sigma_{\leq b}\sigma_{\geq a+1}X}
      \ar[u]^{v^b_{\sigma_{\geq a+1}X}} \ar[r]^-{\sigma_{\leq b}(g^{a+1}_X)} & 
      {\sigma_{\leq b}X} \ar[u]^{v^b_{X}}
      \ar[r]^-{\sigma_{\leq b}({k^a_X})} &
      {\sigma_{\leq b}\sigma_{\leq a}X}
      \ar[u]^{v^b_{\sigma_{\leq a}X}} \ar[r]^-{\sigma_{\leq b}(v^a_X)} & 
      {[1]\sigma_{\leq b}\sigma_{\geq a+1}X} \ar@{..>}[u]^{[1]v^b_{\sigma_{\geq a+1}X}} \\
      {\sigma_{\geq a+1}X} \ar[r]^-{g^{a+1}_X} \ar[u]^{k^b_{\sigma_{\geq a+1}X}} & 
      {X} \ar[r]^-{k^a_X} \ar[u]^{k^b_{X}} 
      \ar@{}[ru]|{(2)} 
      &
      {\sigma_{\leq a}X} \ar[r]^-{v^a_X} \ar[u]^{k^b_{\sigma_{\leq a}X}} & 
      {[1]\sigma_{\geq a+1}X} \ar@{..>}[u]^{[1]k^b_{\sigma_{\geq a+1}X}} \\
      {\sigma_{\geq b+1}\sigma_{\geq a+1} X} \ar[u]^{g^{b+1}_{\sigma_{\geq
            a+1}X}} \ar[r]^-{\sigma_{\geq b+1}(g^{a+1}_X)} 
      \ar@{}[ru]|{(1)} 
      &  
      {\sigma_{\geq b+1} X} \ar[u]^{g^{b+1}_{X}} \ar[r]^-{\sigma_{\geq b+1}({k^a_X})} &
      {\sigma_{\geq b+1}\sigma_{\leq a}X} \ar[u]^{g^{b+1}_{\sigma_{\leq a}X}}
      \ar[r]^-{\sigma_{\geq b+1}(v^a_X)} &  
      {[1]\sigma_{\geq b+1}\sigma_{\geq a+1} X} \ar@{..>}[u]^{[1]g^{b+1}_{\sigma_{\geq a+1}X}} 
    }
  \end{equation}
  constructed as follows: All morphisms $g$ and $k$ are
  adjunction morphisms. We start with the third row which is the
  $\sigma$-truncation triangle of $X$ of type $(\geq a+1, \leq a)$.
  We apply the triangulated functors $\sigma_{\leq b}$ and $\sigma_{\geq
    b+1}$ to this triangle and obtain the triangles in the second and
  fourth row. The adjunctions give the morphisms of triangles from
  fourth to third and third to second row.
  Then extend the first three columns to $\sigma$-truncation
  triangles; they can be uniquely connected by morphisms of triangles extending
  $g^{a+1}_X$ and $k^a_X$ respectively 
  using Proposition~\ref{p:BBD-1-1-9-copied-for-w-str}.
  Similarly (multiply the last arrow in the fourth column by $-1$ to get a
  triangle) we obtain the morphism between third and fourth column.

  We prove that $\sigma_{\geq b+1}$ and $\sigma_{\leq b}$ preserve the
  subcategories 
  $\tildew{\mathcal{T}}(\geq a+1)$ and $\tildew{\mathcal{T}}(\leq a)$.

  \begin{itemize}
  \item 
    \textbf{Case $a \geq b$:} Then in the left vertical triangle the
    first two objects are in $\tildew{\mathcal{T}}(\geq b+1)$; hence
    $\sigma_{\leq b}\sigma_{\geq a+1}X \in \tildew{\mathcal{T}}(\geq b+1) \cap
    \tildew{\mathcal{T}}(\leq b)$ is zero 
    (use \eqref{enum:firstprop-filtr-cat-trunc-perps}).
    (This shows that
    $g^{b+1}_{\sigma_{\geq a+1}X}$ and $\sigma_{\leq b}(k^a_X)$ are
    isomorphisms.)
    \begin{itemize}
    \item \textbf{$X \in \tildew{\mathcal{T}}(\geq a+1)$:} Then
      $g^{a+1}_X$ is an isomorphism and the first two vertical
      triangles are isomorphic. This shows $\sigma_{\leq b}X=0 \in
      \tildew{\mathcal{T}}(\geq a+1)$ and that all four morphisms of the
      square $(1)$ are isomorphisms; hence
      $\sigma_{\geq b+1}X
      \in \tildew{\mathcal{T}}(\geq a+1)$.
    \item \textbf{$X \in \tildew{\mathcal{T}}(\leq a)$:} Then
      $\sigma_{\leq b}X \in \tildew{\mathcal{T}}(\leq b) \subset
      \tildew{\mathcal{T}}(\leq a)$. Hence in the second vertical triangle
      two objects are in $\tildew{\mathcal{T}}(\leq a)$; hence
      the third object $\sigma_{\geq b+1}X$ is in
      $\tildew{\mathcal{T}}(\leq a)$.
    \end{itemize}

  \item 
    \textbf{Case $a \leq b$:} Then in the third vertical triangle the
    second and third object are in $\tildew{\mathcal{T}}(\leq b)$; hence
    $\sigma_{\geq b+1}\sigma_{\leq a}X \in \tildew{\mathcal{T}}(\leq b)\cap
    \tildew{\mathcal{T}}(\geq b+1)$ is zero.
    (This shows that
    $k^b_{\sigma_{\leq a}X}$ and $\sigma_{\geq b+1}(g^{a+1}_X)$ are
    isomorphisms.)
    \begin{itemize}
    \item \textbf{$X \in \tildew{\mathcal{T}}(\leq a)$:} Then
      $k^a_X$ is an isomorphism and the second and third vertical
      triangles are isomorphic. This shows $\sigma_{\geq b+1}X=0 \in
      \tildew{\mathcal{T}}(\leq a)$ and that all four morphisms of the
      square $(2)$ are isomorphisms; hence
      $\sigma_{\leq b}X \in \tildew{\mathcal{T}}(\leq a)$.
    \item \textbf{$X \in \tildew{\mathcal{T}}(\geq a+1)$:} Then
      $\sigma_{\geq b+1}X \in \tildew{\mathcal{T}}(\geq b+1) \subset
      \tildew{\mathcal{T}}(\geq a+1)$. Hence in the second vertical triangle
      two objects are in $\tildew{\mathcal{T}}(\geq a+1)$; hence
      the third object $\sigma_{\leq b}X$ is in $\tildew{\mathcal{T}}(\geq
      a+1)$.
    \end{itemize}
  \end{itemize}

  For the last statement consider the diagram 
  \eqref{eq:interval-isom-sigma-trunc} without the arrow
  $\sigma^{[b,a]}_X$ and with $b$ replaced by $b+1$.
  The vertical arrows are part of $\sigma$-truncation
  triangles. 
  Note that we already know that
  $\sigma_{\geq b+1}\sigma_{\leq a}X \in \tildew{\mathcal{T}}(\leq a)$ and
  $\sigma_{\leq a}\sigma_{\geq b+1}X \in \tildew{\mathcal{T}}(\geq b+1)$. 
  Appropriate cohomological functors 
  give the following
  commutative diagram of isomorphisms:
  \begin{equation*}
    \xymatrix@C=1.5cm{
      {\tildew{\mathcal{T}}(\sigma_{\geq b+1}X, \sigma_{\leq a}X)} 
      &
      {\tildew{\mathcal{T}}(\sigma_{\geq b+1}X, \sigma_{\geq b+1}\sigma_{\leq
          a}X)} 
      \ar[l]_-{g^{b+1}_{\sigma_{\leq a}X} \comp ?}^-{\sim}\\
      {\tildew{\mathcal{T}}(\sigma_{\leq a}\sigma_{\geq b+1}X, \sigma_{\leq a}X)} 
      \ar[u]_-{? \comp k^a_{\sigma_{\geq b+1}X}}^-{\sim}
      &
      {\tildew{\mathcal{T}}(\sigma_{\leq a}\sigma_{\geq b+1}X, \sigma_{\geq b+1}\sigma_{\leq
          a}X)} 
      \ar[l]_-{g^{b+1}_{\sigma_{\leq a}X} \comp ?}^-{\sim}
      \ar[u]_-{? \comp k^a_{\sigma_{\geq b+1}X}}^-{\sim}
    }
  \end{equation*}
  It shows that there is a unique morphism 
  \begin{equation*}
    \sigma^{[b+1,a]}_X:
    \sigma_{\leq a}\sigma_{\geq b+1}X \ra \sigma_{\geq b+1}\sigma_{\leq a}X
  \end{equation*}
  such that $g^{b+1}_{\sigma_{\leq a}X} \comp \sigma^{[b+1,a]}_X \comp
  k^a_{\sigma_{\geq b+1}X}= k^a_X \comp g^{b+1}_X$.
  We have to prove that $\sigma^{[b+1,a]}_X$ is an isomorphism.
  
  From \eqref{enum:firstprop-filt-cat-trunc-triangle} we see that
  the lowest horizontal triangle in
  \eqref{eq:sigma-truncation-two-parameters} is uniquely isomorphic
  (by an isomorphism extending the identity) to the corresponding
  $\sigma$-truncation triangle: 
  \begin{equation*}
    \xymatrix@=1.2cm{
      {\sigma_{\geq b+1}\sigma_{\geq a+1} X} 
      \ar[r]^-{\sigma_{\geq b+1}(g^{a+1}_X)} 
      &  
      {\sigma_{\geq b+1} X} 
      \ar[r]^-{\sigma_{\geq b+1}({k^a_X})} 
      &
      {\sigma_{\geq b+1}\sigma_{\leq a}X} 
      \ar[r]^-{\sigma_{\geq b+1}(v^a_X)} 
      &  
      {[1]\sigma_{\geq b+1}\sigma_{\geq a+1} X} 
      \\
      {\sigma_{\geq a+1} \sigma_{\geq b+1}X} 
      \ar[r]^-{g^{a+1}_{\sigma_{\geq b+1}X}}
      \ar[u]^{f}_{\sim}
      &
      {\sigma_{\geq b+1}X} 
      \ar[r]^-{k^a_{\sigma_{\geq b+1}X}}
      \ar[u]^{\id}
      &
      {\sigma_{\leq a} \sigma_{\geq b+1}X} 
      \ar[r]^-{v^a_{\sigma_{\geq b+1}X}}
      \ar[u]^{h}_{\sim}
      &
      {[1]\sigma_{\geq a+1} \sigma_{\geq b+1}X} 
      \ar[u]^{[1]f}_{\sim}
    }
  \end{equation*}
  In combination with \eqref{eq:sigma-truncation-two-parameters} this
  diagram yields
  $g^{b+1}_{\sigma_{\leq a}X} \comp h \comp k^a_{\sigma_{\geq b+1}X} =
  k^a_X \comp g^{b+1}_X$. This shows that $\sigma^{[b+1,a]}_X=h$
  which is hence an isomorphism.
  Similarly it is easy to see that $X \mapsto \sigma_X^{[b,a]}$ in
  fact defines an isomorphism
  \eqref{eq:interval-isom-sigma-trunc-morph-functors}
  of functors.  
\end{proof}

\begin{remark}
  \label{rem:consequences-from-three-times-three}
  Let us just mention some consequences one can now easily deduce from
  the $3\times 3$-diagram~\eqref{eq:sigma-truncation-two-parameters}.
  \begin{itemize}
  \item 
    (As already mentioned in the proof:)
    If $a \geq b$ the object $\sigma_{\leq b}\sigma_{\geq a+1}X$ is zero
    providing two isomorphisms 
    \begin{align}
      \label{eq:a-geq-b-sigma-isos}
      \sigma_{\leq b}(k^a_X): \sigma_{\leq b}X & \sira
      \sigma_{\leq b}\sigma_{\leq a}X 
      \quad \text{(for $a \geq b$), and} \\
      \notag
      g^{b+1}_{\sigma_{\geq a+1}X}: \sigma_{\geq b+1}\sigma_{\geq a+1}X & \sira
      \sigma_{\geq a+1}X \quad \text{(for $a \geq b$).}
    \end{align}
    Similarly $\sigma_{\geq b+1}\sigma_{\leq a}X$ vanishes for $a \leq b$
    and provides two isomorphisms
    \begin{align}
      \label{eq:a-leq-b-sigma-isos}
      \sigma_{\geq b+1}(g^{a+1}_X): \sigma_{\geq b+1}\sigma_{\geq a+1}X & \sira
      \sigma_{\geq b+1}X 
      \quad \text{(for $a \leq b$), and} \\
      \notag
      k^b_{\sigma_{\leq a}X}: \sigma_{\leq a}X & \sira
      \sigma_{\leq b}\sigma_{\leq a}X \quad \text{(for $a \leq b$).}
    \end{align}
  \item 
    In case $a=b$ the two squares marked $(1)$ and $(2)$ consist of
    isomorphisms. Hence 
    \begin{equation}
      \label{eq:a-eq-b-sigma-epsilon-eta-isos}
      g^{a+1}_{\sigma_{\geq a+1}X}= \sigma_{\geq a+1}(g^{a+1}_X)
      \quad \text{and}
      \quad
      k^a_{\sigma_{\leq a}X}= \sigma_{\leq a}(k^a_X).
    \end{equation}
  \item 
    Proposition~\ref{p:BBD-1-1-9-copied-for-w-str} gives several
    uniqueness statements, e.\,g.\ it shows that the
    morphisms connecting the horizontal triangles are also unique
    extending $g^{b+1}_X$, $k^b_X$ and $v^b_X$ respectively
    (in the last case one has to change the sign of the third morphism
    of the top sequence to make it into a triangle).
  \item Application of $\sigma_{\geq a+1} \xra{g^{a+1}} \id
    \xra{k^a} \sigma_{\leq a} \xra{v^a} [1]\sigma_{\geq a+1}$ to the
    second vertical triangle in 
    \eqref{eq:sigma-truncation-two-parameters}
    yields a similar $3\times 3$-diagram
    which is uniquely isomorphic to the $3\times
    3$-diagram~\eqref{eq:sigma-truncation-two-parameters} by an
    isomorphism extending $\id_X$ (and this is functorial in $X$). 
    The four isomorphisms in the corners are 
    the isomorphism $\sigma^{[b+1,a]}_X$ from the Proposition, the
    (inverse of) the isomorphism $\sigma^{[a+1,b]}_X$, and the 
    isomorphisms $\sigma_{\leq a}\sigma_{\leq b}
    \sira \sigma_{\leq b}\alpha_{<a}$ and $\sigma_{\geq a+1}\sigma_{\geq
      b+1}\sira \sigma_{\geq b+1}\sigma_{\geq a+1}$.
  \end{itemize}
  We use the isomorphisms \eqref{eq:a-geq-b-sigma-isos}
  and \eqref{eq:a-leq-b-sigma-isos} and those from the last point
  sometimes tacitly in the following and write them as equalities.
\end{remark}

We introduce some shorthand notation: For $a,b \in \DZ$ define
$\sigma_{[a,b]}:= \sigma_{\leq b} \sigma_{\geq a}$ (which equals
$\sigma_{\geq a}\sigma_{\leq b}$ by the above convention) and
$\sigma_a:=\sigma_{[a]}:=\sigma_{[a,a]}$.

We give some commutation formulas:
Applying the triangulated functor $s$ to the triangle $(\sigma_{\geq
  a+1}X, X, \sigma_{\leq a}X)$ yields a triangle that is uniquely
isomorphic to $(\sigma_{\geq a+2}s(X), s(X), \sigma_{\leq
  a+1}s(X))$. Hence we obtain isomorphisms (that we write as
equalities) 
\begin{equation}
  \label{eq:sgle-eq-s-comm}
  s\sigma_{\geq a} = \sigma_{\geq a+1}s, \quad
  s\sigma_{\leq a} = \sigma_{\leq a+1}s, \quad
  s\sigma_{a} = \sigma_{a+1}s.
\end{equation}

Let $(\tildew{\mathcal{T}}, i)$ be an f-category over a triangulated
category $\mathcal{T}$. Define
\begin{equation*}
  \gr^n:= i\inv s^{-n} \sigma_{n}:
  \tildew{\mathcal{T}} \ra \mathcal{T}
\end{equation*}
where $i\inv$ is a fixed quasi-inverse of $i$.
Note that $\gr^n$ is a triangulated functor. 
From 
$  \gr^{n+1} s 
  = i\inv s^{-n-1}\sigma_{n+1}s
  = i\inv s^{-n}\sigma_{n}
  = \gr^n
$
we obtain
\begin{equation}
  \label{eq:gr-s-comm}
  \gr^{n+1} s= \gr^n.
\end{equation}

Given $X \in \tildew{\mathcal{T}}$ we define its \define{support} 
by
\begin{equation*}
  \supp(X) :=\{n \in \DZ \mid \sigma_n(X) \not=0\}.
\end{equation*}
Note that $\supp(X)$ is a bounded subset of $\DZ$
by axiom \ref{enum:filt-tria-cat-exhaust} and
Proposition~\ref{p:firstprop-filt-cat},
\eqref{enum:firstprop-filt-cat-trunc-preserve}. It is empty if and only
if $X=0$.
The \define{range} $\range(X)$ of $X$ is defined as the smallest
interval (possibly empty) in $\DZ$ containing $\supp X$.
It is the smallest interval $I$ such that $X \in
\tildew{\mathcal{T}}(I)$.
The \define{length} $l(X)$ of $X$ is the number of
elements in $\range(X)$.

\subsection{Forgetting the filtration -- the functor
  \texorpdfstring{$\omega$}{omega}} 
\label{sec:forget-fitlration-functor-omega}

\begin{proposition}
  [{cf.\ \cite[Prop.~A 3]{Beilinson} (without proof)}]
  \label{p:functor-omega}
  Let $(\tildew{\mathcal{T}}, i)$ be an f-category over a triangulated
  category $\mathcal{T}$. There is a unique (up to unique isomorphism)
  triangulated functor
  \begin{equation*}
    \omega: \tildew{\mathcal{T}} \ra \mathcal{T}
  \end{equation*}
  such that
  \begin{enumerate}[label=(om{\arabic*})]
  \item 
    \label{enum:functor-omega-i}
    The restriction $\omega|_{\tildew{\mathcal{T}}(\leq 0)}:
    \tildew{\mathcal{T}}(\leq 0) \ra \mathcal{T}$ is
    left\footnote{typo in \cite[Prop.~A 3]{Beilinson} (and similar in
    \ref{enum:functor-omega-ii})}
    adjoint to
    $i: \mathcal{T} \ra \tildew{\mathcal{T}}(\leq 0)$.
  \item 
    \label{enum:functor-omega-ii}
    The restriction $\omega|_{\tildew{\mathcal{T}}(\geq 0)}:
    \tildew{\mathcal{T}}(\geq 0) \ra \mathcal{T}$ is
    right
    adjoint to
    $i: \mathcal{T} \ra \tildew{\mathcal{T}}(\geq 0)$.
  \item 
    \label{enum:functor-omega-iii}
    For any $X$ in $\tildew{\mathcal{T}}$, the arrow $\alpha_X: X \ra
    s(X)$ is mapped to an isomorphism $\omega(\alpha_X):\omega(X) \sira
    \omega(s(X))$.
  \item 
    \label{enum:functor-omega-iv}
    For all $X$ in $\tildew{\mathcal{T}}(\leq 0)$ and all $Y$ in
    $\tildew{\mathcal{T}}(\geq 0)$ we have an isomorphism
      \begin{equation}
        \label{eq:functor-omega-iv}
        \omega: \Hom_{\tildew{\mathcal{T}}}(X,Y) \sira
        \Hom_{\mathcal{T}}(\omega(X), \omega(Y)).
      \end{equation}
  \end{enumerate}
  In fact $\omega$ is uniquely determined by the properties
  \ref{enum:functor-omega-i} and
  \ref{enum:functor-omega-iii}
  (or by the properties
  \ref{enum:functor-omega-ii} and
  \ref{enum:functor-omega-iii}).
\end{proposition}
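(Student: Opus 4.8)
The plan is to build $\omega$ first on the subcategory $\tildew{\mathcal{T}}(\leq 0)$ as the left adjoint demanded by~\ref{enum:functor-omega-i}, and then to propagate it to all of $\tildew{\mathcal{T}}$ using the shift $s$ and the transformation $\alpha$. The point is that~\ref{enum:filt-tria-cat-exhaust} gives $\tildew{\mathcal{T}}=\bigcup_n\tildew{\mathcal{T}}(\leq n)$ and $s^{-n}$ carries $\tildew{\mathcal{T}}(\leq n)$ into $\tildew{\mathcal{T}}(\leq 0)$, while~\ref{enum:functor-omega-iii} forces $\omega(\alpha_X)$ to be invertible, i.e.\ $\omega\comp s\cong\omega$; hence one is \emph{compelled} to set $\omega(X):=\omega(s^{-n}X)$ for $X\in\tildew{\mathcal{T}}(\leq n)$. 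So the real content is (a) to construct the left adjoint on $\tildew{\mathcal{T}}(\leq 0)$, and (b) to check that this prescription is well defined independently of $n$ and assembles into a triangulated functor. I do not expect a bare representability argument to yield (a), since $\tildew{\mathcal{T}}$ need not have infinite coproducts; instead every object has finite length (bounded $\supp$, cf.\ Section~\ref{sec:firstprop-filt-cat}), so I would induct on length.

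For (a) I would argue by induction on $l(X)$. On pure objects $X\in\tildew{\mathcal{T}}([n])$ with $n\leq 0$, set $\omega(X):=\gr^n(X)=i\inv s^{-n}X$; full faithfulness of $i$ together with the bijections of axiom~\ref{enum:filt-tria-cat-hom-bij} then identify $\Hom(X,i(T))$ with $\Hom(\omega(X),T)$, producing the unit on pure objects. For $X$ of range $[a,0]$ with $a<0$ I would use the $\sigma$-truncation triangle $\sigma_{\geq a+1}X\to X\to\sigma_aX\to[1]\sigma_{\geq a+1}X$ of Proposition~\ref{p:firstprop-filt-cat}, whose two end terms are pure, respectively shorter, and hence already in the domain of the partially built $\omega$; I would then define $\omega(X)$ as the shifted cone of $\omega$ applied to the connecting morphism $v\colon\sigma_aX\to[1]\sigma_{\geq a+1}X$. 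The legitimacy of "$\omega(v)$" rests on the observation that $v$ is a morphism of exactly the type controlled by~\ref{enum:functor-omega-iv}, whose Hom-bijection I would prove \emph{simultaneously} with the construction. Triangulatedness of the restriction to $\tildew{\mathcal{T}}(\leq 0)$ then follows formally from Proposition~\ref{p:linksad-triang-for-wstr-art} once the adjunction is established.

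With $\omega$ in hand on $\tildew{\mathcal{T}}(\leq 0)$, I would verify (b): the candidate values $\omega(s^{-n}X)$ and $\omega\bigl(s^{-n-1}(sX)\bigr)$ are canonically identified through $\omega(\alpha)$, and compatibility of these identifications—using naturality of $\alpha$ and axiom~\ref{enum:filt-tria-cat-shift-alpha}—makes $\omega$ a triangulated functor on all of $\tildew{\mathcal{T}}$. Property~\ref{enum:functor-omega-ii} is obtained by the mirror argument on $\tildew{\mathcal{T}}(\geq 0)$ (right adjoint, dual truncation), the two descriptions agreeing on $\tildew{\mathcal{T}}([0])$ where both reduce to $i\inv$; property~\ref{enum:functor-omega-iv} is precisely the Hom-bijection produced in the induction. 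For uniqueness under~\ref{enum:functor-omega-i} and~\ref{enum:functor-omega-iii}: two such functors restrict on $\tildew{\mathcal{T}}(\leq 0)$ to left adjoints of $i$, hence are canonically isomorphic there, and this isomorphism spreads uniquely over $\tildew{\mathcal{T}}$ along the $\omega(\alpha)$-identifications by~\ref{enum:filt-tria-cat-exhaust}; the same argument with $\tildew{\mathcal{T}}(\geq 0)$ settles the variant using~\ref{enum:functor-omega-ii}.

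The main obstacle I anticipate is the apparent circularity in (a): constructing $\omega(X)$ needs $\omega$ evaluated on morphisms of the kind appearing in~\ref{enum:functor-omega-iv}, which is itself part of the output. The resolution, and the technically heaviest step, is to run the length induction so that objects, their unit morphisms, and the Hom-bijection~\ref{enum:functor-omega-iv} are manufactured together; here the non-uniqueness in completing morphisms of triangles via Proposition~\ref{p:BBD-1-1-9-copied-for-w-str} must be pinned down by its uniqueness clause, which is available because the vanishing $\Hom(\tildew{\mathcal{T}}(\geq 1),\tildew{\mathcal{T}}(\leq 0))=0$ of axiom~\ref{enum:filt-tria-cat-no-homs} supplies the required $\Hom$-vanishing at each stage.
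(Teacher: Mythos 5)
Your overall architecture is the one the paper uses: induction on length, splitting off a pure piece by a $\sigma$-truncation triangle, handling pure objects via $\gr$ and axiom~\ref{enum:filt-tria-cat-hom-bij}, producing the new object as a cone, and manufacturing objects, units and Hom-bijections together. The gap sits exactly at the step you yourself call the technically heaviest: your claim that the non-uniqueness in completing morphisms of triangles is ``pinned down'' by the uniqueness clause of Proposition~\ref{p:BBD-1-1-9-copied-for-w-str} because axiom~\ref{enum:filt-tria-cat-no-homs} supplies the required Hom-vanishing. It does not, because the completions your construction needs go in the direction that \ref{enum:filt-tria-cat-no-homs} leaves uncontrolled. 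Concretely, to build the unit $\eta_X\colon X \ra i(\omega(X))$ you must fill in the middle of a morphism of triangles from $(\sigma_{\geq a+1}X, X, \sigma_{a}X)$ to the $i$-image of the triangle defining $\omega(X)$, and any $\psi\in\Hom_{\tildew{\mathcal{T}}}(\sigma_{a}X,\,i(\omega(\sigma_{\geq a+1}X)))$ changes one filler into another (add $u'\comp\psi\comp k$, where $k\colon X\ra\sigma_{a}X$ and $u'\colon i(\omega(\sigma_{\geq a+1}X))\ra i(\omega(X))$ are the structure maps). This Hom-space goes from $\tildew{\mathcal{T}}(\leq a)$ with $a\leq -1$ into $\tildew{\mathcal{T}}([0])$, i.e.\ from \emph{low} filtration to \emph{high} filtration; axiom~\ref{enum:filt-tria-cat-no-homs} kills morphisms from high to low, and low-to-high spaces are nonzero in general --- indeed, by the very property~\ref{enum:functor-omega-iv} you are constructing, this space is $\Hom_{\mathcal{T}}(\gr^a(X),\omega(\sigma_{\geq a+1}X))$, which has no reason to vanish. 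The situation is even worse if you instead try to define $\omega(f)$ by completing between the cone-triangles inside $\mathcal{T}$: there the ambiguity lives in Hom-spaces of unfiltered objects, where no filtration vanishing is available at all ($\omega$ is precisely the functor that creates morphisms not present in $\tildew{\mathcal{T}}$). So at the crucial step the uniqueness clause you invoke is inapplicable, and your mechanism for making $\omega$ well defined and functorial collapses.

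The repair --- and this is how the paper's proof actually runs --- is to stop demanding uniqueness of the fillers. The filler producing the unit (the morphism $\epsilon_X$ in the paper's notation) is a genuinely non-unique choice, made once per object; one then proves, by applying $\Hom(-,\tildew{A})$ to the morphism of triangles and using the five lemma, that \emph{any} such choice satisfies the universal property. Morphisms are never obtained by completing triangles: for $f\colon X\ra Y$ one forms the low-to-high composite $\alpha^{M}\comp f\comp\alpha^{N}\colon s^{-N}(X)\ra s^{M}(Y)$ and defines $\omega(f)$ as its unique preimage under the bijections coming from \ref{enum:filt-tria-cat-hom-bij} and the inductively established universal properties; functoriality is then automatic from uniqueness of factorization (Yoneda), not from Proposition~\ref{p:BBD-1-1-9-copied-for-w-str}. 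A second, smaller gap of the same nature: your extension from $\tildew{\mathcal{T}}(\leq 0)$ to all of $\tildew{\mathcal{T}}$ silently uses that the left adjoint inverts $\alpha_Z$ for every $Z\in\tildew{\mathcal{T}}(\leq -1)$. This is a restricted form of \ref{enum:functor-omega-iii}, i.e.\ one of the properties to be \emph{established}, and it does not follow formally from naturality of $\alpha$ and \ref{enum:filt-tria-cat-shift-alpha}; it needs its own induction on length (pure case via \ref{enum:filt-tria-cat-hom-bij}, then the five lemma).
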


The functor $\omega$ is often called the ``forgetting of the filtration
functor"; the
reason for this name is Lemma~\ref{l:omega-in-basic-example} below.
Note that 
$\omega|_{\tildew{\mathcal{T}}([0])}$ is left (and right) adjoint to
the equivalence $i:\mathcal{T} \sira \tildew{\mathcal{T}}([0])$
and hence a quasi-inverse of $i$.

\begin{proof}
  \textbf{Uniqueness:}
  Assume that $\omega: \tildew{\mathcal{T}} \ra
  \mathcal{T}$ satisfies the conditions
  \ref{enum:functor-omega-i} and
  \ref{enum:functor-omega-iii}.

  Let $f: X \ra Y$ be a morphism in $\tildew{\mathcal{T}}$. By 
  \ref{enum:filt-tria-cat-exhaust} there is an $n \in \DZ$ such that
  this is a morphism in $\tildew{\mathcal{T}}(\leq n)$. 
  By \ref{enum:filt-tria-cat-shift} we can assume that $n \geq 0$.
  Consider the commutative diagram
  \begin{equation*}
    \hspace{-1.4cm}
    \xymatrix@C4pc{
      {s^{-n}(X)} \ar[r]^-{\alpha} \ar[d]^-{s^{-n}(f)} &
      {s^{-n+1} (X)} \ar[r]^-{\alpha}
      \ar[d]^-{s^{-n+1}(f)} &
      {\dots} &
      {\dots} \ar[r]^-{\alpha} &
      {s^{-1}(X)}  \ar[r]^-{\alpha}
      \ar[d]^-{s^{-1}(f)} &
      {X}  \ar[d]^-{f} \\
      {s^{-n}(Y)} \ar[r]^-{\alpha} &
      {s^{-n+1} (Y)} \ar[r]^-{\alpha}
      &
      {\dots} &
      {\dots} \ar[r]^-{\alpha} &
      {s^{-1}(Y)}  \ar[r]^-{\alpha} &
      {Y}
    }
  \end{equation*}
  where we omit the indices $s^{-i}(X)$ and $s^{-i}(Y)$ at the various
  maps $\alpha$. 
  If we apply $\omega$, all the horizontal arrows become
  isomorphisms; note that $s^{-n}(f)$ is a morphism in
  $\tildew{\mathcal{T}}(\leq 0)$. This shows that $\omega$ is uniquely
  determined by its restriction to $\tildew{\mathcal{T}}(\leq 0)$ and
  knowledge of all isomorphisms $\omega(\alpha_Z): \omega(Z) \sira
  \omega(s(Z))$ for all $Z$ in $\tildew{\mathcal{T}}$.
  
  If we have two functors $\omega_1$ and $\omega_2$ whose restrictions
  are both adjoint to the inclusion $i: \mathcal{T} \ra 
  \tildew{\mathcal{T}}(\leq 0)$ (i.\,e.\ satisfy
  \ref{enum:functor-omega-i}), these adjunctions give rise to a unique
  isomorphism between $\omega_1|_{\tildew{\mathcal{T}}(\leq 0)}$ and
  $\omega_2|_{\tildew{\mathcal{T}}(\leq 0)}$. If both $\omega_1$ and
  $\omega_2$ satisfy in addition
  \ref{enum:functor-omega-iii}, this isomorphism can be uniquely
  extended to an isomorphism between $\omega_1$ and $\omega_2$ (use
  the above diagram after application of $\omega_1$ and $\omega_2$
  respectively).

  \textbf{Existence:}
  Let $X$ in $\tildew{\mathcal{T}}$. We define objects $X_l$ and $X_r$
  as follows: If $X=0$ let $X_l=X_r=0$. 
  If $X\not= 0$ let $a=a^X, b=b^X \in \DZ$ such that $\range(X)=[a,b]$
  (and $l(X)=b-a+1$);
  let $X_l := s^{-b}(X)$ and $X_r := s^{-a}(X)$ (the indices stand for
  left and right); observe that $X_l \in \tildew{\mathcal{T}}(\leq 0)
  \not\ni s(X_l)$ and $X_r \in \tildew{\mathcal{T}}(\geq 0) \not\ni
  s\inv(X_r)$.

  We denote the composition 
  $X_l \xra{\alpha_{X_l}} s(X_l) \ra \dots \xra{\alpha_{s\inv(X_r)}}
  X_r$ 
  by
  $\alpha_{rl}^X:X_l \ra X_r$.
  Our first goal is to construct for every $X$ in
  $\tildew{\mathcal{T}}$ an object $\Omega_X$ in $\mathcal{T}$ and a
  factorization of $\alpha^X_{rl}$
  \begin{equation*}
    \xymatrix{
      {X_l} \ar[r]^-{\epsilon_X} \ar@/^2pc/[rr]^-{\alpha^X_{rl}} &
      {i(\Omega_X)} \ar[r]^-{\delta_X} &
      {X_r}
    }
  \end{equation*}
  such that 
  \begin{align}
    \label{eq:univ-epsi}
    \Hom(i(\Omega_X), \tildew{A}) \xrightarrow[\sim]{? \comp
      \epsilon_X} \Hom(X_l, \tildew{A}) && \text{for all $\tildew{A}$
      in $\tildew{\mathcal{T}}(\geq 0)$;}\\
    \label{eq:univ-delt}
    \Hom(\tildew{B}, i(\Omega_X)) \xrightarrow[\sim]{\delta_X \comp ?}
    \Hom(\tildew{B}, X_r) && \text{for all $\tildew{B}$
      in $\tildew{\mathcal{T}}(\leq 0)$.}
  \end{align}
  Here is a diagram to illustrate this:
  \begin{equation*}
    \xymatrix{
      & {\tildew{B}} \ar[d]^-b \ar[rd]^-{b'}\\
      {X_l} \ar[r]^-{\epsilon_X} \ar[rd]^-{a'} & 
      {i(\Omega_X)} \ar[r]^-{\delta_X} \ar[d]^-{a} &
      {X_r}\\
      & {\tildew{A}}
    }
  \end{equation*}
  If $X$ is in $\tildew{\mathcal{T}}(\leq 0)$ the composition
  $X \xra{\alpha_X} s(X) \ra \dots \xra{\alpha_{s\inv(X_l)}} X_l$
  is denoted
  $\alpha_l^X:X \ra X_l$. If \eqref{eq:univ-epsi} holds then 
  \ref{enum:filt-tria-cat-hom-bij} shows:
  \begin{align}
    \label{eq:univ-epsi-X}
    \Hom(i(\Omega_X), \tildew{A}) \xrightarrow[\sim]{? \comp
      \epsilon_X\comp \alpha^X_l} \Hom(X, \tildew{A}) &&
    \text{for all $X$ is in $\tildew{\mathcal{T}}(\leq 0)$} \\
    \notag
    && \text{and $\tildew{A}$ 
      in $\tildew{\mathcal{T}}(\geq 0)$.}
  \end{align}
  If $X$ is in
  $\tildew{\mathcal{T}}(\geq 0)$
  and \eqref{eq:univ-delt} holds, we
  similarly have a morphism $\alpha^X_r: X_r \ra X$ and obtain:
  \begin{align}
    \label{eq:univ-delt-X}
    \Hom(\tildew{B}, i(\Omega_X)) \xrightarrow[\sim]{\alpha^X_r \comp
      \delta_X \comp ?} 
    \Hom(\tildew{B}, X) && 
    \text{for all $X \in \tildew{\mathcal{T}}(\geq 0)$}
    \\
    \notag
    && \text{and $\tildew{B}$
      in $\tildew{\mathcal{T}}(\leq 0)$.}
  \end{align}
  
  We construct the triples $(\Omega_X, \epsilon_X, \delta_X)$
  by induction over the length $l(X)$ of $X$. The case $l(X)=0$ is trivial.

  Base case $l(X)=1$.
  Then $X_l=X_r \in \tildew{\mathcal{T}}([0])$.  Let $\kappa$ be a
  quasi-inverse
  of $i$ and fix an isomorphism
  $\theta: \id \sira i\kappa$. 
  We define
  $\Omega_X:= \kappa(X_l)$ and
  \begin{equation*}
    X_l \xrightarrow[\sim]{\epsilon_X:=\theta_{X_l}} i(\Omega_X)
    \xrightarrow[\sim]{\delta_X:=\theta_{X_r}\inv} X_r.
  \end{equation*}
  This is a factorization of $\alpha_{rl}^X=\id_X$ and the conditions
  \eqref{eq:univ-epsi} and
  \eqref{eq:univ-delt} are obviously satisfied.

  Now let $X \in \tildew{\mathcal{T}}$ be given with $l(X) > 1$ and
  assume that we have 
  constructed $(\Omega_Y, \epsilon_Y, \delta_Y)$ as desired for all
  $Y$ with $l(Y) < l(X)$. Let $L:=l(X)-1$. 
  Let $P:=\sigma_{\geq 0}(X_l)$ and $Q:= \sigma_{\leq -1}(X_l)$ and
  let us explain the following diagram:
  \begin{equation*}
    \hspace{-1cm}
    \xymatrix{
      {P=P_l} \ar[r] \ar[dd]^-{\epsilon_{P}}_-{\sim} &
      {X_l} \ar[r] \ar@{..>}[dd]^-{\epsilon_X} &
      {Q} \ar[r]^-h 
      \ar[d]^-{\alpha^{Q}_l}
      &
      {[1] P} \ar[dd]^-{[1]\epsilon_{P}}_-\sim \\
      && {s^{-b_Q}(Q)=Q_l}
      \ar[d]^-{\epsilon_{Q}}\\ 
      {i(\Omega_{P})} \ar[d]^-{\delta_{P}}_-{\sim} \ar@{..>}[r]
      &
      {i(\Omega_X)} \ar@{..>}[r] \ar@{~>}[dd]^-{\delta_X}
      & {i(\Omega_{Q})} \ar@{..>}[r]^-{h'}
      \ar[dd]^-{\delta_{Q}} &
      {[1]i(\Omega_{P})} \ar[d]^-{[1]\delta_{P}}_-{\sim} \\
      {P=P_r} \ar[d]^-{\alpha^L} && 
      &
      {[1]P} \ar[d]^-{[1]\alpha^L} 
      \\
      {s^L(P)} \ar[r] &
      {s^L(X_l)=X_r} \ar[r] &
      {s^L(Q)=Q_r} \ar[r]^-{s^L(h)} 
      &
      {[1] s^L(P)} 
    }
  \end{equation*}
  The first row is the $\sigma$-truncation triangle of type $(\geq 0,
  \leq -1)$ of $X_l \in \tildew{\mathcal{T}}([-L,0])$
  The last row is the image of this triangle under the triangulated
  automorphism $s^L$. 
  There is a morphism between these triangles given by
  $\alpha^L$. 
  Observe that $\range(P)=[0]$ and $\range(Q) = [-L,b_Q] \subset [-L,-1]$; in
  particular the triples 
  $(\Omega_P, \epsilon_P, \delta_P)$ and
  $(\Omega_Q, \epsilon_Q, \delta_Q)$ are constructed.
  Since
  $\delta\comp
  \epsilon$ is a factorization of $\alpha_{rl}$, the
  first, third and fourth column are components of this morphism
  $\alpha^L$ of triangles.
  By \eqref{eq:univ-epsi-X} there is a unique morphism $h'$ as
  indicated making the upper right square commutative, and then
  \eqref{eq:univ-epsi-X} again shows that the lower right square
  commutes.
  We can find some $\Omega_X \in \mathcal{T}$ and a completion of $h'$
  to a triangle as shown in the middle row of the above diagram.
  Then we complete the partial morphism of the two upper triangles
  by $\epsilon_X$ to a morphism of triangles.

  Let us construct the wiggly morphism $\delta_X$.
  Take an arbitrary object $\tildew{A}$ in $\tildew{\mathcal{T}}(\geq
  0)$ and apply 
  $\Hom(?,\tildew{A})$
  to the
  morphism of the upper two triangles. The resulting morphism of long exact
  sequences and the five lemma show that 
  \eqref{eq:univ-epsi} holds for $\epsilon_X$.
  This shows in particular that the morphism $\alpha_{rl}^X:X_l \ra
  X_r$ factorizes 
  uniquely over $\epsilon_X$ by some $\delta_X$. Using 
  \eqref{eq:univ-epsi} again we see that $\delta_X$ defines in fact a
  morphism of the two lower triangles.

  Finally we apply for any $\tildew{B}$ in 
  $\tildew{\mathcal{T}}(\leq 0)$
  the functor 
  $\Hom(\tildew{B}, ?)$ 
  to the
  morphism of the two lower triangles; the five lemma again shows
  that $\delta_X$ satisfies \eqref{eq:univ-delt}.
  This shows that the triple $(\Omega_X, \epsilon_X, \delta_X)$ has
  the properties we want.

  Now we define the functor $\omega: \tildew{\mathcal{T}} \ra
  \mathcal{T}$. On objects we define $\omega(X):=\Omega_X$. Let $f:X
  \ra Y$ be a morphism. Then for some $N \in \DN$ big enough we have
  $s^{-N}(X), s^{-N}(Y) \in \tildew{\mathcal{T}}(\leq 0)$, hence
  $s^{-N}(f)$ is a
  morphism in $\tildew{\mathcal{T}}(\leq 0)$. Similarly for some $M
  \in \DN$
  big enough $s^M(f)$ is a morphism in $\tildew{\mathcal{T}}(\geq 0)$.
  Consider the diagram
  \begin{equation}
    \label{eq:omega-on-morphisms}
    \xymatrix@C4pc{
      {{s^{-N}(X)}} \ar[r]^-{\alpha^?} \ar[d]^-{s^{-N}(f)} &
      {X_l} \ar[r]^-{\epsilon_X} \ar@/^2pc/[rr]^-{\alpha_{rl}^X}
      &
      {i(\omega(X))} \ar[r]^-{\delta_X} \ar@{..>}[d]^-{\Omega_f} & {X_r} 
      \ar[r]^-{\alpha^?} &
      {s^M(X)} \ar[d]^-{s^{M}(f)}
      \\
      {{s^{-N}(Y)}} \ar[r]^-{\alpha^?} &
      {Y_l} \ar[r]^-{\epsilon_Y} \ar@/_2pc/[rr]^-{\alpha_{rl}^Y}
      &
      {i(\omega(Y))} \ar[r]^-{\delta_Y} & {Y_r} 
      \ar[r]^-{\alpha^?} &
      {s^M(Y).}
    }
  \end{equation}
  The morphism from $S^{-N}(X)$ to $i(\omega(Y))$ factors uniquely to
  the dotted arrow by \ref{enum:filt-tria-cat-hom-bij} and
  \eqref{eq:univ-epsi}. Similarly the morphism from $i(\omega(X))$ to
  $s^M(Y)$ factors to the dotted
  arrow. These (a priori) two dotted arrows coincide, since 
  \begin{equation}
    \label{eq:i-omega-isom}
    \Hom(i(\omega(X)),i(\omega(Y))\sira \Hom(s^{-N}(X), s^{M}(Y))
  \end{equation}
  by  \ref{enum:filt-tria-cat-hom-bij} and
  \eqref{eq:univ-epsi} and \eqref{eq:univ-delt} again and since
  both compositions $\delta\comp\epsilon$ are a factorization of
  $\alpha_{rl}$. 
  Note that $\Omega_f$ does not depend on the choice of $N$ and $M$.
  We define $\omega(f)$ to be the unique arrow $\omega(X) \ra
  \omega(Y)$ that is mapped under the equivalence $i$ to $\Omega_f$.
  This respects compositions and
  identity morphisms and hence defines a functor $\omega$.

  We verify \ref{enum:functor-omega-i}-\ref{enum:functor-omega-iv} and
  that $\omega$ can be made into a triangulated functor.

  Let us show \ref{enum:functor-omega-iii} first:
  We can assume that $X\not=0$.
  We draw the left part of diagram
  \eqref{eq:omega-on-morphisms} for the morphism $\alpha_X:X \ra s(X)$
  (note that $b_X+1=b_{s(X)}$ and hence $X_l=(s(X))_l$):
  \begin{equation*}
    \xymatrix@C4pc{
      {{s^{-N}(X)}} \ar[r]^-{\alpha^?}
      \ar[d]_{s^{-N}(\alpha_X)} & 
      {X_l=s^{-{b_X}}(X)} \ar[r]^-{\epsilon_X}
      \ar@{..>}[d]^-{\id}
      &
      {i(\omega(X))} \ar@{~>}[d]^-{\Omega_{\alpha_X}}
      \\
      {{s^{-N+1}(X)}} \ar[r]^-{\alpha^{?-1}} &
      {(s(X))_l=s^{-{b_{s(X)}}}(X)} \ar[r]^-{\epsilon_{s(X)}} 
      &
      {i(\omega(s(X)))}
    }
  \end{equation*}
  By \ref{enum:filt-tria-cat-shift-alpha}
  we have $s^{-N}(\alpha_X)=\alpha_{s^{-N}(X)}$ and hence
  the left square becomes commutative with the dotted arrow.
  Since $\epsilon_X$ and $\epsilon_{s(X)}$ have the same
  universal property \eqref{eq:univ-epsi} the morphism $\Omega_{\alpha_X}$ must
  be an isomorphism.

  Let us show \ref{enum:functor-omega-i}:
  Given $X$ in $\tildew{\mathcal{T}}(\leq 0)$ and $A$ in $\mathcal{T}$
  replace $\tildew{A}$ by $i(A)$ in 
  \eqref{eq:univ-epsi-X} and use $i:\Hom(\omega(X),A)\sira
  \Hom(i(\omega(X)),i(A))$. This gives
  \begin{equation*}
    \Hom_{\mathcal{T}}(\omega(X), A) \xsira{i(?) \comp \epsilon_X
      \comp \alpha_l^X} 
    \Hom_{\tildew{\mathcal{T}}}(X, i(A)).
  \end{equation*}
  From \eqref{eq:omega-on-morphisms} it is easy to see that this
  isomorphism is compatible with morphisms $f:X \ra Y$ in
  $\tildew{\mathcal{T}}(\leq 0)$ and $g:A' \ra A$ in $\mathcal{T}$.

  The proof of \ref{enum:functor-omega-ii} is similar.

  Proposition~\ref{p:linksad-triang-for-wstr-art} shows that
  $\omega|_{\tildew{\mathcal{T}}(\leq 0)}$ is triangulated for a
  suitable isomorphism $\phi: \omega|_{\tildew{\mathcal{T}}(\leq 0)}
  [1] \sira [1] \omega|_{\tildew{\mathcal{T}}(\leq 0)}$. 
  Using the above techniques it is easy to find an isomorphism $\phi:
  \omega[1] \sira [1]\omega$ such that $(\omega, \phi)$ is a
  triangulated functor.
  We leave the details to the reader.

  We finally prove \ref{enum:functor-omega-iv}.
  Let $X$ in
  $\tildew{\mathcal{T}}(\leq 0)$ and $Y$ in $\tildew{\mathcal{T}}(\geq
  0)$. The composition
  \begin{align*}
    \Hom_\mathcal{T}(\omega(X),\omega(Y)) \xsira{i} &
    \, \Hom_{\tildew{\mathcal{T}}}(i(\omega(X)),i(\omega(Y)))  \\
    \xsira{\eqref{eq:i-omega-isom}} &
    \, \Hom_{\tildew{\mathcal{T}}}(s^{-N}(X), s^{M}(Y)) \\
    \xleftarrow[\sim]{\alpha^M \comp ? \comp
      \alpha^N} & 
    \, \Hom_{\tildew{\mathcal{T}}}(X, Y).
  \end{align*}
  of isomorphisms (the last isomorphism comes from
  \ref{enum:filt-tria-cat-hom-bij}) is easily seen to be inverse to
  \eqref{eq:functor-omega-iv}.
\end{proof}

\subsection{Omega in the basic example}
\label{sec:omega-in-basic-example}

Let $\mathcal{A}$ be an abelian category and consider the basic example
of the f-category $DF(\mathcal{A})$ over $D(\mathcal{A})$
(as described in Section~\ref{sec:basic-example-fcat},
Proposition~\ref{p:basic-ex-f-cat}).  
Let $\omega: CF(\mathcal{A}) \ra C(\mathcal{A})$ be the functor
mapping a filtered complex $X=(\ul{X},F)$ to its underlying
non-filtered complex $\ul{X}$.
This functor obviously induces a triangulated functor $\omega:
DF(\mathcal{A}) \ra D(\mathcal{A})$.

\begin{lemma}
  \label{l:omega-in-basic-example}
  The functor $\omega:DF(\mathcal{A}) \ra D(\mathcal{A})$ satisfies
  the conditions 
  \ref{enum:functor-omega-iii} and \ref{enum:functor-omega-i}
  of Proposition~\ref{p:functor-omega}.
\end{lemma}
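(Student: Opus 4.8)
The condition \ref{enum:functor-omega-iii} is essentially a triviality in this model. The shift of filtration $s$ leaves the underlying complex unchanged, and for a genuine filtered complex $X$ the morphism $\alpha_X\colon X \ra s(X)$ is the identity on underlying complexes. Hence $\omega(\alpha_X)=\id_{\ul X}$, which is an isomorphism; since every object of $DF(\mathcal{A})$ is isomorphic to a filtered complex and $\alpha$ is a natural transformation, \ref{enum:functor-omega-iii} follows.

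The real work is \ref{enum:functor-omega-i}, the adjunction $\omega|_{DF(\mathcal{A})(\leq 0)} \dashv i$. The plan is to produce this adjunction already at the level of complexes and then transport it through the localizations. Let $C_{\leq 0}F(\mathcal{A}) \subset CF(\mathcal{A})$ be the full subcategory of filtered complexes $X$ with $F^1X=0$, let $i_0\colon C(\mathcal{A}) \ra C_{\leq 0}F(\mathcal{A})$, $A \mapsto (A,\Tr)$, and let $\omega_0=\omega|_{C_{\leq 0}F(\mathcal{A})}$. For $X \in C_{\leq 0}F(\mathcal{A})$ a filtered chain map $X \ra i_0A$ must send $F^jX$ into $\Tr^jA$; this is vacuous for $j\leq 0$ (where $\Tr^jA=A$) and for $j\geq 1$ (where $F^jX=0$), so forgetting the filtration yields a bijection $\Hom_{C_{\leq 0}F}(X,i_0A) \cong \Hom_C(\ul X,A)$, natural in both variables. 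The identical bookkeeping applies to homotopies, so at chain level $\omega_0 \dashv i_0$, with invertible counit $\omega_0 i_0 = \id_{C(\mathcal{A})}$.

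Next I would descend this adjunction to the derived level. The functor $\omega_0$ sends filtered quasi-isomorphisms to quasi-isomorphisms (the filtrations are finite, so $\ul{s}$ is a quasi-isomorphism once all $\gr^i(s)$ are), and $i_0$ sends quasi-isomorphisms to filtered quasi-isomorphisms (only $\gr^0\comp i_0=\id$ is nonzero). By the standard fact that an adjunction both of whose functors preserve the chosen weak equivalences descends to the localizations — the localized unit and counit still satisfy the triangle identities, these being equations of natural transformations preserved by the localization functors — I obtain an adjunction between the localization $D_{\leq 0}F(\mathcal{A})$ of $C_{\leq 0}F(\mathcal{A})$ at filtered quasi-isomorphisms and $D(\mathcal{A})$. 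It then remains to identify this localization with $DF(\mathcal{A})(\leq 0)$ and the two functors with the restriction of $\omega$ and with $i$, which gives \ref{enum:functor-omega-i}.

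That identification is where I expect the only genuine difficulty. Every object of $DF(\mathcal{A})(\leq 0)$ is isomorphic to one with $F^1=0$ by the Observation in the proof of Proposition~\ref{p:basic-ex-f-cat}, so essential surjectivity is clear; full faithfulness requires showing that morphisms and roofs in $DF(\mathcal{A})$ between $F^1=0$ complexes can be computed inside $C_{\leq 0}F(\mathcal{A})$. The key point is cofinality: given a filtered quasi-isomorphism $t\colon Z \ra X$ with $F^1X=0$, the induced filtration gives $\gr^i(F^1Z)=\gr^iZ$ for $i\geq 1$ and $\gr^i(F^1Z)=0$ for $i\leq 0$; since $\gr^iZ\cong \gr^iX=0$ is acyclic for $i\geq 1$, the subcomplex $F^1Z$ is acyclic, so $Z \ra Z/F^1Z$ is a filtered quasi-isomorphism with $Z/F^1Z \in C_{\leq 0}F(\mathcal{A})$. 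Thus roofs with $F^1=0$ apex are cofinal, the calculus of fractions can be carried out inside $C_{\leq 0}F(\mathcal{A})$, and the localized functors coincide with the restrictions of $\omega$ and $i$. Checking carefully that these replacements are compatible with composition of roofs is the main technical obstacle; everything else is formal.
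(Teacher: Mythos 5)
Your proof is correct, but its architecture differs genuinely from the paper's. The paper works entirely at the derived level: with $\omega' := \omega|_{DF(\mathcal{A})(\leq 0)}$ it defines the unit $\epsilon_X\colon X \ra i(\omega'(X))$ as a zigzag, composing the isomorphism $X \sira X/(X(\geq 1))$ from the Observation in the proof of Proposition~\ref{p:basic-ex-f-cat} with the obvious filtered chain map $X/(X(\geq 1)) \ra i(\ul{X}/F^1X)$ and the inverse of the quasi-isomorphism $i(\ul{X}) \ra i(\ul{X}/F^1X)$; the counit $\delta\colon \omega'\comp i \ra \id$ is the identity, and the substance of the proof is then the verification of the two triangle identities (which the paper leaves to the reader). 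You instead make the adjunction strict at the chain level on the $F^1=0$ subcategory, where unit, counit and triangle identities hold tautologically, and you pay downstream: you need the $2$-categorical universal property of Gabriel--Zisman localization to descend the adjunction, and a cofinality argument to identify the localization of the $F^1=0$ subcategory with $DF(\mathcal{A})(\leq 0)$. Note that the two arguments share the same geometric kernel: your replacement $Z \mapsto Z/F^1Z$ (legitimate because $\gr^i Z$ is acyclic for all $i\geq 1$, so each $\gr^i$ of the quotient map is a quasi-isomorphism) is precisely the paper's Observation, deployed by you to prove cofinality rather than to construct the unit. Your remaining ``technical obstacle'' --- compatibility of these replacements with composition of roofs --- is indeed only a routine verification: it is the standard full-faithfulness criterion for localizations of full (triangulated) subcategories, applied at the level of filtered homotopy categories by factoring any map from a filtered acyclic complex $Z$ to an $F^1=0$ complex through $Z/F^1Z$ (cf.\ \cite{KS-cat-sh}), so nothing in it can fail. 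In short, your route buys a trivially verified adjunction at the price of localization formalism, while the paper's route avoids localization theory at the price of reader-checked triangle identities; the verifications omitted on each side are of comparable weight.
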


\begin{proof}
  Condition \ref{enum:functor-omega-iii} is obvious, so let us show
  condition \ref{enum:functor-omega-i}.

  We abbreviate $\omega':= \omega|_{DF(\mathcal{A})(\leq 0)}$ 
  and consider $i$ as a functor to $DF(\mathcal{A})(\leq 0)$.)
  We first define a morphism 
$    \id_{DF(\mathcal{A})(\leq 0)} \xra{\epsilon} i \comp
    \omega'$
  of functors as follows. 
  Let $X$ be in ${DF(\mathcal{A})(\leq 0)}$. We have seen that the obvious
  morphism
  $X \ra X/(X(\geq 1))$
  (cf.\ \eqref{eq:basic-example-fcat-def-Xgeq-triangle})
  is an isomorphism in $DF(\mathcal{A})$.
  As above we denote the filtration of $X$ by 
  $\dots \supset F^i \supset F^{i+1} \supset \dots$
  and its underlying non-filtered
  complex $\omega(X)$ by $\underline X$.
  Define $\epsilon_X$ by the commutativity of the
  following diagram
  \begin{equation*}
    \hspace{-1cm}
    \xymatrix{
      {X} \ar[d]^-{\sim} \ar@{..>}[r]^-{\epsilon_X} & {i(\omega(X))=
        i(\ul{X}) \ar[d]^-{\sim}}
      \ar[d]^-{\sim} 
      \\
      {X/X(\geq 1)} \ar[r] \gar[d] & {i(\omega(X/X(\geq 1)))=
        i(\ul{X}/F^1)} \gar[d]
      \\
      \mathovalbox{\ul{X}/F^1 : F^{-1}/F^1 \supset F^0/F^1 \supset 0} &     
      \mathovalbox{\ul{X}/F^1 : \ul{X}/F^1 = \ul{X}/F^1 \supset 0} &     
    }
  \end{equation*}
  where the lower horizontal map is the obvious one. 
  This is compatible with morphisms and defines $\epsilon$.
  Let $\delta: \omega' \comp i \ra \id_{D(\mathcal{A})}$ be the identity
  morphism.
  We leave it to the reader to check that
  $\delta \omega' \comp \omega' \epsilon = \id_{\omega'}$ and
  $i \delta \comp \epsilon i = \id_{i}$.
  This implies that $\epsilon$ and $\delta$ are unit and counit of an
  adjunction $(\omega', i)$. 
\end{proof}

\subsection{Construction of the functor \texorpdfstring{$c$}{c}}
\label{sec:construction-functor-cfun}

Let $(\tildew{\mathcal{T}},i)$ be an f-category over a triangulated
category $\mathcal{T}$. 
Our aim in this section is to construct a certain functor
$c:\tildew{\mathcal{T}} \ra C^b(\mathcal{T})$.
The construction is in two steps.
It can be found in a very condensed form in 
\cite[Prop.~A 5]{Beilinson} or
\cite[8.4]{bondarko-weight-str-vs-t-str}.

\subsubsection{First step}
\label{sec:first-step-cprime}

We proceed similar as in the
construction of the weak weight complex functor (see
Section~\ref{sec:weak-wc-fun}). 
This may seem a bit involved 
(compare to the second approach explained later on)
but will turn 
out to be convenient 
when showing that the strong weight complex functor is
a lift of the weak one.

For every object $X$ in
$\tildew{\mathcal{T}}$ we have functorial $\sigma$-truncation
triangles (for all $n \in \DZ$)
\begin{equation}
  \label{eq:sigma-trunc-construction-c}
  \xymatrix{
    {S^n_X:} &
    {\sigma_{\geq n+1}X} \ar[r]^-{g^{n+1}_X} & 
    {X} \ar[r]^-{k_X^n} &
    {\sigma_{\leq n}X} \ar[r]^-{v_X^n} &
    {[1]\sigma_{\geq n+1}X.}
  }
\end{equation}
For any $n$ there is a unique morphism of triangles 
$S^n_X \ra S^{n-1}_X$ extending $\id_X$
(use Prop.~\ref{p:BBD-1-1-9-copied-for-w-str} and
\ref{enum:filt-tria-cat-no-homs}): 
\begin{equation}
  \label{eq:unique-triang-morph-Snx}
  \xymatrix{
    {S^n_X:} \ar[d]|{(h^n_X,\id_X,l^n_X)} &
    {\sigma_{\geq n+1}X} \ar[r]^-{g^{n+1}_X} \ar[d]^-{h^n_X}
    \ar@{}[rd]|{\triangle} &  
    {X} \ar[r]^-{k^n_X} \ar[d]^-{\id_X} 
    \ar@{}[rd]|{\nabla} &
    {\sigma_{\leq n}X} \ar[r]^-{v^n_X} \ar[d]^-{l^n_X} &
    {[1]\sigma_{\geq n+1}X} \ar[d]^-{[1]h^n_X} \\
    {S^{n-1}_X:} &
    {\sigma_{\geq n}X} \ar[r]^-{g^n_X} & 
    {X} \ar[r]^-{k^{n-1}_X} &
    {\sigma_{\leq n-1}X} \ar[r]^-{v^{n-1}_X} &
    {[1]\sigma_{\geq n}X}
  }  
\end{equation}
More precisely $h^n_X$ (resp.\ $l^n_X$) is the unique morphism making the square
$\Delta$ (resp.\ $\nabla$) commutative.
It is easy to see that $h^n_X$ corresponds under 
\eqref{eq:a-leq-b-sigma-isos} to the adjunction morphism
$g^{n+1}_{\sigma_{\geq n}X}:\sigma_{\geq
n+1}\sigma_{\geq n}X \ra \sigma_{\geq n}X$. 
Hence we see from
Proposition~\ref{p:firstprop-filt-cat}
\eqref{enum:firstprop-filt-cat-trunc-triangle} that there is 
a
unique morphism  
$c^n_X$
such
that
\begin{equation}
  \label{eq:triang-prime-sigmaX}
  S^{'n}_{\sigma_{\geq n}X}: 
  \xymatrix{
    {\sigma_{\geq n+1}X} \ar[r]^-{h^n_X} &
    {\sigma_{\geq n}X} \ar[rr]^-{e^n_X:=k^n_{\sigma_{\geq n}X}} &&
    {\sigma_{n}X} \ar[r]^-{c^n_X} &
    {[1]\sigma_{\geq n+1}X}
  }
\end{equation}
is a triangle.
We use the 
the tree triangles in \eqref{eq:unique-triang-morph-Snx} and
\eqref{eq:triang-prime-sigmaX} and the square marked with $\triangle$ 
in 
\eqref{eq:unique-triang-morph-Snx}
as the germ cell and obtain (from the octahedral axiom) the following
octahedron: 
\begin{equation}
  \label{eq:octaeder-cprime}
  \tildew{O}_X^n:=
  \xymatrix@dr{
    && {[1]{\sigma_{\geq n+1}X}} \ar[r]^-{[1]h^n_X} & {[1]{\sigma_{\geq n}X}}
    \ar[r]^-{[1]e^n_X} & {[1]{{\sigma_n X}}} 
    \\ 
    {S^{''n}_{\sigma_{\leq n X}}:} 
    &{{\sigma_n X}} \ar@(ur,ul)[ru]^-{c^n_X}
    \ar@{..>}[r]^-{a^n_X} & 
    {\sigma_{\leq n} X} \ar@{..>}[r]^-{l^n_X} \ar[u]^-{v^n_X} & 
    {\sigma_{\leq n-1} X} 
    \ar@(dr,dl)[ru]^-{b^n_X} \ar[u]_{v^{n-1}_X}\\ 
    {S^{n-1}_X:} &{\sigma_{\geq n}X} \ar[u]^-{e^n_X} \ar[r]^-{g^n_X} 
    & {X} \ar@(dr,dl)[ru]^-{k^{n-1}_X} \ar[u]_{k^n_X} \\  
    {S^n_X:} &{\sigma_{\geq n+1}X} \ar[u]^-{h^n_X} \ar@(dr,dl)[ru]^-{g^{n+1}_X}
    \ar@{}[ru]|{\triangle} 
    \\
    & {S^{'n}_{\sigma_{\geq n}X}:}
  }
\end{equation}
Note that the lower dotted morphism 
is in fact $l^n_X$ by
the uniqueness of $l^n_X$ observed above.
From 
Proposition~\ref{p:firstprop-filt-cat}
\eqref{enum:firstprop-filt-cat-trunc-preserve} we obtain that
the upper dotted morphism labeled $a^n_X$ is unique: It is the 
morphism 
$g^{n}_{\sigma_{\leq n}X}$ (more precisely it is the morphism
$g^{n}_{\sigma_{\leq n}X} \comp \sigma_X^{[n,n]}$
in the notation of \eqref{eq:interval-isom-sigma-trunc}).
We see that the triangle
${S^{''n}_{\sigma_{\leq n X}}}$ can be constructed completely
analogous as triangle \eqref{eq:triang-prime-sigmaX} above.

It is now easy to see that $X \mapsto \tildew{O}^n_X$ is in fact functorial.

Let $c'(X)$ be the following complex in $\tildew{\mathcal{T}}$: Its
$n$-th term is 
\begin{equation*}
  c'(X)^n:=[n]{\sigma_n X}  
\end{equation*}
and the 
differential $d^n_{c'(X)}:[n]{\sigma_n X} \ra [n+1]\sigma_{n+1}X$ is defined by
\begin{multline}
  \label{eq:differential-cprime-complex}
  d^n_{c'(X)}
  := [n](b^{n+1}_X \comp a^n_X)
  =  [n](([1]e^{n+1}_X) \comp v^n_X \comp a^n_X)\\
  =  [n](([1]e^{n+1}_X) \comp c^n_X).
\end{multline}
Note that $d^n_{c'(X)} \comp d^{n-1}_{c'(X)}=0$ 
since 
the composition of two consecutive
maps in a triangle is zero (apply this to \eqref{eq:octaeder-cprime}).

Since everything is functorial we have in fact defined a functor
\begin{equation*}
  c':\tildew{\mathcal{T}} \ra C(\tildew{\mathcal{T}}).
\end{equation*}

Let $C^b_{\Delta}(\tildew{\mathcal{T}}) \subset
C(\tildew{\mathcal{T}})$ be the full subcategory consisting of objects
$A \in C(\tildew{\mathcal{T}})$ such that $A^n \in
\tildew{\mathcal{T}}([n])$ for all $n \in \DZ$ and $A^n=0$ for $n\gg 0$.

Axiom \ref{enum:filt-tria-cat-exhaust} and
$c'(X)^n= [n]\sigma_n(X)\in \tildew{\mathcal{T}}([n])$ show that we
can view $c'$ as a functor
\begin{equation*}
  c':\tildew{\mathcal{T}} \ra C^b_{\Delta}(\tildew{\mathcal{T}}).
\end{equation*}

In the proof of the fact that a weak weight complex functor is a
functor of additive categories with translation we have used an
octahedron \eqref{eq:wc-weak-octaeder-translation}. A similar
octahedron with the same distribution of signs yields a canonical
isomorphism 
\begin{equation}
  \label{eq:cprime-translation}
  c' \comp [1]s\inv
  \cong
  \Sigma \comp (s\inv)_{C^b(\tildew{\mathcal{T}})} \comp c'
\end{equation}
where we use notation introduced at the end of
section~\ref{sec:homotopy-categories}.
We leave the details to the reader.
Note that $\Sigma$ and $(s\inv)_{C^b(\tildew{\mathcal{T}})}$ commute.

We describe two other ways to obtain the functor $c'$:
Let $X \in \tildew{\mathcal{T}}$. 
Apply $\sigma_{\leq n+1}$ to the triangle 
\eqref{eq:triang-prime-sigmaX}.
This gives the triangle
\begin{equation}
  \label{eq:triang-prime-sigmaX-trunc}
  \xymatrix{
    {\sigma_{n+1}X} \ar[r]
    &
    {\sigma_{[n,n+1]}X} \ar[r]
    &
    {\sigma_{\leq n+1}\sigma_{n}X} \ar[rr]^-{\sigma_{\leq n+1}(c^n_X)} &&
    {[1]\sigma_{n+1}X.}
  }
\end{equation}
The following commutative diagram is obtained by applying the
transformation $k^{n+1}:\id \ra \sigma_{\leq n+1}$ to its upper
horizontal morphism: 
\begin{equation}
  \label{eq:ce-comm-square}
  \xymatrix{
    {\sigma_n X} \ar[rr]^{c_X^n} 
    \ar[d]^{k^{n+1}_{\sigma_n X}}_{\sim}
    && {[1]\sigma_{\geq n+1}X}
    \ar[d]^{[1]e^{n+1}_{X}}
    \\
    {\sigma_{\leq n+1}\sigma_n X} \ar[rr]^{\sigma_{\leq n+1}(c_X^n)} 
    && {[1]\sigma_{n+1}X}
  }
\end{equation}
We use the isomorphism $k^{n+1}_{\sigma_nX}:\sigma_nX \sira
\sigma_{\leq n+1}\sigma_n (X)$ in order to replace the third term 
in
\eqref{eq:triang-prime-sigmaX-trunc}
and obtain (using \eqref{eq:ce-comm-square}) the following
triangle, where $\tildew{d}^n_X:=[1]e^{n+1}_X \comp c^n_X$:
\begin{equation}
  \label{eq:sigma-truncation-for-tilde-d}
  \xymatrix{
    {\sigma_{n+1}X} \ar[r]
    &
    {\sigma_{[n,n+1]}X} \ar[r]
    &
    {\sigma_{n}X} 
    \ar[r]^-{\tildew{d}^n_X}
    &
    {[1]\sigma_{n+1}X}
  }
\end{equation}

Completely analogous $\sigma_{\geq n}$ applied to the triangle 
${S^{''n+1}_{\sigma_{\leq n+1 X}}}$ and the isomorphism
$g^n_{\sigma_{n+1}X}:\sigma_{\geq n}\sigma_{n+1}X \sira
\sigma_{n+1}X$
provide a triangle of the form
\eqref{eq:sigma-truncation-for-tilde-d} with the same third morphism
$\tildew{d}^n_X=b^{n+1}_X \comp a^n_X$ (cf.\
\eqref{eq:differential-cprime-complex}) which presumably is
\eqref{eq:sigma-truncation-for-tilde-d} under the obvious
identifications.  
Note that 
\begin{equation*}
  \dots \ra [n]\sigma_n(X) \xra{[n]\tildew{d}^n} [n+1]\sigma_{n+1}(X)
  \ra \dots
\end{equation*}
is the complex $c'(X)$.
Hence we have described two slightly different (functorial)
constructions of the functor $c'$.

We will use the construction described after
\eqref{eq:sigma-truncation-for-tilde-d} later on and refer to it as
the second approach to $c'$.

\subsubsection{Second step}
\label{sec:second-step-cdoubleprime}

In the second step we define the functor $c$ via a functor
$c'':C^b_\Delta(\tildew{\mathcal{T}}) \ra
C^b(\tildew{\mathcal{T}}([0]))$.
There is a shortcut to $c$ described in Remark~\ref{rem:quick-def-c}
below.

Let $A = (A^n, d_A^n) \in C^b_\Delta(\tildew{\mathcal{T}})$. We draw
this complex horizontally in the following diagram:
\begin{equation}
  \label{eq:shift-differential-to-diagonal}
  \xymatrix{
    &
    {s(A^{-1})} 
    \ar@{..>}[dr]
    \\
    {\dots} \ar[r]
    &
    {A^{-1}} 
    \ar[r]_-{{d}^{-1}_A} \ar[u]^-{\alpha}
    & 
    {A^0} 
    \ar[r]^-{{d}^0_A}
    \ar@{..>}[dr]
    & 
    {A^1} 
    \ar[r]^-{{d}^1_A}
    & 
    {A^2} 
    \ar[r]^-{{d}^2_A}
    &
    {\dots}
    \\
    &&& 
    {s^{-1}(A^1)} 
    \ar[u]^-{\alpha}
    \ar@{..>}[dr]
    \\
    &&&& 
    {s^{-2}(A^2)} 
    \ar[uu]^-{\alpha^2}
  }
\end{equation}
The dotted arrows making everything commutative are uniquely obtained
using \ref{enum:filt-tria-cat-hom-bij}. 
The diagonal is again a bounded complex (i.\,e.\ $d^2=0$ (again by
\ref{enum:filt-tria-cat-hom-bij})), now even in
$\tildew{\mathcal{T}}([0])$.
We denote this complex by $c''(A)$.
This construction in fact defines a functor
$c'':C^b_\Delta(\tildew{\mathcal{T}}) \ra
C^b(\tildew{\mathcal{T}}([0]))$.

It is easy to check that there is a canonical isomorphism
\begin{equation}
  \label{eq:cdoubleprime-translation}
  c'' \comp \Sigma \comp (s\inv)_{C^b(\tildew{\mathcal{T}})} 
  \cong
  \Sigma \comp c''.
\end{equation}

\subsubsection{Definition of the functor \texorpdfstring{$c$}{c}}
\label{sec:def-functor-c}

By fixing a quasi-inverse $i\inv$ to $i$ we now define $c$ to be the
composition
\begin{equation*}
  c: \tildew{\mathcal{T}} \xra{c'} C^b_\Delta(\tildew{\mathcal{T}})
  \xra{c''} C^b(\tildew{\mathcal{T}}([0]))
  \xra{i\inv_{C^b}} C^b(\mathcal{T}). 
\end{equation*}
This functor maps an object $X\in \tildew{\mathcal{T}}$ to the complex
\begin{equation}
  \label{eq:cfun-gr}
  \dots \ra 
  [-1]\gr^{-1}(X) 
  \ra
  \gr^0(X) 
  \ra
  [1]\gr^1(X) 
  \ra
  [2]\gr^2(X) 
  \ra
  \dots.
\end{equation}

\begin{remark}
  \label{rem:quick-def-c}
  An equivalent shorter definition of $c$ would be to define it as
  $\omega_{C^b} \comp c'$.
\end{remark}

Combining the above canonical isomorphisms 
\eqref{eq:cprime-translation}
and
\eqref{eq:cdoubleprime-translation} we obtain:

\begin{proposition}
  \label{p:cfun-is-functor-of-add-cat-trans}
  The functor $c$ constructed above is 
  a functor 
  \begin{equation}
    \label{eq:functor-c-as-cat-with-translation}
    c: (\tildew{\mathcal{T}}, [1]s\inv) \ra (C^b(\mathcal{T}), \Sigma)
  \end{equation}
  of additive categories with translation: On objects we have a
  canonical isomorphism
  \begin{equation}
    \label{eq:c-Sigma-commute}
    c([1]s\inv(X))\cong \Sigma c(X).
  \end{equation}
\end{proposition}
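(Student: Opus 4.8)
The plan is to obtain the required structure by simply concatenating the two canonical isomorphisms \eqref{eq:cprime-translation} and \eqref{eq:cdoubleprime-translation}, which were established (modulo the routine verifications deferred to the reader) in the construction of $c'$ and $c''$, together with the observation that $i\inv_{C^b}$ strictly commutes with $\Sigma$. First I would record that $c = i\inv_{C^b} \comp c'' \comp c'$ is additive, since each of the three functors in the composition is additive; this is half of what it means to be a functor of additive categories with translation. The only substantial point is then to produce a natural isomorphism $\theta \colon c \comp [1]s\inv \sira \Sigma \comp c$, whose component at an object $X$ will be the claimed isomorphism \eqref{eq:c-Sigma-commute}.

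To build $\theta$ I would precompose the defining factorisation of $c$ with the translation $[1]s\inv$ and rewrite step by step. Precomposition gives $c \comp [1]s\inv = i\inv_{C^b} \comp c'' \comp (c' \comp [1]s\inv)$. Applying \eqref{eq:cprime-translation} to the innermost factor yields a canonical isomorphism with $i\inv_{C^b} \comp c'' \comp \Sigma \comp (s\inv)_{C^b(\tildew{\mathcal{T}})} \comp c'$, and then \eqref{eq:cdoubleprime-translation} identifies $c'' \comp \Sigma \comp (s\inv)_{C^b(\tildew{\mathcal{T}})}$ canonically with $\Sigma \comp c''$, producing $i\inv_{C^b} \comp \Sigma \comp c'' \comp c'$. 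Finally, since $i\inv_{C^b}$ is induced termwise from the additive functor $i\inv$, it commutes strictly with the shift of complexes, i.e. $i\inv_{C^b} \comp \Sigma = \Sigma \comp i\inv_{C^b}$; this rewrites the last expression as $\Sigma \comp i\inv_{C^b} \comp c'' \comp c' = \Sigma \comp c$. Composing these three natural isomorphisms defines $\theta$, and its naturality is automatic since each constituent is a natural isomorphism of functors.

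I do not expect a genuine obstacle here: the analytic content — checking that the two shift/octahedral bookkeeping isomorphisms \eqref{eq:cprime-translation} and \eqref{eq:cdoubleprime-translation} are well defined and natural — has already been isolated (and deferred) in the preceding construction, so the present statement is a purely formal concatenation. The only point deserving a line of care is the commutation $i\inv_{C^b} \comp \Sigma = \Sigma \comp i\inv_{C^b}$: it holds strictly because applying an additive functor termwise changes neither the reindexing nor the sign flip of the differential that define $\Sigma$, so no quasi-inverse coherence isomorphism intervenes. Thus the chief \emph{difficulty} is really just keeping track of where each of the shift functors $\Sigma$, $s\inv$ and $[1]$ acts, and confirming that the three isomorphisms compose to the asserted natural isomorphism of functors of additive categories with translation.
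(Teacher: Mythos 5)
Your proposal is correct and is essentially the paper's own argument: the paper obtains Proposition~\ref{p:cfun-is-functor-of-add-cat-trans} precisely by combining the canonical isomorphisms \eqref{eq:cprime-translation} and \eqref{eq:cdoubleprime-translation} across the factorisation $c = i\inv_{C^b} \comp c'' \comp c'$, exactly as you do. Your additional remark that $i\inv_{C^b}$ commutes strictly with $\Sigma$ (because a termwise-applied additive functor respects both the reindexing and the sign change of the differential) is the same tacit step the paper leaves unstated.
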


\section{Strong weight complex functor}
\label{sec:strong-WCfun}

Our aim in this section is to show the following Theorem:

\begin{theorem}
  [{cf.\ \cite[8.4]{bondarko-weight-str-vs-t-str}}]
  \label{t:strong-weight-cplx-functor}
  Let $\mathcal{T}$ be a triangulated category with a 
  bounded
  weight structure 
  $w=(\mathcal{T}^{w \leq 0},\mathcal{T}^{w \geq 0})$.
  Let $(\tildew{\mathcal{T}}, i)$ be an f-category over
  $\mathcal{T}$. 
  Assume that $\tildew{\mathcal{T}}$ satisfies 
  axiom~\ref{enum:filt-tria-cat-3x3-diag} stated below.
  Then there is a strong weight complex functor
  \begin{equation*}
    \WCfun: \mathcal{T} \ra K^b(\heart(w))^\anti.
  \end{equation*}
  In particular $\WCfun$ is a functor of triangulated categories.
\end{theorem}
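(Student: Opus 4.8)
The plan is to use the filtered enhancement to produce \emph{functorial, strictified weight decompositions}: to realize every object of $\mathcal{T}$ by a filtered object all of whose associated graded pieces are weight-pure, and then to read off the weight complex via the functor $c$ of Section~\ref{sec:construction-functor-cfun}. Concretely, I would single out the full subcategory $\mathcal{P} \subset \tildew{\mathcal{T}}$ of \emph{weight-pure} objects, namely those $\tildew{X}$ with $\gr^n(\tildew{X}) \in \mathcal{T}^{w=n}$ for all $n$ (equivalently $[n]\gr^n(\tildew{X}) \in \heart$). For such $\tildew{X}$ the $n$-th term $[n]\gr^n(\tildew{X})$ of the complex $c(\tildew{X})$ in \eqref{eq:cfun-gr} lies in $\heart$, so $c$ restricts to a functor $\mathcal{P} \ra C^b(\heart)$ of additive categories with translation. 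The core of the argument is then the claim that the forgetful functor restricts to an equivalence $\omega|_{\mathcal{P}}: \mathcal{P} \sira \mathcal{T}$.

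I would prove essential surjectivity by induction on the length of the weight range of $X \in \mathcal{T}$, which is finite since $w$ is bounded. In the base case $X \in \mathcal{T}^{w=n}$ one simply sets $\tildew{X}:=s^n i(X)$, which is weight-pure with $\gr^n(\tildew{X})=X$ and $\omega(\tildew{X}) \cong X$ by \ref{enum:functor-omega-iii}. For the inductive step one takes a weight decomposition $w_{\geq n+1}X \ra X \ra w_{\leq n}X \ra [1]w_{\geq n+1}X$ (Lemma~\ref{l:weight-str-basic-properties}~\eqref{enum:bounded-w-decomp}), lifts the outer terms by induction into $\mathcal{P}\cap\tildew{\mathcal{T}}(\geq n+1)$ and $\mathcal{P}\cap\tildew{\mathcal{T}}(\leq n)$, lifts the connecting morphism uniquely by the Hom-bijection \ref{enum:functor-omega-iv} of Proposition~\ref{p:functor-omega} (after applying a power of $s$ to land in the range where \ref{enum:functor-omega-iv} applies), and forms the filtered cone. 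Full faithfulness of $\omega|_{\mathcal{P}}$ I would obtain from a filtration argument along the $\sigma$-truncations of $\tildew{X}$ and $\tildew{Y}$: the ``diagonal'' Hom-groups are computed by \ref{enum:functor-omega-iv}, while the off-diagonal contributions vanish on both sides, in $\tildew{\mathcal{T}}$ by the f-category orthogonality \ref{enum:filt-tria-cat-no-homs} and in $\mathcal{T}$ by the weight orthogonality \ref{enum:ws-iii}. Fixing a quasi-inverse $\rho: \mathcal{T}\sira\mathcal{P}$, I would then define $\WCfun$ as the composite $\mathcal{T} \xra{\rho} \mathcal{P} \xra{c} C^b(\heart) \ra K^b(\heart)^\anti$, the sign twist $\anti$ being forced by the translation $[1]s\inv$ on the source of $c$ exactly as in Proposition~\ref{p:cfun-is-functor-of-add-cat-trans} and Remark~\ref{rem:ws-hot-additive-anti}.

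It remains to verify that $\WCfun$ is triangulated, and this I expect to be the main obstacle. Given a triangle $X \ra Y \ra Z \ra [1]X$ in $\mathcal{T}$, the idea is to lift it to a triangle $\tildew{X} \ra \tildew{Y} \ra \tildew{Z} \ra [1]\tildew{X}$ of weight-pure objects in $\tildew{\mathcal{T}}$; applying $c$ to such a filtered triangle should yield, degreewise, short exact sequences in $\heart$ which split by purity and \ref{enum:ws-iii}, so that $c(\tildew{Z})$ is the mapping cone and $\WCfun(X) \ra \WCfun(Y) \ra \WCfun(Z)$ becomes a distinguished anti-triangle in $K^b(\heart)^\anti$. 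Producing the weight-pure lift of $Z$ together with \emph{both} maps simultaneously, and arranging the $\sigma$-truncations of all three objects into a compatible $3\times 3$-diagram, is exactly where I expect axiom~\ref{enum:filt-tria-cat-3x3-diag} to be indispensable; together with its use in the inductive cone construction above, these are presumably the two points at which the paper invokes it. The delicate step is checking that the filtered triangle, after $c$ and passage to $K^b(\heart)$, is genuinely a cone rather than merely a candidate triangle.

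Finally, the comparison with $\WCweak$ required by Definition~\ref{d:strong-WCfun} is essentially built into the construction. The $\sigma$-truncation triangles \eqref{eq:sigma-trunc-construction-c} of a weight-pure lift $\rho(X)$ are carried by $\omega$ to weight decompositions of $X$, and the first step of $c$ in Section~\ref{sec:first-step-cprime} was deliberately modelled on the construction of $\WCweak$ in Section~\ref{sec:weak-wc-fun}. Hence $\omega$ transports $c'(\rho(X))$ to a candidate weight complex of $X$, and the choices that differ between the two constructions affect the result only up to weak homotopy; tracking this through $c''$ and $i\inv$ should give a canonical isomorphism of functors of additive categories with translation between the composite $\mathcal{T} \xra{\WCfun} K^b(\heart)^\anti \ra K_\weak(\heart)$ and $\WCweak$, which is what the definition of a strong weight complex functor demands.
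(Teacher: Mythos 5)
Your skeleton matches the paper's in outline: your $\mathcal{P}$ is exactly the paper's $\tildew{\mathcal{T}}^s$, your induction on the weight range is the paper's essential-surjectivity argument (Prop.~\ref{p:omega-on-subtildeT}), and the functor is ultimately built from $c$ and an inverse of something induced by $\omega$. But there is a genuine gap at what you yourself call the core of the argument: $\omega|_{\mathcal{P}}$ is \emph{not} an equivalence, because it is not faithful. Take $0 \neq \tildew{X} \in \tildew{\mathcal{T}}^s([0])$ and complete $\alpha_{\tildew{X}}$ to a triangle $[-1]s(\tildew{X}) \ra E \ra \tildew{X} \xra{\alpha_{\tildew{X}}} s(\tildew{X})$; then $E$ lies in $\mathcal{P}$ and is nonzero (its range is $[0,1]$), yet $\omega(E)=0$ because $\omega(\alpha_{\tildew{X}})$ is invertible (this is Remark~\ref{rem:omega-range-versus-weights-on-tildeTs}). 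Hence $\id_E$ is a nonzero morphism with $\omega(\id_E)=0$. In particular your proposed Hom-computation (diagonal pieces via \ref{enum:functor-omega-iv}, off-diagonal pieces vanishing by \ref{enum:filt-tria-cat-no-homs} and \ref{enum:ws-iii}) cannot work: orthogonality only kills morphisms from higher to lower (filtration or weight) degree, the ``low-to-high'' Hom-groups are nonzero on both sides, and the extension data they carry is exactly what destroys faithfulness. Consequently a quasi-inverse $\rho$ does not exist and the definition $\WCfun = c \comp \rho$ does not get off the ground; even the paper's fullness statement is not proved by orthogonality but by Lemma~\ref{l:alpha-hom-subtildeT}, whose proof is one of the two places where axiom~\ref{enum:filt-tria-cat-3x3-diag} is used.

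The repair is the substance of the paper's proof and is absent from your proposal: one must pass to the quotient $\mathcal{Q} = \tildew{\mathcal{T}}^s/(\Kern \omega|_{\tildew{\mathcal{T}}^s})$ by the kernel ideal, so that $\ol{\omega}$ becomes an equivalence (Prop.~\ref{p:omega-tildeTs-equiv}), and then show that $h=\can \comp c$ also factors through $\mathcal{Q}$, i.e.\ that $c(f)$ is null-homotopic whenever $\omega(f)=0$. This is not formal: $c(f)$ need not vanish at the chain level, and the factorization (Cor.~\ref{c:e-prime-zero-on-kernel}) rests on the hardest computation in the paper, Prop.~\ref{p:cone-alpha-homotopic-to-zero}, which identifies $c$ of an $\alpha$-twisted triangle with a mapping-cone sequence and in particular gives $h(E)=0$ for the object $E$ above; that proposition is the second place where axiom~\ref{enum:filt-tria-cat-3x3-diag} enters, and it simultaneously yields that $\ol{h}$ is triangulated, which is the step you flagged as delicate but left unresolved. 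Your final paragraph on the comparison with $\WCweak$ is in the right spirit and close to the paper's concluding argument, but it presupposes the well-definedness of $\WCfun$ on morphisms, which is exactly what the missing quotient step provides.
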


A proof of (a stronger version of) this theorem is sketched in 
\cite[8.4]{bondarko-weight-str-vs-t-str}, where M.~Bondarko
attributes the argument to A.~Beilinson.
When we tried to understand the details we had to impose the
additional 
conditions that 
$\tildew{\mathcal{T}}$ satisfies
axiom~\ref{enum:filt-tria-cat-3x3-diag} and that the weight structure
is bounded. 
We state axiom~\ref{enum:filt-tria-cat-3x3-diag} in
Section~\ref{sec:additional-axiom} and show 
in Section~\ref{sec:additional-axiom-basic-example}
that it is satisfied in the basic example of a filtered derived
category.
Our proof of 
Theorem~\ref{t:strong-weight-cplx-functor} is an elaboration of the ideas of
A.~Beilinson and M.~Bondarko; we sketch the idea of the proof
in Section~\ref{sec:idea-construction-strong-WC-functor} and give the
details in Section~\ref{sec:existence-strong-WCfun}.

\subsection{Idea of the proof}
\label{sec:idea-construction-strong-WC-functor}

Before giving the details let us explain the strategy of the proof
of Theorem~\ref{t:strong-weight-cplx-functor}.

Let $(\tildew{\mathcal{T}},i)$ be an f-category over a triangulated
category $\mathcal{T}$ and 
assume that $w=(\mathcal{T}^{w \leq 0},\mathcal{T}^{w \geq 0})$ is a weight
structure on $\mathcal{T}$.
Its heart $\heart(w)=\mathcal{T}^{w=0}$ is a full subcategory of $\mathcal{T}$,
and hence $C^b(\heart(w)) \subset C^b(\mathcal{T})$
and $K^b(\heart(w)) \subset K^b(\mathcal{T})$ are
full subcategories.
Let $\tildew{\mathcal{T}}^s$ be the full subcategory of 
$\tildew{\mathcal{T}}$ consisting of objects $X \in 
\tildew{\mathcal{T}}$ such that $c(X) \in
C^b(\heart(w))$
where $c$ is the functor constructed in Section~\ref{sec:construction-functor-cfun}: We have a ``pull-back"
diagram 
\begin{equation}
  \label{eq:pb-subtildet}
  \xymatrix{
    {\tildew{\mathcal{T}}} \ar[r]^-{c} 
    & {C^b(\mathcal{T})}\\
    {\tildew{\mathcal{T}}^s} \ar[r]^-{c} 
    \ar@{}[u]|-{\cup}
    & {C^b(\heart(w))} \ar@{}[u]|-{\cup}
  }
\end{equation}
of categories where we denote the lower horizontal functor also by
$c$.
Note that \eqref{eq:c-Sigma-commute} shows that $\tildew{\mathcal{T}}^s$
is stable under $s\inv[1]=[1]s\inv$.

We expand diagram \eqref{eq:pb-subtildet} to
\begin{equation}
  \label{eq:pb-subtildet-expanded}
  \xymatrix{
    {\tildew{\mathcal{T}}} \ar[r]_-{c} \ar@/^1pc/[rr]^-{h}
    & {C^b(\mathcal{T})} \ar[r]_-{\can} 
    & {K^b(\mathcal{T})^\anti}\\
    {\tildew{\mathcal{T}}^s} \ar[r]^-{c} \ar@/_1pc/[rr]_{h}
    \ar@{}[u]|-{\cup}
    & {C^b(\heart(w))} \ar@{}[u]|-{\cup}
    \ar[r]^-{\can} 
    \ar@{}[u]|-{\cup}
    & {K^b(\heart(w))^\anti} \ar@{}[u]|-{\cup}
  }
\end{equation}
and define $h:=\can \comp c$ as indicated.
We have seen in
Proposition~\ref{p:cfun-is-functor-of-add-cat-trans}
that 
$c$ is a functor of additive categories with translation (where
$\tildew{\mathcal{T}}$ (or $\tildew{\mathcal{T}}^s$) are equipped with
the translation $[1]s\inv$); the same is obviously true for $h$.
For this statement the homotopy categories on the right are just
considered as additive categories with translation. In the following
however we view them as triangulated categories with the class of
triangles described in Sections \ref{sec:anti-triangles}
and \ref{sec:homotopy-categories}.

From now on we assume that $\tildew{\mathcal{T}}$ satisfies 
axiom~\ref{enum:filt-tria-cat-3x3-diag} (stated below) and that the
weight structure is bounded.

Consider the following diagram whose dotted arrows will be explained:
\begin{equation}
  \label{eq:omega-subtildeT-exp}
  \xymatrix{
    {\mathcal{T}} & {\tildew{\mathcal{T}}} \ar[r]^-{h} \ar[l]_{\omega}
    & {K^b(\mathcal{T})^\anti}\\
    {\mathcal{T}} \gar[u] 
    & 
    {\tildew{\mathcal{T}}^s} \ar[r]^-{h}
    \ar@{}[u]|-{\cup} \ar[l]_{\omega|_{\tildew{\mathcal{T}}^s}} \ar@{..>}[d]^-{\can}
    & {K^b(\heart(w))^\anti} \ar@{}[u]|-{\cup}\\
    & 
    {\mathcal{Q}}
    \ar@{..>}[lu]^-{\ol{\omega}}_{\sim}
    \ar@{..>}[ru]_{\ol{h}} 
  }
\end{equation}
We will prove below:
The restriction $\omega|_{\tildew{\mathcal{T}}^s}$ factors over some
quotient functor $\tildew{\mathcal{T}}^s \xra{\can} \mathcal{Q}$ and
induces an equivalence $\ol{\omega}:\mathcal{Q} \sira \mathcal{T}$ of
additive categories with translation
(see Prop.~\ref{p:omega-tildeTs-equiv}) where
the translation functor of $\mathcal{Q}$ is induced by $[1]s\inv$.
Transfer of structure turns $\mathcal{Q}$
into a triangulated category; 
its class of triangles can be
explicitly described (cf.\ Lemma~\ref{l:omega-bar-triang}).

On the other hand the functor
$h:\tildew{\mathcal{T}}^s \ra K^b(\heart(w))^\anti$ factors over
$\mathcal{Q}$ to a functor $\ol{h}$ of triangulated categories
(see Cor.~\ref{c:h-factors-triang}).

Let $\WCfun: \mathcal{T} \ra K^b(\heart(w))^\anti$ be the
composition $\ol{h} \comp 
\ol{\omega}\inv$,
where $\ol{\omega}\inv$ is a
quasi-inverse of
$\ol{\omega}$.
(In diagram \eqref{eq:omega-subtildeT-exp} $\WCfun$ is the
composition of  
the dotted arrows.)
Then $\WCfun$ will turn out to be a strong weight complex functor, proving 
Theorem~\ref{t:strong-weight-cplx-functor}.

\subsection{An additional axiom for filtered triangulated categories}
\label{sec:additional-axiom}

Let $\tildew{\mathcal{T}}$ be a 
filtered triangulated category $\tildew{\mathcal{T}}$.
Let $Y$ in $\tildew{\mathcal{T}}$ be an object and consider the
$\sigma$-truncation triangle
\begin{equation*}
  \label{eq:sigma-trunc}
  \xymatrix{
    {S^0_Y:} &
    {\sigma_{\geq 1}Y} \ar[r]^-{g^1_Y} & 
    {Y} \ar[r]^-{k_Y^0} &
    {\sigma_{\leq 0}Y} \ar[r]^-{v_Y^0} &
    {[1]\sigma_{\geq 1}Y.}
  }
\end{equation*}
Applying the morphism $\alpha$ we obtain a morphism of triangles
\begin{equation*}
  \xymatrix{
    {s(S^0_Y):} &
    {s(\sigma_{\geq 1}Y)} \ar[r]^-{s(g^1_Y)} & 
    {s(Y)} \ar[r]^-{s(k_Y^0)} &
    {s(\sigma_{\leq 0}Y)} \ar[r]^-{s(v_Y^0)} &
    {[1]s(\sigma_{\geq 1}Y)}\\
    {S^0_Y:} &
    {\sigma_{\geq 1}Y} \ar[r]^-{g^1_Y} \ar[u]^{\alpha_{\sigma_{\geq
          1}(Y)}} & 
    {Y} \ar[r]^-{k_Y^0} \ar[u]^{\alpha_Y} &
    {\sigma_{\leq 0}Y} \ar[r]^-{v_Y^0} \ar[u]^{\alpha_{\sigma_{\leq 0}(Y)}}&
    {[1]\sigma_{\geq 1}Y.} \ar[u]^{[1]\alpha_{\sigma_{\geq 1}(Y)}}
  }
\end{equation*}
where we tacitly identify
$s([1]\sigma_{\geq 1}(Y))= [1]s(\sigma_{\geq 1}(Y))$ and 
$\alpha_{[1]\sigma_{\geq 1}(Y)}=[1]\alpha_{\sigma_{\geq 1}(Y)}$.

Given a morphism $f:X \ra Y$ in $\tildew{\mathcal{T}}$,
the morphism of triangles
\begin{equation*}
  \xymatrix{
    {S^0_Y:} &
    {\sigma_{\geq 1}Y} \ar[r]^-{g^1_Y} & 
    {Y} \ar[r]^-{k_Y^0} &
    {\sigma_{\leq 0}Y} \ar[r]^-{v_Y^0} &
    {[1]\sigma_{\geq 1}Y.} \\
    {S^0_X:} &
    {\sigma_{\geq 1}X} \ar[r]^-{g^1_X} \ar[u]^{\sigma_{\geq 1}(f)} & 
    {X} \ar[r]^-{k_X^0} \ar[u]^{f} &
    {\sigma_{\leq 0}X} \ar[r]^-{v_X^0} \ar[u]^{\sigma_{\leq 0}(f)}&
    {[1]\sigma_{\geq 1}X.} \ar[u]^{[1]\sigma_{\geq 1}(f)}
  }
\end{equation*}
is the unique morphism of triangles extending $f$
(use Prop.~\ref{p:BBD-1-1-9-copied-for-w-str} and
\ref{enum:filt-tria-cat-no-homs}).

We denote the composition of this two morphisms of triangles by
$\alpha \comp f: S_X^0 \ra s(S_Y^0)$:
\begin{equation}
  \label{eq:alpha-f-triangle}
  \hspace{-1.6cm}
  \xymatrix@C2cm{
    {s(S^0_Y):} &
    {s(\sigma_{\geq 1}Y)} \ar[r]^-{s(g^1_Y)} & 
    {s(Y)} \ar[r]^-{s(k_Y^0)} &
    {s(\sigma_{\leq 0}Y)} \ar[r]^-{s(v_Y^0)} &
    {[1]s(\sigma_{\geq 1}Y)}\\
    {S^0_X:} \ar[u]^{\alpha \comp f} &
    {\sigma_{\geq 1}X} \ar[r]^-{g^1_X} \ar[u]^{\alpha_{\sigma_{\geq
          1}(Y)} \comp \sigma_{\geq 1}(f)} & 
    {X} \ar[r]^-{k_X^0} \ar[u]^{\alpha_Y \comp f} &
    {\sigma_{\leq 0}X} \ar[r]^-{v_X^0} \ar[u]^{\alpha_{\sigma_{\leq
          0}(Y)} \comp \sigma_{\leq 0}(f)}&
    {[1]\sigma_{\geq 1}X.} \ar[u]^{[1](\alpha_{\sigma_{\geq 1}(Y)}
      \comp \sigma_{\geq 1}(f))}
  }
\end{equation}
(We don't see a reason why this morphism of triangles 
extending
$\alpha_Y \comp f$ should be unique; 
Proposition~\ref{p:BBD-1-1-9-copied-for-w-str} does not apply.
If it were unique axiom \eqref{enum:filt-tria-cat-3x3-diag} below
would be satisfied automatically.)

Now the additional axiom can be stated:
\begin{enumerate}[label=(fcat{\arabic*}),start=7]
\item 
  \label{enum:filt-tria-cat-3x3-diag}
  For any morphism $f: X \ra Y$ in $\tildew{\mathcal{T}}$ the 
  morphism 
  \eqref{eq:alpha-f-triangle}
  of triangles 
  $\alpha \comp f: S_X^0 \ra s(S_Y^0)$ explained above can be
  extended to a $3 \times 3$-diagram 
  \begin{equation}
    \label{eq:filt-tria-cat-nine-diag}
    \hspace{-1.4cm}
    \xymatrix@C2cm{
      &
      {[1]\sigma_{\geq 1}X} \ar@{..>}[r]^-{[1]g^1_X} & 
      {[1]X} \ar@{..>}[r]^-{[1]k_X^0} &
      {[1]\sigma_{\leq 0}X} \ar@{..>}[r]^-{[1]v_X^0} 
      \ar@{}[rd]|-{\anticomm} &
      {[2]\sigma_{\geq 1}X} \\
      &
      {A} \ar[u]^-{b} \ar[r] & 
      {Z} \ar[u] \ar[r] &
      {B} \ar[u] \ar[r] & 
      {[1]A} \ar@{..>}[u]^-{[1]b} \\
      {s(S^0_Y):} &
      {s(\sigma_{\geq 1}Y)} \ar[r]^-{s(g^1_Y)} \ar[u]^-{a} & 
      {s(Y)} \ar[r]^-{s(k_Y^0)} \ar[u]&
      {s(\sigma_{\leq 0}Y)} \ar[r]^-{s(v_Y^0)} \ar[u]&
      {[1]s(\sigma_{\geq 1}Y)} \ar@{..>}[u]^-{[1]a}\\
      {S^0_X:} \ar[u]^{\alpha \comp f} &
      {\sigma_{\geq 1}X} \ar[r]^-{g^1_X} \ar[u]^{\alpha_{\sigma_{\geq
            1}(Y)} \comp \sigma_{\geq 1}(f)} & 
      {X} \ar[r]^-{k_X^0} \ar[u]^{\alpha_Y \comp f} &
      {\sigma_{\leq 0}X} \ar[r]^-{v_X^0} \ar[u]^{\alpha_{\sigma_{\leq
            0}(Y)} \comp \sigma_{\leq 0}(f)} &
      {[1]\sigma_{\geq 1}X} \ar@{..>}[u]^{[1](\alpha_{\sigma_{\geq 1}(Y)}
        \comp \sigma_{\geq 1}(f))}
    }
  \end{equation}
  having the properties described in 
  Proposition~\ref{p:3x3-diagram-copied-for-w-str}.
\end{enumerate}

Instead of taking the morphism $S_X^0 \xra{\alpha \comp f} s(S^0_Y)$
at the bottom of this $3\times 3$-diagram we get similar diagrams with
morphisms  $S_X^n \xra{\alpha \comp f} s(S^n_Y)$ at the bottom (use
the functor $s$ of triangulated categories).

\begin{remark}
  \label{rem:fcat-nine-nearly-needless}
  We first expected that 
  axiom \ref{enum:filt-tria-cat-3x3-diag} is a consequence of
  Proposition~\ref{p:3x3-diagram-copied-for-w-str}.
  In fact this proposition implies that there is a diagram
  \eqref{eq:filt-tria-cat-nine-diag} with nearly all the required
  properties: If we start from the small commutative square in the
  lower left corner, the only thing that is not clear to us is why
  one can assume that the morphism from $\sigma_{\leq 0}X$ to
  $s(\sigma_{\leq 0}Y)$ is 
  ${\alpha_{\sigma_{\leq 0}(Y)} \comp \sigma_{\leq 0}(f)}$.
\end{remark}

\begin{remark}
  \label{rem:fcat-nine-rewritten}
  Some manipulations of diagram~\ref{eq:filt-tria-cat-nine-diag} (mind
  the signs!) show that
  axiom~\ref{enum:filt-tria-cat-3x3-diag} gives:
  For any morphism $f: X \ra Y$ and any $n \in \DZ$ there is a $3
  \times 3$-diagram of the following\footnote{We assume here and in
    similar situations in the following that $[1][-1]=\id$.} form:
  \begin{equation}
    \label{eq:filt-tria-cat-nine-diag-sigma}
    \xymatrix
    {
      {[-1]s({\sigma_{\geq n+1}(Y)})}
      \ar[r] \ar[d]^-{[-1]s(g^{n+1}_Y)} &  
      {A'} \ar[r] \ar[d] & 
      {{\sigma_{\geq n+1}(X)}} \ar[rr]^-{\alpha_{\sigma_{\geq n+1}(Y)}
        \comp \sigma_{\geq n+1}(f)} \ar[d]^-{g^{n+1}_X} & &
      {s({\sigma_{\geq n+1}(Y)})} \ar@{..>}[d]^-{s({g^{n+1}_Y})} \\
      {[-1]s(Y)} \ar[r] \ar[d]^-{[-1]s({k_Y^n})} &
      {Z'} \ar[r] \ar[d] &
      {X} \ar[rr]^-{\alpha_Y \comp f} \ar[d]^-{k_X^n} &&
      {s(Y)} \ar@{..>}[d]^-{s({k_Y^n})} \\
      {[-1]s({\sigma_{\leq n}(Y)})} \ar[r] \ar[d]^-{-[-1]s({v_Y^n})} & 
      {B'} \ar[r] \ar[d] & 
      {{\sigma_{\leq n}(X)}} 
      \ar[rr]^-{\alpha_{\sigma_{\leq n}(Y)} \comp \sigma_{\leq n}(f)}
      \ar[d]^-{v_X^n} \ar@{}[rrd]|-{\anticomm} &&
      {s({\sigma_{\leq n}(Y)})} \ar@{..>}[d]^-{-s({v_Y^n})} \\
      {s({\sigma_{\geq n+1}(Y)})} \ar@{..>}[r] &
      {[1]A'} \ar@{..>}[r] &
      {[1]{\sigma_{\geq n+1}(X)}}
      \ar@{..>}[rr]^-{[1](\alpha_{\sigma_{\geq n+1}(Y)}\comp
        \sigma_{\geq n+1}(f))} & &
      {[1]s({\sigma_{\geq n+1}(Y)})} 
    }
  \end{equation}
\end{remark}

\subsection{The additional axiom in the basic example}
\label{sec:additional-axiom-basic-example}

Let $\mathcal{A}$ be an abelian category and consider the basic example
$DF(\mathcal{A})$ of a filtered triangulated category
(as described in Section~\ref{sec:basic-example-fcat},
Prop.~\ref{p:basic-ex-f-cat}).  

\begin{lemma}
  \label{l:additional-axiom-true-DFA}
  Axiom~\ref{enum:filt-tria-cat-3x3-diag} is true in
  $DF(\mathcal{A})$.
\end{lemma}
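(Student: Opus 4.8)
The plan is to construct the $3\times3$-diagram \eqref{eq:filt-tria-cat-nine-diag} by hand at the level of the category $CF(\mathcal{A})$ of filtered complexes, where mapping cones are strictly functorial. As Remark~\ref{rem:fcat-nine-nearly-needless} makes clear, the only content of axiom~\ref{enum:filt-tria-cat-3x3-diag} beyond Proposition~\ref{p:3x3-diagram-copied-for-w-str} is that the vertical map $\sigma_{\leq 0}X \to s(\sigma_{\leq 0}Y)$ in the completed diagram is forced to equal $\alpha_{\sigma_{\leq 0}(Y)}\comp\sigma_{\leq 0}(f)$; working strictly pins this map down for free.

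First I would reduce to the case that $f\colon X \to Y$ comes from an honest morphism of $CF(\mathcal{A})$. A general $f$ is a roof $X \xla{s} X' \xra{g} Y$ with $s$ a quasi-isomorphism, hence an isomorphism in $DF(\mathcal{A})$, and $g$ a strict morphism. Functoriality of the $\sigma$-truncations turns $s$ into an isomorphism of truncation triangles $S^0_{X'} \sira S^0_X$, and the morphism of triangles $\alpha\comp f$ is obtained from $\alpha\comp g$ by precomposing with this isomorphism. Since precomposition with an isomorphism of triangles carries a $3\times3$-diagram for $\alpha\comp g$ to one for $\alpha\comp f$ (the objects $A$, $Z$, $B$ and all cone maps are unchanged; only the bottom row and the corner shifts are relabelled through the isomorphism), I may assume $f$ is strict. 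Then, possibly after replacing $X,Y$ by isomorphic objects, I realize the $\sigma$-truncation triangles strictly as $X(\geq 1) \xra{i} X \xra{p} X/(X(\geq 1))$ (cf.\ \eqref{eq:basic-example-fcat-def-Xgeq-triangle}, \eqref{eq:triangle-fcat-basic}), and likewise for $Y$.

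The key tool is the strict filtered mapping cone: for $\phi\colon P \to Q$ in $CF(\mathcal{A})$ the cone $\Cone(\phi)$ carries the filtration $F^n\Cone(\phi)=\Cone(F^n\phi)$, so $\gr^n\Cone(\phi)=\Cone(\gr^n\phi)$. Thus $\phi\mapsto\Cone(\phi)$ is functorial and exact on each graded piece, and every strict mapping-cone sequence becomes a triangle in $DF(\mathcal{A})$. The strict morphism $\alpha_Y\comp f\colon X \to s(Y)$ respects the strict sub/quotient structure above, and its induced maps on $X(\geq 1)$ and on $X/(X(\geq 1))$ are exactly $\alpha_{\sigma_{\geq 1}(Y)}\comp\sigma_{\geq 1}(f)$ and $\alpha_{\sigma_{\leq 0}(Y)}\comp\sigma_{\leq 0}(f)$ respectively; this rests on naturality of $\alpha$ together with axiom~\ref{enum:filt-tria-cat-shift-alpha} and the commutation $s\sigma_{\geq 1}=\sigma_{\geq 2}s$, after the canonical identifications of filtration indices. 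I would then take, in $CF(\mathcal{A})$, the strict cones of the three vertical maps and of the morphism of truncation triangles $S^0_X \xra{\alpha\comp f} s(S^0_Y)$ as a whole. Because the strict cone is functorial and compatible with all $\gr^n$, the three columns (cone triangles of the vertical maps), the bottom two rows $S^0_X$ and $s(S^0_Y)$, the middle row $A \to Z \to B$ (the termwise cones), and the top row $[1]S^0_X$ all become mapping-cone triangles in $CF(\mathcal{A})$, hence triangles in $DF(\mathcal{A})$. The anticommutativity of the upper-right square and the sign of $-s(v^0_Y)$ in \eqref{eq:filt-tria-cat-nine-diag-sigma} are exactly the standard mapping-cone signs.

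I expect the main obstacle to be the bookkeeping in the last two steps rather than any conceptual difficulty: verifying that the strict truncations of $\alpha_Y\comp f$ really are the prescribed maps under the filtration-index shifts introduced by $s$, and matching every shift and sign of \eqref{eq:filt-tria-cat-nine-diag} with the conventions for the filtered mapping cone. Once these identifications are in place the existence of the diagram is automatic: in $CF(\mathcal{A})$ there is no obstruction to completing a morphism of triangles to a $3\times3$-diagram, since the strict cone performs the completion functorially.
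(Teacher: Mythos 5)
Your proposal is essentially the paper's own proof: reduce to a strict morphism in $CF(\mathcal{A})$, model the $\sigma$-truncation triangles by strict filtered mapping cones, take termwise strict cones of the vertical maps $\beta$, $\gamma=\alpha_Y\comp f$ and their cone map, and match the signs, exactly as the paper does. The only two points where your sketch needs repair in the execution are the ones your "bookkeeping" hedge points at: the third term of each row must be the strict cone $\Cone(X(\geq 1)\ra X)$ rather than the quotient $X/(X(\geq 1))$ (the connecting morphism of the quotient sequence exists only in $DF(\mathcal{A})$, so "the strict cone of the morphism of triangles as a whole" does not parse in the quotient model), and the termwise-cone middle row is not literally a mapping-cone sequence but only isomorphic to $\Cone(\tzmat{g_Y}00{g_X})$ via the standard summand-permutation-with-sign isomorphism of iterated cones --- which is precisely the explicit matrix computation that occupies the second half of the paper's proof.
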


\begin{proof}
  Let $f: X \ra Y$ be a morphism in $DF(\mathcal{A})$. We can assume
  without loss of generality that $f$ is (the class of) a morphism
  $f:X \ra Y$ in $CF(\mathcal{A})$. 
  We explain the following diagram:
  \begin{equation*}
    \hspace{-4.1cm}
    \xymatrix@R=40pt@C=18pt{
      {F^n[1]L_X} \ar@{..>}[r]^-{[1]{g_X}} 
      \ar@{}[rd]|(.2){-x} &
      {F^n[1]X} \ar@{..>}[r]^-{\svek 10} 
      \ar@{}[rd]|(.2){-x} &
      {F^n[1]X \oplus F^n[2]L_X} \ar@{..>}[r]^-{\zvek 01}
      \ar@{}[rd]|-{\anticomm} 
      \ar@{}[rd]|(.2){\tzmat {-x}{-{g_X}}0{x}} & 
      {F^n[2]L_X} 
      \ar@{}[rd]|(.2){x} &
      \\
      {F^{n-1}L_Y\oplus F^n[1]L_X} \ar[u]^-{\zvek 01} \ar[r]^-{\tzmat
        {g_Y}00{g_X}} 
      \ar@{}[rd]|(.2){\tzmat y\beta 0{-x}} & 
      {F^{n-1}Y\oplus F^n[1]X} \ar[u]^-{\zvek 01} \ar[r]^-{
        \Big[
        \begin{smallmatrix}
          1&0\\ 0&0\\ 0&1\\ 0&0
        \end{smallmatrix}
        \Big]
      } 
      \ar@{}[rd]|(.2){\tzmat y\gamma 0{-x}} &
      {F^{n-1}Y \oplus F^{n-1}[1]L_Y \oplus F^n[1]X \oplus F^n[2]L_X}
      \ar[u]^-{
        \Big[
        \begin{smallmatrix}
          0&0&1&0\\ 0&0&0&1
        \end{smallmatrix}
        \Big]
      } \ar[r]^-{
        \Big[
        \begin{smallmatrix}
          0&1&0&0\\ 0&0&0&{-1}
        \end{smallmatrix}
        \Big]
      } 
      \ar@{}[rd]|(.35){
        \Big[
        \begin{smallmatrix}
          y&{g_Y}&\gamma&0\\
          0&-y&0&\beta\\
          0&0&-x&-{g_X}\\
          0&0&0&x
        \end{smallmatrix}
        \Big]
      }  
      & 
      {F^{n-1}[1]L_Y \oplus F^n[2]L_X} \ar@{..>}[u]^-{\zvek 01} 
      \ar@{}[rd]|(.2){\tzmat {-y}{-\beta} 0{x}} & 
      { } \\
      {F^{n-1}L_Y} \ar[u]^-{\svek 10} \ar[r]^-{{g_Y}} 
      \ar@{}[rd]|(.2){y} & 
      {F^{n-1}Y} \ar[u]^-{\svek 10} \ar[r]^-{\svek 10} 
      \ar@{}[rd]|(.2){y} &
      {F^{n-1}Y\oplus F^{n-1}[1]L_Y} \ar[u]^-{
        \Big[
        \begin{smallmatrix}
          1&0\\ 0&1\\ 0&0\\ 0&0
        \end{smallmatrix}
        \Big]
      } \ar[r]^-{\zvek 01} 
      \ar@{}[rd]|(.2){\tzmat y{g_Y}0{-y}} & 
      {F^{n-1}[1]L_Y} \ar@{..>}[u]^-{\svek 10} 
      \ar@{}[rd]|(.2){-y} &
      { } \\
      {F^nL_X} \ar[u]^-{\beta} \ar[r]^-{g_X} 
      \ar@{}[rd]|(.2){x} & 
      {F^nX} \ar[u]^-{\gamma} \ar[r]^-{\svek 10} 
      \ar@{}[rd]|(.2){x} &
      {F^nX \oplus F^n[1]L_X} \ar[u]^-{\tzmat \gamma 0 0 \beta} 
      \ar[r]^-{\zvek 01} 
      \ar@{}[rd]|(.2){\tzmat x{g_X}0{-x}} & 
      {F^n[1]L_X} \ar@{..>}[u]^-{[1]\beta} 
      \ar@{}[rd]|(.2){-x} 
      & { } \\
      { } &
      { } &
      { } &
      { } &
      { } 
    }
  \end{equation*}
  This diagram is the $n$-th filtered part of the $3\times 3$ diagram 
  we need. 
  For simplicity we will in the rest of this description not
  distinguish between a morphism and its $n$-th filtered part.

  The lowest row is 
  constructed as follows: 
  Let $L_X:= X(\geq 1)$ be as defined in
  \ref{eq:basic-example-fcat-def-Xgeq-triangle}
  and let $g_X: L_X \ra X$ be the obvious morphism (called $i$
  there). The lowest row is the ($n$-th filtered part of)
  the mapping cone triangle of this morphism. This triangle is
  isomorphic to the triangle \eqref{eq:triangle-fcat-basic} and hence
  a possible choice 
  for the $\sigma$-truncation triangle of $X$, cf.\ the proof of 
  Proposition~\ref{p:basic-ex-f-cat}.

  The second row from below is the
  corresponding triangle for $s(Y)$.

  The lower right ``index" at each object indicates its differential,
  e.\,g.\ the differential of $X$ is $x$.
  
  The morphism of triangles between the lower two rows 
  is constructed as described before \eqref{eq:alpha-f-triangle},
  e.\,g.\ $\gamma = \alpha_Y \comp f:X \ra s(Y)$.

  Then we fit the morphisms $\beta$, $\gamma$ and $\tzmat \gamma
  00\beta$ into mapping cone triangles. 
  Then consider the horizontal arrows in the second row from above:
  They are morphisms of complexes and make all small squares
  (anti-)commutative as required.

  We only need to show that this second row is a triangle.
  This is a consequence of the following diagram which gives an
  isomorphism of this row to the mapping cone triangle of
  $\tzmat{g_Y}00{g_X}$. 
  \begin{equation*}
    \hspace{-4.4cm}
    \xymatrix@R=60pt@C=23pt{
      {F^{n-1}L_Y\oplus F^n[1]L_X} \gar[d] \ar[r]^-{\tzmat
        {g_Y}00{g_X}} 
      \ar@{}[rd]|(.2){\tzmat y\beta 0{-x}} & 
      {F^{n-1}Y\oplus F^n[1]X} \gar[d] \ar[r]^-{
        \Big[
        \begin{smallmatrix}
          1&0\\ 0&0\\ 0&1\\ 0&0
        \end{smallmatrix}
        \Big]
      } 
      \ar@{}[rd]|(.2){\tzmat y\gamma 0{-x}} &
      {F^{n-1}Y \oplus F^{n-1}[1]L_Y \oplus F^n[1]X \oplus F^n[2]L_X}
      \ar[d]^-{\sim}_-{
        \Big[
        \begin{smallmatrix}
          1&0&0&0\\ 
          0&0&1&0\\ 
          0&1&0&0\\ 
          0&0&0&-1
        \end{smallmatrix}
        \Big]
      } \ar[r]^-{
        \Big[
        \begin{smallmatrix}
          0&1&0&0\\ 0&0&0&{-1}
        \end{smallmatrix}
        \Big]
      } 
      \ar@{}[rd]|(.35){
        \Big[
        \begin{smallmatrix}
          y&{g_Y}&\gamma&0\\
          0&-y&0&\beta\\
          0&0&-x&-{g_X}\\
          0&0&0&x
        \end{smallmatrix}
        \Big]
      }  
      & 
      {F^{n-1}[1]L_Y \oplus F^n[2]L_X} \gar[d]
      \ar@{}[rd]|(.2){\tzmat {-y}{-\beta} 0{x}} & 
      { } \\
      {F^{n-1}L_Y\oplus F^n[1]L_X} \ar[r]^-{\tzmat
        {g_Y}00{g_X}} 
      \ar@{}[rd]|(.2){\tzmat y\beta 0{-x}} & 
      {F^{n-1}Y\oplus F^n[1]X} \ar[r]^-{
        \Big[
        \begin{smallmatrix}
          1&0\\ 0&1\\ 0&0\\ 0&0
        \end{smallmatrix}
        \Big]
      } 
      \ar@{}[rd]|(.2){\tzmat y\gamma 0{-x}} &
      {F^{n-1}Y \oplus F^n[1]X \oplus F^{n-1}[1]L_Y \oplus F^n[2]L_X}
      \ar[r]^-{
        \Big[
        \begin{smallmatrix}
          0&0&1&0\\ 0&0&0&1
        \end{smallmatrix}
        \Big]
      } 
      \ar@{}[rd]|(.35){
        \Big[
        \begin{smallmatrix}
          y&\gamma&{g_Y}&0\\
          0&-x&0&{g_X}\\
          0&0&-y&-\beta\\
          0&0&0&x
        \end{smallmatrix}
        \Big]
      }  
      & 
      {F^{n-1}[1]L_Y \oplus F^n[2]L_X}
      \ar@{}[rd]|(.2){\tzmat {-y}{-\beta} 0{x}} & 
      { } \\
      { } &
      { } &
      { } &
      { } &
      { } 
    }
  \end{equation*}
\end{proof}

\subsection{Existence of a strong weight complex functor}
\label{sec:existence-strong-WCfun}

Let
$\mathcal{T}$ be a triangulated category with a 
bounded
weight structure 
$w=(\mathcal{T}^{w \leq 0},\mathcal{T}^{w \geq 0})$ and let
$(\tildew{\mathcal{T}}, i)$ be an f-category over
$\mathcal{T}$.

An object $X \in \tildew{\mathcal{T}}$ is by definition in
$\tildew{\mathcal{T}}^s$ if and only if $c(X) \in C^b(\heart(w))$.
Using \eqref{eq:cfun-gr} we obtain:
\begin{align}
  \label{eq:subtildeT-equiv}
  X \in \tildew{\mathcal{T}}^s 
  & \Leftrightarrow \forall a \in \DZ: [a]\gr^a(X) \in
  \heart(w)=\mathcal{T}^{w=0}\\
  \notag
  & \Leftrightarrow \forall a \in \DZ: \gr^a(X) = i\inv s^{-a} \sigma_a (X) \in
  [-a]\mathcal{T}^{w=0}=\mathcal{T}^{w=a}\\
  \notag
  & \Leftrightarrow \forall a \in \DZ: \sigma_a (X) \tildew{\in}
  s^a(i(\mathcal{T}^{w=a})),
\end{align}
where ``$\tildew{\in}$" stands for ``is isomorphic to some object in".

\begin{remark}
  \label{rem:tildeTs-and-heart-comp-w-str}
  Note the difference between $\tildew{\mathcal{T}}^s$ and
  the heart \eqref{eq:heart-comp-w-str} 
  of
  the unique compatible w-structure on $\tildew{\mathcal{T}}$ 
  described in Proposition~\ref{p:compatible-w-str}.
\end{remark}

Observe that $\tildew{\mathcal{T}}^s$ is not a triangulated subcategory
of $\tildew{\mathcal{T}}$: It is not closed under the translation $[1]$.

However $\tildew{\mathcal{T}}^s$ is closed under $[1]s\inv$
(use \eqref{eq:c-Sigma-commute}) and under
extensions: If 
$(A,X,B)$ is a triangle with $A, B \in \tildew{\mathcal{T}}^s$,
apply the triangulated functor $\gr^a$ and obtain the triangle 
$(\gr^a(A), \gr^a(X), \gr^a(B))$; now use that $\mathcal{T}^{w=a}$ is closed
under extensions 
(Lemma~\ref{l:weight-str-basic-properties}~\eqref{enum:weight-perp-prop}).

It is obvious from \eqref{eq:subtildeT-equiv}
that 
$\tildew{\mathcal{T}}^s$ 
is stable under all $\sigma$-truncations.

\begin{lemma}
  \label{l:omega-range-versus-weights-on-tildeTs}
  Let $X \in \tildew{\mathcal{T}}^s([a,b])$ for $a,b \in \DZ$. Then
  $\omega(X) \in \mathcal{T}^{w \in [a,b]}$.
\end{lemma}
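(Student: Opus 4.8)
The plan is to prove this by induction on the length $l(X)=b-a+1$ (or equivalently on the size of $\range(X)$), using the fact that $\omega$ is a triangulated functor together with the structural properties of $\tildew{\mathcal{T}}^s$ already established: namely that it is stable under $\sigma$-truncations and that $\omega$ respects the weight-theoretic content of the graded pieces.

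First I would dispose of the base cases. If $X=0$ the claim is trivial since $0 \in \mathcal{T}^{w \in [a,b]}$. For $l(X)=1$, we have $\range(X)=[a]$, so $\sigma_a(X) \tildew{\in} s^a(i(\mathcal{T}^{w=a}))$ by \eqref{eq:subtildeT-equiv} and $X \cong \sigma_a(X)$. I would then use that $\omega(\alpha_X)$ is an isomorphism (property~\ref{enum:functor-omega-iii}) to reduce $\omega(X)=\omega(\sigma_a X)$ to $\omega$ of an object in $i(\mathcal{T}^{w=a})$; since $\omega|_{\tildew{\mathcal{T}}([0])}$ is a quasi-inverse of $i$ (the remark after Proposition~\ref{p:functor-omega}) and $\omega$ commutes with the shift of filtration up to the $\omega(\alpha)$-isomorphisms, this identifies $\omega(X)$ with an object of $\mathcal{T}^{w=a} \subset \mathcal{T}^{w \in [a,b]}$.

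For the inductive step with $l(X) > 1$, I would apply the $\sigma$-truncation triangle of type $(\geq b, \leq b-1)$ to $X$:
\begin{equation*}
  \sigma_{\geq b}X \ra X \ra \sigma_{\leq b-1}X \ra [1]\sigma_{\geq b}X.
\end{equation*}
Since $\tildew{\mathcal{T}}^s$ is stable under $\sigma$-truncations, both $\sigma_{\geq b}X$ and $\sigma_{\leq b-1}X$ lie in $\tildew{\mathcal{T}}^s$; moreover $\sigma_{\geq b}X \in \tildew{\mathcal{T}}^s([b])$ (length one) and $\sigma_{\leq b-1}X \in \tildew{\mathcal{T}}^s([a,b-1])$ (shorter length). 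By the base case, $\omega(\sigma_{\geq b}X) \in \mathcal{T}^{w=b} \subset \mathcal{T}^{w \geq a}$, and by induction $\omega(\sigma_{\leq b-1}X) \in \mathcal{T}^{w \in [a,b-1]}$. Applying the triangulated functor $\omega$ yields a triangle in $\mathcal{T}$ with outer terms $\omega(\sigma_{\geq b}X) \in \mathcal{T}^{w \leq b}$ and $\omega(\sigma_{\leq b-1}X) \in \mathcal{T}^{w \leq b}$, and since $\mathcal{T}^{w \leq b}$ is closed under extensions (Lemma~\ref{l:weight-str-basic-properties}~\eqref{enum:weight-perp-prop}) I get $\omega(X) \in \mathcal{T}^{w \leq b}$. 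Symmetrically, both outer terms lie in $\mathcal{T}^{w \geq a}$, so closure of $\mathcal{T}^{w \geq a}$ under extensions gives $\omega(X) \in \mathcal{T}^{w \geq a}$; combining, $\omega(X) \in \mathcal{T}^{w \in [a,b]}$.

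The main obstacle I anticipate is the base-case bookkeeping: carefully checking that $\omega(\sigma_a X)$ really lands in $\mathcal{T}^{w=a}$ rather than merely some shift of the heart. This requires combining \eqref{eq:subtildeT-equiv} (which controls $\sigma_a(X)$ up to isomorphism as $s^a(i(\mathcal{T}^{w=a}))$) with the interplay between $\omega$, the filtration shift $s$, and the $\omega(\alpha)$-isomorphisms from~\ref{enum:functor-omega-iii}; one must track how the shift $[a]$ in the definition $\gr^a = i\inv s^{-a}\sigma_a$ interacts with $\mathcal{T}^{w=a}=[-a]\mathcal{T}^{w=0}$. Once this identification is clean, the inductive extension argument is routine, relying only on $\omega$ being triangulated and the extension-closure of the weight-bounded subcategories.
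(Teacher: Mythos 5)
Your proof is correct and follows essentially the same route as the paper: the paper's argument also builds $X$ up from its graded pieces via the tower of $\sigma$-truncations (its diagram~\eqref{eq:build-up-X} is exactly your induction, drawn all at once), identifies $\omega(\sigma_n(X)) \cong \gr^n(X) \in \mathcal{T}^{w=n}$ using $\sigma_n(X) \cong s^n(i(\gr^n(X)))$ and \ref{enum:functor-omega-iii}, and concludes by extension-closure of $\mathcal{T}^{w\leq n}$ and $\mathcal{T}^{w\geq n}$ from Lemma~\ref{l:weight-str-basic-properties}~\eqref{enum:weight-perp-prop}. The only difference is presentational: you phrase the tower as a formal induction on the length $l(X)$, which is a fine (arguably cleaner) way to organize the same argument.
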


\begin{proof}
  We can build up $X$ as indicated in the
  following diagram in the case $[a,b]=[-2,1]$:
  \begin{equation}
    \label{eq:build-up-X}
    \xymatrix@C-1.3cm{
      {X=\sigma_{\leq 1}X} \ar[rr] 
      && {\sigma_{\leq 0}X} \ar[rr] \ar@{~>}[ld]
      && {\sigma_{\leq -1}X} \ar[rr] \ar@{~>}[ld]
      && {\sigma_{\leq -2}X} \ar[rr] \ar@{~>}[ld]
      && {\sigma_{\leq -3}X=0} \ar@{~>}[ld]
      \\
      & {\sigma_1(X)} \ar[lu]
      && {\sigma_0(X)} \ar[lu]
      && {\sigma_{-1}(X)} \ar[lu]
      && {\sigma_{-2}(X)} \ar[lu]
    }
  \end{equation}
  All triangles are isomorphic to $\sigma$-truncation triangles with
  the wiggly arrows of degree one.
  Since $\sigma_n(X)\cong s^n(i(\gr^n(X)))$ 
  Proposition~\ref{p:functor-omega} \ref{enum:functor-omega-iii}
  yields $\omega(\sigma_n(X)) \cong \gr^n(X)$.
  If we apply $\omega$ to diagram \eqref{eq:build-up-X}
  we obtain a diagram that is isomorphic to
  \begin{equation*}
    \xymatrix@C-1.3cm{
      {\omega(X)} \ar[rr] 
      && {\omega(\sigma_{\leq 0}X)} \ar[rr] \ar@{~>}[ld]
      && {\omega(\sigma_{\leq -1}X)} \ar[rr] \ar@{~>}[ld]
      && {\omega(\sigma_{\leq -2}X)} \ar[rr] \ar@{~>}[ld]
      && {0} \ar@{~>}[ld]
      \\
      & {\gr^1(X)} \ar[lu]
      && {\gr^0(X)} \ar[lu]
      && {\gr^{-1}(X)} \ar[lu]
      && {\gr^{-2}(X).} \ar[lu]
    }
  \end{equation*}
  Since $\gr^n(X) \in \mathcal{T}^{w=n}$ and all $\mathcal{T}^{w \leq
    n}$, $\mathcal{T}^{w\geq n}$ are closed under extensions 
  (Lemma \ref{l:weight-str-basic-properties}
  \eqref{enum:weight-perp-prop}) we obtain the claim.
\end{proof}

\begin{remark}
  \label{rem:omega-range-versus-weights-on-tildeTs}
  The converse statement of
  Lemma~\ref{l:omega-range-versus-weights-on-tildeTs} is not true in
  general (but cf.
  Prop.~\ref{p:omega-on-subtildeT}):
  Take a non-zero object $X \in \tildew{\mathcal{T}}^s([0])$ (so
  $X\cong \sigma_0(X) \tildew{\in} i(\mathcal{T}^{w=0})$) and fit the
  morphism $\alpha_X$ into a triangle
  \begin{equation*}
    \xymatrix{
      {[-1]s(X)} \ar[r]
      &
      {E} \ar[r]
      &
      {X} \ar[r]^-{\alpha_X}
      &
      {s(X).}
    }
  \end{equation*}
  Then $E \in \tildew{\mathcal{T}}^s$. Since $\omega(\alpha_X)$ is an
  isomorphism we have $\omega(E)=0$ but $\range(E)=[0,1]$.
\end{remark}

In the rest of this section we additionally assume:
The weight structure
$w=(\mathcal{T}^{w \leq 0},\mathcal{T}^{w \geq 0})$ is bounded,
and $\tildew{\mathcal{T}}$ satisfies 
axiom~\ref{enum:filt-tria-cat-3x3-diag}.

\begin{lemma}
  [{\cite[Lemma 8.4.2]{bondarko-weight-str-vs-t-str}}]
  \label{l:alpha-hom-subtildeT}
  For all $M, N \in \tildew{\mathcal{T}}^s$ the map
  \begin{equation*}
    (\alpha_{N})_*=(\alpha_{N} \comp ?): \Hom_{\tildew{\mathcal{T}}}(M, N) 
    \ra \Hom_{\tildew{\mathcal{T}}}(M, s(N))
  \end{equation*}
  is surjective and for all $a>0$ the map
  \begin{equation*}
    (\alpha_{s^a(N)})_*=(\alpha_{s^a(N)} \comp ?): \Hom_{\tildew{\mathcal{T}}}(M, s^a(N)) 
    \ra \Hom_{\tildew{\mathcal{T}}}(M, s^{a+1}(N))
  \end{equation*}
  is bijective.
\end{lemma}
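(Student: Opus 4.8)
The plan is to prove, by induction on $l(M)+l(N)$, a slightly sharpened statement from which the Lemma is immediate. For $M,N\in\tildew{\mathcal{T}}^s$ and $c\in\DZ$ set
\begin{equation*}
  \theta^N_c:=(\alpha_{s^c(N)})_*=(s^c(\alpha_N))_*\colon
  \Hom_{\tildew{\mathcal{T}}}(M,s^c(N))\to
  \Hom_{\tildew{\mathcal{T}}}(M,s^{c+1}(N)),
\end{equation*}
where $\alpha_{s^c(N)}=s^c(\alpha_N)$ by \ref{enum:filt-tria-cat-shift-alpha}. I will show that $\theta^N_c$ is injective for all $c\ge 1$ and surjective for all $c\ge 0$. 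Taking $c=0$ gives the asserted surjectivity of $(\alpha_N)_*$, and for $c=a>0$ injectivity and surjectivity together give bijectivity, so this implies the Lemma.

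For the base case $l(M),l(N)\le 1$ the nonzero situation is $M\cong s^m(i(U))$, $N\cong s^n(i(V))$ with $U\in\mathcal{T}^{w=m}$, $V\in\mathcal{T}^{w=n}$ by \eqref{eq:subtildeT-equiv}. Applying the automorphism $s^{-m}$ reduces $\theta^N_c$ to the transition map $\Hom(i(U),s^{k}(i(V)))\to\Hom(i(U),s^{k+1}(i(V)))$ induced by $s^k(\alpha_{i(V)})$, where $k=n+c-m$. For $k<0$ we have $i(U)\in\tildew{\mathcal{T}}(\geq 0)$ and $s^k(i(V))\in\tildew{\mathcal{T}}(<0)$, so $\Hom(i(U),s^k(i(V)))=0$ by Proposition~\ref{p:firstprop-filt-cat}\eqref{enum:firstprop-filtr-cat-trunc-perps}; for $k\ge0$, \ref{enum:functor-omega-iv} together with \ref{enum:functor-omega-iii} identifies $\Hom(i(U),s^k(i(V)))$ with $\Hom_{\mathcal{T}}(U,V)$ compatibly with the transition maps, which thereby become identities. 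Hence $\theta^N_c$ is an isomorphism for $k\ge 0$, is $0\to 0$ for $k\le -2$, and is the inclusion $0\hookrightarrow\Hom(U,V)$ for $k=-1$; in particular $\theta^N_c$ is injective for every $c$. Surjectivity for $c\ge0$ also holds: the only way it can fail is at $k=-1$, i.e. $c=m-n-1\ge0$, which forces $m\ge n+1$, so that $\Hom_{\mathcal{T}}(U,V)=0$ by \ref{enum:ws-iii} and the map is onto. This settles the base case.

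For the inductive step at least one of $M,N$ has length $\ge 2$; say $l(N)\ge 2$ (the case $l(M)\ge 2$ is completely symmetric, using the contravariant long exact sequence of $\Hom_{\tildew{\mathcal{T}}}(-,s^c(N))$). Pick $n$ in the interior of $\range(N)$ and take the $\sigma$-truncation triangle $N'\to N\to N''\to[1]N'$ with $N':=\sigma_{\geq n+1}N$, $N'':=\sigma_{\leq n}N$; both lie in $\tildew{\mathcal{T}}^s$ and have strictly smaller length. Since $\tildew{\mathcal{T}}^s$ is stable under $[1]s\inv$ by \eqref{eq:c-Sigma-commute}, the objects $P:=[1]s\inv(N')$ and $Q:=s[-1](N'')$ again lie in $\tildew{\mathcal{T}}^s$, with $[1]N'=s(P)$, $[-1]N''=s\inv(Q)$, and $l(P)=l(N')$, $l(Q)=l(N'')$. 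Applying $s^c$ and the natural transformation $s^c(\alpha)$ to this triangle yields a morphism of triangles $s^c(T_N)\to s^{c+1}(T_N)$, and $\Hom_{\tildew{\mathcal{T}}}(M,-)$ turns it into a commutative ladder of long exact sequences with vertical maps the various $\theta$. Using $\alpha_{s(X)}=s(\alpha_X)$ one identifies the two outer vertical maps with $\theta^{P}_{c+1}$ (at the $[1]N'$ spot) and $\theta^Q_{c-1}$ (at the $[-1]N''$ spot). The inductive hypothesis then applies to $N',N'',P,Q$ paired with $M$, and surjectivity of $\theta^N_c$ for $c\ge 0$ follows from the epimorphism four lemma applied to the spots $N',N,N'',[1]N'$, while injectivity for $c\ge 1$ follows from the monomorphism four lemma applied to $[-1]N'',N',N,N''$.

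The main obstacle is precisely that $\tildew{\mathcal{T}}^s$ is not triangulated: it is not stable under $[1]$, so the connecting objects $[1]N'$ and $[-1]N''$ leave $\tildew{\mathcal{T}}^s$, and rewriting them as $s(P)$, $s\inv(Q)$ re-imports them into the induction but at shifted indices $c+1$ and $c-1$. The real work is calibrating the invariant so that these shifts fall exactly into the ranges where the four-lemma hypotheses are available: the outgoing outer term needs $\theta^P$ injective at index $c+1\ge 1$ (i.e. $c\ge0$), and the incoming outer term needs $\theta^Q$ surjective at index $c-1\ge 0$ (i.e. $c\ge 1$), which is exactly why the invariant demands injectivity only from $c\ge1$ and surjectivity from $c\ge 0$. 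Checking that these constraints close up in both the $N$- and the $M$-devissage is the crux; once the invariant is correctly tuned the diagram chases are routine, and the only genuine input of the weight structure is the vanishing $\Hom(\mathcal{T}^{w\geq n+1},\mathcal{T}^{w\leq n})=0$ that eliminates the boundary obstruction at $k=-1$ in the base case.
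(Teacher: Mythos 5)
Your proof is correct, but it takes a genuinely different route from the paper's, and the difference is substantive. The paper completes $\alpha_N$ to a triangle $N \to s(N) \to Q \to [1]N$ and reduces both assertions to the vanishing $\Hom_{\tildew{\mathcal{T}}}(M',[c]Q)=0$ for $c\in\DN$; it then dévisses $M'$ and $N$ down to singleton support, and the dévissage of $N$ is exactly where axiom~\ref{enum:filt-tria-cat-3x3-diag} is invoked: since cones are not functorial, relating $Q=\Cone(\alpha_N)$ to the cones of $\alpha_{\sigma_{\geq d+1}(N)}$ and $\alpha_{\sigma_{\leq d}(N)}$ requires the $3\times 3$-diagram for $\id_N$. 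You never form the cone at all: you prove the calibrated statement (injectivity for $c\geq 1$, surjectivity for $c\geq 0$) by induction on $l(M)+l(N)$, dévissing through the morphism of $\sigma$-truncation triangles $S^n_N \to s(S^n_N)$ induced by $\alpha$ --- which exists in any f-category simply because $\alpha$ is a morphism of triangulated functors --- and then applying the four lemma, with the outer terms $[1]N'$ and $[-1]N''$ re-imported into the induction as $s^{c+1}(P)$ and $s^{c-1}(Q)$ via the $[1]s\inv$-stability of $\tildew{\mathcal{T}}^s$; I checked that the index bookkeeping closes up in both the $N$- and $M$-dévissage, and that your base case (which uses the same atomic ingredients as the paper: \ref{enum:filt-tria-cat-no-homs}, \ref{enum:functor-omega-iii}, \ref{enum:functor-omega-iv} and \ref{enum:ws-iii}) is sound. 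The payoff of your approach is notable: it proves Lemma~\ref{l:alpha-hom-subtildeT} without axiom~\ref{enum:filt-tria-cat-3x3-diag}, whereas the paper names this lemma as one of the two places where that axiom is needed. This does not by itself remove the axiom from Theorem~\ref{t:strong-weight-cplx-functor}, since the second use (in Proposition~\ref{p:cone-alpha-homotopic-to-zero}, where the cone of $\alpha$ genuinely enters the construction and must be computed termwise) remains; but it localizes the dependence on the axiom to that single point. What the paper's cone-based formulation buys in exchange is the explicit vanishing statement about $Q$ itself, which fits the way $Q$ reappears later (e.g.\ in Corollary~\ref{c:e-prime-zero-on-kernel}, where morphisms in $\Kern\omega$ are factored through $Q$), though only the surjectivity/bijectivity form of the lemma is actually used downstream.
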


\begin{proof}
  We complete $\alpha_N$ to a triangle 
  \begin{equation}
    \label{eq:alpha-N-tri}
    \xymatrix{
      {N} \ar[r]^{\alpha_N} & 
      {s(N)} \ar[r] &
      {Q} \ar[r] &
      {[1]N,}
    }
  \end{equation}
  apply $s^a$ and obtain the triangle
  \begin{equation*}
    \xymatrix@C=1.5cm{
      {s^a(N)} \ar[r]^{s^a(\alpha_N)=\alpha_{s^a(N)}} &
      {s^{a+1}(N)} \ar[r] &
      {s^a(Q)} \ar[r] &
      {[1]s^a(N).}
    }    
  \end{equation*}
  Applying $\tildew{\mathcal{T}}(M,?)$
  to this 
  triangle yields an exact sequence
  \begin{equation*}
    \tildew{\mathcal{T}}(M,[-1]s^a(Q)) \ra 
    \tildew{\mathcal{T}}(M,s^a(N)) \xra{(\alpha_{s^a(N)})_*} 
    \tildew{\mathcal{T}}(M,s^{a+1}(N)) \ra 
    \tildew{\mathcal{T}}(M,s^a(Q))
  \end{equation*}
  Hence we have to prove:
  \begin{align*}
    \tildew{\mathcal{T}}(M,s^a(Q)) & = 0 && \text{for $a \geq 0$, and}\\
    \tildew{\mathcal{T}}(M,[-1]s^a(Q)) & = 0 && \text{for $a > 0$.}
  \end{align*}
  This is clearly implied by
  \begin{align*}
    \tildew{\mathcal{T}}(M,[b]s^a(Q)) & = 0 && \text{for all $a, b \in
      \DZ$ with $a+b\geq 0$.}
  \end{align*}
  
  We claim more generally that
  \begin{align}
    \label{eq:m-prime-cq-claim}
    \tildew{\mathcal{T}}(M',[c]Q) & = 0 && \text{for all $c \in \DN$
      and all $M' \in \tildew{\mathcal{T}}^s$;}
  \end{align}
  the above special case is obtained by setting $M':=s^{-a}[a]M$ (note
  that $\tildew{\mathcal{T}}^s$ is $[1]s\inv$-stable) and $c=a+b$ using
  $\tildew{\mathcal{T}}(M,[b]s^a(Q)) \xsira{s^{-a}[a]}
  \tildew{\mathcal{T}}(M',[c]Q)$.

  We claim that we can assume in 
  \eqref{eq:m-prime-cq-claim} that the support of $M'$ and $N$
  (the object $N$ determines $Q$ up to isomorphism) is a singleton (or empty):
  For $M'$ this is obvious; for $N$ we use axiom
  \ref{enum:filt-tria-cat-3x3-diag} for $\id_N: N \ra N$ 
  and obtain the following $3 \times 3$-diagram:
  \begin{equation*}
    \xymatrix{
      {[1]\sigma_{\geq d+1} (N)} \ar@{..>}[r] & 
      {[1]s(\sigma_{\geq d+1} (N))} \ar@{..>}[r] &
      {[1]Q'} \ar@{..>}[r] \ar@{}[rd]|{\anticomm}& 
      {[2]\sigma_{\geq d+1} (N)} \\
      {\sigma_{\leq d}(N)} \ar[u] \ar[r]^-{\alpha_{\sigma_{\leq d}(N)}} & 
      {s(\sigma_{\leq d}(N))} \ar[u] \ar[r] &
      {Q''} \ar[u] \ar[r] & 
      {[1]\sigma_{\leq d}(N)} \ar@{..>}[u] \\
      {N} \ar[r]^-{\alpha_N} \ar[u] & 
      {s(N)} \ar[r] \ar[u] &
      {Q} \ar[r] \ar[u] & 
      {[1]N} \ar@{..>}[u] \\
      {\sigma_{\geq d+1} (N)} \ar[u] \ar[r]^-{\alpha_{\sigma_{\geq d+1}(N)}} & 
      {s(\sigma_{\geq d+1} (N))} \ar[u] \ar[r] &
      {Q'} \ar[u] \ar[r] & 
      {[1]\sigma_{\geq 1} (N)} \ar@{..>}[u] 
    }
  \end{equation*}
  This shows that knowing \eqref{eq:m-prime-cq-claim} for $Q'$ and
  $Q''$ implies \eqref{eq:m-prime-cq-claim} for $Q$, proving the claim.

  Assume now that the support of $M'$ is $[x]$ and that of $N$
  is $[y]$ for some $x,y \in \DZ$.
  This means that we can assume 
  (cf.\ \eqref{eq:subtildeT-equiv})
  that 
  \begin{align*}
    M' & = s^x i(X) && \text{for some $X \in \mathcal{T}^{w=x}$, and}\\
    N & = s^y i(Y) && \text{for some $Y \in \mathcal{T}^{w=y}$.}
  \end{align*}
  Since $M'$ is in $\tildew{\mathcal{T}}(\geq x)$, the triangle 
  $(\sigma_{\geq x}([c]Q), [c]Q, \sigma_{\leq x-1}([c]Q))$ and
  \ref{enum:filt-tria-cat-no-homs} show the first isomorphism in
  \begin{equation*}
    \tildew{\mathcal{T}}(M', [c] Q) \sila
    \tildew{\mathcal{T}}(M', \sigma_{\geq x}([c] Q)) 
    \xsira{\omega}
    {\mathcal{T}}(\omega(M'), \omega(\sigma_{\geq x}([c] Q))),
  \end{equation*}
  the second isomorphism is a consequence of
  \ref{enum:functor-omega-iv} and $M' \in \tildew{\mathcal{T}}(\leq
  x)$.
  Note that $Q$ and $[c]Q$ is in $\tildew{\mathcal{T}}([y,y+1])$ by
  \eqref{eq:alpha-N-tri}.
  In order to show that 
  ${\mathcal{T}}(\omega(M'), \omega(\sigma_{\geq x}([c] Q)))$
  vanishes, we consider three cases:
  \begin{enumerate}
  \item $x > y+1$: Then $\sigma_{\geq x}([c] Q)=0$.
  \item $x \leq y$: Then $\sigma_{\geq x}([c] Q)=[c]Q$. 
    Applying the triangulated functor $\omega$ to
    \eqref{eq:alpha-N-tri} and using \ref{enum:functor-omega-iii}
    shows that $\omega(\sigma_{\geq x}([c] Q)=\omega([c]Q)=0$.
  \item $x=y+1$: Applying the triangulated functor $\sigma_{\geq x}$
    to \eqref{eq:alpha-N-tri} shows that 
    $\sigma_{\geq x}([c] Q) \cong [c]s(N)=[c]s^{y+1}i(Y)=[c]s^x i(Y)$.
    Hence we have
    \begin{equation*}
      {\mathcal{T}}(\omega(M'), \omega(\sigma_{\geq x}([c] Q)))
      = {\mathcal{T}}(\omega(s^xi(X)), \omega([c]s^xi(Y)))
      \cong {\mathcal{T}}(X, [c]Y)
    \end{equation*}
    where we use \ref{enum:functor-omega-iii} ($i(X)$
    and $s^xi(X)$ are connected by a sequence of morphisms
    $\alpha_{s^?i(X)}$, and similarly for $i(Y)$) and the fact that
    $\omega|_{\tildew{\mathcal{T}}([0])}$ is a quasi-inverse of
    $i$. Since $X \in \mathcal{T}^{w \geq 
      x}=\mathcal{T}^{w \geq y+1}$ and $[c]Y
    \in \mathcal{T}^{w \leq y-c} \subset \mathcal{T}^{w \leq y}$ we have
    ${\mathcal{T}}(X, [c]Y)=0$ by \ref{enum:ws-iii}.
  \end{enumerate}
\end{proof}

\begin{proposition}
  \label{p:omega-on-subtildeT}
  The restriction
  $\omega|_{\tildew{\mathcal{T}}^s}:\tildew{\mathcal{T}}^s \ra
  \mathcal{T}$ of 
  $\omega:\tildew{\mathcal{T}} \ra \mathcal{T}$ 
  is full (i.\,e.\ induces
  epimorphisms on morphism spaces) and essentially surjective (i.\,e.\
  surjective on isoclasses of objects).

  More precisely
  $\omega|_{\tildew{\mathcal{T}}^s([a,b])}:\tildew{\mathcal{T}}^s([a,b]) \ra
  \mathcal{T}^{w \in [a,b]}$ 
  (cf.\ Lemma~\ref{l:omega-range-versus-weights-on-tildeTs})
  has these properties.
\end{proposition}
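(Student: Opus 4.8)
The functor in question is well defined: Lemma~\ref{l:omega-range-versus-weights-on-tildeTs} guarantees that $\omega$ carries $\tildew{\mathcal{T}}^s([a,b])$ into $\mathcal{T}^{w\in[a,b]}$, and it clearly suffices to prove the sharper bracketed statement, since every object of $\tildew{\mathcal{T}}^s$ lies in some $\tildew{\mathcal{T}}^s([a,b])$. First I would record an auxiliary \emph{separation isomorphism}: whenever $P\in\tildew{\mathcal{T}}(\leq m)$ and $R\in\tildew{\mathcal{T}}(\geq m)$ for some $m\in\DZ$, applying the automorphism $s^{-m}$ to land in $s^{-m}P\in\tildew{\mathcal{T}}(\leq 0)$, $s^{-m}R\in\tildew{\mathcal{T}}(\geq 0)$ and then \ref{enum:functor-omega-iv} (together with $\omega\comp s\cong\omega$ from \ref{enum:functor-omega-iii}) yields a bijection $\omega\colon\Hom_{\tildew{\mathcal{T}}}(P,R)\sira\Hom_{\mathcal{T}}(\omega P,\omega R)$. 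Both halves of the proof rest on this bijection together with the weight axioms, and I would prove essential surjectivity by induction on the length $b-a+1$ and fullness directly from Lemma~\ref{l:alpha-hom-subtildeT}.

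For essential surjectivity the base case is a singleton $[a]$: given $W\in\mathcal{T}^{w=a}$ set $X:=s^a i(W)$, so that $\gr^a(X)\cong W$ gives $X\in\tildew{\mathcal{T}}^s([a])$, while $\omega(X)\cong\omega(i(W))\cong W$ by \ref{enum:functor-omega-iii} and the fact that $\omega|_{\tildew{\mathcal{T}}([0])}$ is a quasi-inverse of $i$. For the inductive step take $a<b$ and $W\in\mathcal{T}^{w\in[a,b]}$, and choose a weight decomposition $w_{\geq b}W\to W\to w_{\leq b-1}W\to[1]w_{\geq b}W$; by Lemma~\ref{l:weight-str-basic-properties}~\eqref{enum:weights-bounded} (with $n=b-1$) one has $w_{\geq b}W\in\mathcal{T}^{w=b}$ and $w_{\leq b-1}W\in\mathcal{T}^{w\in[a,b-1]}$. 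By the base case and the inductive hypothesis these are $\omega(A)$ and $\omega(B)$ for some $A\in\tildew{\mathcal{T}}^s([b])$ and $B\in\tildew{\mathcal{T}}^s([a,b-1])$. Since $B\in\tildew{\mathcal{T}}(\leq b-1)$ and $[1]A\in\tildew{\mathcal{T}}(\geq b)\subseteq\tildew{\mathcal{T}}(\geq b-1)$ (the latter is a triangulated subcategory, hence $[1]$-stable), the separation isomorphism lets me lift the connecting morphism to a $\delta\colon B\to[1]A$ whose image under the triangulated functor $\omega$ is, after the identifications above, that connecting morphism. Completing $\delta$ to a triangle $A\to X\to B\xra{\delta}[1]A$ and applying the triangulated functors $\sigma_n$ gives $\sigma_b X\cong A$, $\sigma_n X\cong\sigma_n B$ for $a\leq n\leq b-1$, and $\sigma_n X=0$ otherwise, so $X\in\tildew{\mathcal{T}}^s([a,b])$; applying $\omega$ produces a triangle with the same outer vertices and connecting map as the weight decomposition, whence $\omega(X)\cong W$.

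For fullness, fix $M,N\in\tildew{\mathcal{T}}^s([a,b])$ and $\bar f\colon\omega(M)\to\omega(N)$. The idea is that $\omega$ factors as a surjection followed by a bijection after postcomposing sufficiently many copies of $\alpha_N$. For $c\geq b-a$ one has $M\in\tildew{\mathcal{T}}(\leq b)$ and $s^cN\in\tildew{\mathcal{T}}(\geq a+c)\subseteq\tildew{\mathcal{T}}(\geq b)$, so the separation isomorphism gives a bijection $\omega\colon\Hom_{\tildew{\mathcal{T}}}(M,s^cN)\sira\Hom_{\mathcal{T}}(\omega M,\omega s^cN)$; post-composing with $\omega(\alpha^c)^{-1}\colon\omega s^cN\sira\omega N$ (an isomorphism by \ref{enum:functor-omega-iii}) and using $\omega\comp(\alpha^c)_*=\omega(\alpha^c)\comp\omega(-)$ shows that this bijection, precomposed with $(\alpha^c)_*$, is exactly $f\mapsto\omega(f)$. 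On the other hand Lemma~\ref{l:alpha-hom-subtildeT} makes $\Hom_{\tildew{\mathcal{T}}}(M,N)\to\Hom_{\tildew{\mathcal{T}}}(M,sN)$ surjective and each $\Hom_{\tildew{\mathcal{T}}}(M,s^jN)\to\Hom_{\tildew{\mathcal{T}}}(M,s^{j+1}N)$ bijective for $j\geq1$, so $(\alpha^c)_*$ is surjective for $c\geq1$. Choosing $c\geq\max(1,b-a)$ and chaining the two facts shows that $\omega\colon\Hom_{\tildew{\mathcal{T}}}(M,N)\to\Hom_{\mathcal{T}}(\omega M,\omega N)$ is surjective, i.e.\ $\omega|_{\tildew{\mathcal{T}}^s([a,b])}$ is full.

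The genuinely hard input is Lemma~\ref{l:alpha-hom-subtildeT}, which is precisely where axiom~\ref{enum:filt-tria-cat-3x3-diag} enters; granted that lemma, the only real care needed here is the bookkeeping of ranges so that \ref{enum:functor-omega-iv} applies and that the cone $X$ constructed in the inductive step has the correct support. I expect that range bookkeeping to be the main place where signs and shift indices must be checked carefully, but no further conceptual obstacle should arise.
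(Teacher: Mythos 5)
Your proof is correct, and while your fullness argument is essentially the paper's (the same combination of Lemma~\ref{l:alpha-hom-subtildeT} with the shifted form of \ref{enum:functor-omega-iv}, written algebraically instead of via the paper's ladder diagram), your essential surjectivity argument takes a genuinely different route. The paper cuts at an arbitrary $c$ and lifts $w_{\leq c}X$ and $[1]w_{\geq c+1}X$, whose ranges $[a,c]$ and $[c,b-1]$ \emph{overlap} at $c$; as a result the connecting morphism cannot be lifted by \ref{enum:functor-omega-iv} alone, so the paper invokes the already-established fullness to produce some (non-unique) lift $f\colon\tildew{A}\to\tildew{B}$ and then composes with $\alpha_{\tildew{B}}$ to push the range of the target up to $[c+1,b]$ before taking the cone. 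You instead cut at $n=b-1$ and lift $w_{\geq b}W$ and $w_{\leq b-1}W$ directly, so the two lifts have \emph{disjoint} ranges $[b]$ and $[a,b-1]$; the connecting morphism then lifts uniquely via your separation isomorphism (which is exactly \ref{enum:functor-omega-iv} conjugated by $s^{-m}$ and the isomorphisms $\omega(\alpha)$ of \ref{enum:functor-omega-iii}), and the cocone finishes the step. What your version buys: essential surjectivity becomes independent of fullness (hence of Lemma~\ref{l:alpha-hom-subtildeT} and axiom~\ref{enum:filt-tria-cat-3x3-diag}), the lift of the connecting map is unique, and no $\alpha$-composition trick is needed. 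What the paper's version buys: the constructed triangle $[-1]s(\tildew{B})\ra\tildew{X}\ra\tildew{A}\xra{\alpha\comp f}s(\tildew{B})$ is already of the precise shape used to define the class $\Delta_\mathcal{Q}$ of triangles on the quotient $\mathcal{Q}$, which is convenient for the subsequent development. Two bookkeeping points you wave at but should spell out: the identification of connecting morphisms must include the structural isomorphism $\omega[1]\cong[1]\omega$ of the triangulated functor $\omega$, and the final step ``triangles with identified connecting morphisms are isomorphic'' is the standard rotation-plus-TR3-plus-five-lemma argument; neither is a gap.
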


\begin{proof}
  We first prove that $\omega|_{\tildew{\mathcal{T}}^s}$ induces
  epimorphisms on morphism spaces. Let $M, N \in
  \tildew{\mathcal{T}}^s$.
  By \ref{enum:filt-tria-cat-exhaust} we find $m, n \in \DZ$ such that
  $M \in \tildew{\mathcal{T}}(\leq m)$ and $N \in
  \tildew{\mathcal{T}}(\geq n)$. 
  Choose $a \in \DZ$ satisfying $a \geq m-n$.

  We give a pictorial proof:
  Consider the following diagrams
  \begin{equation}
    \label{eq:pictorial-omegas-subtilde}
    \xymatrix{
      & {s^a(N)} \\
      & {\vdots} \ar[u] \\
      & {s(N)} \ar[u] \\
      {M} 
      \ar[r]^-{f_0}
      \ar[ru]^-{f_1}
      \ar[ruuu]^-{f_a}
      & {N,} \ar[u]_{\alpha_N}
    }
    \quad
    \xymatrix{
      & {\omega(s^a(N))} \\
      & {\vdots} \ar[u]_\sim \\
      & {\omega(s(N))} \ar[u]_\sim \\
      {\omega(M)} 
      \ar[r]^-{g_0}
      \ar[ru]^-{g_1}
      \ar[ruuu]^-{g_a}
      & {\omega(N).} \ar[u]_{\omega(\alpha_N)}^\sim
    }
  \end{equation}
  Assume that we are given a morphism $g_0$. 
  Since $\omega$ maps every $\alpha_?$ to an isomorphism (cf.\
  \ref{enum:functor-omega-iii}), $g_0$ uniquely determines $g_1,\dots,
  g_a$ such that the diagram on the right commutes.
  
  Since 
  $M \in \tildew{\mathcal{T}}(\leq m)$ and $s^a(N) \in
  \tildew{\mathcal{T}}(\geq n+a) \subset \tildew{\mathcal{T}}(\geq m)$,
  \ref{enum:functor-omega-iv} implies that there is a unique $f_a$
  satisfying $\omega(f_a)=g_a$. This $f_a$ yields
  $f_{a-1}, \dots, f_1$ (uniquely) and $f_0$ (possibly non-uniquely)
  such that the diagram on the left commutes 
  (use Lemma \ref{l:alpha-hom-subtildeT}). Taking $\omega$ of the
  diagram on the left it is clear that $\omega(f_0)=g_0$.

  Now we prove that $\omega|_{\tildew{\mathcal{T}}^s}$ is surjective
  on isoclasses of objects. Let $X \in \mathcal{T}$ be given.
  Since the given weight structure is bounded there are $a,b \in \DZ$
  such that $X \in \mathcal{T}^{w \in [a,b]}$. We prove the statement
  by induction on $b-a$. If $a > b$ then $X=0$ and the statement is
  obvious. Assume $a=b$. Then $s^a(i(X))$ is in $\tildew{\mathcal{T}}^s([a])$
  by \eqref{eq:subtildeT-equiv} and we have $\omega(s^a(i(X)))\cong
  \omega(i(X))\cong X$.

  Now assume $a<b$. Choose $c \in \DZ$ with $a \leq c < b$ and
  take a weight decomposition
  \begin{equation}
    \label{eq:omega-subtildeT-wdec}
    w_{\geq c+1}X \ra X \ra w_{\leq c}X \ra [1]w_{\geq c+1}X.
  \end{equation}
  Lemma~\ref{l:weight-str-basic-properties}
  \eqref{enum:weights-bounded} shows that
  $w_{\leq c}X \in \mathcal{T}^{w \in [a,c]}$
  and $w_{\geq c+1}X \in \mathcal{T}^{w \in [c+1,b]}$.
  By induction we can hence lift
  $w_{\leq c}X \in \mathcal{T}^{w\in[a,c]}$ 
  and $[1]w_{\geq c+1}X \in \mathcal{T}^{w \in [c,b-1]}$ to
  objects
  $\tildew{A} \in \tildew{\mathcal{T}}^s([a,c])$, 
  $\tildew{B} \in \tildew{\mathcal{T}}^s([c,b-1])$.
  This shows that the triangle
  \eqref{eq:omega-subtildeT-wdec} is isomorphic to 
  a triangle 
  \begin{equation}
    \label{eq:omega-subtildeT-unten}
    [-1]\omega(\tildew{B}) \ra X \ra \omega(\tildew{A}) \xra{g}
    \omega(\tildew{B}). 
  \end{equation}
  Since we have already proved fullness there exists $f: \tildew{A} \ra
  \tildew{B}$ such that $\omega(f)=g$. We complete the composition 
  $\tildew{A} \xra{f} \tildew{B} \xra{\alpha_{\tildew{B}}}
  s(\tildew{B})$ to a triangle
  \begin{equation*}
    [-1]s(\tildew{B}) \ra \tildew{X} \ra \tildew{A} \xra{\alpha\comp
      f} s(\tildew{B}).
  \end{equation*}
  The image of this triangle under $\omega$ is isomorphic to triangle
  \eqref{eq:omega-subtildeT-unten}. 
  In particular $X \cong \omega(\tildew{X})$.
  Since $[-1]s(\tildew{B}) \in \tildew{\mathcal{T}}^s([c+1,b])$ and
  $\tildew{A} \in \tildew{\mathcal{T}}^s([a,c])$ we have
  $\tildew{X} \in \tildew{\mathcal{T}}^s([a,b])$ since
  $\tildew{\mathcal{T}}^s([a,b])$ is closed under extensions.
\end{proof}

In the following we view $\tildew{\mathcal{T}}^s$ as an additive
category with translation $[1]s\inv$. Proposition~\ref{p:functor-omega},
\ref{enum:functor-omega-iii} and the fact that $\omega$ is
triangulated turn 
$\omega|_{\tildew{\mathcal{T}}^s}:\tildew{\mathcal{T}}^s \ra
\mathcal{T}$ into a functor of additive categories with translation.

Recall (for example from \cite[A.3.1]{assem-simson-skowronski-1})
that a two-sided ideal $\mathcal{I}$ in an additive category
$\mathcal{A}$ is a subclass of
the class of all morphism satisfying some obvious properties. Then the quotient
$\mathcal{A}/\mathcal{I}$ has the same objects as $\mathcal{A}$ but
morphisms are identified if their difference is in the ideal. Then
$\mathcal{A}/\mathcal{I}$ is again an additive category and the
obvious \textbf{quotient functor} $\can: \mathcal{A} \ra
\mathcal{A}/\mathcal{I}$ has an obvious universal property.
If $F:\mathcal{A} \ra \mathcal{B}$ is an additive functor of additive
categories, its kernel $\Kern F$ is the two-sided ideal given by 
\begin{equation*}
  (\Kern F)(A,A') = \Kern(F:\mathcal{A}(A,A') \ra \mathcal{B}(FA,FA'))
\end{equation*}
for $A, A' \in \mathcal{A}$.

Define 
$\mathcal{Q}:={\tildew{\mathcal{T}}^s/(\Kern
  \omega|_{\tildew{\mathcal{T}}^s})}$
and let $\can: \tildew{\mathcal{T}}^s \ra \mathcal{Q}$ be the quotient
functor.

The functor $[1]s\inv: \tildew{\mathcal{T}}^s \ra
\tildew{\mathcal{T}}^s$ descends to a functor $\mathcal{Q} \ra
\mathcal{Q}$
denoted by the same symbol: $(\mathcal{Q}, [1]s\inv)$ is an additive
category with translation and $\can$ is a functor of such categories.

\begin{proposition}
  \label{p:omega-tildeTs-equiv}  
  The functor $\omega|_{\tildew{\mathcal{T}}^s}$ factors
  uniquely 
  to an equivalence
  \begin{equation*}
    \ol{\omega}: (\mathcal{Q},[1]s\inv) \sira (\mathcal{T},[1])
  \end{equation*}
  of additive categories with translation
  (as indicated in diagram~\eqref{eq:omega-subtildeT-exp}).
\end{proposition}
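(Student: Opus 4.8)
The plan is to deduce this from the general principle that a full and essentially surjective additive functor induces an equivalence after quotienting by the ideal of morphisms it kills, together with a routine check that the translation structure descends. The essential input is already available: Proposition~\ref{p:omega-on-subtildeT} tells us that $\omega|_{\tildew{\mathcal{T}}^s}$ is full and essentially surjective, and by definition $\mathcal{Q}=\tildew{\mathcal{T}}^s/(\Kern \omega|_{\tildew{\mathcal{T}}^s})$ is formed by collapsing precisely the morphisms on which $\omega|_{\tildew{\mathcal{T}}^s}$ vanishes.

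First I would produce the factorization as a plain additive functor. Since $\can$ is the identity on objects and surjective on morphisms, and since every morphism in $\Kern \omega|_{\tildew{\mathcal{T}}^s}$ is sent to zero, the universal property of the quotient functor yields a unique additive functor $\ol{\omega}\colon \mathcal{Q}\ra\mathcal{T}$ with $\ol{\omega}\comp\can=\omega|_{\tildew{\mathcal{T}}^s}$; uniqueness is automatic because $\can$ is bijective on objects and surjective on morphisms. Next I would check that $\ol{\omega}$ is an equivalence. On morphism spaces $\ol{\omega}$ is the map
\[
\Hom_{\tildew{\mathcal{T}}^s}(M,N)/(\Kern\omega)(M,N)\ra \Hom_{\mathcal{T}}(\omega M,\omega N),
\]
which is injective because we divided out exactly the kernel and surjective because $\omega|_{\tildew{\mathcal{T}}^s}$ is full (Proposition~\ref{p:omega-on-subtildeT}); hence $\ol{\omega}$ is fully faithful. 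Essential surjectivity of $\ol{\omega}$ follows from essential surjectivity of $\omega|_{\tildew{\mathcal{T}}^s}$ (same proposition) and the fact that $\can$ is bijective on objects. Thus $\ol{\omega}$ is an equivalence of additive categories.

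It remains to upgrade $\ol{\omega}$ to a functor of additive categories with translation. Recall that $\omega|_{\tildew{\mathcal{T}}^s}$ already carries such a structure, i.e.\ a natural isomorphism $\phi\colon \omega|_{\tildew{\mathcal{T}}^s}\comp([1]s\inv)\sira [1]\comp\omega|_{\tildew{\mathcal{T}}^s}$ (coming from $\omega$ being triangulated and from $\omega(\alpha_?)$ being invertible, cf.\ Proposition~\ref{p:functor-omega} \ref{enum:functor-omega-iii}). Since $\can$ is bijective on objects and the translation of $\mathcal{Q}$ is induced by $[1]s\inv$, I would set $\ol{\phi}_{\can X}:=\phi_X$; these are morphisms in $\mathcal{T}$, and the identities $\ol{\omega}([1]s\inv(\can X))=\omega([1]s\inv X)$ and $[1]\ol{\omega}(\can X)=[1]\omega X$ give them the correct source and target. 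The only point requiring verification is naturality of $\ol{\phi}$ in $\mathcal{Q}$; but every morphism of $\mathcal{Q}$ has the form $\can(f)$, and for such a morphism the naturality square for $\ol{\phi}$ is exactly the naturality square for $\phi$ with respect to $f$, which holds. Uniqueness of $\ol{\phi}$ is forced by the requirement $\ol{\phi}_{\can X}=\phi_X$ together with bijectivity of $\can$ on objects.

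The main obstacle is really located upstream, in Proposition~\ref{p:omega-on-subtildeT}: once fullness and essential surjectivity of $\omega|_{\tildew{\mathcal{T}}^s}$ are granted, the present statement is formal. Within this proof the sole subtlety worth recording carefully is that $\tildew{\mathcal{T}}^s$ is \emph{not} triangulated (it is not $[1]$-stable), so one must work throughout with the translation $[1]s\inv$ under which it is stable; this is precisely why what must descend is the datum $\phi$ of a functor of additive categories with translation, rather than a triangulated structure.
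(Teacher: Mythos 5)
Your proposal is correct and follows exactly the route the paper intends: the paper's own proof is the single line ``This is clear from Proposition~\ref{p:omega-on-subtildeT},'' and what you have written is precisely the formal verification behind that claim (quotient by the kernel ideal gives faithfulness for free, fullness and essential surjectivity come from Proposition~\ref{p:omega-on-subtildeT}, and the translation datum descends since $\can$ is bijective on objects). Your closing remark that one must work with $[1]s\inv$ rather than $[1]$ because $\tildew{\mathcal{T}}^s$ is not $[1]$-stable is also the point the paper itself flags just before the proposition.
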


\begin{proof}
  This is clear from
  Proposition~\ref{p:omega-on-subtildeT}.
\end{proof}

In any additive category with translation we have the notion of
candidate triangles and morphisms between them. Let
$\Delta_\mathcal{Q}$ be the class of all candidate triangles in
$\mathcal{Q}$ that are isomorphic to the image\footnote
{
  We
  assume that $[1]s\inv [-1]s =\id$.
}
\begin{equation*}
  \xymatrix{
    {[-1]s(X)} \ar[r]^-{\ol{f}} &
    {Y} \ar[r]^-{\ol{g}} &
    {Z} \ar[r]^-{\ol{h}} &
    {X} &
  }
\end{equation*}
in $\mathcal{Q}$ of a sequence
\begin{equation*}
  \xymatrix{
    {[-1]s(X)} \ar[r]^-{{f}} &
    {Y} \ar[r]^-{{g}} &
    {Z} \ar[r]^-{{h}} &
    {X} &
  }
\end{equation*}
in $\tildew{\mathcal{T}}^s$ such that
\begin{equation*}
  \xymatrix{
    {[-1]s(X)} \ar[r]^-{f} &
    {Y} \ar[r]^-{{g}} &
    {Z} \ar[r]^-{{\alpha_X \comp h}} &
    {s(X)} &
  }
\end{equation*}
is a triangle in $\tildew{\mathcal{T}}$.

\begin{lemma}
  \label{l:omega-bar-triang}
  $(\mathcal{Q}, [1]s\inv, \Delta_\mathcal{Q})$ is a triangulated
  category and $\ol{\omega}: \mathcal{Q} \ra \mathcal{T}$ is a functor
  of triangulated categories.
\end{lemma}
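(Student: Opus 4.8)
The plan is to equip $\mathcal{Q}$ with a triangulated structure by transport of structure along the equivalence $\ol{\omega}$, and then to identify the transported class of distinguished triangles with the class $\Delta_\mathcal{Q}$ defined above. First I would invoke Proposition~\ref{p:omega-tildeTs-equiv}, which says that $\ol{\omega}\colon(\mathcal{Q},[1]s\inv)\sira(\mathcal{T},[1])$ is an equivalence of additive categories with translation. Since $(\mathcal{T},[1])$ is triangulated, declaring a candidate triangle in $\mathcal{Q}$ to be distinguished exactly when $\ol{\omega}$ carries it to a triangle in $\mathcal{T}$ defines a class $\Delta'_\mathcal{Q}$, and $(\mathcal{Q},[1]s\inv,\Delta'_\mathcal{Q})$ is then a triangulated category for which $\ol{\omega}$ is a triangulated equivalence. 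This is the standard transport of a triangulated structure along an equivalence of additive categories with translation: every axiom is a statement about candidate triangles, their morphisms, the translation and (co)products of triangles, and all of these are preserved and reflected by such an equivalence together with a chosen quasi-inverse. In particular $\Delta'_\mathcal{Q}$ is closed under isomorphism in $\mathcal{Q}$, and it will suffice to prove $\Delta_\mathcal{Q}=\Delta'_\mathcal{Q}$.

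For the inclusion $\Delta_\mathcal{Q}\subset\Delta'_\mathcal{Q}$, since $\Delta'_\mathcal{Q}$ is closed under isomorphism it is enough to treat the $\mathcal{Q}$-image of a sequence $[-1]s(X)\xra{f}Y\xra{g}Z\xra{h}X$ in $\tildew{\mathcal{T}}^s$ for which $[-1]s(X)\xra{f}Y\xra{g}Z\xra{\alpha_X\comp h}s(X)$ is a triangle in $\tildew{\mathcal{T}}$. Applying the triangulated functor $\omega$ yields a triangle in $\mathcal{T}$. Because $\ol{\omega}\comp\can=\omega|_{\tildew{\mathcal{T}}^s}$ as functors of additive categories with translation, this triangle agrees term by term with the image under $\ol{\omega}$ of the candidate triangle $[-1]s(X)\xra{\ol f}Y\xra{\ol g}Z\xra{\ol h}X$; the one point requiring care is that the connecting morphism $\omega(\alpha_X\comp h)=\omega(\alpha_X)\comp\omega(h)$ matches $\ol{\omega}(\ol h)$ composed with the translation structure isomorphism of $\ol{\omega}$. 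This follows from the very construction of the translation structure of $\omega|_{\tildew{\mathcal{T}}^s}$ (and hence of $\ol{\omega}$), using that $\omega$ sends every $\alpha$ to an isomorphism (\ref{enum:functor-omega-iii}) together with the compatibility $\alpha_{s(?)}=s(\alpha_?)$ of \ref{enum:filt-tria-cat-shift-alpha}. Thus $\ol{\omega}$ maps the candidate triangle to a triangle, i.e.\ it lies in $\Delta'_\mathcal{Q}$.

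For the reverse inclusion $\Delta'_\mathcal{Q}\subset\Delta_\mathcal{Q}$ I would show that $\Delta_\mathcal{Q}$ is large enough. Let $T\colon W\xra{\ol f}Y\xra{\ol g}Z\xra{\ol h}X$ be a candidate triangle in $\Delta'_\mathcal{Q}$, where $X=[1]s\inv W$, so $W=[-1]s(X)$ and all of $W,Y,Z,X$ are objects of $\tildew{\mathcal{T}}^s$ (note $\tildew{\mathcal{T}}^s$ is stable under $[1]s\inv$ and, $[1]s\inv$ being an automorphism, under its inverse $[-1]s$, by \eqref{eq:c-Sigma-commute}). Lift $\ol h$ through the quotient functor $\can$ to a morphism $h\colon Z\ra X$ in $\tildew{\mathcal{T}}^s$, form $\alpha_X\comp h\colon Z\ra s(X)$, and complete it to a triangle $[-1]s(X)\xra{f}\tildew{Y}\xra{g}Z\xra{\alpha_X\comp h}s(X)$ in $\tildew{\mathcal{T}}$. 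Its outer terms $[-1]s(X)$ and $Z$ lie in $\tildew{\mathcal{T}}^s$, so $\tildew{Y}\in\tildew{\mathcal{T}}^s$ because $\tildew{\mathcal{T}}^s$ is closed under extensions. By construction the $\mathcal{Q}$-image $T'\colon W\xra{\ol f}\tildew{Y}\xra{\ol g}Z\xra{\ol h}X$ lies in $\Delta_\mathcal{Q}$, hence in $\Delta'_\mathcal{Q}$ by the previous step, and it shares the objects $W,Z,X$ and the connecting morphism $\ol h$ with $T$. In the triangulated category $(\mathcal{Q},[1]s\inv,\Delta'_\mathcal{Q})$ any two distinguished triangles completing the same morphism $\ol h$ are isomorphic by an isomorphism restricting to the identity on $Z$ and $X$, so $T\cong T'$; as $\Delta_\mathcal{Q}$ is closed under isomorphism in $\mathcal{Q}$, we get $T\in\Delta_\mathcal{Q}$. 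Combining the two inclusions gives $\Delta_\mathcal{Q}=\Delta'_\mathcal{Q}$, which proves the Lemma. I expect the main obstacle to be the compatibility check of connecting morphisms in the first inclusion; the closure-under-extensions step in the second inclusion is exactly what forces one to complete $\alpha_X\comp h$ rather than $h$ itself, in the same spirit as the proof of Proposition~\ref{p:omega-on-subtildeT}.
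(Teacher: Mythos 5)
Your proposal is correct and follows essentially the same route as the paper: the paper's proof also reduces, via transport of structure along the equivalence $\ol{\omega}$ of Proposition~\ref{p:omega-tildeTs-equiv}, to showing that a candidate triangle lies in $\Delta_\mathcal{Q}$ if and only if its image under $\ol{\omega}$ is a triangle in $\mathcal{T}$, a verification the paper leaves as an ``easy exercise''. Your two inclusions (applying the triangulated functor $\omega$ and invertibility of $\omega(\alpha_X)$ for one direction; lifting $\ol{h}$, completing $\alpha_X \comp h$ to a triangle, closure of $\tildew{\mathcal{T}}^s$ under extensions, and uniqueness of triangle completions for the other) are exactly the details that exercise requires.
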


\begin{proof}
  Since we already know that $\ol{\omega}$ is an equivalence of
  additive categories with translation it is
  sufficient to show that a candidate triangle in $\mathcal{Q}$ is in
  $\Delta_{\mathcal{Q}}$ if and only if its image under $\ol{\omega}$
  is a triangle in $\mathcal{T}$. This is an easy exercise left to the reader.
\end{proof}

\begin{remark}
  There is another description of $\Delta_\mathcal{Q}$:
  Let $\Delta'_\mathcal{Q}$ be the class of all candidate triangles in
  $\mathcal{Q}$ that are isomorphic to the image
  \begin{equation*}
    \xymatrix{
      {X} \ar[r]^-{\ol{f}} &
      {Y} \ar[r]^-{\ol{g}} &
      {Z} \ar[r]^-{\ol{h}} &
      {[1]s\inv(X)} &
    }
  \end{equation*}
  in $\mathcal{Q}$ of a sequence
  \begin{equation*}
    \xymatrix{
      {X} \ar[r]^-{{f}} &
      {Y} \ar[r]^-{{g}} &
      {Z} \ar[r]^-{{h}} &
      {[1]s\inv(X)} &
    }
  \end{equation*}
  in $\tildew{\mathcal{T}}^s$ such that
  \begin{equation*}
    \xymatrix{
      {s\inv(X)} \ar[r]^-{{f\comp \alpha_{s\inv(X)}}} &
      {Y} \ar[r]^-{{g}} &
      {Z} \ar[r]^-{{h}} &
      {[1]s\inv(X)} &
    }
  \end{equation*}
  is a triangle in $\tildew{\mathcal{T}}$.
  Lemma~\ref{l:omega-bar-triang} is also true for the class
  $\Delta'_\mathcal{Q}$ of triangles with essentially the same
  proof. This shows that $\Delta_\mathcal{Q}=\Delta'_\mathcal{Q}$.
\end{remark}

Recall that $c$ and $h$ are functors of additive categories with
translation if we equip $\tildew{\mathcal{T}}$ (or
$\tildew{\mathcal{T}}^s$) with the 
translation $[1]s\inv$. 
This is used implicitly in the next
proposition.

\begin{proposition}
  \label{p:cone-alpha-homotopic-to-zero}
  Let $m:M\ra N$ be a morphism in $\tildew{\mathcal{T}}^s$. Assume
  that $\alpha_N \comp m$ appears as the last morphisms in a triangle
  \begin{equation}
    \label{eq:alpha-triangle-MN}
    \xymatrix{
      {[-1]s(N)} \ar[r]^-{u} &
      {Q} \ar[r]^-{v} &
      {M} \ar[r]^-{\alpha_N \comp m} &
      {s(N)}
    }
  \end{equation}
  in $\tildew{\mathcal{T}}$.
  Then $Q \in \tildew{\mathcal{T}}^s$, 
  the functor 
  $c:\tildew{\mathcal{T}}^s \ra C^b(\heart(w))$  
  maps the sequence
  \begin{equation}
    \label{eq:alpha-lift-triangle-MN}
    \xymatrix{
      {[-1]s(N)} \ar[r]^-{u} &
      {Q} \ar[r]^-{v} &
      {M} \ar[r]^-{m} &
      {N}
    }
  \end{equation}
  in $\tildew{\mathcal{T}}^s$ 
  to a sequence that is isomorphic to 
  \begin{equation}
    \label{eq:h-triangulated}
    \xymatrix{
      {\Sigma\inv (c(N))} \ar[r]^-{\svek {1}0} &
      {\Sigma\inv (\Cone(-c(m)))} \ar[r]^-{\zvek 0{1}} &
      {c(M)} \ar[r]^-{c(m)} &
      {c(N)}      
    }
  \end{equation}
  in $C^b(\heart(w))$, and 
  the  
  functor $h:\tildew{\mathcal{T}}^s \ra K^b(\heart(w))^\anti$ maps
  \eqref{eq:alpha-lift-triangle-MN} to a triangle in ${K^b(\heart(w))}^\anti$.

  In the particular case that $m=\id_N:N \ra N$ we have $h(Q)=0$ in
  $K^b(\heart(w))^\anti$, i.\,e.\ 
  $\id_{c(Q)}$ is homotopic to zero.
\end{proposition}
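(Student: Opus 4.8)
The plan is to run the argument in three stages: first place $Q$ inside $\tildew{\mathcal{T}}^s$, then identify $c$ applied to the sequence \eqref{eq:alpha-lift-triangle-MN} with the rotated mapping cone of $c(m)$ (objects, maps, and differentials), and finally pass to $K^b(\heart(w))^\anti$ and read off the special case.

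\textbf{That $Q \in \tildew{\mathcal{T}}^s$.} First I would observe that $[-1]s(N) = ([1]s\inv)\inv(N)$ lies in $\tildew{\mathcal{T}}^s$: by \eqref{eq:c-Sigma-commute} one has $c([-1]s(N)) \cong \Sigma\inv c(N) \in C^b(\heart(w))$. The triangle \eqref{eq:alpha-triangle-MN} then exhibits $Q$ directly as an extension of $[-1]s(N)$ by $M$ (its outer terms are $[-1]s(N)$ and $M$, and its fourth term is $s(N) = [1][-1]s(N)$). Since $\tildew{\mathcal{T}}^s$ is closed under extensions — apply $\gr^a$ and use that each $\mathcal{T}^{w=a}$ is extension-closed (Lemma~\ref{l:weight-str-basic-properties}~\eqref{enum:weight-perp-prop}), exactly as in the discussion preceding Lemma~\ref{l:omega-range-versus-weights-on-tildeTs} — we conclude $Q \in \tildew{\mathcal{T}}^s$.

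\textbf{Termwise identification.} The $n$-th term of $c(X)$ is $[n]\gr^n(X)$ by \eqref{eq:cfun-gr}. Applying the triangulated functor $\gr^n$ to \eqref{eq:alpha-triangle-MN} and using $\gr^n\comp s = \gr^{n-1}$ (see \eqref{eq:gr-s-comm}) produces a triangle
\[
  [-1]\gr^{n-1}(N) \ra \gr^n(Q) \ra \gr^n(M)
  \xra{\gr^n(\alpha_N)\comp\gr^n(m)} \gr^{n-1}(N).
\]
Here I would invoke the vanishing $\gr^n(\alpha_N) = 0$ (equivalently $\sigma_n(\alpha_X) = 0$ for all $X$): this expresses that $\alpha$ raises the filtration degree and hence acts by zero on the associated graded, which is immediate from the axioms and transparent in the basic example (Section~\ref{sec:omega-in-basic-example}). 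The connecting map therefore vanishes, the triangle splits, and $\gr^n(Q) \cong \gr^n(M) \oplus [-1]\gr^{n-1}(N)$. Translated through \eqref{eq:cfun-gr} this gives $c(Q)^n \cong (\Sigma\inv c(N))^n \oplus c(M)^n$, i.e. the $n$-th term of $\Sigma\inv\Cone(-c(m)) = \Sigma\inv c(N) \oplus c(M)$, with $c(u)$ the inclusion $\svek 10$ and $c(v)$ the projection $\zvek 01$.

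\textbf{Differentials (the main obstacle).} The technical heart is to check that, under this identification, the differential of $c(Q)$ coincides with the mapping cone differential of $\Cone(-c(m))$ (shifted by $\Sigma\inv$), namely the block form built from $d_{c(N)}$, $d_{c(M)}$ and $\pm c(m)$. By \eqref{eq:differential-cprime-complex} this differential is the connecting map $[n](b^{n+1}_Q\comp a^n_Q)$ extracted from the octahedra \eqref{eq:octaeder-cprime} for $Q$. To compute it I would feed $m\colon M \ra N$ into axiom~\ref{enum:filt-tria-cat-3x3-diag} in the form \eqref{eq:filt-tria-cat-nine-diag-sigma} for each $n$: its middle row reproduces \eqref{eq:alpha-triangle-MN} (so the central object is $Q$), while its columns express the $\sigma$-truncations of $Q$ through the cone-type objects $A'$, $B'$ assembled from the truncations of $M$ and of $s(N)$. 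Combining these $3\times 3$-diagrams for consecutive $n$ with the ``second approach'' triangles \eqref{eq:sigma-truncation-for-tilde-d} for $M$ and $N$ lets one write $\tildew d^n_Q$ in block form and match it, signs included, with the cone differential. This is bookkeeping rather than a conceptual difficulty, but it is where essentially all the work lies; once done, $c$ carries \eqref{eq:alpha-lift-triangle-MN} precisely to \eqref{eq:h-triangulated}.

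\textbf{Passage to $K^b(\heart(w))^\anti$ and the special case.} Finally I would apply $\can\colon C^b(\heart(w)) \ra K^b(\heart(w))^\anti$. By \eqref{eq:mapcone-minus-m-rot2}, the sequence $\Sigma\inv c(N) \xra{\svek{-1}0} \Sigma\inv\Cone(-c(m)) \xra{\zvek 0{-1}} c(M) \xra{-c(m)} c(N)$ is a genuine triangle in $K^b(\heart(w))$; negating all three morphisms and using the definition of an anti-triangle (Section~\ref{sec:anti-triangles}) shows that \eqref{eq:h-triangulated} is a triangle in $K^b(\heart(w))^\anti$. Hence $h = \can\comp c$ sends \eqref{eq:alpha-lift-triangle-MN} to a triangle there. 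For $m = \id_N$ we have $c(m) = \id_{c(N)}$, so $\Cone(-\id_{c(N)})$ is contractible; thus $c(Q) \cong \Sigma\inv\Cone(-\id_{c(N)})$ is contractible as well, i.e. $\id_{c(Q)}$ is null-homotopic, which is exactly $h(Q) = 0$ in $K^b(\heart(w))^\anti$.
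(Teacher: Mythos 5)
Your first and last stages are sound, and mostly coincide with the paper's proof: the extension-closure argument for $Q \in \tildew{\mathcal{T}}^s$ is exactly the paper's, as is the passage to $K^b(\heart(w))^\anti$ via \eqref{eq:mapcone-minus-m-rot2} and the special case $m=\id_N$. Your termwise splitting via the vanishing $\gr^n(\alpha_X)=0$ is a correct variant of the paper's ``Observation'' (which instead uses that triangles in the heart of a weight structure split), although that vanishing is not quite ``immediate'': one has to check, say, that $\sigma_{\leq n}(\alpha_X)=l^{n+1}_{s(X)}\comp \alpha_{\sigma_{\leq n}X}$ (both sides agree after precomposition with $k^n_X$, which suffices since $\Hom([1]\sigma_{\geq n+1}X,\tildew{\mathcal{T}}(\leq n))=0$), and then that $\sigma_{\geq n}$ of this map vanishes because $l^{n+1}_{s(X)}\comp s(g^n_{\sigma_{\leq n}X}):s(\sigma_n X)\ra \sigma_{\leq n}(s(X))$ is a morphism from $\tildew{\mathcal{T}}(\geq n+1)$ to $\tildew{\mathcal{T}}(\leq n)$, hence zero by \ref{enum:filt-tria-cat-no-homs}.

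The genuine gap is your third stage, which you dismiss as bookkeeping: it is in fact the entire content of the proposition. A termwise splitting only tells you that $d_{c(Q)}$ is upper triangular with diagonal blocks $d_{\Sigma\inv c(N)}$ and $d_{c(M)}$ and \emph{some} off-diagonal entry $\theta$ satisfying $d\theta+\theta d=0$; the proposition asserts $\theta = c(m)$ on the nose (equivalently, in the paper's notation, $\kappa^{a-1}=\alpha_{\sigma_{a-1}(N)}\comp\sigma_{a-1}(m)$, \eqref{eq:kappa-identified}), and nothing in your sketch pins this down. Two concrete pieces are missing. First, applying \ref{enum:filt-tria-cat-3x3-diag} to $m$ separately for each $n$ produces for each $n$ its own central object (a cone of $\alpha_N\comp m$); these are all isomorphic to $Q$, but the axiom provides no compatible identifications, so the diagrams for consecutive $n$ cannot simply be ``combined'' into one block description of $c'(Q)$. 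The paper avoids this by applying the axiom to the truncated morphisms $\sigma_{\leq a}(m)$ (diagram \eqref{eq:sigma-leq-a-N-slice-off}) and then constructing a coherent tower $f_a:Q\ra Q_a$ with $f_{a-1,a}f_a=f_{a-1}$, using that $M,N\in\tildew{\mathcal{T}}(\leq A)$ for $A$ large so the tower stabilizes. Second, even with coherent identifications, computing the off-diagonal entry requires (a) explicit splittings $\epsilon_a,\delta_a$ whose crucial property $\epsilon_a\comp\delta_a=0$ comes precisely from the middle column of the $3\times 3$-diagram being a triangle --- this is exactly where axiom \ref{enum:filt-tria-cat-3x3-diag} earns its keep, and it is invisible in your plan --- and (b) a prior treatment of the single-degree case (the paper's ``first case'', culminating in \eqref{eq:differential-eL-gr-supp-singleton}), which is then fed into \eqref{eq:kappa-nearly-identified}. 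The signs must also be traced through \eqref{eq:sigma-truncation-for-tilde-d}; the paper does the first case explicitly ``to convince the reader that we got all signs correct''. Until these steps are carried out, you have only matched the terms of $c(Q)$ with those of $\Sigma\inv(\Cone(-c(m)))$, which does not determine the complex, let alone the sequence \eqref{eq:h-triangulated}.
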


\begin{proof}
  Since $\tildew{\mathcal{T}}^s$ is stable under $[-1]s$ and closed
  under extensions, the first three objects in 
  the triangle \eqref{eq:alpha-triangle-MN} 
  are in $\tildew{\mathcal{T}}^s$.

  When computing $c=c''\comp c'$ in this proof we use the second
  approach
  to $c'$ described at the end of
  Section~\ref{sec:first-step-cprime}.

  \textbf{First case: $m$ is a morphism in $\tildew{\mathcal{T}}^s([a])$:}
  We start with the case that $m:M \ra N$ is in
  $\tildew{\mathcal{T}}^s([a])$ for some $a \in \DZ$.
  Assume that we are given a triangle
  \begin{equation}
    \label{eq:alpha-triangle-MN-gr-a}
    \xymatrix{
      {[-1]s(N)} \ar[r]^-{u'} & 
      {L} \ar[r]^-{v'} &
      {M} \ar[r]^-{\alpha_N \comp m} &
      {s(N).} 
    }
  \end{equation}
  in $\tildew{\mathcal{T}}$ (cf.\ \eqref{eq:alpha-triangle-MN}; we
  write $L$ instead of $Q$ here for 
  notational reasons to become clear later on).
  Note that $\range(L) \subset [a,a+1]$ and $\range([-1]s(N)) \subset [a+1]$.
  Let 
  \begin{equation*}
    \xymatrix{
      {\sigma_{a+1}(L)} \ar[r]^-{u''} &
      {L} \ar[r]^-{v''} &
      {\sigma_{a}(L)} \ar[r]^-{\tildew{d}^a_{L}} &
      {[1]\sigma_{a+1} ({L}).}
    }
  \end{equation*}
  be the triangle\footnote{
    We assume in this proof without loss of generality that $\sigma_{\geq n}$ 
    (more precisely $g^n:\sigma_{\geq n} \ra \id$) is the identity 
    on objects of
    $\tildew{\mathcal{T}}(\geq n)$, and similarly for $k^n$. 
    This gives for example $\sigma_{[a,a+1]}(L)=L$.
  }
  constructed 
  in the second approach to $c'(L)$, cf.\
  \eqref{eq:sigma-truncation-for-tilde-d}.   
  This triangle is uniquely isomorphic to
  the 
  triangle
  \eqref{eq:alpha-triangle-MN-gr-a} by an isomorphism extending
  $\id_{L}$ 
  (use Prop.~\ref{p:BBD-1-1-9-copied-for-w-str}):
  \begin{equation}
    \label{eq:isom-trunc-extended-alpha}
    \xymatrix{
      {\sigma_{a+1}(L)} \ar[r]^-{u''} \ar[d]^{p}_{\sim} &
      {L} \ar[r]^-{v''} \gar[d] &
      {\sigma_{a}(L)} \ar[r]^-{\tildew{d}^a_{L}}
      \ar[d]^{q}_{\sim} &
      {[1]\sigma_{a+1} (L)} \ar[d]^{[1]p}_{\sim}\\
      {{[-1]s(N)}} \ar[r]^-{u'} & 
      {{L}} \ar[r]^-{v'} &
      {{M}} \ar[r]^-{\alpha_N \comp m} &
      {{s(N)}.} 
    }
  \end{equation}
  This proposition even characterizes $p$ as the unique morphism
  making the square on the left commutative (and similarly for
  $p\inv$), and $q$ as the unique 
  morphism making the square 
  in the middle commutative.

  In order to compute the differential of $c'(L)$ in terms of $m$ and
  to identify $p$ and $q$ we follow the second approach to $c'$
  described in  
  Section~\ref{sec:construction-functor-cfun}:
  We apply to the
  triangle \eqref{eq:alpha-triangle-MN-gr-a} the sequence of functors
  in the left column of the following diagram 
  (cf.\ \eqref{eq:sigma-truncation-for-tilde-d})
  and obtain in this way the rest of the diagram:
  \begin{equation}
    \label{eq:alpha-triangle-MN-gr-a-sigma}
    \xymatrix{
      \sigma_{a+1} \ar[d] & 
      {[-1]s(N)} \ar[r]^{\sigma_{a+1}(u')} \gar[d]&
      {\sigma_{a+1}(L)} \ar[r] \ar[d]^-{u''} &
      {0} \ar[r] \ar[d] &
      {s(N)} \gar[d] \\
      \sigma_{[a,a+1]} \ar[d] &
      {[-1]s(N)} \ar[r]^-{u'} \ar[d]&
      {L} \ar[r]^-{v'} \ar[d]^-{v''} &
      {M} \ar[r]^-{\alpha_N \comp m} \gar[d] &
      {s(N)} \ar[d] \\
      \sigma_{a} \ar[d]^-{\tildew{d}^a} &
      {0} \ar[r] \ar[d]&
      {\sigma_a(L)} \ar[r]^-{\sigma_a(v')} \ar[d]^-{\tildew{d}^a_{L}} &
      {M} \ar[r] \ar[d] &
      {0} \ar[d] \\
      [1]\sigma_{a+1} &
      {s(N)} 
      \ar[r]^{[1]\sigma_{a+1}(u')} &
      {[1]\sigma_{a+1}(L)} \ar[r]  &
      {0} \ar[r] &
      {[1]s(N)} 
    }
  \end{equation}
  From the above remarks we see that $\sigma_{a+1}(u')=p\inv$ and
  $\sigma_a(v')=q$
  (this can also be seen directly from
  \eqref{eq:isom-trunc-extended-alpha} using the adjunctions but we
  wanted to include the above diagram). 
  If we also use the commutativity of the square on the right in 
  \eqref{eq:isom-trunc-extended-alpha} we obtain
  \begin{equation}
    \label{eq:differential-eL-gr-supp-singleton}
    ([1]\sigma_{a+1}(u'))\inv \comp \tildew{d}^a_{L} \comp
    (\sigma_a(v'))\inv
    = ([1]p) \comp \tildew{d}^a_{L} \comp q\inv
    = \alpha_N \comp m.
  \end{equation}
  We will need precisely this formula later on. 
  It means that (up to unique isomorphisms) the differential
  $\tildew{d}^a_{L}$ is $\alpha_N \comp m$. 

  Let us finish the proof of the proposition in this special case
  in order to convince the
  reader that we got all signs correct.
  
  These results show that the image of the sequence 
  \begin{equation}
    \label{eq:lift-of-triangle-MN-gr-a}
    \xymatrix{
      {{[-1]s(N)}} \ar[r]^-{u'} & 
      {{L}} \ar[r]^-{v'} &
      {{M}} \ar[r]^-{m} &
      {{N}.} 
    }
  \end{equation}
  in $\tildew{\mathcal{T}}^s$
  under the functor $c'$ 
  is the sequence 
  \begin{equation*}
    \xymatrix{
      {0} \ar[r] \ar[d]&
      {\sigma_a(L)} \ar[r]^-{q} \ar[d]^-{\tildew{d}^a_{L}} &
      {M} \ar[r]^-{m} \ar[d] &
      {N} \ar[d] \\
      {s(N)} 
      \ar[r]^{[1]p\inv} &
      {[1]\sigma_{a+1}(L)} \ar[r]  &
      {0} \ar[r] &
      {0} 
    }
  \end{equation*}
  in $C^b_\Delta(\tildew{\mathcal{T}})$ where we draw the complexes
  vertically and only draw their components of degree $a$ in the upper
  and of degree $a+1$ in the lower row (all other components are
  zero).
  These sequence is isomorphic by the isomorphism $(\id, \svek
  q{[1]p}, \id, \id)$ to the sequence (use
  \eqref{eq:differential-eL-gr-supp-singleton}) 
  \begin{equation*}
    \xymatrix{
      {0} \ar[r] \ar[d]&
      {M} \ar[r]^-{\id_M} \ar[d]^-{\alpha_N \comp m} &
      {M} \ar[r]^-{m} \ar[d] &
      {N} \ar[d] \\
      {s(N)} 
      \ar[r]^{\id_{s(N)}} &
      {s(N)} \ar[r]  &
      {0} \ar[r] &
      {0.} 
    }
  \end{equation*}
  If we now apply the functor $c''$ we see that 
  the image of \eqref{eq:lift-of-triangle-MN-gr-a} under $c$ is
  isomorphic to
  \begin{equation*}
    \xymatrix{
      {0} \ar[r] \ar[d]&
      {s^{-a}(M)} \ar[r]^-{\id} \ar[d]^-{s^{-a}(m)} &
      {s^{-a}(M)} \ar[r]^-{s^{-a}(m)} \ar[d] &
      {s^{-a}(N)} \ar[d] \\
      {s^{-a}(N)} 
      \ar[r]^{\id} &
      {s^{-a}(N)} \ar[r]  &
      {0} \ar[r] &
      {0} 
    }
  \end{equation*}
  which is precisely
  \eqref{eq:h-triangulated}.
  If we pass to the homotopy category we obtain a candidate triangle 
  that has the same shape as
  \eqref{eq:mapcone-minus-m-rot2} but all morphisms are multiplied
  by $-1$: It is a triangle in $K^b(\heart(w))^\anti$.
  This finishes the proof in the case that
  $m:M \ra N$ is in $\tildew{\mathcal{T}}^s([a])$ for some $a \in \DZ$.

  \textbf{Observation:}
  Recall that the first three objects of triangle
  \eqref{eq:alpha-triangle-MN}  are in $\tildew{\mathcal{T}}^s$. If we
  apply the triangulated 
  functor $\sigma_b$ (for any $b \in \DZ$), we obtain a triangle with
  first three terms isomorphic to objects in $s^b(i(\mathcal{T}^{w=b})$
  by \eqref{eq:subtildeT-equiv}; 
  since there are only trivial extensions in the heart of a weight
  structure
  (see Lemma~\ref{l:weight-str-basic-properties}), this triangle
  is isomorphic to the obvious direct sum triangle.
  We will construct explicit splittings.
  
  \textbf{The general case.}
  Let $m:M \ra N$ be a morphism in $\tildew{\mathcal{T}}^s$.

  Let $a \in \DZ$ be arbitrary. 
  Axiom \ref{enum:filt-tria-cat-3x3-diag}
  (we use the variant \eqref{eq:filt-tria-cat-nine-diag-sigma}
  for the morphism $\sigma_{\leq a}(m): X=\sigma_{\leq a}(M) \ra Y =
  \sigma_{\leq a}(N)$ and $n=a-1$) gives a $3\times 3$-diagram
  \begin{equation}
    \label{eq:sigma-leq-a-N-slice-off}
    \xymatrix{
      {[-1]s({\sigma_{a}(N)})} \ar[r]^-{u'_a} 
      \ar[d]^-{[-1]s(g^a)}
      & 
      {L_a} \ar[r]^-{v'_a} \ar[d]^-{f'_a} & 
      {{\sigma_{a}(M)}} \ar[rr]^-{\alpha_{\sigma_{a}(N)} \comp \sigma_a(m)} 
      \ar[d]^-{g^a}
      && 
      {s({\sigma_{a}(N)})} 
      \ar@{..>}[d]^-{s(g^a)}
      \\
      {[-1]s({\sigma_{\leq a}(N)})} \ar[r]^-{u_a} \ar[d]^-{[-1]s({k_N^{a-1,a}})} &
      {Q_a} \ar[r]^-{v_a} \ar[d]^-{f_{a-1,a}} &
      {{\sigma_{\leq a}(M)}} \ar[rr]^-{\alpha_{\sigma_{\leq a}(N)}
        \comp \sigma_{\leq a}(m)} \ar[d]^-{k_M^{a-1,a}} &&
      {s({\sigma_{\leq a}(N)})} \ar@{..>}[d]^-{s({k_N^{a-1,a}})} \\
      {[-1]s({\sigma_{\leq a-1}(N)})} \ar[r]^-{u_{a-1}} 
      \ar[d]
      & 
      {Q_{a-1}} \ar[r]^-{v_{a-1}} \ar[d] & 
      {{\sigma_{\leq a-1}(M)}} \ar[rr]^-{\alpha_{\sigma_{\leq a-1}(N)}
        \comp \sigma_{\leq a-1}(m)}
      \ar[d]
      \ar@{}[rrd]|-{\anticomm} & &
      {s({\sigma_{\leq a-1}(N)})} 
      \ar@{..>}[d]
      \\
      {s({\sigma_{a}(N)})} \ar@{..>}[r] &
      {[1]L_a} \ar@{..>}[r] &
      {[1]{\sigma_{a}(M)}}
      \ar@{..>}[rr]^-{[1](\alpha_{\sigma_{a}(N)}\comp \sigma_a(m))} &&
      {[1]s({\sigma_{a}(N)})} 
    }
  \end{equation}
  where we write $k_?^{a-1,a}$ for the adjunction morphism
  $k^{a-1}_{\sigma_{\leq a}?}$ and $g^a$ for the adjunction morphisms
  $g^{a}_{\sigma_{\leq a}M}$ and $g^{a}_{\sigma_{\leq a}N}$. 
  We fix such a $3 \times 3$-diagram for any $a \in \DZ$.

  By \ref{enum:filt-tria-cat-exhaust} we find $A \in \DZ$ such that
  $M, N \in \tildew{\mathcal{T}}(\leq A)$. Then the top and bottom row in
  diagram 
  \eqref{eq:sigma-leq-a-N-slice-off} are zero for $a>A$, and the two
  rows in the middle are connected by an isomorphism of triangles.
  It is easy to see (first define $f_A$, then $f_{A-1}=f_{A-1,A}f_A$
  etc.) that there are morphisms 
  $f_a: Q \ra [1]Q_a$ 
  (for any $a \in \DZ$) such that 
  $f_{a-1,a} f_a = f_{a-1}$ holds and such that
  \begin{equation}
    \label{eq:N-to-sigma-leq-a-second}
    \xymatrix{
      {{[-1]s(N)}} \ar[r]^-u \ar[d]^-{[-1]s(k_N^a)} & 
      {{Q}} \ar[r]^-v \ar[d]^-{f_a} &
      {{M}} \ar[r]^-{\alpha_N \comp m} \ar[d]^-{k_M^a} &
      {{s(N)}} \ar[d]^-{s(k_N^a)} \\
      {{[-1]s(\sigma_{\leq a}(N))}} \ar[r]^-{u_a} & 
      {{Q_a}} \ar[r]^-{v_a} &
      {{\sigma_{\leq a}(M)}} \ar[r]^-{\alpha_{\sigma_{\leq a}(N)}\comp
      \sigma_{\leq a(m)}} &
      {{s(\sigma_{\leq a}(N))}} 
    }
  \end{equation}
  is a morphism of triangles, where
  the top triangle is \eqref{eq:alpha-triangle-MN}, the bottom triangle
  is the second horizontal triangle from above in
  \eqref{eq:sigma-leq-a-N-slice-off} and $k_?^a$ are the adjunction
  morphisms. 
  Since $k_?^{a-1,a} k_?^a = k_?^{a-1}$ holds
  (under the usual identifications, cf.\
  \eqref{eq:sigma-truncation-two-parameters}),  
  the morphisms of triangles
  \eqref{eq:N-to-sigma-leq-a-second} (for $a \in \DZ$)
  are compatible with the morphisms between the two middle rows of 
  \eqref{eq:sigma-leq-a-N-slice-off}.

  We claim that $\sigma_b(f_a)$ is an isomorphism for all
  $b \leq a$: Applying the triangulated functor $\sigma_b$ to
  \eqref{eq:N-to-sigma-leq-a-second}
  gives a morphism of triangles
  with two components isomorphisms
  by \eqref{eq:a-geq-b-sigma-isos}
  (and \eqref{eq:sgle-eq-s-comm}); hence it is an isomorphisms and
  $\sigma_b(f_a)$ is an isomorphism. 
  This shows that any $f_a$ induces an isomorphism between
  the parts of the complexes 
  $c(Q)$ and $c(Q_a)$ in degrees $\leq a$:
  Apply $\tildew{d}^b: \sigma_b  \ra [1] \sigma_{b+1}$ to $f_a$ for
  all $b < a$.

  Note that the three horizontal triangles 
  in \eqref{eq:sigma-leq-a-N-slice-off}
  are of the type considered
  in the observation. Applying $\sigma_b$ (any $b \in \DZ$) to them
  yields triangles with vanishing connecting morphisms.
  Hence we omit these morphisms in the
  following diagram which is
  $\sigma_a$ applied to the nine upper left entries of
  \eqref{eq:sigma-leq-a-N-slice-off} (we use again some canonical
  identifications) 
  \begin{equation}
    \label{eq:sigma-leq-a-N-slice-off-sigma-a}
    \xymatrix@=1.5cm{
      {0} \ar[r] \ar[d] & 
      {{\sigma_a(L_a)}} \ar[r]^-{\sigma_a(v'_a)}_-\sim
      \ar[d]_-{\sigma_a(f'_a)} & 
      {{\sigma_{a}(M)}} \ar[d]^-{\id} 
      \ar@/_1pc/@{..>}[dl]^-{\delta_a} \\
      {{\sigma_a([-1]s(N))}} \ar[r]^-{\sigma_a(u_a)}
      \ar[d]^-{\id}  
      &  
      {{\sigma_a(Q_a)}} \ar[r]^-{\sigma_a(v_a)} 
      \ar[d]^-{\sigma_a(f_{a-1,a})}
      \ar@/^1pc/@{..>}[dl]_-{\epsilon_a}
      &
      {{\sigma_{a}(M)}} \ar[d] \\
      {{\sigma_a([-1]s(N))}} \ar[r]_-{\sigma_a(u_{a-1})}^-\sim & 
      {{\sigma_a(Q_{a-1})}} \ar[r] &
      {0}
    }
  \end{equation}
  and where
  the dotted arrows are defined by
  \begin{align*}
    \epsilon_a & := (\sigma_a(u_{a-1}))\inv \comp \sigma_a(f_{a-1,a}),\\
    \delta_a   & := \sigma_a(f'_a) \comp (\sigma_a(v'_a))\inv.
  \end{align*}
  So the four honest triangles with one dotted arrow commute, in
  particular 
  $\epsilon_a \comp \sigma_a(u_a)=\id$ and $\sigma_a(v_a)\comp
  \delta_a =\id$.
  Note that $\epsilon_a \comp \delta_a=0$ since the middle column in
  \eqref{eq:sigma-leq-a-N-slice-off-sigma-a} 
  is part of a triangle
  (this comes from axiom~\ref{enum:filt-tria-cat-3x3-diag} used
  above).
  Lemma~\ref{l:zero-triang-cat}
  gives an explicit splitting 
  of the middle
  row of 
  \eqref{eq:sigma-leq-a-N-slice-off-sigma-a}:
  \begin{equation}
    \label{eq:explicit-iso-sigma-a-Q-a}
    \xymatrix{
      {{\sigma_a([-1]s(N))}} \ar[r]^-{\sigma_a(u_a)}
      \gar[d] 
      &  
      {{\sigma_a(Q_a)}} \ar[r]^-{\sigma_a(v_a)} 
      \ar[d]^-{\svek{\epsilon_a}{\sigma_a(v_a)}}_-{\sim}
      &
      {{\sigma_{a}(M)}} \gar[d] \\
      {{\sigma_a([-1]s(N))}} \ar[r]^-{\svek 10}
      &  
      {{\sigma_a([-1]s(N))} \oplus \sigma_{a}(M)}
      \ar[r]^-{\zvek 01} 
      &
      {{\sigma_{a}(M)}}
    }
  \end{equation}
  and states that $\zvek{\sigma_a(u_a)}{\delta_a}$ is inverse to
  $\svek{\epsilon_a}{\sigma_a(v_a)}$.

  Our aim now is to compute the morphisms which will yield the
  differential of $c(Q)$ using this explicit direct sum
  decomposition.

  We explain the following diagram (which is commutative without the
  dotted arrows).
  \begin{equation}
    \label{eq:start-analysis-differential-Q}
    \xymatrix@=1.5cm{
      {\sigma_{a-1}([-1]s(N))} \ar[r]^-{\sigma_{a-1}(u_{a-1})} 
      & 
      {\sigma_{a-1}(Q_{a-1})} \ar[r]^-{\sigma_{a-1}(v_{a-1})} 
      \ar@/^2pc/@{..>}[l]^-{\epsilon_{a-1}}
      & 
      {{\sigma_{a-1}(M)}} 
      \ar@/_2pc/@{..>}[l]_-{\delta_{a-1}}
      \\
      {\sigma_{a-1}([-1]s(N))} \ar[r]^-{\sigma_{a-1}(u_a)}
      \gar[u]
      \ar[d]^{\tildew{d}^{a-1}_{[-1]s(N)}}
      & 
      {\sigma_{a-1}(Q_a)} \ar[r]^-{\sigma_{a-1}(v_a)}
      \ar[u]_-{\sigma_{a-1}(f_{a-1,a})}^-{\sim} 
      \ar[d]^{\tildew{d}^{a-1}_{Q_a}} & 
      {{\sigma_{a-1}(M)}} 
      \gar[u]
      \ar[d]^{\tildew{d}^{a-1}_{M}}
      \\
      {{[1]\sigma_a([-1]s(N))}} \ar[r]^-{[1]\sigma_a(u_a)}
       &  
      {{[1]\sigma_a(Q_a)}} \ar[r]^-{[1]\sigma_a(v_a)} 
      \ar@/^2pc/@{..>}[l]^-{[1]\epsilon_a}
      &
      {{[1]\sigma_{a}(M)}} 
      \ar@/_2pc/@{..>}[l]_-{[1]\delta_{a}}
    }
  \end{equation}
  The first two rows are (the horizontal mirror image of)
  $\sigma_{a-1}$ applied to the two middle rows of 
  \eqref{eq:sigma-leq-a-N-slice-off}.
  The last 
  row is $[1]$ applied to the middle row of
  \eqref{eq:sigma-leq-a-N-slice-off-sigma-a}. The morphism between
  second and third row is that from
  \eqref{eq:sigma-truncation-for-tilde-d}. 
  Note that we have split the first and third row explicitly before.
  
  This diagram shows that 
  \begin{equation*}
    \tildew{d}^{a-1}_{Q}: \sigma_{a-1}(Q) \ra [1]\sigma_a (Q)
  \end{equation*}
  has the form
  \begin{equation}
    \label{eq:partial-differential-Q}
    \begin{bmatrix}
      {\tildew{d}^{a-1}_{[-1]s(N)}} & {\kappa^{a-1}} \\ {0} &
      {{\tildew{d}^{a-1}_{M}}} 
    \end{bmatrix}
  \end{equation}
  if we identify 
  \begin{equation*}
    \xymatrix@C=2cm{
      {\sigma_{a-1}(Q)} \ar[r]^-{\sigma_{a-1}(f_{a-1})}_-{\sim} &
      {\sigma_{a-1}(Q_{a-1})}
      \ar[r]^-{\svek{\epsilon_{a-1}}{\sigma_{a-1}(v_{a-1})}}_-{\sim} & 
      {\sigma_{a-1}([-1]s(N)) \oplus \sigma_{a-1}(M)}
    }
  \end{equation*}
  and 
  \begin{equation*}
    \xymatrix@C=2cm{
      {[1]\sigma_a(Q)} \ar[r]^-{[1]\sigma_a(f_{a})}_-{\sim} &
      {[1]\sigma_a(Q_a)}
      \ar[r]^-{\svek{[1]\epsilon_{a}}{[1]\sigma_{a}(v_a)}}_-{\sim} & 
      {{{[1]\sigma_a([-1]s(N))} \oplus [1]\sigma_{a}(M)}}
    }
  \end{equation*}
  along 
  \eqref{eq:explicit-iso-sigma-a-Q-a}, for some morphism 
  \begin{equation*}
    \kappa^{a-1}: \sigma_{a-1}(M) \ra 
    [1]\sigma_a([-1]s(N));
  \end{equation*}
  which we will determine now; our aim is to prove
  \eqref{eq:kappa-identified} below.
 
  We apply to the commutative diagram
  \begin{equation*}
    \xymatrix{
      {L_{a-1}} \ar[r]^-{f'_{a-1}} &
      {Q_{a-1}} &
      {Q_a} \ar[l]^-{f_{a-1,a}} &
      {Q} \ar[l]^-{f_{a}} \ar@/_1.5pc/[ll]_-{f_{a-1}}
    }
  \end{equation*}
  the morphism ${\tildew{d}^{a-1}}: \sigma_{a-1} \ra
  [1]\sigma_a$ of functors and obtain the 
  middle part of
  the following commutative diagram (the rest will
  be explained below):
  \begin{equation*}
  \hspace{-0.8cm}
    \xymatrix@=1.5cm{
      {{\sigma_{a-1}(M)}} \ar[dr]^-{\delta_{a-1}}\\
      {\sigma_{a-1}(L_{a-1})} \ar[r]^-{\sigma_{a-1}(f'_{a-1})} 
      \ar[d]^-{\tildew{d}^{a-1}_{L_{a-1}}} 
      \ar[u]^-{\sigma_{a-1}(v'_{a-1})}_-{\sim} &
      {\sigma_{a-1}(Q_{a-1})} 
      \ar[d]^-{\tildew{d}^{a-1}_{Q_{a-1}}} &
      {\sigma_{a-1}(Q_a)} \ar[l]_-{\sigma_{a-1}(f_{a-1,a})}^-{\sim} 
      \ar[d]^-{\tildew{d}^{a-1}_{Q_{a}}} &
      {\sigma_{a-1}(Q)} \ar[l]_-{\sigma_{a-1}(f_{a})}^-{\sim} 
      \ar@/_2.5pc/[ll]_-{\sigma_{a-1}(f_{a-1})}^-{\sim}
      \ar[d]^-{\tildew{d}^{a-1}_{Q}} \\
      {[1]\sigma_a(L_{a-1})} \ar[r]^-{[1]\sigma_a(f'_{a-1})}_-{\sim} &
      {[1]\sigma_a(Q_{a-1})} &
      {[1]\sigma_a(Q_a)} \ar[l]_-{[1]\sigma_a(f_{a-1,a})} 
      \ar[dl]^-{[1]\epsilon_a} &
      {[1]\sigma_a(Q)} \ar[l]_-{[1]\sigma_a(f_{a})}^-{\sim}\\
      {{[1]\sigma_a([-1]s(N))}} \ar[u]_-{[1]\sigma_a(u'_{a-1})}^-\sim
      \gar[r] &
      {{[1]\sigma_a([-1]s(N))}} \ar[u]_-{[1]\sigma_a(u_{a-1})}^-\sim 
    }
  \end{equation*}
  The honest triangles with diagonal sides $\delta_{a-1}$ and $[1]\epsilon_a$
  respectively are commutative by
  \eqref{eq:sigma-leq-a-N-slice-off-sigma-a}. The 
  lower left square commutes since it is (up to rotation)
  $[1]\sigma_a$ applied to 
  the upper left square 
  in \eqref{eq:sigma-leq-a-N-slice-off} (after substituting $a-1$ for
  $a$ there). Note that $[1]\sigma_a(u'_{a-1})$ is an isomorphism
  since $\sigma_{a}(\sigma_{a-1}(M))=0$.
  It is immediate from this diagram that $\kappa^{a-1}$ is the
  downward vertical composition in the left column of this diagram,
  i.\,e.
  \begin{equation}
    \label{eq:kappa-nearly-identified}
    \kappa^{a-1}= ([1]\sigma_a(u'_{a-1}))\inv \comp
    \tildew{d}^{a-1}_{L_{a-1}} \comp (\sigma_{a-1}(v'_{a-1}))\inv.
  \end{equation}

  Recall that the considerations 
  in the first case
  gave formula 
  \eqref{eq:differential-eL-gr-supp-singleton};
  if we apply them to the morphism $\sigma_{a-1}(m):\sigma_{a-1}(M)
  \ra \sigma_{a-1}(N)$ 
  in $\tildew{\mathcal{T}}^s([a-1])$ and the top horizontal
  triangle in  
  \eqref{eq:sigma-leq-a-N-slice-off} (with $a$ replaced by $a-1$)
  (which plays the role of
  \eqref{eq:alpha-triangle-MN-gr-a}), 
  this formula describes the right hand side of 
  \eqref{eq:kappa-nearly-identified}
  and hence yields
  \begin{equation}
    \label{eq:kappa-identified}
    \kappa^{a-1}= \alpha_{\sigma_{a-1}(N)} \comp \sigma_{a-1}(m).  
  \end{equation}
  
  Let us sum up what we know:
  We use from now on tacitly the  
  identifications
  \begin{equation*}
    \sigma_a(Q)\sira 
    \sigma_a([-1]s(N)) \oplus \sigma_{a}(M)
  \end{equation*}
  given by $\sigma_a(f_a)$ and \eqref{eq:explicit-iso-sigma-a-Q-a}. 
  Then the morphism
  $\tildew{d}^{a}_{Q}:  \sigma_{a}(Q) \ra [1]\sigma_{a+1} (Q)$
  is given by the matrix
  \begin{equation*}
    \tildew{d}^{a}_{Q} = 
    \begin{bmatrix}
      {\tildew{d}^{a}_{[-1]s(N)}} & {\alpha_{\sigma_a(N)} \comp
        \sigma_{a}(m)} \\  
      {0} & {{\tildew{d}^{a}_{M}}} 
    \end{bmatrix}.
  \end{equation*}
  Shifting accordingly this describes the complex $c'(Q)$
  completely. Moreover, the 
  morphisms $c'(u): c'([-1]s(N)) \ra c'(Q)$ 
  and $c'(v):c'(Q) \ra c'(M)$ become identified with the inclusion
  $\svek 10$ of the first summand and the projection 
  $\svek 01$ onto the second summand respectively.

  Then it is clear that $c(Q)=c''(c'(Q))$ 
  is given by (we assume that $i$
  is just the inclusion $\tildew{\mathcal{T}}([0]) \subset
  \tildew{\mathcal{T}}$):
  \begin{align*}
    c(Q)^a & =c([-1]s(N))^{a} \oplus c(M)^{a},\\
    d_{c(Q)}^a & =
    \begin{bmatrix}
      {d^{a}_{c([-1]s(N))}} & {c(m)^a}\\ 
      0 & {d^{a}_{c(M)}}
    \end{bmatrix}.
  \end{align*}

  Using the canonical identification $c([-1]s(N))\cong \Sigma\inv
  c(N)$ we see that $c$ maps
  \eqref{eq:alpha-lift-triangle-MN} to
  \eqref{eq:h-triangulated}.
  As before this becomes a triangle
  in $K^b(\heart(w))^\anti$, 
  cf.\ \eqref{eq:mapcone-minus-m-rot2}.
\end{proof}

\begin{corollary}
  \label{c:e-prime-zero-on-kernel}
  For all $M, N \in \tildew{\mathcal{T}}^s$, 
  \begin{equation*}
    h:\Hom_{\tildew{\mathcal{T}}^s}(M,N) \ra
    \Hom_{K^b(\heart(w))^\anti}(h(M), h(N))
  \end{equation*}
  vanishes
  on
  the kernel of 
  $\omega|_{\tildew{\mathcal{T}}^s}:\Hom_{\tildew{\mathcal{T}}^s}(M,N) \ra
  \Hom_{\mathcal{T}}(\omega(M), \omega(N))$.
\end{corollary}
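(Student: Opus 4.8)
The plan is to show that any $m\in\Ker(\omega|_{\tildew{\mathcal{T}}^s})(M,N)$ factors, inside $\tildew{\mathcal{T}}^s$, through an object on which $h$ already vanishes, namely the shifted cone $C:=\Cone(\alpha_N)[-1]$ of the structure morphism $\alpha_N\colon N\to s(N)$. Once we have a factorization $m=w\comp\tilde m$ through $C$, functoriality of $h$ gives $h(m)=h(w)\comp h(\tilde m)$, which is forced to be $0$ as soon as $h(C)=0$.

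The first and only delicate step is to reinterpret $\omega(m)=0$ as the single-step condition $\alpha_N\comp m=0$. Given $\omega(m)=0$, I would choose (using \ref{enum:filt-tria-cat-exhaust}) an integer $p$ with $M\in\tildew{\mathcal{T}}(\leq p)$ and then $a\gg 0$ with $s^a(N)\in\tildew{\mathcal{T}}(\geq p)$; a filtration shift of \ref{enum:functor-omega-iv} makes $\omega$ faithful on $\Hom(M,s^a(N))$, so that $\omega(\alpha^{(a)}\comp m)=\omega(\alpha^{(a)})\comp\omega(m)=0$ forces $\alpha^{(a)}\comp m=0$, where $\alpha^{(a)}=\alpha_{s^{a-1}N}\comp\cdots\comp\alpha_N$. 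Now Lemma~\ref{l:alpha-hom-subtildeT} says each $(\alpha_{s^jN})_*$ with $j\ge 1$ is bijective, so the composite $(\alpha_{s^{a-1}N})_*\comp\cdots\comp(\alpha_{sN})_*$ is injective; peeling it off the identity $\alpha^{(a)}\comp m=(\alpha_{s^{a-1}N})_*\comp\cdots\comp(\alpha_{sN})_*\comp(\alpha_N)_*(m)=0$ leaves $\alpha_N\comp m=0$. In other words $\Ker(\omega|_{\tildew{\mathcal{T}}^s})(M,N)=\Ker\big((\alpha_N)_*\colon\Hom(M,N)\to\Hom(M,s(N))\big)$.

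With $\alpha_N\comp m=0$ in hand, I would complete $\alpha_N$ to a triangle $C\xra{w}N\xra{\alpha_N}s(N)\ra[1]C$ in $\tildew{\mathcal{T}}$. Applying the cohomological functor $\Hom(M,-)$ and using $\alpha_N\comp m=0$ shows that $m$ lies in the image of $w_*$, i.e. $m=w\comp\tilde m$ for some $\tilde m\colon M\to C$ (Prop.~\ref{p:BBD-1-1-9-copied-for-w-str}). The object $C$ lies in $\tildew{\mathcal{T}}^s$: rotating the triangle places $C$ in the middle of a triangle whose outer terms are $[-1]s(N)$ and $N$, both of which lie in $\tildew{\mathcal{T}}^s$ (the category is closed under extensions, and closed under $[-1]s$ because \eqref{eq:c-Sigma-commute} identifies $c([-1]s(-))$ with $\Sigma\inv c(-)$). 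Hence $w$ and $\tilde m$ are honest morphisms of $\tildew{\mathcal{T}}^s$. Finally $C$ is precisely the object denoted $Q$ attached to the morphism $\id_N$ in Proposition~\ref{p:cone-alpha-homotopic-to-zero}, whose last assertion gives $h(C)=0$ in $K^b(\heart(w))^\anti$. Therefore $h(m)=h(w)\comp h(\tilde m)=0$, as claimed.

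The main obstacle is entirely concentrated in the reduction of the second paragraph; everything afterwards is formal. I expect the care to be needed in checking that \ref{enum:functor-omega-iv} really applies after the shift (so that $\omega$ is faithful on the relevant high $\Hom$-group) and that the bijectivity range $j\ge 1$ of Lemma~\ref{l:alpha-hom-subtildeT} is exactly what isolates $(\alpha_N)_*$ as the whole obstruction. It is worth noting why the seemingly more direct route fails: one can split the $\omega$-image of the triangle on $\Cone(\alpha_N\comp m)$ and lift a retraction along the full functor $\omega|_{\tildew{\mathcal{T}}^s}$, but the lift only retracts up to an error term that is itself a kernel endomorphism, producing a circularity. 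Recognizing $\Ker\omega$ as $\Ker(\alpha_N)_*$ and factoring through the $h$-acyclic cone is what avoids this loop.
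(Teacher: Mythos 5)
Your proof is correct and follows essentially the same route as the paper's: both arguments identify the kernel of $\omega|_{\tildew{\mathcal{T}}^s}$ on $\Hom_{\tildew{\mathcal{T}}^s}(M,N)$ with the kernel of $(\alpha_N)_*$ using Lemma~\ref{l:alpha-hom-subtildeT} together with \ref{enum:functor-omega-iii} and \ref{enum:functor-omega-iv}, then factor the given morphism through the third object $Q$ of a triangle on $\alpha_N$ and invoke the final assertion of Proposition~\ref{p:cone-alpha-homotopic-to-zero} (the case $m=\id_N$) to conclude $h(Q)=0$ and hence $h(f)=0$. The differences are purely presentational (the paper packages the kernel identification into a commutative diagram, while you peel off the bijections $(\alpha_{s^j(N)})_*$ explicitly), so nothing needs to be changed.
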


\begin{proof}
  (We argue similarly as in the proof of Proposition
  \ref{p:omega-on-subtildeT}.)
  Let $M, N \in \tildew{\mathcal{T}}^s$.
  By \ref{enum:filt-tria-cat-exhaust} we find $m, n \in \DZ$ such that
  $M \in \tildew{\mathcal{T}}(\leq m)$ and $N \in
  \tildew{\mathcal{T}}(\geq n)$. 
  Choose $a \in \DZ$ satisfying $a \geq m-n$.
  Consider the following commutative diagram 
  where the epimorphisms and isomorphisms in the upper row come
  from Lemma~\ref{l:alpha-hom-subtildeT}, the isomorphisms in the
  lower row from Proposition~\ref{p:functor-omega},
  \ref{enum:functor-omega-iii}, and the vertical morphisms are
  application of $\omega$;
  the vertical morphism on the right is an isomorphism by
  Proposition~\ref{p:functor-omega}, \ref{enum:functor-omega-iv}:
  \begin{equation*}
    \xymatrix{
      {\tildew{\mathcal{T}}(M,N)} \sar[r]^{\alpha_N \comp ?} \ar[d]^{\omega} &
      {\tildew{\mathcal{T}}(M,s(N))} \ar[r]^-{\sim} \ar[d]^{\omega} &
      {\tildew{\mathcal{T}}(M,s^a(N))} \ar[d]^{\omega}_{\sim} \\
      {{\mathcal{T}}(\omega(M),\omega(N))} \ar[r]^-{\sim} &
      {{\mathcal{T}}(\omega(M),\omega(s(N)))} \ar[r]^-{\sim} &
      {{\mathcal{T}}(\omega(M),\omega(s^a(N)))} 
    }
  \end{equation*}
  This diagram shows that the kernel of the vertical arrow $\omega$ on
  the left coincides with the kernel of the horizontal arrow
  $(\alpha_N \comp ?)$.

  Let $f: M \ra N$ be a morphism in $\tildew{\mathcal{T}}^s$ and
  assume that $\omega(f)=0$. 
  Since $\alpha_N \comp f=0$ the morphism $f$ factors to $f'$ as
  indicated in the following commutative diagram
  \begin{equation*}
    \xymatrix{
      &&
      {M} \ar[d]^f \ar@{..>}[ld]_{f'}
      \\
      {[-1]s(N)} \ar[r]^-{u} &
      {Q} \ar[r]^-{v} &
      {N} \ar[r]^-{\alpha_N} &
      {s(N),}
    }
  \end{equation*}
  where the lower horizontal row is a completion of $\alpha_N$ into a triangle.
  Proposition~\ref{p:cone-alpha-homotopic-to-zero} shows that
  $h(Q)=0$ and hence $h(f)=0$.
\end{proof}

\begin{corollary}
  \label{c:h-factors-triang}
  As indicated in diagram~\eqref{eq:omega-subtildeT-exp},
  the functor $h$ of additive categories with translation factors
  uniquely to a triangulated functor
  \begin{equation*}
    \ol{h}: (\mathcal{Q},[1]s\inv, \Delta_\mathcal{Q}) \ra K^b(\heart(w))^\anti.
  \end{equation*}
\end{corollary}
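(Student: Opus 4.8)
The plan is to obtain $\ol{h}$ in two stages: first as a functor of additive categories with translation, by the universal property of the quotient functor, and then to check that it is triangulated by testing it on the generators of $\Delta_{\mathcal{Q}}$.

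For the first stage I would invoke Corollary~\ref{c:e-prime-zero-on-kernel}, which asserts that for all $M, N \in \tildew{\mathcal{T}}^s$ the induced map $h$ on morphism spaces vanishes on the kernel of $\omega|_{\tildew{\mathcal{T}}^s}$. Thus $h$ factors through the two-sided ideal $\Kern(\omega|_{\tildew{\mathcal{T}}^s})$, and the universal property of $\can: \tildew{\mathcal{T}}^s \ra \mathcal{Q}$ yields a unique additive functor $\ol{h}: \mathcal{Q} \ra K^b(\heart(w))^\anti$ with $\ol{h} \comp \can = h$. Since $\can$ is a functor of additive categories with translation (for the translation $[1]s\inv$ on $\mathcal{Q}$) and is the identity on objects, and since $h$ carries the canonical isomorphism $\phi$ coming from Proposition~\ref{p:cfun-is-functor-of-add-cat-trans} (equation~\eqref{eq:c-Sigma-commute}), this $\phi$ descends uniquely along $\can$, making $(\ol{h}, \phi)$ a functor of additive categories with translation.

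For the second stage I must show that $(\ol{h}, \phi)$ sends every triangle in $\Delta_\mathcal{Q}$ to a triangle in $K^b(\heart(w))^\anti$. Because $\ol{h}$ is a functor and the class of triangles in $K^b(\heart(w))^\anti$ is closed under isomorphism of candidate triangles, it suffices to test the generators: the image under $\can$ of a sequence
\[
  [-1]s(X) \xra{f} Y \xra{g} Z \xra{e} X
\]
in $\tildew{\mathcal{T}}^s$ for which $[-1]s(X) \xra{f} Y \xra{g} Z \xra{\alpha_X \comp e} s(X)$ is a triangle in $\tildew{\mathcal{T}}$. Setting $N := X$, $M := Z$, $m := e$ and $Q := Y$, this is precisely the hypothesis of Proposition~\ref{p:cone-alpha-homotopic-to-zero}, whose conclusion is that $h$ maps $[-1]s(N) \xra{f} Q \xra{g} M \xra{m} N$ to a triangle in $K^b(\heart(w))^\anti$. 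As $h = \ol{h} \comp \can$, this is exactly $\ol{h}$ applied to the generating candidate triangle, so $\ol{h}$ is triangulated.

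The content of the corollary is light: essentially everything is packaged into Corollary~\ref{c:e-prime-zero-on-kernel} and Proposition~\ref{p:cone-alpha-homotopic-to-zero}. I expect the only point requiring genuine care to be the bookkeeping of the translation isomorphism $\phi$: one must confirm that the isomorphism making $(\ol{h}, \phi)$ a functor of additive categories with translation is the very same canonical identification $c([-1]s(N)) \cong \Sigma\inv c(N)$ appearing in Proposition~\ref{p:cone-alpha-homotopic-to-zero}, so that the image is an honest candidate triangle with the correctly twisted connecting morphism (compare \eqref{eq:h-triangulated}) rather than a mere sequence. Once this is matched, the reduction to generators and the invariance under isomorphism of candidate triangles are routine.
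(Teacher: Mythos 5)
Your proof is correct and follows essentially the same route as the paper: Corollary~\ref{c:e-prime-zero-on-kernel} gives the unique factorization of $h$ through $\can:\tildew{\mathcal{T}}^s \ra \mathcal{Q}$ as a functor of additive categories with translation, and Proposition~\ref{p:cone-alpha-homotopic-to-zero}, applied via your identification $N:=X$, $Q:=Y$, $M:=Z$, $m:=e$ to the generating sequences in the definition of $\Delta_\mathcal{Q}$, gives triangulatedness. The bookkeeping you flag (matching the translation isomorphism with the identification $c([-1]s(N))\cong \Sigma\inv c(N)$ from \eqref{eq:h-triangulated}, and the reduction to generators) is exactly what the paper leaves implicit.
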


\begin{proof}
  Corollary~\ref{c:e-prime-zero-on-kernel} shows that $h$ factors
  uniquely to a functor $\ol{h}$ of additive categories with
  translation. 
  Proposition~\ref{p:cone-alpha-homotopic-to-zero}
  together with the description of the class of triangles
  $\Delta_\mathcal{Q}$ before Lemma~\ref{l:omega-bar-triang} show that
  $\ol{h}$ is a triangulated functor.
\end{proof}

\begin{proof}
  [Proof of Thm.~\ref{t:strong-weight-cplx-functor}]

  We first use Proposition~\ref{p:omega-on-subtildeT}:
  For any object $X \in \mathcal{T}$ we fix an object $\tildew{X} \in
  \tildew{\mathcal{T}}^s$ and an isomorphism $X \cong
  \omega(\tildew{X})$. Then we fix for any
  morphism $f:X \ra Y$ in 
  $\mathcal{T}$ a morphism $\tildew{f}:\tildew{X} \ra \tildew{Y}$ in
  $\tildew{\mathcal{T}}^s$ such that $f$ corresponds
  to $\omega(\tildew{f})$ under the isomorphisms 
  $X \cong \omega(\tildew{X})$ and
  $Y \cong \omega(\tildew{Y})$.
  Mapping $X$ to $\tildew{X}$ and $f$ to the class of $\tildew{f}$
  in $\mathcal{Q}$ defines a quasi-inverse $\ol{\omega}\inv$ to
  $\ol{\omega}$
  (and any quasi-inverse is of this form). 
  We claim that $\WCfun:=\ol{h} \comp \ol{\omega}\inv$ 
  (cf.\ diagram~\eqref{eq:omega-subtildeT-exp})
  is a strong weight complex functor.
  Lemma~\ref{l:omega-bar-triang} and
  Corollary~\ref{c:h-factors-triang} show that it is a triangulated
  functor. We have to show that its 
  composition 
  with the canonical functor $K(\heart(w))^\anti \ra
  {K_\weak(\heart(w))}$ is isomorphic to a weak weight complex functor.

  Observe that the constructions of the weak weight complex functor
  (see Section~\ref{sec:weak-wc-fun})
  and of $c'$ 
  (see Section~\ref{sec:first-step-cprime})
  are almost parallel under $\omega$:
  Lemma~\ref{l:omega-range-versus-weights-on-tildeTs} shows that
  $\omega$ maps $\sigma$-truncation
  triangles of objects $\tildew{X} \in \tildew{\mathcal{T}}^s$ to
  weight decompositions. This means that we can take the image
  $\omega(S^n_{\tildew{X}})$ of 
  $S^n_{\tildew{X}}$ (see \eqref{eq:sigma-trunc-construction-c})
  under $\omega$ as 
  a preferred choice for the triangle $T^n_X$ (see
  \eqref{eq:choice-weight-decomp}); more precisely we have to replace
  $\omega(\tildew{X})$ in $\omega(S^n_{\tildew{X}})$ by $X$. 
  Similarly 
  the octahedron $\tildew{O}^n_{\tildew{X}}$ (see \eqref{eq:octaeder-cprime})
  yields $\omega(\tildew{O}^n_{\tildew{X}})$ as a preferred choice for
  the octahedron 
  $O^n_X$ (see \eqref{eq:wc-weak-octaeder}): We have
  $w_nX=\omega(\sigma_n(X))$. Since the octahedron
  $\tildew{O}^n_X$ 
  is functorial we immediately get preferred choices for the
  morphisms $f^n$ in \eqref{eq:f-w-trunc-fn}, namely
  $\omega(\sigma_n(\tildew{f}))$. 
  These preferred choices define 
  the assignment $X \mapsto \candidateWCweak(X)$, $f \mapsto 
  \candidateWCweak(f)=([n]\omega(\sigma_n(\tildew{f})))_{n \in \DZ}$.
  Passing to the weak homotopy category defines 
  a weak weight complex functor
  $\WCweak:\mathcal{T} \ra K_\weak(\heart(w))$ (see Thm.~\ref{t:weakWCfun}).

  Let $\can:C^b(\heart) \ra K^b(\heart(w))^\anti$ be the obvious functor.
  Then we have (using Rem.~\ref{rem:quick-def-c})
  \begin{equation*}
    \can \comp \candidateWCweak \comp \omega = \can \comp \omega_{C^b}
    \comp c' \cong \can 
    \comp c'' \comp c' =\can \comp c = h
  \end{equation*}
  on objects and morphisms and, since $h$ is a functor,
  as functors $\tildew{\mathcal{T}}^s \ra K^b(\heart(w))^\anti$.
  This implies that $\can \comp \candidateWCweak \cong \ol{h} \comp
  \ol{\omega}\inv = \WCfun$.
  The composition of $\can \comp \candidateWCweak$
  with $K^b(\heart(w))^\anti \ra K_\weak(\heart(w))$ 
  is the weak weight complex functor $\WCweak$ from above. Hence
  $\WCfun$
  is a strong weight complex functor.
\end{proof}

\begin{remark}
  \label{rem:special-choice-omega-inverse}
  At the beginning of the proof of Theorem
  \ref{t:strong-weight-cplx-functor} we chose
  some objects $\tildew{X}$.
  Let us take some more care there:
  For $X=0$ choose $\tildew{X}=0$. Assume $X\not=0$
  Then let $a,b \in \DZ$ be such
  that $X \in \mathcal{T}^{w \in [a,b]}$ and $b-a$ is minimal. Then
  Proposition~\ref{p:omega-on-subtildeT} allows us to 
  find an object $\tildew{X} \in \tildew{\mathcal{T}}^s([a,b])$
  and an isomorphism $X \cong \omega(\tildew{X})$. 
  Taking these choices and proceeding as in the above proof it is
  obvious that $\WCfun:\mathcal{T} \ra K^b(\heart(w))^\anti$ maps
  $\mathcal{T}^{w \in [a,b]}$ to $K^{[a,b]}(\heart(w))$.
\end{remark}

\section{Lifting weight structures to f-categories} 
\label{sec:weight-structures-and-filtered-triang}

We show some statements about compatible weight structures. 
For the corresponding and motivating results for t-structures see
\cite[Prop.~A 5 a]{Beilinson}. 

\begin{definition}
  Let $(\tildew{\mathcal{T}},i)$ be an f-category over a triangulated
  category $\mathcal{T}$. Assume that both $\tildew{\mathcal{T}}$ and
  $\mathcal{T}$ are weight categories (see Def.~\ref{d:ws}), i.\,e.\
  they are equipped with weight structures. Then these weight structures are
  \define{compatible}  
  if\footnote
  {
    Condition \ref{enum:wstr-ft-i} is natural whereas 
    \ref{enum:wstr-ft-s} is perhaps a priori not clear. Here is a
    (partial) justification. Take an object $0\not= X \in \heart(w)$
    (where $w$ is the w-structure on $\mathcal{T}$) and
    denote $i(X)$ also by $X$. Then we have the nonzero morphism
    $\alpha_X: X \ra s(X)$. Now $X \in \tildew{\mathcal{T}}^{w \geq 0}$
    and $s(X) \in s(\tildew{\mathcal{T}}^{w \leq 0})$. So we cannot have
    $s(\tildew{\mathcal{T}}^{w \leq 0})= \tildew{\mathcal{T}}^{w \leq -1}$.
  }
  \begin{enumerate}[label=(wcomp-ft{\arabic*})]
  \item 
    \label{enum:wstr-ft-i}
    $i: \mathcal{T} \ra \tildew{\mathcal{T}}$ is w-exact and
  \item
    \label{enum:wstr-ft-s}
    $s(\tildew{\mathcal{T}}^{w \leq 0}) = \tildew{\mathcal{T}}^{w \leq
      1}$.
  \end{enumerate}
  Note that the at first sight asymmetric condition
  \ref{enum:wstr-ft-s} implies its counterpart
  \begin{enumerate}[resume,label=(wstr-ft{\arabic*})]
  \item
    \label{enum:wstr-ft-s-symm}
    $s(\tildew{\mathcal{T}}^{w \geq 0})
    = \tildew{\mathcal{T}}^{w \geq 1}$,
  \end{enumerate}
  as we prove in Remark~\ref{rem:wstr-ft-symm} below.
\end{definition}

\begin{remark}
  \label{rem:wstr-ft-symm}
  Assume that 
  $\tildew{\mathcal{T}}$ together with 
  $i:\mathcal{T} \ra \tildew{\mathcal{T}}$ is an f-category over the
  triangulated category $\mathcal{T}$, and that both
  $\tildew{\mathcal{T}}$ and $\mathcal{T}$ are equipped with
  compatible w-structures.
  Then \eqref{eq:ws-geq-left-orth-of-leq} and 
  \ref{enum:wstr-ft-s} 
  show that
  \begin{equation*}
    \tildew{\mathcal{T}}^{w \geq 2} 
    = \leftidx{^{\perp}}{(\tildew{\mathcal{T}}^{w \leq 1})}{}
    = \leftidx{^{\perp}}{(s(\tildew{\mathcal{T}}^{w \leq 0}))}{}
    = s(\leftidx{^{\perp}}{(\tildew{\mathcal{T}}^{w \leq 0})}{})
    = s(\tildew{\mathcal{T}}^{w \geq 1})
  \end{equation*}
  Now apply the translation $[1]$ to get
  \ref{enum:wstr-ft-s-symm}
\end{remark}

The statement of the following proposition appears independently in
\cite[Prop.~3.4 (1)]{achar-kitchen-koszul-mixed-arXiv} where the proof is
essentially left to the reader.

\begin{proposition}
  \label{p:compatible-w-str}
  Let $(\tildew{\mathcal{T}},i)$ be an f-category over a triangulated
  category $\mathcal{T}$. Given a w-structure $w=(\mathcal{T}^{w \leq
    0}, \mathcal{T}^{w \geq 0})$ on $\mathcal{T}$, there
  is a unique w-structure $\tildew{w}=(\tildew{\mathcal{T}}^{w \leq
    0}, \tildew{\mathcal{T}}^{w \geq 0})$ on $\tildew{\mathcal{T}}$ that
  is compatible 
  with the given w-structure on $\mathcal{T}$. It is given by
  \begin{align}
    \label{eq:tildeT-wstr}
    \tildew{\mathcal{T}}^{w \leq 0} & =\{X \in \tildew{\mathcal{T}}\mid
    \text{$\gr^j(X) \in \mathcal{T}^{w \leq -j}$ for all $j \in
      \DZ$}\},\\
    \notag
    \tildew{\mathcal{T}}^{w \geq 0} & =\{X \in
    \tildew{\mathcal{T}}\mid
    \text{$\gr^j(X) \in \mathcal{T}^{w \geq -j}$ for all $j \in \DZ$}\}.
  \end{align}
\end{proposition}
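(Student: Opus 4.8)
The plan is to prove Proposition~\ref{p:compatible-w-str} in the following stages. First I would establish that the pair $\tildew{w} = (\tildew{\mathcal{T}}^{w \leq 0}, \tildew{\mathcal{T}}^{w \geq 0})$ defined by \eqref{eq:tildeT-wstr} is indeed a weight structure; then show it is compatible with $w$; and finally prove uniqueness. For existence, I would verify the axioms \ref{enum:ws-i}--\ref{enum:ws-iv} in turn. Axioms \ref{enum:ws-ii} and \ref{enum:ws-iii} should be relatively painless: closure under $[1]$-shifts follows from the commutation formula $\gr^{j}[1] = [1]\gr^{j}$ (as each $\gr^j$ is triangulated) together with $\mathcal{T}^{w \leq -j-1} \subset \mathcal{T}^{w \leq -j}$, and the orthogonality \ref{enum:ws-iii} would be reduced to the orthogonality on each graded piece. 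For axiom \ref{enum:ws-i}, additivity is clear, and closure under retracts follows because each $\gr^j$ is additive (so applying $\gr^j$ to an idempotent splitting lands in $\mathcal{T}^{w\leq -j}$, which is itself retract-closed by \ref{enum:ws-i} for $w$).

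The genuinely substantive step is axiom \ref{enum:ws-iv}, the existence of weight decompositions in $\tildew{\mathcal{T}}$. Here I would argue by induction on the length $l(X)$ of an object $X \in \tildew{\mathcal{T}}$ (defined in Section~\ref{sec:firstprop-filt-cat}), using the $\sigma$-truncation triangles from Proposition~\ref{p:firstprop-filt-cat}~\eqref{enum:firstprop-filt-cat-trunc-triangle} to peel off one graded piece at a time. For $X$ with $\supp(X) = \{a\}$ (the base case), we have $X \cong s^a i(\gr^a(X))$, and a weight decomposition $w_{\geq 1}\gr^a(X) \ra \gr^a(X) \ra w_{\leq 0}\gr^a(X)$ in $\mathcal{T}$ can be transported through $s^a i$; one checks via the definition \eqref{eq:tildeT-wstr} that the shifted summands land in the correct graded-weight ranges (the $s^a$-shift is precisely what converts the weight condition $\mathcal{T}^{w \leq 0}$ into the condition $\gr^a \in \mathcal{T}^{w \leq -a}$). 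For the inductive step, I would take the $\sigma$-truncation triangle $\sigma_{\geq a+1}X \ra X \ra \sigma_{\leq a}X$ with $a$ chosen inside $\range(X)$, apply the inductive hypothesis to the two outer (shorter) objects to get their $\tildew{w}$-decompositions, and then splice these together. The splicing — producing a single weight decomposition triangle of $X$ from the decompositions of $\sigma_{\geq a+1}X$ and $\sigma_{\leq a}X$ — is where I expect the main obstacle to lie: it requires an octahedral/$3\times 3$-diagram argument together with the vanishing $\Hom_{\tildew{\mathcal{T}}}(\tildew{\mathcal{T}}^{w \geq 1}, \tildew{\mathcal{T}}^{w \leq 0}) = 0$ to guarantee the connecting maps compose correctly, exactly as in the standard proof that weight structures can be pasted along triangles (cf.\ Lemma~\ref{l:weight-str-basic-properties}, closure of the $\tildew{\mathcal{T}}^{w\ \ast}$ under extensions, which I would establish first as a consequence of having already verified \ref{enum:ws-iii}).

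For compatibility, condition \ref{enum:wstr-ft-i} (that $i$ is w-exact) follows directly from \eqref{eq:tildeT-wstr} by computing $\gr^j(i(X))$: since $i(X) \in \tildew{\mathcal{T}}([0])$, only $\gr^0(i(X)) \cong X$ is nonzero, so $i(X) \in \tildew{\mathcal{T}}^{w\leq 0}$ iff $X \in \mathcal{T}^{w \leq 0}$, and similarly for $\geq$. Condition \ref{enum:wstr-ft-s}, namely $s(\tildew{\mathcal{T}}^{w\leq 0}) = \tildew{\mathcal{T}}^{w \leq 1}$, I would verify using the commutation relation \eqref{eq:gr-s-comm}, $\gr^{j+1}\comp s = \gr^j$: an object $Y$ lies in $s(\tildew{\mathcal{T}}^{w \leq 0})$ iff $s^{-1}(Y) \in \tildew{\mathcal{T}}^{w\leq 0}$ iff $\gr^j(s^{-1}Y) \in \mathcal{T}^{w\leq -j}$ for all $j$, and since $\gr^j(s^{-1}Y) = \gr^{j-1}(Y)$ this reindexes exactly to the defining condition for $\tildew{\mathcal{T}}^{w \leq 1}$.

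Finally, uniqueness is the cleanest part: Corollary~\ref{c:inclusion-w-str} reduces the problem to showing that any compatible weight structure $(\tildew{\mathcal{T}}^{w\leq 0}, \tildew{\mathcal{T}}^{w\geq 0})$ must be \emph{contained} in the one given by \eqref{eq:tildeT-wstr}. Given any compatible $\tildew{w}$, I would show that w-exactness of $i$ combined with \ref{enum:wstr-ft-s}--\ref{enum:wstr-ft-s-symm} forces $\gr^j = i^{-1}s^{-j}\sigma_j$ to send $\tildew{\mathcal{T}}^{w\leq 0}$ into $\mathcal{T}^{w \leq -j}$: indeed $\sigma_j$ preserves each aisle (it is a composite of truncations, which preserve the weight subcategories once one knows they are stable under $\sigma$-truncation — this stability I would note follows from the extension-closure and the structure of $\sigma$-truncation triangles), $s^{-j}$ shifts the weight index by $-j$ via \ref{enum:wstr-ft-s}, and $i^{-1} = \omega|_{\tildew{\mathcal{T}}([0])}$ is w-exact as a quasi-inverse of the w-exact equivalence $i$. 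This yields the desired containment, and Corollary~\ref{c:inclusion-w-str} then gives equality, completing the proof.
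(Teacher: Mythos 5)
Your existence argument follows the paper's own proof in outline (induction on the length, base case transported through $s^a\comp i$, inductive step splicing the decompositions of $\sigma_{\geq c+1}X$ and $\sigma_{\leq c}X$ along a $3\times 3$-diagram), and the splicing can indeed be made to work: if you complete the commutative square formed by the connecting morphism $\sigma_{\leq c}X \ra [1]\sigma_{\geq c+1}X$ and a compatible map $w_{\geq n+1}\sigma_{\leq c}X \ra [1]w_{\geq n+1}\sigma_{\geq c+1}X$, the only vanishing needed is that of the composite $w_{\geq n+1}\sigma_{\leq c}X \ra \sigma_{\leq c}X \ra [1]\sigma_{\geq c+1}X \ra [1]w_{\leq n}\sigma_{\geq c+1}X$, which \ref{enum:ws-iii} supplies. (The paper proves a stronger inductive statement, carrying the estimates \eqref{eq:omega-tau} on $\omega$-images, because it insists on prescribing more of the $3\times 3$-diagram; your lighter route is viable for bare existence.) However, two of your supporting claims are wrong as stated. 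First, extension-closure of the classes \eqref{eq:tildeT-wstr} does \emph{not} follow from \ref{enum:ws-iii}: for a pair not yet known to satisfy \ref{enum:ws-iv}, orthogonality gives no extension-closure at all. It follows instead by applying the triangulated functors $\gr^j$ and invoking Lemma~\ref{l:weight-str-basic-properties}~\eqref{enum:weight-perp-prop} for the genuine weight structure $w$ on $\mathcal{T}$. Second, \ref{enum:ws-iii} does not reduce to ``orthogonality on each graded piece'': morphisms in $\tildew{\mathcal{T}}$ are not detected by their graded pieces (all $\gr^j(\alpha_X)$ vanish while $\alpha_X$ need not), so one must first reduce both objects to singleton support via $\sigma$-truncations and then treat three cases, using \ref{enum:filt-tria-cat-no-homs} when the filtration degree drops and \ref{enum:filt-tria-cat-hom-bij} when it rises; only the equal-degree case is orthogonality in $\mathcal{T}$.

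The genuine gap is in your uniqueness argument. You need the aisles of an \emph{arbitrary} compatible weight structure $(\mathcal{C}^{w\leq 0}, \mathcal{C}^{w \geq 0})$ to be stable under $\sigma$-truncation, and you justify this by ``extension-closure and the structure of $\sigma$-truncation triangles''. That implication runs the wrong way: extension-closure lets you infer membership of the \emph{middle} term of a triangle from the two outer ones, whereas here $X \in \mathcal{C}^{w \leq 0}$ sits in the middle of $\sigma_{\geq a+1}X \ra X \ra \sigma_{\leq a}X \ra [1]\sigma_{\geq a+1}X$ and it is the outer terms you need. For the classes \eqref{eq:tildeT-wstr} this stability is obvious from the definition, but for an unknown compatible $\mathcal{C}$ it is essentially as strong as the uniqueness statement itself, and none of the ingredients you list yields it. The repair is to reverse the containment, which is exactly what the paper does: by Corollary~\ref{c:inclusion-w-str} it suffices to show $\tildew{\mathcal{T}}^{w\leq 0} \subset \mathcal{C}^{w\leq 0}$ and $\tildew{\mathcal{T}}^{w\geq 0} \subset \mathcal{C}^{w\geq 0}$. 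For $X \in \tildew{\mathcal{T}}^{w\leq 0}$ each piece $s^j(i(\gr^j(X)))$ lies in $\mathcal{C}^{w\leq 0}$ by \ref{enum:wstr-ft-i} and \ref{enum:wstr-ft-s}, and climbing the tower of $\sigma$-truncation triangles \eqref{eq:build-up-X2} one applies extension-closure of $\mathcal{C}^{w\leq 0}$ --- valid because $\mathcal{C}$ \emph{is} a weight structure --- in the correct direction: the two outer terms are known, the middle one is deduced.
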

From 
\eqref{eq:tildeT-wstr} we get
\begin{align}
  \label{eq:tildeT-wstr-shifts}
  \tildew{\mathcal{T}}^{w \leq n} & 
  := [-n]\tildew{\mathcal{T}}^{w \leq 0} 
  =\{X \in \tildew{\mathcal{T}}\mid
  \text{$\gr^j(X) \in \mathcal{T}^{w \leq n-j}$ for all $j \in
    \DZ$}\},\\
  \notag
  \tildew{\mathcal{T}}^{w \geq n} & 
  := [-n]\tildew{\mathcal{T}}^{w \geq 0} 
  =\{X \in \tildew{\mathcal{T}}\mid
  \text{$\gr^j(X) \in \mathcal{T}^{w \geq n-j}$ for all $j \in \DZ$}\}.
\end{align}
The heart of the w-structure of the proposition is
\begin{equation}
  \label{eq:heart-comp-w-str}
  \heart(\tildew{w}) = \tildew{\mathcal{T}}^{w =0} 
  =\{X \in \tildew{\mathcal{T}}\mid
  \text{$\gr^j(X) \in \mathcal{T}^{w =-j}$ for all $j \in \DZ$}\}.
\end{equation}

\begin{proof}
  \textbf{Uniqueness:}
  We assume that we already know that 
  \eqref{eq:tildeT-wstr}
  is a compatible w-structure.
  Let $X$ be any object in $\tildew{\mathcal{T}}$. 
  Then $X$ is in $\tildew{\mathcal{T}}([a,b])$ for some $a, b \in
  \DZ$. 
  We can build up $X$ from its graded pieces as indicated in the
  following diagram 
  in the case $[a,b]=[-2,1]$:
  \begin{equation}
    \label{eq:build-up-X2}
    \xymatrix@C-1.3cm{
      {X=\sigma_{\leq 1}X} \ar[rr] 
      && {\sigma_{\leq 0}X} \ar[rr] \ar@{~>}[ld]
      && {\sigma_{\leq -1}X} \ar[rr] \ar@{~>}[ld]
      && {\sigma_{\leq -2}X} \ar[rr] \ar@{~>}[ld]
      && {\sigma_{\leq -3}X=0} \ar@{~>}[ld]
      \\
      & {s(i(\gr^1(X)))} \ar[lu]
      && {i(\gr^0(X))} \ar[lu]
      && {s^{-1}(i(\gr^{-1}(X)))} \ar[lu]
      && {s^{-2}(i(\gr^{-2}(X)))} \ar[lu]
    }
  \end{equation}
  All triangles are isomorphic to $\sigma$-truncation triangles with the wiggly
  arrows of degree one.
 
  Assume that $(\mathcal{C}^{w \leq 0},
  \mathcal{C}^{w \geq 0})$ is a compatible w-structure on
  $\tildew{\mathcal{T}}$.
  We claim that
  \begin{equation*}
    \tildew{\mathcal{T}}^{w \leq 0} \subset \mathcal{C}^{w \leq 0} 
    \text{ and }
    \tildew{\mathcal{T}}^{w \geq 0} \subset \mathcal{C}^{w \geq 0}
  \end{equation*}
  which implies equality of the two w-structures 
  (see Lemma \ref{c:inclusion-w-str}). 
  Assume that $X$ is in $\tildew{\mathcal{T}}^{w \leq 0}$. Then
  $\gr^j(X) \in \mathcal{T}^{w \leq -j}$ implies
  $i(\gr^j(X)) \in \mathcal{C}^{w \leq -j}$ and we obtain
  $s^j(i(\gr^j(X))) \in \mathcal{C}^{w \leq 0}$.
  Now an induction using \eqref{eq:build-up-X2} (respectively its
  obvious generalization to arbitrary $[a,b]$) shows that $X$ is in
  $\mathcal{C}^{w \leq 0}$. Analogously we show
  $\tildew{\mathcal{T}}^{w \geq 0} \subset \mathcal{C}^{w \geq 0}$.

  \textbf{Existence:}
  If 
  \eqref{eq:tildeT-wstr} defines a w-structure 
  compatibility is obvious:
  \ref{enum:wstr-ft-i} is clear
  and \ref{enum:wstr-ft-s} follows from
  $\gr^j \comp s= \gr^{j-1}$ (see
  \eqref{eq:gr-s-comm}).

  We prove that 
  \eqref{eq:tildeT-wstr} defines a w-structure on $\tildew{\mathcal{T}}$.
  
  Condition \ref{enum:ws-i} holds:
  All functors $\gr^j$ are triangulated and in particular
  additive. 
  Since all $\mathcal{T}^{w\leq j}$ and $\mathcal{T}^{w \geq j}$ are
  additive categories and closed under retracts in $\mathcal{T}$,
  $\tildew{\mathcal{T}}^{w \leq 0}$
  and $\tildew{\mathcal{T}}^{w \geq 0}$ are additive categories and
  closed under retracts in $\tildew{\mathcal{T}}$.

  Condition \ref{enum:ws-ii} follows from 
  \eqref{eq:tildeT-wstr-shifts}.
  
  Condition \ref{enum:ws-iii}:
  Let $X \in \tildew{\mathcal{T}}^{w \geq 1}$ and 
  $Y \in \tildew{\mathcal{T}}^{w \leq 0}$.
  It is obvious from \eqref{eq:tildeT-wstr-shifts} that 
  all $\tildew{\mathcal{T}}^{w \geq n}$ 
  and $\tildew{\mathcal{T}}^{w \leq n}$ are stable under all
  $\sigma$-truncations.
  Hence we can first $\sigma$-truncate $X$ and reduce to the case that
  $l(X)\leq 1$ and then similarly reduce to $l(Y) \leq 1$.
  So it is sufficient to prove $\Hom(X,Y)=0$ for 
  $X \in \tildew{\mathcal{T}}([a])$
  and $Y \in \tildew{\mathcal{T}}([b])$ for arbitrary $a, b \in \DZ$. 
  Let $f: X\ra Y$ be a morphism.
  \begin{itemize}
  \item 
    If $b<a$ then $f=0$ by \ref{enum:filt-tria-cat-no-homs}.
  \item 
    Let $b=a$. Then $f=\sigma_a(f)$ (under the obvious identification) and
    $\sigma_a(f) \cong s^a(i(\gr^a(f)))$.
    But $\gr^a(f):\gr^a(X) \ra \gr^a(Y)$ is zero since 
    $\gr^a(X) \in \mathcal{T}^{w \geq 1-a}$ and
    $\gr^a(Y) \in \mathcal{T}^{w \leq -a}$.

  \item 
    Let $b > a$. Then $f$ can be factorized as $X \xra{f'}
    s^{-(b-a)}(Y)\xra{\alpha^{b-a}} Y$ for a unique $f'$ using
    \ref{enum:filt-tria-cat-hom-bij}.
    Then 
    \begin{equation*}
      s^{-(b-a)}(Y) \in s^{-(b-a)}(\tildew{\mathcal{T}}^{w \leq
        0}([b]))=\tildew{\mathcal{T}}^{w \leq a-b}([a])
      \subset \tildew{\mathcal{T}}^{w \leq 0}([a])
    \end{equation*}
    and the case $b=a$ imply that $f'=0$ and hence $f=0$.
  \end{itemize}
  
  Condition \ref{enum:ws-iv}: 
  By induction on $b-a$ we prove the following statement
  (which is sufficient by \ref{enum:filt-tria-cat-exhaust}):
  Let $X$ be in $\tildew{\mathcal{T}}([a,b])$. Then for each $n \in
  \DZ$ there are triangles
  \begin{equation}
    \label{eq:taun-comp}
    w_{\geq n+1} X \ra X \ra w_{\leq n}X \ra [1] w_{\geq n+1} X
  \end{equation}
  with 
  $w_{\geq n+1} X \in \tildew{\mathcal{T}}^{w \geq n+1}([a,b])$ and
  $w_{\leq n}X \in \tildew{\mathcal{T}}^{w \leq n}([a,b])$
  and satisfying
  \begin{align}
    \label{eq:omega-tau}
    \omega (w_{\geq n+1} X) & \in \mathcal{T}^{w \geq n+1-b},\\
    \notag
    \omega (w_{\leq n} X) & \in \mathcal{T}^{w \leq n-a}.
  \end{align}

  For $b-a<0$ we choose everything to be zero.
  Assume $b-a=0$. Then the object $s^{-a}(X)$ is isomorphic to $i(Y)$
  for some $Y$ in $\mathcal{T}$. 
  Let $m\in \DZ$ and let 
  \begin{equation*}
    w_{\geq m+1}Y \ra Y \ra w_{\leq m}Y \ra [1]w_{\geq m+1}Y
  \end{equation*}
  be a $(w \geq m+1, w\leq m)$-weight decomposition of $Y$ in
  $\mathcal{T}$ with respect to $w$.
  Applying the triangulated functor $s^a \comp i$ we obtain (using an
  isomorphism 
  $s^{-a}(X)\cong i(Y)$) a triangle
  \begin{equation}
    \label{eq:taun-comp-aa}
    s^a(i(w_{\geq m+1}Y)) \ra X \ra s^a(i(w_{\leq m}Y)) \ra
    [1]s^a(i(w_{\geq m+1}Y)). 
  \end{equation}
  Since $i(w_{\geq m+1} Y) \in \tildew{\mathcal{T}}^{w \geq m+1}([0])$ we have
  $s^a(i(w_{\geq m+1} Y)) \in \tildew{\mathcal{T}}^{w \geq
    m+1+a}([a])$, and similarly 
  $s^a(i(w_{\leq m}Y)) \in \tildew{\mathcal{T}}^{w \leq m+a}([a])$.
  Take now $m=n-a$ and define the triangle
  \eqref{eq:taun-comp} to be
  \eqref{eq:taun-comp-aa}.
  Then \eqref{eq:omega-tau} is satisfied by 
  \ref{enum:functor-omega-iii}.
    
  Now assume $b>a$. Choose $a \leq c < b$. In the 
  diagram
  \begin{equation}
    \label{eq:construct-t-gr-supp}
    \xymatrix{
      {[1]w_{\geq n+1}\sigma_{\geq c+1}X} \ar[r] 
      & {[1]\sigma_{\geq c+1}X} \ar[r]
      & {[1]w_{\leq n}\sigma_{\geq c+1}X} \ar[r]^-{\anticomm}
      & {[2]w_{\geq n+1}\sigma_{\geq c+1}X} \\
      {w_{\geq n+1}\sigma_{\leq c}X} \ar[r] \ar@{~>}[u]
      & {\sigma_{\leq c}X} \ar[r] \ar[u]
      & {w_{\leq n}\sigma_{\leq c}X} \ar[r] \ar@{~>}[u]
      & {[1]w_{\geq n+1}\sigma_{\leq c}X} \ar@{~>}[u]^{[1]}
    }
  \end{equation}
  the vertical arrow in the middle is the last arrow in the
  $\sigma$-truncation triangle $\sigma_{\geq c+1}X \ra X \ra \sigma_{\leq c}X \ra
  [1]\sigma_{\geq c+1}X$. Using induction the lower row is 
  \eqref{eq:taun-comp} for $\sigma_{\leq c}X$ and the upper row is
  $[1]$ applied to \eqref{eq:taun-comp} for $\sigma_{\geq c+1}X$
  where we also multiply the last arrow by $-1$ as indicated by
  $\anticomm$; then this row is a triangle.
  In order to construct the indicated completion to a morphism of
  triangles
  we claim that
  \begin{equation}
    \label{eq:tr-filt-map-zero}
    \Hom([\epsilon]w_{\geq n+1}\sigma_{\leq c}X, [1]w_{\leq
      n}\sigma_{\geq c+1}X)=0 
    \text{ for $\epsilon \leq 2$.} 
  \end{equation}
  Since the left entry is in $\tildew{\mathcal{T}}(\leq c)$ and the
  right entry is in $\tildew{\mathcal{T}}(\geq c+1)$,
  it is sufficient by \ref{enum:functor-omega-iv}
  to show that
  \begin{equation*}
    \Hom([\epsilon]\omega (w_{\geq n+1}(\sigma_{\leq c}(X))),
    [1]\omega (w_{\leq n}(\sigma_{\geq c+1}(X))))=0  
    \text{ for $\epsilon \leq 2$.}
  \end{equation*}
  But this is true by axiom
  \ref{enum:ws-iii} 
  since the left entry
  is in $\mathcal{T}^{w \geq n+1-c-\epsilon} 
  \subset \mathcal{T}^{w \geq n-c-1}$ and
  the right entry is in 
  $\mathcal{T}^{w \leq n-(c+1)-1}=\mathcal{T}^{w \leq n-c-2}$ by
  \eqref{eq:omega-tau}.
  Hence claim \eqref{eq:tr-filt-map-zero} is proved.

  This claim for $\epsilon \in \{0,1\}$ and
  Proposition~\ref{p:BBD-1-1-9-copied-for-w-str} show that we can
  complete the arrow in \eqref{eq:construct-t-gr-supp} uniquely to a morphism
  of triangles as indicated by the squiggly arrows.

  Now multiply the last morphism in the first row of 
  \eqref{eq:construct-t-gr-supp} by $-1$;
  this makes the square on the right anti-commutative.
  Proposition~\ref{p:3x3-diagram-copied-for-w-str} and the uniqueness
  of the squiggly arrows show that this
  modified diagram fits as the first two rows into a $3\times
  3$-diagram
  \begin{equation*}
    \xymatrix{
      {[1]w_{\geq n+1}\sigma_{\geq c+1}X} \ar[r]
      & {[1]\sigma_{\geq c+1}X} \ar[r]
      & {[1]w_{\leq n}\sigma_{\geq c+1}X} \ar[r]
      \ar@{}[dr]|-{\anticomm}
      & {[2]w_{\geq n+1}\sigma_{\geq c+1}X } \\
      {w_{\geq n+1}\sigma_{\leq c}X} \ar[r] \ar@{~>}[u]
      & {\sigma_{\leq c}X} \ar[r] \ar[u]
      & {w_{\leq n}\sigma_{\leq c}X} \ar[r] \ar@{~>}[u]
      & {[1]w_{\geq n+1}\sigma_{\leq c}X} \ar@{~>}[u]\\
      {A} \ar[r] \ar[u]
      & {X} \ar[r] \ar[u]
      & {B} \ar[r] \ar[u]
      & {[1]A} \ar[u]\\
      {w_{\geq n+1}\sigma_{\geq c+1}X} \ar[r]  \ar[u]
      & {\sigma_{\geq c+1}X} \ar[r]  \ar[u]
      & {w_{\leq n}\sigma_{\geq c+1}X} \ar[r] \ar[u]
      & {[1]w_{\geq n+1}\sigma_{\geq c+1}X } \ar[u] 
    }
  \end{equation*}
  where we can assume that the second column is the
  $\sigma$-truncation triangle of $X$.

  We claim that we can define $w_{\geq n+1}X:=A$ and $w_{\leq n}X:=B$
  and that we can take the horizontal triangle $(A,X,B)$ as
  \eqref{eq:taun-comp}: 
  Apply the triangulated functor $\gr^j$ to the vertical column
  containing $A$. For $j \leq c$ this yields an isomorphism
  \begin{equation*}
    \gr^j A \sira \gr^j w_{\geq n+1}\sigma_{\leq c}X \in
    \mathcal{T}^{w \geq n+1-j},
  \end{equation*}
  and for $j > c$ we obtain isomorphisms
  \begin{equation*}
    \gr^j A \sila \gr^j w_{\geq n+1}\sigma_{\geq c+1}X \in
    \mathcal{T}^{w \geq n+1-j}.
  \end{equation*}
  This shows $A \in \tildew{\mathcal{T}}^{w \geq n+1}([a,b])$ where
  the statement about the range comes from the fact that the first
  isomorphism is zero for $j<a$ and the second one is zero for $j>b$. 
  Furthermore, if we apply $\omega$ to the column containing $A$, we
  obtain a triangle
  \begin{equation*}
    \xymatrix{
      {\omega (w_{\geq n+1}(\sigma_{\geq c+1}(X)))} \ar[r] 
      & 
      {\omega(A)} \ar[r]
      &
      {\omega (w_{\geq n+1}(\sigma_{\leq c}(X)))} \ar[r] 
      &
      {[1]\omega (w_{\geq n+1}(\sigma_{\geq c+1}(X)))}
    }
  \end{equation*}
  in which the first term is in $\mathcal{T}^{w \geq n+1-b}$ and the third
  term is in $\mathcal{T}^{w\geq n+1-c} \subset \mathcal{T}^{w\geq
    n+1-b}$. Hence $\omega(A) =\omega(w_{\geq n+1}X)$ is 
  in $\mathcal{T}^{w \geq n+1-b}$
  by Lemma~\ref{l:weight-str-basic-properties}
  \eqref{enum:weight-perp-prop}.
  Similarly we treat $B$. 
\end{proof}


\def\cprime{$'$} \def\cprime{$'$} \def\cprime{$'$} \def\cprime{$'$}
  \def\Dbar{\leavevmode\lower.6ex\hbox to 0pt{\hskip-.23ex \accent"16\hss}D}
  \def\cprime{$'$} \def\cprime{$'$}

\end{document}